\documentclass [11pt,reqno]{amsart}
\usepackage {amsmath,amssymb,verbatim,geometry,color,esint}
\usepackage[all]{xy}
\usepackage{mathrsfs}
\usepackage[backref,pagebackref,pdftex,hyperindex]{hyperref}
\usepackage[pdftex]{graphicx}
\geometry{centering,vcentering,asymmetric,marginratio=1:1,vscale=0.77,hscale=0.71}
\usepackage{tikz}
\usetikzlibrary{math}

%%%%%%%%%%%%%%%%%%%%%%%%%%%%%%%%%%%%%%%%%%%%%%%%%%%%%%%%%%%%%%%%%%
% \def\Xint#1{\mathchoice
% {\XXint\displaystyle\textstyle{#1}}%
% {\XXint\textstyle\scriptstyle{#1}}%
% {\XXint\scriptstyle\scriptscriptstyle{#1}}%
% {\XXint\scriptscriptstyle\scriptscriptstyle{#1}}%
% \!\int}
% \def\XXint#1#2#3{{\setbox0=\hbox{$#1{#2#3}{\int}$ }
% \vcenter{\hbox{$#2#3$ }}\kern-.6\wd0}}
% \def\ddashint{\Xint=}
% \def\dashint{\Xint-}

\usepackage{accents}

\newcommand{\A}{\mathbb{A}}

 \newcommand{\C}{\mathbb{C}}

 \newcommand{\Q}{\mathbb{Q}}
 \newcommand{\R}{\mathbb{R}}
 \newcommand{\Z}{\mathbb{Z}}

\newcommand{\fa}{\mathfrak{a}}

\newcommand{\cC}{\mathcal{C}}
\newcommand{\cD}{\mathcal{D}}
\newcommand{\cE}{\mathcal{E}}

\newcommand{\cL}{\mathcal{L}}
\newcommand{\cM}{\mathcal{M}}

\newcommand{\cO}{\mathcal{O}}

\newcommand{\cZ}{\mathcal{Z}}
\newcommand{\cX}{\mathcal{X}}

\renewcommand{\a}{\alpha}
\renewcommand{\b}{\beta}
\renewcommand{\d}{\delta}
\newcommand{\e}{\varepsilon}
\newcommand{\f}{\varphi}

\newcommand{\g}{\gamma}

\newcommand{\om}{\omega}
\newcommand{\Om}{\Omega}
\newcommand{\Ga}{\Gamma}

\newcommand{\p}{\psi}

\newcommand{\ie}{{\rm i.e.\ }}

\newcommand{\inter}{\cdot\ldots\cdot}
\newcommand{\winter}{\wedge\dots\wedge}

\newcommand{\hto}{\hookrightarrow}

\newcommand{\jj}{\mathrm{J}}
\newcommand{\tJ}{\mathrm{J}^+}

\DeclareMathOperator{\dd}{{d}}

\DeclareMathOperator{\en}{E}

\DeclareMathOperator{\mab}{M}

\DeclareMathOperator{\ii}{I}

\DeclareMathOperator{\env}{P}

\DeclareMathOperator{\Cz}{C^0}

\DeclareMathOperator{\Ent}{Ent}

\DeclareMathOperator{\MA}{MA}

\DeclareMathOperator{\Amp}{Amp}

\DeclareMathOperator{\PSH}{PSH}
\DeclareMathOperator{\CPSH}{CPSH}

\DeclareMathOperator{\Pos}{Pos}

\DeclareMathOperator{\sing}{sing}

\DeclareMathOperator{\effe}{F}

\DeclareMathOperator{\dT}{\dd_{\mathrm{T}}}

\DeclareMathOperator{\Num}{N^1}

\DeclareMathOperator{\VCar}{VCar}

\DeclareMathOperator{\ten}{\widetilde{E}}

\newcommand{\ddc}{\mathrm{dd^c}}

\newcommand{\PH}{\mathcal{PH}}

\renewcommand{\div}{\mathrm{div}}

\newcommand{\triv}{\mathrm{triv}}

\newcommand{\ld}{\mathrm{A}}

\newcommand{\HBC}{\mathrm{H}_{\mathrm{BC}}}

%%%%%

%\newcommand{\red}[1]{\textcolor{red}{#1}}

%%%%%

%%%%%
\numberwithin{equation}{section}       % Number formulas within sections

\newtheorem{prop} {Proposition} [section]
\newtheorem{thm}[prop] {Theorem} 
\newtheorem{defi}[prop] {Definition}
\newtheorem{lem}[prop] {Lemma}
\newtheorem{cor}[prop]{Corollary}
\newtheorem{prop-def}[prop]{Proposition-Definition}

\newtheorem*{thmA}{Theorem A} 
 
\newtheorem*{thmB}{Theorem B} 
\newtheorem*{thmC}{Theorem C}

\newtheorem{exam}[prop]{Example}
\newtheorem{rmk}[prop]{Remark}

\newtheorem{conj}[prop]{Conjecture}
\theoremstyle{remark}
\newtheorem*{ackn}{Acknowledgment}

\title[Measures of finite energy]{Measures of finite energy in pluripotential theory: a synthetic approach}
\date{\today}

\author{S{\'e}bastien Boucksom \and Mattias Jonsson}

\address{Sorbonne Universit\'e and Universit\'e Paris Cit\'e\\
CNRS\\
IMJ-PRG\\
F-75005 Paris\\
France}
\email{sebastien.boucksom@imj-prg.fr}

\address{Dept of Mathematics\\
  University of Michigan\\
  Ann Arbor, MI 48109-1043\\
  USA}
\email{mattiasj@umich.edu}

%\subjclass[2000]{Primary: 32H50, Secondary: 14C21}

\begin{document}

\begin{abstract} We introduce a synthetic approach to global pluripotential theory, covering in particular the case of a compact K\"ahler manifold and that of a projective Berkovich space over a non-Archimedean field. We define and study the space of measures of finite energy, introduce twisted energy and free energy functionals thereon, and show that coercivity of these functionals is an open condition with respect to the polarization. 
\end{abstract}

\dedicatory{To Bo Berndtsson, on the occasion of his 70+$\e$-th birthday}

\maketitle

\setcounter{tocdepth}{1}
\tableofcontents
%
%
%%%%%%%%%%%%%%%%%%%%%%%%%%%%%%%%%%%%%%%%%%%%%%%%%%%%%%%%%%%%%%%%%%%
%
%
\section*{Introduction}
Pluripotential theory on compact K\"ahler spaces is by now a very well developed subject, with key applications to K\"ahler geometry. Generalizing classical concepts from potential theory, measures and potentials of finite energy play a central role in this theory, see for instance~\cite{GZ2,BBGZ,thermo,DN,DGL}. In parallel, a non-Archimedean version of pluripotential theory has also emerged, taking place on projective Berkovich spaces~\cite{BerkBook} over a (complete) non-Archimedean field. Initially motivated by Arakelov geometry~\cite{Zha95,CL06}, it has found various other applications, including degenerations of Calabi--Yau manifolds~\cite{LiSYZ} and the Yau--Tian--Donaldson conjecture~\cite{YTD,Li22,nakstab2}. 

These two versions of pluripotential theory bear many similarities, and can be formulated in a quite parallel way.  The main purpose of the present article is to introduce a synthetic formalism covering in particular these two cases, and use it to extend some of the main results of~\cite{trivval} and~\cite{nakstab2} (that were taking place on projective Berkovich spaces over a trivially valued field, and applied to the study of K-stability). More specifically, we 
\begin{itemize}
\item define and study the space of measures of finite energy;
\item introduce the \emph{twisted energy} and \emph{free energy} functionals on the latter space, whose composition with the Monge--Amp\`ere operator respectively recover the Donaldson J-functional and the Mabuchi K-energy in the K\"ahler case, and their analogues in the non-Archimedean case; 
\item show that coercivity of the free energy is an open condition with respect to the K\"ahler class. 
\end{itemize}
The emphasis in this paper is on \emph{measures} of finite energy, as opposed to \emph{potentials} of finite energy, that we do not seek to investigate here (see for instance~\cite{nama,trivval,Reb,DXZ} in the non-Archimedean context). 

%
%%%%%%%%%%%%%%%%%%%%%%%%%%%%%%%%%%%%%%%%%%%%%%%%%%%%%%%%%%%%%%%%%%%
\subsection*{The general setup}
Throughout this paper, we work with a compact topological space $X$ equipped with the following data: 
\begin{itemize}
\item a dense linear subspace $\cD\subset\Cz(X)$ of \emph{test functions}, containing all constants; 
\item a vector space $\cZ$ of \emph{closed $(1,1)$-forms on $X$}, endowed with a nice\footnote{See~\S\ref{sec:notation} for the terminology.} partial order, and a linear map $\ddc\colon\cD\to\cZ$ vanishing on constants; 
\item an integer $n\ge 1$ (seen as the `dimension' of $X$), and a nonzero $n$-linear symmetric map taking a tuple $(\theta_1,\dots,\theta_n)$ in $\cZ$ to a signed Radon measure $\theta_1\winter\theta_n$ on $X$, assumed to be positive for all $\theta_i\ge 0$, and such that each bilinear form 
\begin{equation}\label{equ:Hodge}
\cD\times\cD\to\R\quad (\f,\p)\mapsto\int\f\,\ddc\p\wedge\theta_1\winter\theta_{n-1}
\end{equation}
with $\theta_i\in\cZ$ is symmetric, and seminegative for $\theta_i\ge 0$. 
\end{itemize}
We then introduce the \emph{Bott--Chern cohomology space} 
$$
\HBC(X):=\cZ/\ddc\cD, 
$$
and define the \emph{positive cone} $\Pos(X)\subset\HBC(X)$ as the interior\footnote{Here we use the finest vector space topology of $\HBC(X)$, see~\S\ref{sec:notation}.} of the image of the convex cone
$$
\cZ_+:=\{\theta\in\cZ\mid\theta\ge 0\}.
$$
This setup is primarily inspired by the case of a compact K\"ahler manifold $X$, where $\cD$ is the space of smooth functions, and $\cZ$ the space of closed $(1,1)$-forms. It also covers the case of a projective Berkovich space $X$ over a complete non-Archimedean field $k$, where $\cD$ is generated by \emph{piecewise linear} (or \emph{model}) functions, and elements of $\cZ$ are represented by numerical classes on models over the valuation ring (or test configurations, in the trivially valued case)~\cite{siminag,GM,trivval}. At least under reasonable assumptions on $X$ and $k$, we then have $\HBC(X)=\Num(X)$, and $\Pos(X)$ coincides with the ample cone, see~\S\ref{sec:BCNA}.  
%
%%%%%%%%%%%%%%%%%%%%%%%%%%%%%%%%%%%%%%%%%%%%%%%%%%%%%%%%%%%%%%%%%%%
\subsection*{Measures of finite energy}
Fix a form $\om\in\cZ_+$ such that $[\om]\in\Pos(X)$, with volume $V_\om=\int\om^n>0$. The space of \emph{$\om$-plurisubharmonic} test functions is defined as 
$$
\cD_\om:=\{\f\in\cD\mid\om_\f:=\om+\ddc\f\ge 0\}, 
$$
and the \emph{Monge--Amp\`ere operator} takes $\f\in\cD_\om$ to the probability measure 
$$
\MA_\om(\f):=V_\om^{-1}\om_\f^n. 
$$
It admits a primitive, the \emph{Monge--Amp\`ere energy} $\en_\om\colon\cD_\om\to\R$, explicitly given by 
$$
\en_\om(\f)=\frac{1}{n+1}\sum_{j=0}^nV_\om^{-1}\int\f\,\om_\f^j\wedge\om^{n-j}.
$$
Denote by $\cM$ the space of (Radon) probability measures on $X$, and define the \emph{energy} of a measure $\mu\in\cM$ as the Legendre transform\footnote{This corresponds to $\en_\om^\vee(\mu)$ in the notation of~\cite{BBGZ,trivval}, and to $\|\mu\|_\om$ in that of~\cite{nakstab2}.}
$$
\jj_\om(\mu):=\sup_{\f\in\cD_\om}\{\en_\om(\f)-\int\f\,\mu\}\in [0,+\infty]. 
$$
Then $\jj_\om\colon\cM\to [0,+\infty]$ is convex, and lsc in the weak topology; the space of \emph{measures of finite energy} is defined as 
$$
\cM^1_\om:=\{\mu\in\cM\mid\jj_\om(\mu)<\infty\}, 
$$
equipped with the \emph{strong topology}, \ie the coarsest refinement of the weak topology in which $\jj_\om$ becomes continuous. 

As a consequence of the seminegativity of~\eqref{equ:Hodge}, the functional $\en_\om$ is concave. This is equivalent to the non-negativity of the \emph{Dirichlet functional}
$$
\jj_\om(\f,\p):=\en_\om(\f)-\en_\om(\p)+\int(\p-\f)\MA_\om(\f), 
$$
which is more explicitly given by the familiar expression 
$$
\jj_\om(\f,\p)=\frac 12\int(\f-\p)\ddc(\p-\f)
$$
when $n=1$, and a positive linear combination of integrals of the form 
$$
\int(\f-\p)\ddc(\p-\f)\wedge\om_\f^j\wedge\om_\p^{n-j-1}\quad (0\le j<n)
$$
in general, see~\eqref{equ:Dirichlet}. Our first main result shows that the Dirichlet functional induces, via the Monge--Amp\`ere operator, a complete quasi-metric\footnote{See~\S\ref{sec:notation} for the notion of quasi-metric used in this paper.} space structure on $\cM^1_\om$.  

\begin{thmA} Assume that $\om$ has the \emph{orthogonality property}. Then:
\begin{itemize}
\item[(i)] the image of the Monge--Amp\`ere operator $\MA_\om\colon\cD_\om\to\cM$ is a dense subspace of $\cM^1_\om$;
\item[(ii)] there exists a unique (continuous) quasi-metric $\d_\om$ on $\cM^1_\om$ that defines the strong topology of $\cM^1_\om$ and such that 
$$
\d_\om(\MA_\om(\f),\MA_\om(\p))=\jj_\om(\f,\p)
$$
for all $\f,\p\in\cD_\om$; 
\item[(iii)] the quasi-metric space $(\cM^1_\om,\d_\om)$ is complete.
\end{itemize}
\end{thmA}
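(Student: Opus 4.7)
The plan is to first set up a quasi-metric on the image $\MA_\om(\cD_\om)\subset\cM^1_\om$, then extend to the whole space by density.

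I would start by showing $\jj_\om(\f,\p)$ depends only on $\mu:=\MA_\om(\f)$ and $\nu:=\MA_\om(\p)$, so that $\d_\om(\mu,\nu):=\jj_\om(\f,\p)$ is well defined on $\MA_\om(\cD_\om)\times\MA_\om(\cD_\om)$. Indeed, rewriting
$$
\jj_\om(\f,\p)=\bigl(\en_\om(\f)-\int\f\,\mu\bigr)-\bigl(\en_\om(\p)-\int\p\,\mu\bigr),
$$
the first bracket equals the Legendre value $\jj_\om(\mu)$ at every $\f'$ with $\MA_\om(\f')=\mu$ (because $\MA_\om$ is the differential of the concave $\en_\om$), and similarly the second depends only on $\nu$. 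Non-negativity of $\d_\om$ follows from concavity of $\en_\om$; non-degeneracy comes from strict positivity of the polarized Dirichlet integrals appearing in the expansion of $\jj_\om(\f,\p)$. For the quasi-triangle inequality I would compute
$$
\jj_\om(\f,\p)+\jj_\om(\p,\chi)-\jj_\om(\f,\chi)=\int(\p-\chi)\bigl(\MA_\om(\f)-\MA_\om(\p)\bigr),
$$
expand $\MA_\om(\f)-\MA_\om(\p)$ as a wedge product against $\ddc(\f-\p)$, and apply Cauchy--Schwarz for the seminegative bilinear forms of~\eqref{equ:Hodge} to dominate the right-hand side by a constant multiple of $\jj_\om(\f,\p)$ or $\jj_\om(\p,\chi)$.

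For the density in (i), given $\mu\in\cM^1_\om$ pick a maximizing sequence $\f_j\in\cD_\om$ with $\en_\om(\f_j)-\int\f_j\,\mu\to\jj_\om(\mu)$, and set $\mu_j:=\MA_\om(\f_j)$. Using the tautology $\en_\om(\f_j)-\int\f_j\,\mu_j=\jj_\om(\mu_j)$ yields
$$
\jj_\om(\mu_j)-\jj_\om(\mu)=\int\f_j(\mu_j-\mu)+o(1),
$$
and the orthogonality property of $\om$ is precisely what forces this right side to zero. Combined with weak convergence $\mu_j\rightharpoonup\mu$, obtained from first-order conditions for near-maximizers tested against $\p\in\cD_\om$, this gives strong convergence $\mu_j\to\mu$ in $\cM^1_\om$. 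Item (ii) then follows by extending $\d_\om$ by uniform continuity from the dense subset $\MA_\om(\cD_\om)$; the axioms of a quasi-metric carry over, and the topology defined by $\d_\om$ matches the strong one via the inequality $\d_\om(\mu,\nu)+\d_\om(\nu,\mu)\ge|\jj_\om(\mu)-\jj_\om(\nu)|$, which one checks first on the dense subset from the explicit formula.

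Finally, for completeness (iii), a $\d_\om$-Cauchy sequence $(\mu_j)$ has $\sup_j\jj_\om(\mu_j)<\infty$, so weak compactness of $\cM$ combined with lsc of $\jj_\om$ produces a weak cluster point $\mu\in\cM^1_\om$; the Cauchy condition then pins down $\jj_\om(\mu_j)\to\jj_\om(\mu)$ and hence $\d_\om(\mu_j,\mu)\to 0$ along the full sequence. I expect the main obstacle to be the energy convergence $\jj_\om(\mu_j)\to\jj_\om(\mu)$ in step (i), which is exactly where the orthogonality property of $\om$ becomes indispensable; formulating and exploiting it purely within the synthetic framework, without recourse to capacity estimates or the comparison principle, is the crux. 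Once this is secured, the remaining steps reduce to Legendre duality, concavity of $\en_\om$, and standard density/compactness considerations.
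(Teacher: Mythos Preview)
Your overall architecture matches the paper's (maximizing sequences for density, extension of the Dirichlet functional by continuity for the quasi-metric, weak compactness plus lower semicontinuity for completeness), but two of the load-bearing steps contain genuine gaps.

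\textbf{Well-definedness of $\d_\om$.} Your decomposition
\[
\jj_\om(\f,\p)=\bigl(\en_\om(\f)-\textstyle\int\f\,\mu\bigr)-\bigl(\en_\om(\p)-\textstyle\int\p\,\mu\bigr),\qquad \mu=\MA_\om(\f),
\]
is correct, and the first bracket is indeed $\jj_\om(\mu)$. But the claim that ``similarly the second depends only on $\nu$'' does not follow from the Legendre argument: the second bracket involves $\int\p\,\mu$, not $\int\p\,\nu$, so the concavity argument that worked for the first bracket does not apply. If $\MA_\om(\p)=\MA_\om(\p')=\nu$, then $\jj_\om(\p,\p')=0$ gives $\en_\om(\p)-\en_\om(\p')=\int(\p-\p')\,\nu$, whereas you need $\en_\om(\p)-\en_\om(\p')=\int(\p-\p')\,\mu$. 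Closing this gap requires the H\"older estimate $\bigl|\int(\p-\p')(\mu-\nu)\bigr|\lesssim\jj_\om(\p,\p')^\a(\cdots)$, which in turn comes from the Cauchy--Schwarz machinery. The paper establishes well-definedness (and, simultaneously, uniform continuity allowing extension) precisely through such H\"older bounds, not through a direct Legendre identification.

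\textbf{The role of orthogonality.} This is the more serious issue. You write that orthogonality ``is precisely what forces $\int\f_j(\mu_j-\mu)\to 0$'', but the orthogonality property as formulated in the paper is the statement $\lim_{\f\in\cD_{\om,<f}}\int(f-\f)\MA_\om(\f)=0$, which is not the same assertion and does not directly yield that limit. The paper's route is to show (Proposition~\ref{prop:ortho}) that orthogonality is \emph{equivalent} to a uniform differentiability estimate for the extended energy $\ten_\om$, namely
\[
\bigl|\ten_\om(\f+tf)-\en_\om(\f)-t\textstyle\int f\,\MA_\om(\f)\bigr|\lesssim C t^2\sup|f|,
\]
valid uniformly in $\f\in\cD_\om$. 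Combining this with the Legendre inequality $\ten_\om(\p_j+tf)\le\jj_\om(\mu)+\int(\p_j+tf)\,\mu$ and dividing by $t\to 0$ yields weak convergence $\mu_j\to\mu$. Only \emph{after} weak convergence is in hand does one obtain $\int\p_j(\mu_j-\mu)\to 0$, and that step uses the H\"older estimate~\eqref{equ:holdMA} together with the quasi-triangle bound $\jj_\om(\p_i,\p_j)\lesssim\jj_\om(\mu,\p_i)+\jj_\om(\mu,\p_j)$, not orthogonality. Your proposal conflates these two mechanisms and omits the differentiability characterization, which is exactly the ``crux'' you correctly anticipate but do not supply.
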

The energy can be expressed in terms of the quasi-metric as 
$$
\jj_\om(\mu)=\d_\om(\mu,\mu_\om)\quad\text{where}\quad\mu_\om:=V_\om^{-1}\om^n=\MA_\om(0). 
$$
We refer to Definition~\ref{defi:ortho} for the precise definition of the orthogonality property. Suffice it to say here that it only depends on $[\om]\in\Pos(X)$, and holds when $X$ is a compact K\"ahler manifold, or any projective Berkovich space $X$ over a non-Archimedean field (as a consequence of~\cite{BE,BGM}). In the former case, Theorem~A can be deduced from~\cite{BBGZ,BBEGZ}; in the latter, it was established in the trivially valued case in~\cite{trivval}, and is thus extended here to the case of an arbitrary non-Archimedean ground field. 

The strategy of proof of Theorem~A follows the same lines as the trivially case treated in~\cite{trivval}. The first key ingredient is a uniform differentiability property for the Legendre transform of the convex functional $\mu\mapsto\jj_\om(\mu)$, which is shown to be equivalent to the orthogonality property. This is used to prove that if $(\f_i)$ is a maximizing sequence for a given $\mu\in\cM^1_\om$ (\ie a sequence in $\cD_\om$ computing the supremum that defines $\jj_\om(\mu)$), then $\MA_\om(\f_i)$ converges to $\mu$. 

The rest of the proof relies on an extensive use of H\"older estimates for mixed Monge--Amp\`ere integrals, obtained from repeated applications of the Cauchy--Schwarz inequality to the seminegative form~\eqref{equ:Hodge}. This approach, which goes back to~\cite{Blo} and was further exploited in~\cite{BBGZ,BBEGZ, trivval}, is put in a simple general setting in Appendix~\ref{sec:CS}. 

%
%%%%%%%%%%%%%%%%%%%%%%%%%%%%%%%%%%%%%%%%%%%%%%%%%%%%%%%%%%%%%%%%%%%
\subsection*{Twisted energy, free energy, and coercivity} 
Assuming from now on the orthogonality property, we next investigate the dependence on $\om$ of the space $\cM^1_\om$ and the energy functional $\jj_\om\colon\cM^1_\om\to\R_{\ge 0}$. To this end, we require the \emph{submean value property}, \ie the existence of $C\in\R_{\ge 0}$ such that 
$$
\sup \f\le \int\f\,\mu_\om+C
$$ 
for all $\om$-psh test functions $\f\in\cD_\om$. We show that this condition is independent of $\om$, and that it is equivalent to the irreducibility of $X$ when the latter is a compact K\"ahler or projective Berkovich space (see~\S\ref{sec:submean}). 

\begin{thmB} Assume that the submean value property holds. Then: 
\begin{itemize}
\item the topological space $\cM^1=\cM^1_\om$ is independent of $\om$; 
\item for any $\theta\in\cZ$, there exists a unique continuous functional $\jj_\om^\theta\colon\cM^1\to\R$ such that 
$$
\jj_\om^\theta(\mu)=\frac{d}{dt}\bigg|_{t=0}\jj_{\om+t\theta}(\mu)
$$
for any $\mu\in\cM^1$; furthermore, $\jj_\om^\theta(\mu)$ satisfies H\"older estimates with respect to $\om$. 
\end{itemize}
\end{thmB}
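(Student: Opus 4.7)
For the independence of $\cM^1 = \cM^1_\om$, the key technical step is to establish a two-sided comparison
\[
C^{-1}\jj_{\om_1}(\mu) - C' \le \jj_{\om_0}(\mu) \le C\jj_{\om_1}(\mu) + C'
\]
for any two reference forms $\om_0, \om_1 \in \cZ_+$ with classes in $\Pos(X)$, with constants depending only on $\om_0$ and $\om_1$. Since $\Pos(X)$ is open and convex, a scaling/transitivity argument reduces this to the case $\om_0 \le C\om_1$, so that $\cD_{\om_0} \subset \cD_{C\om_1}$, and one can then compare $\en_{\om_0}$ with $\en_{C\om_1}$ directly from their polynomial formulas, invoking the submean value property to replace $\sup\f$ by $\int\f\,\mu_{\om_0}$ (and hence, up to $\jj_{\om_0}(\mu)$, by $\int\f\,\mu$) in the uniform bounds that appear. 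The same estimate, applied to the positive-combination expression for the Dirichlet functional, controls $\d_{\om_0}$ by $\d_{\om_1}$ and thus identifies the two strong topologies.

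For the twisted energy, fix $\om$ and $\theta \in \cZ$, and observe that $[\om+t\theta] \in \Pos(X)$ for $|t|$ small; the first part then shows that picking any positive representative in this class yields a well-defined energy $\jj_{\om+t\theta}$ on $\cM^1$ independent (up to normalization) of the choice. For $\mu = \MA_\om(\psi)$ with $\psi \in \cD_\om$, the orthogonality property gives $\jj_\om(\mu) = \en_\om(\psi) - \int\psi\,\mu$, and direct differentiation of the polynomial formula for $\en_{\om+t\theta}(\psi)$ at $t=0$ yields
\[
\frac{d}{dt}\bigg|_{t=0} V_{\om+t\theta}\en_{\om+t\theta}(\f) = \int\f\,\theta\wedge\sum_{k=0}^{n-1}\om_\f^k\wedge\om^{n-k-1}.
\]
An envelope-type argument, together with the convergence of maximizing sequences from the proof of Theorem~A, then produces a closed-form expression for $\jj_\om^\theta(\MA_\om(\psi))$ as a sum of mixed integrals in $\psi$, $\om$, $\om_\psi$, and $\theta$, corrected by a term proportional to $\en_\om(\psi)$ that encodes the variation of the total mass $V_{\om+t\theta}$.

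Finally, to extend $\jj_\om^\theta$ continuously to all of $\cM^1$, iterated applications of Cauchy--Schwarz to the seminegative bilinear forms~\eqref{equ:Hodge} (Appendix~\ref{sec:CS}) bound each of these mixed integrals by products of $\d_\om$-distances and energies, producing H\"older estimates of the shape
\[
|\jj_\om^\theta(\mu) - \jj_\om^\theta(\mu')| \le C_\theta\,\bigl(1 + \jj_\om(\mu) + \jj_\om(\mu')\bigr)^{1-\alpha}\,\d_\om(\mu,\mu')^\alpha
\]
for some $\alpha \in (0,1]$. Combined with the density statement in Theorem~A, this provides the unique continuous extension to $\cM^1$ and preserves the derivative interpretation under strong convergence; a parallel application of the same Cauchy--Schwarz machinery to the dependence on $\theta$ yields the stated H\"older dependence on $\om$. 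The main obstacle will be ensuring that the constants in these H\"older estimates are uniform enough in $\om$ to control the perturbation $\om + t\theta$ and to recover the derivative as a genuine limit from its values on the dense subspace $\MA_\om(\cD_\om)$.
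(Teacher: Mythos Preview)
Your outline is broadly aligned with the paper's approach, but there are two places where the sketch hides nontrivial content.

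For the independence of $\cM^1$, your reduction ``by scaling/transitivity to the case $\om_0 \le C\om_1$'' is not available in the abstract setting: two forms in $\cZ_+$ with positive classes need not be comparable as forms (in the non-Archimedean case, no two $\cX$-positive forms for distinct models are comparable). The paper handles this by first proving (Proposition~\ref{prop:comm}) that any finite set of classes in $\Pos(X)$ admits \emph{commensurable} representatives, and separately that the Lipschitz class of the auxiliary metric $\dd_\om$ depends only on $[\om]$ (Lemma~\ref{lem:M1equiv}); only then does the comparison go through (Lemma~\ref{lem:M1comm}). Your argument would work once these reductions are in place, but they are not automatic.

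For the twisted energy, the real difficulty is not defining $\jj_\om^\theta$ but proving that it computes the derivative of $\jj_{\om+t\theta}(\mu)$ for \emph{arbitrary} $\mu\in\cM^1$. You write that the continuous extension ``preserves the derivative interpretation under strong convergence,'' but this would require uniform convergence (in $\mu$) of the difference quotients $t^{-1}(\jj_{\om+t\theta}(\mu)-\jj_\om(\mu))$, which does not follow from H\"older continuity of $\jj_\om^\theta$ alone: note that even for $\mu=\MA_\om(\psi)$, the function $\psi$ is \emph{not} a maximizer for $\jj_{\om+t\theta}(\mu)$, so differentiating $\en_{\om+t\theta}(\psi)$ does not directly give the derivative of $\jj_{\om+t\theta}(\mu)$. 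The paper separates the two steps: it first defines $\jj_\om^\theta$ as the continuous extension of $\nabla_\theta\en_\om$ (Proposition~\ref{prop:nablaJ}), and then proves the derivative formula directly for all $\mu$ via a quantitative one-sided estimate (Lemma~\ref{lem:diffen})
\[
\jj_{\om+\theta}(\mu)\ge\jj_\om(\mu)+\jj_\om^\theta(\mu)-O\bigl(\|\theta\|_\om^2\,\tJ_\om(\mu)\bigr),
\]
obtained by writing $\mu=\MA_\om(\f)$ and using $(1-\|\theta\|_\om)\f\in\cD_{\om+\theta}$ as a competitor in the supremum defining $\jj_{\om+\theta}(\mu)$; reversing the roles of $\om$ and $\om+t\theta$ (and invoking Theorem~\ref{thm:twistedhold} to compare $\jj_{\om+t\theta}^\theta$ with $\jj_\om^\theta$) gives the matching upper bound. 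Your ``envelope-type argument'' gestures at this, but the second-order uniformity in $\mu$ is the whole point, and it is precisely what your last paragraph flags as the main obstacle without resolving.
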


The strategy of proof of Theorem~B again globally follows the same lines as the trivially valued case treated in~\cite{trivval,nakstab2}. However, in the latter case the submean value inequality is actually an equality, \ie one can take the constant $C$ above to be $0$, while an extra layer of complication arises in the general case to handle this constant, which forces us to take a slightly different route. 

We call $\jj_\om^\theta(\mu)$ the \emph{$\theta$-twisted energy} of $\mu\in\cM^1$. It provides an analogue of the \emph{Donaldson J-functional} on the level of measures, and its relevance comes from its relation to the \emph{Mabuchi K-energy}, when $X$ is a compact K\"ahler manifold or a smooth projective Berkovich space. In these two cases, the choice of a (smooth or PL) metric $\rho$ on the canonical bundle $K_X$ defines an \emph{entropy} functional $\Ent\colon\cM\to\R\cup\{+\infty\}$. In line with~\cite{thermo}, we then define the \emph{free energy} $\effe_\om\colon\cM^1\to\R\cup\{+\infty\}$ by setting
$$
\effe_\om(\mu):=\Ent(\mu)-\Ent(\mu_\om)+\jj_\om^\theta(\mu),
$$
where $\theta\in\cZ$ denotes the curvature of $\rho$. The free energy so defined is independent of the choice of $\rho$, and its composition with the Monge--Amp\`ere operator coincides with the \emph{Mabuchi K-energy} $\mab_\om\colon\cD_\om\to\R$. As a consequence of Theorem~B, we then show:

\begin{thmC} Assume $X$ is a compact K\"ahler manifold or smooth projective Berkovich space over a non-Archimedean field, and consider the \emph{coercivity threshold} 
$$
\sigma(X,\om):=\sup\left\{\sigma\in\R\mid\effe_\om\ge\sigma\jj_\om+A\text{ for some }A\in\R\right\}. 
$$
Then $\sigma(X,\om)$ is a continuous function of $[\om]\in\Pos(X)$. 
\end{thmC}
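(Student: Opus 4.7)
The plan is to deduce the continuity of $\sigma(X,\om)$ from the Hölder estimates in Theorem~B, combined with some elementary manipulations of the defining supremum. First observe that $\sigma(X,\om)$ depends only on the class $[\om]\in\Pos(X)$, and that the constant $\Ent(\mu_\om)$ appearing in the definition of $\effe_\om$ can be absorbed into the additive constant, so that
$$
\sigma(X,\om)=\sup\bigl\{\sigma\in\R:\,\Ent(\mu)+\jj_\om^\theta(\mu)\ge\sigma\jj_\om(\mu)+A\text{ on }\cM^1\text{ for some }A\in\R\bigr\}.
$$
It is then enough to show that for any $\epsilon>0$, there is a neighborhood of $[\om]$ in $\Pos(X)$ on which $\sigma(X,\om')\ge\sigma(X,\om)-\epsilon$, the reverse inequality following by symmetry.

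The main analytic input is provided by the Hölder bounds in Theorem~B. Applying them to $\jj_\om^\theta$ and combining with Young's inequality $a^\alpha b^{1-\alpha}\le\eta b+C_\eta a$, I would get: for every $\delta>0$, there is a neighborhood of $\om$ in $\cZ$ such that
$$
|\jj_{\om'}^\theta(\mu)-\jj_\om^\theta(\mu)|\le\delta\bigl(1+\jj_\om(\mu)\bigr)
$$
for all $\mu\in\cM^1$. To obtain the analogous bound for $\jj_{\om'}(\mu)-\jj_\om(\mu)$ itself, I would write this difference as $\int_0^1\jj_{\om_t}^{\om'-\om}(\mu)\,dt$ along the affine path $\om_t:=(1-t)\om+t\om'$, bound the integrand by the same Hölder estimate applied with $\theta=\om'-\om$, and close the resulting integral inequality via a Grönwall argument to control $\jj_{\om_t}$ uniformly by $\jj_\om$.

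Once both estimates are in hand, fix $\epsilon>0$ and set $\sigma_0:=\sigma(X,\om)-\epsilon/2$. By definition, there is $A\in\R$ with $\Ent(\mu)+\jj_\om^\theta(\mu)\ge\sigma_0\jj_\om(\mu)+A$ on $\cM^1$. Substituting the first estimate yields
$$
\Ent(\mu)+\jj_{\om'}^\theta(\mu)\ge(\sigma_0-\delta)\jj_\om(\mu)+A-\delta,
$$
and invoking the second estimate to replace $\jj_\om(\mu)$ by $\jj_{\om'}(\mu)$ (where the direction of conversion depends on the sign of $\sigma_0-\delta$) produces a bound
$$
\Ent(\mu)+\jj_{\om'}^\theta(\mu)\ge\sigma'\jj_{\om'}(\mu)+A''
$$
with $\sigma'=\sigma_0+O(\delta)$. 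Choosing $\delta$ small enough ensures $\sigma'\ge\sigma(X,\om)-\epsilon$, which gives $\sigma(X,\om')\ge\sigma(X,\om)-\epsilon$.

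The main obstacle is the derivation of the Hölder estimate for $\jj_{\om'}-\jj_\om$ itself: the path-integral representation reduces matters to Theorem~B, but the integrand bound involves $\jj_{\om_t}(\mu)$ rather than $\jj_\om(\mu)$, so one needs a Grönwall-type closure to propagate the estimate along the segment. Once this is settled, the comparison of coercivity thresholds becomes essentially formal, modulo a small case distinction on the sign of $\sigma_0-\delta$ in the final conversion step.
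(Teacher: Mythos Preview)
Your overall strategy matches the paper's: write $\sigma(X,\om)=\sigma_\om^{\theta_\rho}(\Ent_\rho)$ with $\Ent_\rho$ independent of $\om$, then show that $\sigma_\om^\theta(F)$ varies continuously in $(\om,\theta)$ by combining the H\"older estimate for $\jj_\om^\theta$ with a comparison of the energies $\jj_\om$ and $\jj_{\om'}$, and finally substitute into the coercivity inequality. The substitution step is exactly Lemma~\ref{lem:threshcont}; the paper works with $\tJ_\om=\jj_\om+T_\om$ instead of $\jj_\om$, which produces the clean multiplicative form \eqref{equ:tJvar} and sidesteps your case split on the sign of $\sigma_0-\delta$.

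The one place where you take a different and unnecessarily roundabout route is your second estimate. The path-integral-plus-Gr\"onwall argument is valid, but redundant: since $\jj_\om^\om=\jj_\om$ by \eqref{equ:Jomom}, the comparison $|\jj_\om(\mu)-\jj_{\om'}(\mu)|\lesssim\d^\a\tJ_\om(\mu)$ is already the special case $\theta=\om$, $\theta'=\om'$ of the very H\"older estimate \eqref{equ:twistedlip} you invoke for the first step. This is precisely how the paper obtains \eqref{equ:Jvar} and then \eqref{equ:tJvar}, in one line, with no integration along a segment; the Gr\"onwall closure you flag as the ``main obstacle'' dissolves.

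One point you gloss over: you assert that $\sigma(X,\om)$ depends only on $[\om]$ and then work in a neighborhood of $\om$ in $\cZ$, but the statement concerns continuity on $\Pos(X)\subset\HBC(X)$, and in the non-Archimedean case nearby classes need not admit commensurable representatives automatically. The paper handles this by first proving class-independence of $\sigma_\om^\theta(F)$ via \eqref{equ:Jomtau} and Lemma~\ref{lem:coerind}, and then, for any finite-dimensional direction in $\HBC(X)$, using Proposition~\ref{prop:normfin} to choose representatives in a single space $\cZ_{\om_0}$ where the Thompson-metric estimates apply.
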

This result actually holds in much greater generality, for the twisted coercivity threshold of any given functional on $\cM^1$ (see Theorem~\ref{thm:threshcont}). 

In the trivially valued case, the free energy $\effe_\om(\mu)$ coincides with the invariant $\b_\om(\mu)$ defined and studied in~\cite{nakstab2}, and $\sigma(X,\om)$ with the \emph{divisorial stability threshold} of $(X,\om)$, which is positive iff $(X,\om)$ is \emph{divisorially stable} (a strengthening of uniform K-stability, conjecturally equivalent to it, cf.~\S\ref{sec:coerNA}). Specializing to the case of Dirac measures $\mu$ recovers the notion of \emph{valuative stability}~\cite{DL,LiuYa}, which in the Fano case is equivalent to K-stability~\cite{Fujval,LiEquivariant}.

In the case of a compact K\"ahler manifold, we have $\sigma(X,\om)>0$ iff $[\om]$ contains a unique constant scalar curvature K\"ahler (cscK) metric, as a consequence of~\cite{CC} and~\cite{DaR,BDL20}. Theorem~C thus recovers the fact, originally due to LeBrun-Simanca~\cite{LS}, that the existence of a unique cscK metric in a K\"ahler class is an open condition on that class.

% 
%%%%%%%%%%%%%%%%%%%%%%%%%%%%%%%%%%%%%%%%%%%%%%%%%%%%%%%%%%%%%%%%%% 
%
\subsection*{Structure of the paper}
The article is organized as follows. 

\begin{itemize}
\item Section~\ref{sec:synthetic} introduces the synthetic pluripotential theoretic formalism, including the energy pairing and the submean value property, and establishes basic properties of the Dirichlet functional.

\item Section~\ref{sec:mes} studies the space of measures of finite energy. It introduces the orthogonality property, and proves Theorem~A (cf.~\S\ref{sec:Dirqm}). 

\item Assuming the submean value property, Section~\ref{sec:further} establishes the first part of Theorem~B, along with various further estimates for the energy. 

\item Section~\ref{sec:twisteddiff} is devoted to the twisted energy, which is proved to compute the directional derivatives of the energy, concluding the proof of Theorem~B. 

\item Section~\ref{sec:freeen} studies the (twisted) coercivity threshold of a functional, and proves that it depends continuously on the cohomology classes. This is then applied to the free energy, yielding Theorem~C. 

\item Finally, Appendix~A establishes the relevant estimates needed for the Dirichlet functional in a simple general setting, while Appendix~B studies the orthogonality property on compact K\"ahler spaces.  

\end{itemize}

%
%%%%%%%%%%%%%%%%%%%%%%%%%%%%%%%%%%%%%%%%%%%%%%%%%%%%%%%%%%%%%%%%%%
%
\begin{ackn}
The authors would like to thank Robert Berman, Ruadha\'i Dervan, Philippe Eyssidieux, Charles Favre, Vincent Guedj, L\'eonard Pille-Schneider, R\'emi Reboulet and Ahmed Zeriahi for helpful discussions related to the contents of this paper. 

It is a great pleasure to dedicate this article to Bo Berndtsson. His many fundamental contributions to complex analysis, pluripotential theory and convex analysis have been a constant source of inspiration for us. We would especially like to express our admiration for his fantastic results on positivity of direct images that have played a key role in our own works (among many others!), as well as for his laid-back approach to mathematics in general, which makes Bo such a pleasant person to interact with. 

The second author was partially supported by NSF grants DMS-1900025 and DMS-2154380.
\end{ackn}

%
%%%%%%%%%%%%%%%%%%%%%%%%%%%%%%%%%%%%%%%%%%%%%%%%%%%%%%%%%%%%%%%%%%%
%
\section{A synthetic pluripotential theoretic formalism}\label{sec:synthetic}
In this section, we introduce the general setup considered in this paper, which is designed to cover in a synthetic manner the case of a compact K\"ahler space and that of a projective Berkovich space over a non-Archimedean field. 
%
%
%%%%%%%%%%%%%%%%%%%%%%%%%%%%%%%%%%%%%%%%%%%%%%%%%%%%%%%%%%%%%%%%%%%
%
\subsection{Notation and terminology}\label{sec:notation}

\begin{itemize}
\item For $x,y\in\R$, $x\lesssim y$ or $x=O(y)$ mean in this paper $x\le C_n  y$ for a constant $C_n>0$ only depending on a given integer $n$ fixed in the setup, and $x\approx y$ if $x\lesssim y$ and $y\lesssim x$. 

\item Recall that any real vector space $V$ admits a \emph{finest vector space topology}, generated by its finite dimensional subspaces, \ie a subset $A\subset V$ is open (or closed) iff, for each finite dimensional subspace $W\subset V$, $A\cap W$ is open (resp.~closed) in the canonical vector space topology of $W$. This topology is not locally convex as soon as the dimension of $V$ is uncountable. 

\item Consider a partially ordered $\R$-vector space $(V,\ge)$. We shall say for brevity that the partial order is \emph{nice} if $V_+:=\{x\in V\mid x\ge 0\}$ spans $V$, and is closed in the finest vector space topology of $V$. 

\item In this paper, a \emph{quasi-metric} on a set $Z$ is a function $\d\colon Z\times Z\to\R_{\ge 0}$ that
\begin{itemize}
\item is \emph{quasi-symmetric}, \ie there exists $C>0$ such that 
$$
C^{-1}\d(x,y)\le \d(y,x)\le C \d(x,y); 
$$
\item satisfies the \emph{quasi-triangle inequality}, \ie there exists $C>0$ such that 
$$
\d(x,y)\le C(\d(x,z)+\d(z,y))
$$
(or, equivalently, $\d(x,y)\le C'\max\{\d(x,z),\d(z,y)\}$ for some other constant $C'>0$); 
\item \emph{separates points}, \ie $\d(x,y)=0\Leftrightarrow x=y$.
\end{itemize}
Note that $\d^\e$ is then also a quasi-metric for any $\e>0$. A quasi-metric space $(Z,\d)$ comes with a Hausdorff topology, and even a uniform structure. In particular, Cauchy sequences and completeness make sense for $(Z,\d)$. Such uniform structures have a countable basis of entourages, and are thus metrizable, by general theory. In fact, there exists a metric $d$ on $X$ and $A>0$ and constants $\e>0$ only depending on $C$ such that the quasi-metric $\d^\e$ satisfies $A^{-1}\d^\e\le d\le A\d^\e$ (see~\cite{Fri,AIN,PS}, thanks to Prakhar Gupta for hinting at those references). 
\end{itemize}

%
%%%%%%%%%%%%%%%%%%%%%%%%%%%%%%%%%%%%%%%%%%%%%%%%%%%%%%%%%%%%%%%%%%%
%
\subsection{Test functions and closed $(1,1)$-forms}\label{sec:setup}
Throughout this paper, we work with a compact Hausdorff topological space $X$. We denote by $\Cz(X)^\vee$ the space of signed Radon measures on $X$, and by 
$$
\cM\subset\Cz(X)^\vee
$$
the subset of probability measures, which is convex and compact in the weak topology. Recall from the introduction that we assume $X$ equipped with the following data: 
\begin{itemize}
\item a dense linear subspace $\cD\subset\Cz(X)$ of \emph{test functions}, containing all constants; 
\item a vector space $\cZ$ of \emph{closed $(1,1)$-forms on $X$}, endowed with a nice partial order, and a linear map $\ddc\colon\cD\to\cZ$ vanishing on constants; 
\item an integer $n\ge 1$ (viewed as the `dimension' of $X$), and a nonzero $n$-linear symmetric map taking a tuple $(\theta_1,\dots,\theta_n)$ in $\cZ$ to a signed Radon measure $\theta_1\winter\theta_n$ on $X$, assumed to be positive for all $\theta_i\ge 0$, and such that each bilinear form 
$$
\cD\times\cD\to\R\quad (\f,\p)\mapsto\int\f\,\ddc\p\wedge\theta_1\winter\theta_{n-1}
$$
with $\theta_i\in\cZ$ is symmetric, and seminegative when $\theta_i\ge 0$. 
\end{itemize}
Since $\cZ_+$ generates $\cZ$, symmetry in the last item amounts to the \emph{integration-by-parts} formula
\begin{equation}\label{equ:intpart}
\int\f\,\ddc\p\wedge\theta_1\winter\theta_{n-1}=\int\p\,\ddc\f\wedge\theta_1\winter\theta_{n-1}
\end{equation}
for all $\f,\p\in\cD$ and $\theta_i\in\cZ$, while seminegativity requires
\begin{equation}\label{equ:semineg}
\int\f\,\ddc\f\wedge\theta_1\winter\theta_{n-1}\le 0
\end{equation}
when $\theta_i\ge 0$ for all $i$. 

%\begin{rmk}\label{rmk:inducedsetup} The above setup induces a similar one by viewing $\theta_1\winter\theta_n$ as a $p$-linear symmetric function of $(\theta_1,\dots,\theta_p)$ for $1\le p\le n$ and $\theta_{p+1},\dots,\theta_n\in\cZ_+$ fixed, or by replacing $\cZ$ with any linear subspace $\cZ'$ containing $\ddc\cD$.
%\end{rmk}
%
%
%

\begin{defi} For any $\theta\in\cZ$ and $\f\in\cD$, we set $\theta_\f:=\theta+\ddc\f$. We say that $\f$ is \emph{$\theta$-plurisubharmonic} ($\theta$-psh for short) if $\theta_\f\ge 0$, and denote by 
$$
\cD_\theta:=\{\f\in\cD\mid\theta_\f\ge 0\}
$$
the space of $\theta$-psh test functions. 
\end{defi}
For all $\theta,\theta'\in\cZ$ and $t\in\R_{>0}$ we have
$$
\cD_\theta+\cD_{\theta'}\subset\cD_{\theta+\theta'},\quad\cD_{t\theta}=t\cD_\theta.
$$
In particular, $\cD_\theta$ is a convex subset of $\cD$, and 
\begin{equation}\label{equ:Dmon}
\theta\le\theta'\Longrightarrow\cD_\theta\subset\cD_{\theta'}.
\end{equation}
Since $\ddc$ vanishes on constants, we have: 
\begin{exam}\label{exam:cstpsh} Constant functions on $X$ are $\theta$-psh iff $\theta\ge 0$. 
\end{exam}

The main two instances of the above setup considered in this paper are as follows. 

\subsubsection{The K\"ahler case}\label{sec:Kahler} The above formalism is primarily inspired by the case of a compact K\"ahler complex analytic space $X$. Here $n=\dim X$, $\cD=\cC^\infty(X)$ is the space of smooth functions on $X$, and $\cZ$ is the space of closed $(1,1)$-forms on $X$ that are locally $\ddc$-exact, \ie global sections of the image $\cZ^{1,1}_X$ of the sheaf morphism $\ddc\colon\cC^\infty_X\to\Omega^{1,1}_X$ (see for instance~\cite{DemSMF} for the definition of smooth forms in this context). When $X$ is nonsingular, \ie a compact K\"ahler manifold, $\cZ$ coincides with the space of closed $(1,1)$-forms on $X$, but the inclusion might be strict in the singular case. 

For all $\f,\p\in\cD$ and $\theta_i\in\cZ$, the Stokes formula implies 
$$
\int\f\,\ddc\p\wedge\Theta=\int\p\,\ddc\f\wedge\Theta=-\int \mathrm{d}\f\wedge\mathrm{d^c}\p\wedge\Theta
$$
with $\Theta:=\theta_1\winter\theta_{n-1}$, which yields~\eqref{equ:intpart} and~\eqref{equ:semineg}, since the $(1,1)$-form $\mathrm{d}\f\wedge\mathrm{d^c}\f$ is semipositive. 

\begin{rmk}\label{rmk:npos} More generally, fix a K\"ahler form $\om_X$ on $X$ and assume $n\le \dim X$. Recall that a $(1,1)$-form $\a$ on $X$ is \emph{$n$-semipositive} (with respect to $\om_X$) if $\a^i\wedge\om_X^{\dim X-i}\ge 0$ for $i=0,\dots,n$ (see~\cite{Blo2,LN}). For any $n$-tuple of $n$-semipositive $(1,1)$-forms $\a_1,\dots,\a_n$, we have $\a_1\winter\a_n\wedge\om_X^{\dim X-n}\ge 0$. One thus obtains a setup satisfying the above conditions by defining $\cZ_+$ as the convex cone of closed $(1,1)$-forms that are $n$-semipositive, and by associating to a tuple $(\theta_1,\dots,\theta_n)$ in $\cZ$ the measure $\theta_1\winter\theta_n\wedge\om_X^{\dim X-n}$.
\end{rmk}

\subsubsection{The non-Archimedean case}\label{sec:NA} Assume now that $X$ is a projective Berkovich space over a non-Archimedean field $k$, \ie the Berkovich analytification of a projective $k$-scheme, of dimension $n=\dim X$. We then take $\cD$ to be the $\R$-vector space generated by \emph{PL functions}, see~\cite[\S 5.4]{BE}. 

When $k$ is nontrivially valued, $\cD$ can be described in terms of \emph{vertical divisors} on (projective, flat) models $\cX$ of $X$ over the spectrum $S$ of the valuation ring. More precisely, we have
$$
\cD\simeq\varinjlim_\cX\VCar(\cX)_\R,
$$
where $\VCar(\cX)_\R$ denotes the $\R$-vector space generated by Cartier divisors on $\cX$ that are vertical, \ie supported on the special fiber. The same description applies in the trivially valued case as well, if a model is now understood as a \emph{test configuration} $\cX\to S:=\A^1$ (see~\cite[\S 6.1]{BHJ1}, \cite[\S 2.2]{trivval}). 

In the nontrivially valued case, the space $\cZ$ is defined by setting
\begin{equation}\label{equ:ZNA}
\cZ:=\varinjlim_\cX\Num(\cX/S),
\end{equation}
where $\Num(\cX/S)$ denotes the (finite dimensional) vector space of relative numerical classes (see~\cite[\S 4.2]{siminag}, \cite[\S 4]{GM}, the definition being inspired by~\cite{BGS}). A form $\theta\in\cZ$ is thus represented by a numerical class $\theta_\cX\in\Num(\cX/S)$ for some model $\cX$, called a \emph{determination} of $\theta$, two such classes being identified if they coincide after pulling back to some higher model, and we write $\theta\ge 0$ if $\theta_\cX$ is (relatively) nef for some (hence any) determination $\cX$ of $\theta$. 

The measure $\theta_1\winter\theta_n$ associated to a tuple of forms $\theta_i\in\cZ$ is a finite linear combination of Dirac masses at divisorial points, whose coefficients can be described in terms of intersection numbers computed on models. 

The linear map $\ddc\colon\cD\to\cZ$ takes a vertical divisor $D\in\VCar(\cX)_\R$ to its numerical class in $\Num(\cX/S)$, and the seminegativity condition~\eqref{equ:semineg} follows the local Hodge index theorem of Yuan--Zhang~\cite[Theorem~2.1]{YZ}. 

Again, the same discussion applies to the trivially valued case as well, using test configurations instead of models. In that case, pulling back numerical classes on $X$ to the product test configuration further yields an injection
\begin{equation}\label{equ:numtriv}
\Num(X)\hto\cZ.
\end{equation}
Note that only forms lying in $\Num(X)$ were considered in~\cite{trivval}. 

\begin{rmk} In the non-Archimedean case, one could also work with smooth functions and $(1,1)$-forms in the sense of~\cite{CLD} (see also~\cite{GK15,GK17,GJR}), but this will not be considered in the present paper. 
\end{rmk}

\begin{rmk} More generally, test configurations for an arbitrary compact K\"ahler manifold (as in~\cite{SD,DeR}) can also be approached using the above formalism. This is the topic of forthcoming work of Pietro Piccione. 
\end{rmk}

\begin{rmk} Another setting where the above formalism applies is that of tropical toric pluripotential theory as in~\cite{BGJK}.
\end{rmk}

%
%%%%%%%%%%%%%%%%%%%%%%%%%%%%%%%%%%%%%%%%%%%%%%%%%%%%%%%%%%%%%%%%%%%
%
\subsection{Bott--Chern cohomology and positive classes}\label{sec:pos}

\begin{defi} We define the \emph{Bott--Chern cohomology space} as 
$$
\HBC(X):=\cZ/\ddc\cD,  
$$
and denote by $\theta\mapsto[\theta]$ the quotient map $\cZ\to\HBC(X)$. The \emph{positive cone} 
$$
\Pos(X)\subset\HBC(X)
$$
is defined as the interior of the image of $\cZ_+$. 
\end{defi}
Here the interior is taken with respect to the finest vector space topology (see~\S\ref{sec:notation}). Concretely, a class $\a\in\HBC(X)$ belongs to $\Pos(X)$ if, for any $\b\in\HBC(X)$, $\a+t\b$ lies in the image of $\cZ_+$ for all $t\in\R$ small enough.  

Since it is assumed that $\cZ_+$ spans $\cZ$, its image in $\HBC(X)$ is a convex cone that generates $\HBC(X)$. As a consequence, the positive cone $\Pos(X)$ is non-empty as soon as $\HBC(X)$ is finite dimensional. 

\begin{rmk} The image $\cZ_+$ in $\HBC(X)$ is not closed in general. This happens already in the complex projective case, see~\cite[Example~1.7]{DPS}.
\end{rmk}

Since $\ddc$ vanishes on $\R\subset\cD$, \eqref{equ:intpart} yields 
$$
\int\ddc\f\wedge\theta_1\winter\theta_{n-1}=\int\f\,\ddc 1\wedge\theta_1\winter\theta_{n-1}=0
$$
for all $\f\in\cD$ and $\theta_i\in\cZ$. As a result, $(\theta_1,\dots,\theta_n)\mapsto\int\theta_1\winter\theta_n$ descends to a symmetric $n$-linear pairing 
$$
\HBC(X)^n\to\R\quad (\a_1,\dots,\a_n)\mapsto\a_1\inter\a_n,
$$
which we call the \emph{intersection pairing}. 

\begin{lem}\label{lem:posint} For all classes $\a_1,\dots,\a_n\in\Pos(X)$ we have $\a_1\inter\a_n>0$.
\end{lem}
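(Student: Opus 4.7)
The plan is as follows. Since each $\a_i\in\Pos(X)$ lies in the image of $\cZ_+$, we can pick representatives $\theta_i\in\cZ_+$; then $\theta_1\winter\theta_n$ is a non-negative measure on $X$, giving $\a_1\inter\a_n\ge 0$ without further effort. The entire content of the lemma is thus to exclude equality.

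My main tool will be the following vanishing lemma: \emph{if $\a\in\Pos(X)$ and $\b_2,\dots,\b_n\in\HBC(X)$ lie in the image of $\cZ_+$, and $\a\cdot\b_2\cdot\ldots\cdot\b_n=0$, then $\gamma\cdot\b_2\cdot\ldots\cdot\b_n=0$ for every $\gamma\in\HBC(X)$.} To prove it, I use the explicit description of $\Pos(X)$ recalled just before the lemma statement: for any $\gamma\in\HBC(X)$ there exists $\e>0$ such that $\a+t\gamma$ lies in the image of $\cZ_+$ for all $|t|<\e$. The already established inequality then yields $(\a+t\gamma)\cdot\b_2\cdot\ldots\cdot\b_n\ge 0$ for such $t$; by multilinearity this reads $t\,(\gamma\cdot\b_2\cdot\ldots\cdot\b_n)\ge 0$, and demanding both signs of $t$ forces the coefficient to vanish.

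With the vanishing lemma in hand, I proceed by iterating. Assume for contradiction that $\a_1\inter\a_n=0$ with all $\a_i\in\Pos(X)$. Applying the lemma with $\a=\a_1$ and $(\b_2,\dots,\b_n)=(\a_2,\dots,\a_n)$ gives $\gamma_1\cdot\a_2\cdot\ldots\cdot\a_n=0$ for every $\gamma_1\in\HBC(X)$. Specializing to $\gamma_1$ in the image of $\cZ_+$ and re-applying the lemma with $\a=\a_2$ and the list $(\gamma_1,\a_3,\dots,\a_n)$ in the role of the $\b_j$'s yields $\gamma_2\cdot\gamma_1\cdot\a_3\cdot\ldots\cdot\a_n=0$ for every $\gamma_2\in\HBC(X)$; since $\cZ_+$ spans $\cZ$ (part of the niceness assumption), every class is a difference of two classes in the image of $\cZ_+$, so multilinearity removes the positivity restriction on $\gamma_1$. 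Iterating this peeling-off procedure $n$ times produces $\gamma_1\cdot\gamma_2\cdot\ldots\cdot\gamma_n=0$ for all $(\gamma_1,\dots,\gamma_n)\in\HBC(X)^n$, i.e.~the $n$-linear intersection pairing vanishes identically on $\HBC(X)$.

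This contradicts the fact that the intersection pairing on $\cZ$ is nonzero by assumption and factors through the quotient $\HBC(X)=\cZ/\ddc\cD$ (by the integration-by-parts identity used immediately before the lemma), so it is also nonzero as a form on $\HBC(X)^n$. The only delicate bookkeeping is in the iterative removal of positivity restrictions on the successive test classes $\gamma_i$, which works precisely because $\cZ_+$ generates $\cZ$ as a vector space; I do not expect any hard analytic obstacle.
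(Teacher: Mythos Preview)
Your argument is correct but takes a different route from the paper's proof. The paper argues directly: using that $\cZ_+$ spans $\cZ$ and that the measure-valued map is nonzero, one first finds $\theta_1,\dots,\theta_n\in\cZ_+$ with $[\theta_1]\inter[\theta_n]=\int\theta_1\winter\theta_n>0$; then, since each $\a_i$ is interior to the image of $\cZ_+$, one picks $\e>0$ with $\a_i-\e[\theta_i]$ still in that image, expands $\a_1\inter\a_n$ multilinearly, and bounds it below by $\e^n[\theta_1]\inter[\theta_n]>0$. This is shorter and yields an explicit positive lower bound. Your approach instead proceeds by contradiction via an iterated ``vanishing lemma'', peeling off one positive factor at a time to conclude that the pairing on $\HBC(X)$ vanishes identically. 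Both arguments rest on the same two ingredients---nonnegativity on the image of $\cZ_+$ and the openness of $\Pos(X)$---but deploy them differently.

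One small point to tighten: in your last paragraph you appeal to ``the intersection pairing on $\cZ$ is nonzero by assumption''. What is assumed nonzero is the \emph{measure}-valued map $(\theta_i)\mapsto\theta_1\winter\theta_n$, not the scalar map $(\theta_i)\mapsto\int\theta_1\winter\theta_n$ that descends to $\HBC(X)$. The passage between them is exactly the opening step of the paper's proof: since $\cZ_+$ spans $\cZ$, some tuple in $\cZ_+$ gives a nonzero---hence strictly positive---measure, whose total mass is therefore positive. With that one-line addition your contradiction is complete.
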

\begin{proof} By assumption, the measure $\theta_1\winter\theta_n$ is nonzero for some tuple $\theta_1,\dots,\theta_n\in\cZ$. Since $\cZ_+$ generates $\cZ$, we can further assume $\theta_i\in\cZ_+$ for all $i$. Then $[\theta_1]\inter [\theta_n]=\int\theta_1\winter\theta_n>0$. Now we can find $0<\e\ll 1$ such that $\a_i-\e [\theta_i]\in\Pos(X)$ for all $i$, and hence $\a_1\inter\a_n\ge\e^n[\theta_1]\inter[\theta_n]>0$. 
\end{proof}

\begin{defi} For each $\om\in\cZ_+$ and $\theta\in\cZ$ we set
$$
\|\theta\|_\om:=\inf\{C\ge 0\mid \pm\theta\le C\om\}\in [0,+\infty],
$$
and we say that $\theta$ is \emph{$\om$-bounded} if $\|\theta\|_\om<\infty$. 
\end{defi} 
The set
$$
\cZ_\om:=\{\theta\in\cZ\mid\|\theta\|_\om<\infty\}
$$ 
of $\om$-bounded forms is a linear subspace of $\cZ$, on which $\|\cdot\|_\om$ defines a norm. 

\begin{rmk} In general, $\cZ_\om$ is a strict subspace of $\cZ$. More precisely, $\cZ_\om=\cZ$ iff $\om$ lies in the interior of $\cZ_+$ in the finest vector space topology of $\cZ$, which is empty in the non-Archimedean case (see \S\ref{sec:BCNA} below). 
\end{rmk}

We can now characterize positive classes as follows: 

\begin{prop}\label{prop:normfin} A class $\a\in\HBC(X)$ lies in $\Pos(X)$ iff, for each finite dimensional subspace $V\subset\cZ$, $\a$ admits a representative $\om\in\cZ_+$ such that $V\subset\cZ_\om$.
\end{prop}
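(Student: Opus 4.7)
The plan is to prove the two directions separately, using the concrete description (stated right after the definition of $\Pos(X)$) that $\a\in\Pos(X)$ iff for every $\b\in\HBC(X)$ the class $\a+t\b$ lies in the image of $\cZ_+$ for all sufficiently small $t\in\R$, together with the fact (recalled in the paragraph just above the statement) that $\cZ_\om$ is a linear subspace of $\cZ$ on which $\|\cdot\|_\om$ is a norm. For the ``if'' direction, given $\b\in\HBC(X)$, I would pick $\theta\in\cZ$ with $[\theta]=\b$ and apply the hypothesis to the line $V:=\R\theta$ to obtain $\om\in\cZ_+$ with $[\om]=\a$ and $C:=\|\theta\|_\om<\infty$; then for $|t|<1/C$ one has $\om+t\theta\ge(1-|t|C)\om\ge 0$ in $\cZ$, so $\a+t\b=[\om+t\theta]$ lies in the image of $\cZ_+$, and $\a\in\Pos(X)$ follows.

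The ``only if'' direction rests on an intermediate fact: for every $\g\in\cZ_+$, there exist $t>0$ and $\om\in\cZ_+$ with $[\om]=\a$ and $\om\ge t\g$. This is where positivity of $\a$ really enters: since $-[\g]\in\HBC(X)$ and $\a\in\Pos(X)$, one can find small $t>0$ with $\a-t[\g]$ in the image of $\cZ_+$; picking $\eta\in\cZ_+$ with $[\eta]=\a-t[\g]$ and setting $\om:=\eta+t\g$ gives a sum of elements of $\cZ_+$, which thus lies in $\cZ_+$, has class $\a$, and satisfies $\om-t\g=\eta\ge 0$. Granting this, fix a basis $\theta_1,\dots,\theta_s$ of $V$ and use that $\cZ_+$ spans $\cZ$ to decompose $\theta_i=\theta_i^+-\theta_i^-$ with $\theta_i^\pm\in\cZ_+$. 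Applying the intermediate fact to each $\theta_i^\pm$ yields $\om_i^\pm\in\cZ_+$ with $[\om_i^\pm]=\a$ and $t_i^\pm>0$ such that $\om_i^\pm\ge t_i^\pm\theta_i^\pm$; the average $\om:=(2s)^{-1}\sum_{i,\pm}\om_i^\pm$ then lies in $\cZ_+$, represents $\a$, and satisfies $\om\ge(2s)^{-1}\om_i^\pm\ge (t_i^\pm/(2s))\,\theta_i^\pm$ for every $(i,\pm)$. Combined with $-\theta_i^\pm\le 0\le\om$, this shows each $\theta_i^\pm$ has finite $\om$-norm, and since $\cZ_\om$ is a linear subspace, it contains every $\theta_i=\theta_i^+-\theta_i^-$ and thus all of $V$.

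The only real subtlety is the interplay between the pointwise (order-theoretic) positivity in $\cZ$ and the cohomological positivity in $\HBC(X)$: one cannot simply pick an arbitrary representative of $\a$ and hope that it dominates a given form, because altering a form by $\ddc f$ is not an order-increasing operation. The trick that bypasses this is precisely the passage to $\a-t[\g]$, which converts the cohomological openness of $\Pos(X)$ at $\a$ into a concrete pointwise domination once $t\g$ is added back in; the averaging step then packages finitely many such dominations into a single representative.
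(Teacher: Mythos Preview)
Your proof is correct and follows essentially the same approach as the paper's: both directions match, with your ``intermediate fact'' being exactly the step the paper carries out (finding $\om_i^\pm\in\a$ with $\om_i^\pm\ge\e\theta_i^\pm$ via $\a-\e[\theta_i^\pm]$ lying in the image of $\cZ_+$), followed by the same averaging trick. The only cosmetic difference is that the paper uses a single $\e$ for all $i,\pm$ while you allow separate $t_i^\pm$, and the paper dismisses the ``if'' direction as clear whereas you spell it out.
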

\begin{proof} Assume $\a\in\Pos(X)$, and pick a basis $(\theta_i)_{1\le i\le r}$ of a finite dimensional vector space $V\subset\cZ$. Since $\cZ_+$ spans $\cZ$, for each $i$ we can write $\theta_i=\theta_i^+-\theta_i^-$ with $\theta_i^\pm\ge 0$. Since $\a\in\Pos(X)$ we have $\a-\e[\theta_i^\pm]\in\Pos(X)$ for all $0<\e\ll 1$, and we can thus find $\e>0$ and $\om_i^\pm\in\a$ such that $\om_i^\pm-\e\theta_i^\pm\ge 0$ for all $i$. Now set 
$$
\om:=\tfrac 1{2r}\sum_{i=1}^r(\om_i^++\om_i^-)\in\cZ_+.
$$
Then $[\om]=\a$, and for each $i$ we have $\om\ge\tfrac \e{2r}\theta_i^\pm\ge\pm\tfrac\e{2r}\theta_i$, and hence $\theta_i\in\cZ_\om$, \ie $V\subset\cZ_\om$. This proves the `only if' part, and the converse is clear. 
\end{proof}

\begin{cor}\label{cor:pshspan} Pick $\theta\in\cZ$ such that $[\theta]\in\Pos(X)$. Then any $f\in\cD$ can be written as $f=f^+-f^-$ with $f^\pm\in\cD_{t\theta}=t\cD_\theta$ for some $t>0$. In particular, $\cD_\theta$ spans $\cD$. 
\end{cor}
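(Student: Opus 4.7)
The plan is to apply Proposition~\ref{prop:normfin} to the (at most one-dimensional) subspace $V:=\R\cdot\ddc f\subset\cZ$. Since $[\theta]\in\Pos(X)$, this furnishes a semipositive representative $\om\in\cZ_+$ with $[\om]=[\theta]$ and $V\subset\cZ_\om$; in particular $\|\ddc f\|_\om<\infty$, so one can pick $C>0$ with $-C\om\le\ddc f\le C\om$.

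Next, since $[\om]=[\theta]$, I would choose $g\in\cD$ with $\om=\theta+\ddc g$, and set $t:=C$, $f^+:=Cg+f$, $f^-:=Cg$. A direct computation yields
\begin{align*}
t\theta+\ddc f^+ &= C\theta+C\,\ddc g+\ddc f = C\om+\ddc f\ge 0,\\
t\theta+\ddc f^- &= C\theta+C\,\ddc g = C\om\ge 0,
\end{align*}
so $f^\pm\in\cD_{t\theta}$, while $f^+-f^-=f$ by construction. The ``in particular'' assertion then follows from the identity $\cD_{t\theta}=t\cD_\theta$ (immediate from the definitions for $t>0$), which lets us rewrite $f=t(f^+/t)-t(f^-/t)$ as an $\R$-linear combination of elements of $\cD_\theta$.

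I do not foresee any real obstacle here: the corollary is essentially a one-line consequence of Proposition~\ref{prop:normfin}. The one conceptual point worth emphasizing is that it is essential to have $[\theta]$ lie in the \emph{open} cone $\Pos(X)$, rather than merely admitting some semipositive representative, since this openness is precisely what allows $\ddc f$ to be dominated in both directions by a positive multiple of some $\om\in[\theta]$.
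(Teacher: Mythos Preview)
Your proof is correct and follows essentially the same approach as the paper: both apply Proposition~\ref{prop:normfin} to obtain a semipositive representative $\om=\theta+\ddc g\in\cZ_+$ of $[\theta]$ with $\pm\ddc f\le C\om$, and then set $f^+=f+Cg$, $f^-=Cg$ with $t=C$. The paper's write-up is terser (and in fact contains a few typographical slips mixing $C$ and $t$), but the argument is identical.
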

\begin{proof} By Proposition~\ref{prop:normfin}, we can find $\p\in\cD_\theta$ and $C>0$ such that $-\ddc f\le t(\theta+\ddc\p)$. Thus $f^+:=\f+t\p$ lies in $\cD_{t\theta}$, and the result follows with $f^-:=t\p\in\cD_{C\theta}$. 
\end{proof}

Following~\cite{Tho}, we define the \emph{Thompson distance} between $\om,\om'\in\cZ_+$ as
\begin{equation}\label{equ:dT}
\dT(\om,\om'):=\inf\{\d\ge 0\mid e^{-\d}\om\le\om'\le e^\d\om\}\in [0,+\infty]. 
\end{equation}
We say that $\om$ and $\om'$ are \emph{commensurable} if $\dT(\om,\om')<\infty$. Note that this holds iff $\om'\in\cZ_\om$ and $\om\in\cZ_{\om'}$. Commensurability is an equivalence relation on $\cZ_+$. The linear subspace $\cZ_\om$ only depends on the commensurability class of $\om\in\cZ_+$, and so does the equivalence class of the norm $\|\cdot\|_\om$. 

The next result is readily checked, and left to the reader. 

\begin{lem}\label{lem:commdT} The commensurability class of any $\om\in\cZ_+$ forms an open convex cone in the normed vector space $(\cZ_\om,\|\cdot\|_\om)$, whose topology is defined by the restriction of the Thompson metric. 
\end{lem}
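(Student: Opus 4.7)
The plan is to verify the three claims of the lemma---that the commensurability class $C_\om$ of $\om$ is a convex cone in $\cZ_\om$, is open in the norm $\|\cdot\|_\om$, and carries the same topology from the restriction of $\dT$ as from the restriction of $\|\cdot\|_\om$---by directly manipulating the defining inequalities $e^{-\delta}\om\le\om'\le e^{\delta}\om$. No deeper input seems needed beyond the definitions of $\|\cdot\|_\om$ and $\dT$ given in the text.

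For the cone and convexity properties, I would simply observe that scaling by $t>0$ rescales the bounding constants by $t$ (absorbing an $|\log t|$ into $\delta$), while adding two sets of inequalities $e^{-\delta_i}\om\le\om_i\le e^{\delta_i}\om$ sandwiches $\om_1+\om_2$ between $(e^{-\delta_1}+e^{-\delta_2})\om$ and $(e^{\delta_1}+e^{\delta_2})\om$, each a positive multiple of $\om$. For openness, fix $\om'\in C_\om$ with $c\om\le\om'\le C\om$ for some $0<c\le C$. Any $\om''\in\cZ_\om$ with $\|\om''-\om'\|_\om<r$ satisfies $-r\om\le\om''-\om'\le r\om$, hence $(c-r)\om\le\om''\le (C+r)\om$. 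Taking any $0<r<c$ guarantees both $\om''\in\cZ_+$ and the two-sided bound witnessing commensurability, so the $r$-ball around $\om'$ in $(\cZ_\om,\|\cdot\|_\om)$ lies inside $C_\om$.

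The comparison of topologies is where I would be most careful. Fix $\om_1\in C_\om$ with $c_1\om\le\om_1\le C_1\om$. On one hand, if $\|\om_1-\om_2\|_\om=\e<c_1$, then $-\e\om\le\om_2-\om_1\le\e\om$, and since $\e\om\le(\e/c_1)\om_1$ (using $\om\le(1/c_1)\om_1$), we get $(1-\e/c_1)\om_1\le\om_2\le(1+\e/c_1)\om_1$; hence $\dT(\om_1,\om_2)\le-\log(1-\e/c_1)$, which tends to $0$ as $\e\to 0$. Conversely, from $\dT(\om_1,\om_2)\le\delta$ one has $(e^{-\delta}-1)\om_1\le\om_2-\om_1\le(e^\delta-1)\om_1$; bounding $\om_1$ above by $C_1\om$ (and taking care to reverse the inequality on the lower side, since $e^{-\delta}-1\le 0$) yields $\|\om_2-\om_1\|_\om\le C_1(e^\delta-1)\to 0$ as $\delta\to 0$. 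The only thing to watch for is precisely this sign reversal in the last step; otherwise the argument is a routine piece of interval arithmetic relating additive and multiplicative comparisons.
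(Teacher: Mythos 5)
Your proof is correct and complete; since the paper explicitly leaves this verification to the reader, your direct manipulation of the defining inequalities (including the careful sign reversal when multiplying $\om_1\le C_1\om$ by the nonpositive factor $e^{-\delta}-1$) is exactly the intended argument.
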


We conclude this section with the following useful fact, that will be put to use in~\S\ref{sec:further}. 

\begin{prop}\label{prop:comm} Each finite subset of $\Pos(X)$ can be represented by commensurable forms in $\cZ_+$. 
\end{prop}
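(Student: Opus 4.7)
The plan is to construct representatives $\om_i\in\cZ_+$ of the given classes $\a_i$ that are all commensurable to a single reference form $\om\in\cZ_+$; since commensurability is an equivalence relation on $\cZ_+$, this yields pairwise commensurability of the $\om_i$. Concretely, I look for representatives of the form $\om_i:=\tau_i+\e\om$, where $\tau_i\in\cZ_+$ represents a slight perturbation $\a_i-\e\a_0$ of $\a_i$ and $\om\in\cZ_+$ represents the total class $\a_0:=\a_1+\dots+\a_N$.

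First, I observe that $\Pos(X)$ is an open convex cone in $\HBC(X)$ (as the interior of the convex cone image of $\cZ_+$), so $\a_0\in\Pos(X)$ as well. Using the openness of $\Pos(X)$ and the continuity of the affine map $\e\mapsto\a_i-\e\a_0$, I can pick $\e>0$ small enough that $\a_i-\e\a_0\in\Pos(X)$ for all $i$, and then select representatives $\tau_i\in\cZ_+$ of these classes. The crucial step is to apply Proposition~\ref{prop:normfin} to $\a_0\in\Pos(X)$ with the finite-dimensional subspace $V:=\mathrm{span}(\tau_1,\dots,\tau_N)\subset\cZ$, producing a representative $\om\in\cZ_+$ of $\a_0$ such that each $\tau_i$ lies in $\cZ_\om$, \ie $\pm\tau_i\le C\om$ for some constant $C>0$. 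Setting $\om_i:=\tau_i+\e\om$, which lies in $\cZ_+$ and represents $\a_i$, I obtain the sandwich $\e\om\le\om_i\le(C+\e)\om$. This shows $\om_i\in\cZ_\om$ and $\om\in\cZ_{\om_i}$ for every $i$, so each $\om_i$ is commensurable to $\om$, and thus the $\om_i$ are pairwise commensurable.

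The main obstacle to navigate is the asymmetry inherent in Proposition~\ref{prop:normfin}: it only produces a representative \emph{dominating} the prescribed subspace elements, not one dominated by them, so a naive application cannot directly yield commensurability. My workaround is to decouple the two directions: the upper bound $\om_i\le(C+\e)\om$ follows from the proposition applied to the $\tau_i$, while the lower bound $\om_i\ge\e\om$ is installed by hand through the shift $\e\om$, with the positivity of $\e$ guaranteed precisely by the openness of $\Pos(X)$ (which is why one must perturb from $\a_i$ to $\a_i-\e\a_0$ before invoking Proposition~\ref{prop:normfin}).
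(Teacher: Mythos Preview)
Your proof is correct and takes a genuinely different route from the paper's argument. The paper proceeds in two steps: first it shows that any \emph{two} classes in $\Pos(X)$ admit commensurable representatives by a connectedness argument (the relation ``admit commensurable representatives'' is an equivalence relation, and each equivalence class meets the segment $[\a,\a']$ in an open set, using Proposition~\ref{prop:normfin} and Lemma~\ref{lem:commdT}); second, given $\a_1,\dots,\a_r$, it applies the pairwise result to each pair $(\a_1,\a_i)$, obtaining commensurable pairs $(\om_{1,i},\om_i)$, and then averages the $\om_{1,i}$ to get a single representative $\om_1$ of $\a_1$ commensurable to all the $\om_i$ (using that commensurability classes are convex cones).

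Your approach is more direct and fully constructive: by perturbing each $\a_i$ to $\a_i-\e\a_0$ (which stays in $\Pos(X)$ by openness) and applying Proposition~\ref{prop:normfin} once to the sum class $\a_0$, you produce a single reference form $\om$ together with representatives $\om_i=\tau_i+\e\om$ satisfying the explicit sandwich $\e\om\le\om_i\le(C+\e)\om$. This avoids both the equivalence-relation/connectedness step and the averaging trick, at the cost of being slightly less conceptual. Either approach ultimately rests on Proposition~\ref{prop:normfin}; yours exploits it more efficiently.
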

\begin{proof} We first claim that any two $\a,\a'\in\Pos(X)$ admit commensurable representatives. Indeed, setting $\a\sim\a'$ when $\a$ and $\a'$ admit commensurable representatives defines an equivalence relation on $\Pos(X)$, and it is thus enough that each equivalence class intersects the line segment $[\a,\a']$ in an open set. Pick $\b\in [\a,\a']$, and choose representatives $\theta,\theta'\in\cZ_+$ of $\a,\a'$. By Proposition~\ref{prop:normfin}, we can find a representative $\om\in\cZ_+$ of $\b$ such that $\theta,\theta'\in\cZ_\om$. By Lemma~\ref{lem:commdT}, the commensurability class of $\om$ intersects $[\theta,\om]$ and $[\om,\theta']$ along neighborhood of $\om$, and it follows, as desired, that the equivalence class of $\b$ intersects $[\a,\a']$ along a neighborhood of $\b$. 

Now pick $\a_1,\dots,\a_r\in\Pos(X)$. By the first part of the proof, for each $i=2,\dots,r$ we can find two commensurable forms $\om_{1,i},\om_i\in\cZ_+$ such that $[\om_{1,i}]=\a_1$ and $[\om_i]=\a_i$. Then $\om_1:=\tfrac{1}{r-1}\sum_{i=2}^r\om_{1,i}\in\cZ_+$ represents $\a_1$, and is commensurable to $\om_i$ for all $i$ since commensurability classes are convex cones.
\end{proof}

 \subsubsection{The K\"ahler case}\label{exam:BCKahler} As in~\S\ref{sec:Kahler}, assume that $X$ is a compact K\"ahler space. When $X$ is nonsingular, $\HBC(X)$ coincides with the $(1,1)$-part of the Hodge decomposition of $\mathrm{H}^2(X,\R)$. In general, denote by $\PH_X$ the sheaf of germs of \emph{pluriharmonic functions}, \ie the kernel of the sheaf morphism $\ddc\colon\cC^\infty_X\to\Om^{1,1}_X$. The existence of partitions of unity implies that the sheaf $\cC^\infty_X$ is soft, and the cohomology long exact sequence associated to the short exact sequence of sheaves
$$
0\to\PH_X\to\cC^\infty_X\to\cZ^{1,1}_X\to 0
$$
thus yields 
$$
\HBC(X)\simeq\mathrm{H}^1(X,\PH_X).
$$
We do not know whether this is always finite dimensional, but this holds at least when $X$ is normal, as a consequence of the fact that $\PH_X$ then coincides with the sheaf $\Re\cO_X$ of real parts of holomorphic functions, see~\cite[\S 4.6.1]{BG}. 

The positive cone $\Pos(X)$ is compatible with the usual definition, \ie $\a\in\HBC(X)$ lies in $\Pos(X)$ iff $\a$ can be represented by a K\"ahler form. Indeed, given any K\"ahler form $\om$, a class $\a\in\Pos(X)$ can be represented by a form $\theta\in\cZ$ such that $\theta-\e\om\in\cZ_+$ for some $0<\e<\ll 1$, so that $\theta$ is a K\"ahler form. In particular, K\"ahler forms constitute a single commensurability class that maps onto $\Pos(X)$. 

\subsubsection{The non-Archimedean case}\label{sec:BCNA}  Assume, as \S\ref{sec:NA}, that $X$ is a projective Berkovich space over a non-Archimedean field. By definition, any $\theta\in\cZ$ is represented by a numerical class on some model $\cX$ of $X$, whose restriction to the generic fiber defines a numerical class $\{\theta\}\in\Num(X)$ on (the projective variety underlying) $X$. This induces a surjective map 
\begin{equation}\label{equ:HBCNA}
\HBC(X)\twoheadrightarrow\Num(X),
\end{equation}
which is an isomorphism when $X$ is smooth and $k$ is discretely valued of residue characteristic $0$~\cite[Theorem~4.3]{siminag}, or when $X$ is normal and $k$ is algebraically closed~\cite[Theorem~4.2.7]{Jell}). It is also an isomorphism when $k$ is trivially valued, with inverse provided by pulling back classes in $\Num(X)$ to the trivial test configuration $\cX_\triv=X\times\A^1$. 

We claim that $\Pos(X)$ coincides with the preimage of the ample cone of $X$ under~\eqref{equ:HBCNA}, and that $\cZ_+$ has empty interior in the finest vector space topology of $\cZ$ (so that K\"ahler forms do not admit an analogue in the non-Archimedean case). 

To see this, we say as  in~\cite[\S 5.1]{siminag} that a form $\om\in\cZ_+$ is \emph{$\cX$-positive} for a given model/test configuration $\cX$ if it is represented by a (relatively) ample class in $\Num(\cX/S)$. Note that all $\cX$-positive forms are commensurable. As observed in~\cite[Proposition~5.2]{siminag} (see also~\cite[Proposition~4.14]{GM}, \cite[Lemma~3.11]{trivval}), any $\a\in\HBC(X)$ whose image in $\Num(X)$ under~\eqref{equ:HBCNA} is ample can be represented by an $\cX$-positive form $\om$ for any sufficiently high model $\cX$. Now $\cZ_\om$ contains all forms determined on $\cX$, but does not contain any $\cX'$-positive form for a model $\cX'$ strictly dominating $\cX$. The claim easily follows.

%
%
%%%%%%%%%%%%%%%%%%%%%%%%%%%%%%%%%%%%%%%%%%%%%%%%%%%%%%%%%%%%%%%%%%%
%
\subsection{The energy pairing}\label{sec:enpair}
Mimicking the properties of induced metrics on Deligne pairings (see for instance~\cite[Theorem~8.16]{BE}), and generalizing~\cite[\S 2.2]{SD} (for a compact K\"ahler manifold) and~\cite[\S 3.2]{trivval} (for a projective Berkovich space over a trivially valued field), we introduce: 

\begin{prop-def}\label{propdef:en} The \emph{energy pairing} 
$$
(\cZ\times\cD)^{n+1}\to\R\quad((\theta_0,\f_0),\dots,(\theta_n,\f_n))\mapsto(\theta_0,\f_0)\inter(\theta_n,\f_n),
$$
is defined as the unique $(n+1)$-linear symmetric map such that 
\begin{equation}\label{equ:enint}
(0,\f_0)\cdot(\theta_1,\f_1)\inter(\theta_n,\f_n)=\int\f_0\,(\theta_1+\ddc\f_1)\winter(\theta_n+\ddc\f_n)
\end{equation} 
and
\begin{equation}\label{equ:ennorm}
(\theta_0,0)\inter(\theta_n,0)=0. 
\end{equation}
It further satisfies 
\begin{equation}\label{equ:encst}
(\theta_0,\f_0+c_0)\inter(\theta_n,\f_n+c_n)=(\theta_0,\f_0)\inter(\theta_n,\f_n)+\sum_{i=0}^n c_i[\theta_0]\inter\widehat{[\theta_i]}\inter[\theta_n]
\end{equation}
for all $c_i\in\R$, and 
\begin{equation}\label{equ:pairingtrans} 
\left(\theta_0+\ddc\tau_0,\f_0\right)\inter\left(\theta_n+\ddc\tau_n,\f_n\right)=(\theta_0,\tau_0+\f_0)\inter(\theta_n,\tau_n+\f_n)-(\theta_0,\tau_0)\inter(\theta_n,\tau_n). 
\end{equation}
for all $\tau_i\in\cD$. 
\end{prop-def}

\begin{proof} Using multilinearity, symmetry and~\eqref{equ:enint}, \eqref{equ:ennorm}, we necessarily have 
\begin{equation}\label{equ:enpairing}
(\theta_0,\f_0)\inter(\theta_n,\f_n)=\sum_{i=0}^n\int\f_i\,\theta_0\winter\theta_{i-1}\wedge(\theta_{i+1}+\ddc\f_{i+1})\winter(\theta_n+\ddc\f_n).
\end{equation}
This proves uniqueness. To show existence, the only nontrivial part is to check that the right-hand side of~\eqref{equ:enpairing} is a symmetric function of the $(\theta_i,\f_i)$. It suffices to see invariance under transpositions, which is an easy consequence of the integration-by-parts formula~\eqref{equ:intpart} (compare~\cite[Proposition~2.3]{SD}). 

To see~\eqref{equ:encst}, we may assume $c_i=0$ for $i>0$, and the result is then a direct consequence of~\eqref{equ:enint}. 

Finally, pick $\tau_i\in\cD$, and set
$$
F(\f_0,\dots,\f_n):=(\theta_0,\tau_0+\f_0)\inter(\theta_n,\tau_n+\f_n)-(\theta_0+\ddc\tau_0,\f_0)\inter(\theta_n+\ddc\tau_n,\f_n).
$$
By~\eqref{equ:enint}, we have 
$$
(0,\f_0)\cdot(\theta_1,\tau_1+\f_1)\inter(\theta_n,\tau_n+\f_n)-(0,\f_0)\cdot(\theta_1+\ddc\tau_1,\f_1)\inter(\theta_n+\ddc\tau_n,\f_n)=0. 
$$
This implies that $F(\f_0,\dots,\f_n)$ is independent of $\f_0$, and hence equal to $F(0,\f_1,\dots,\f_n)$. Applying the same argument successively to $\f_1,\dots,\f_n$, we end up with $F(\f_0,\dots,\f_0)=F(0,\dots,0)=(\theta_0,\tau_0)\inter(\theta_n,\tau_n)$, 
which proves~\eqref{equ:pairingtrans}. 
\end{proof}
By~\eqref{equ:enint}, the seminegativity property~\eqref{equ:semineg} translates into 
\begin{equation}\label{equ:negdef}
(0,\f)^2\cdot(\theta_1,\f_1)\inter(\theta_{n-1},\f_{n-1})\le 0
\end{equation}
for all $\f\in\cD$ and $\f_i\in\cD_{\theta_i}$. 

As in~\cite[\S 3.2]{trivval}, we further note the following straightforward monotonicity properties: 

\begin{prop}\label{prop:enpairing}
For all $\theta_i,\theta'_i\in\cZ$ and $\f_i,\f'_i\in\cD_{\theta_i}$, we have 
\begin{equation}\label{equ:enmon1}
\f_i\le\f'_i\text{ for all }i\Longrightarrow(\theta_0,\f_0)\inter(\theta_n,\f_n)\le(\theta_0,\f'_0)\inter(\theta_n,\f'_n);
\end{equation}
\begin{equation}\label{equ:enmon2}
\f_i\le 0\text{ and }0\le\theta_i\le\theta'_i\text{ for all }i\Longrightarrow (\theta'_0,\f_0)\inter(\theta'_n,\f_n)\le(\theta_0,\f_0)\inter(\theta_n,\f_n).
\end{equation}
\end{prop}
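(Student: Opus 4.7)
Both inequalities will be proved by a telescoping argument that exploits multilinearity and symmetry of the energy pairing, reducing each single-step change to a positivity statement for a mixed Monge--Amp\`ere integral, which in turn is read off directly from the defining identity~\eqref{equ:enint}.

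For (i), I would form the telescoping sum
\begin{equation*}
(\theta_0,\f'_0)\inter(\theta_n,\f'_n)-(\theta_0,\f_0)\inter(\theta_n,\f_n)
=\sum_{i=0}^n\Delta_i,
\end{equation*}
where $\Delta_i$ is obtained by replacing $\f_i$ with $\f'_i$ at the $i$-th step. By multilinearity, each $\Delta_i$ equals
$(0,\f'_i-\f_i)$ paired with the remaining factors $(\theta_j,\f'_j)$ for $j<i$ and $(\theta_j,\f_j)$ for $j>i$. After permuting $(0,\f'_i-\f_i)$ into the zeroth slot (using the symmetry of the pairing established in Proposition--Definition~\ref{propdef:en}) and applying~\eqref{equ:enint}, $\Delta_i$ becomes an integral of $\f'_i-\f_i\ge 0$ against the positive measure obtained by wedging the forms $\theta_j+\ddc\f'_j$ for $j<i$ and $\theta_j+\ddc\f_j$ for $j>i$, all of which lie in $\cZ_+$ since $\f'_j,\f_j\in\cD_{\theta_j}$. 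Hence $\Delta_i\ge 0$ and (i) follows.

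For (ii), I would telescope in the forms rather than the potentials: write
\begin{equation*}
(\theta'_0,\f_0)\inter(\theta'_n,\f_n)-(\theta_0,\f_0)\inter(\theta_n,\f_n)=\sum_{i=0}^n\Delta'_i,
\end{equation*}
where $\Delta'_i$ swaps $\theta_i$ for $\theta'_i$ at the $i$-th step. By multilinearity, $\Delta'_i$ is the pairing of $(\theta'_i-\theta_i,0)$ with the factors $(\theta'_j,\f_j)$ for $j<i$ and $(\theta_j,\f_j)$ for $j>i$. Note $\f_i\in\cD_{\theta_i}\subset\cD_{\theta'_i}$ by~\eqref{equ:Dmon}, so all potentials remain psh with respect to the relevant forms. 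Using symmetry to move $(\theta'_i-\theta_i,0)$ into the zeroth slot and then applying the explicit formula~\eqref{equ:enpairing}, the $j=0$ summand vanishes because $\f_0$ has been replaced by $0$, and each remaining summand is an integral of $\f_j\le 0$ against a wedge product of forms in $\cZ_+$ that includes the nonnegative factor $\theta'_i-\theta_i$. Each such integral is therefore $\le 0$, whence $\Delta'_i\le 0$ and (ii) follows.

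The main obstacle---which is really only a bookkeeping one---is making sure that after permuting a factor into the zeroth slot via symmetry, the remaining factors are paired with psh potentials against positive forms, so that the explicit integral representations~\eqref{equ:enint} and~\eqref{equ:enpairing} yield integrals with the correct sign. For (i) this is automatic, whereas for (ii) it relies on the monotonicity~\eqref{equ:Dmon} of the spaces $\cD_\theta$ in $\theta$, together with the fact that zeroing out $\f_0$ in~\eqref{equ:enpairing} eliminates the only term that could otherwise have the wrong sign.
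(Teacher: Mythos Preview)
Your proof is correct and is exactly the natural telescoping argument the paper has in mind; the paper in fact does not spell out a proof at all, simply calling these ``straightforward monotonicity properties'' (with a reference to~\cite[\S 3.2]{trivval}). Your careful handling of part~(ii)---moving $(\theta'_i-\theta_i,0)$ into the zeroth slot, noting that the $j=0$ term of~\eqref{equ:enpairing} vanishes, and using $\cD_{\theta_j}\subset\cD_{\theta'_j}$ to ensure all remaining wedge factors lie in $\cZ_+$---is precisely what is needed to make the ``straightforward'' claim rigorous.
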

Combining~\eqref{equ:enmon1} and~\eqref{equ:encst}, we infer the Lipschitz property
\begin{equation}\label{equ:enlip}
\left|(\theta_0,\f_0)\inter(\theta_n,\f_n)-(\theta_0,\f'_0)\inter(\theta_n,\f'_n)\right|\le\sum_{i=0}^n\sup |\f_i-\f'_i|[\theta_0]\inter\widehat{[\theta_i]}\inter[\theta_n]
\end{equation} 
for all $\f_i,\f'_i\in\cD_{\theta_i}$. 

Following~\cite[\S 7.2]{trivval}, we next establish a lower bound for the energy pairing, which will play a crucial role in~\S\ref{sec:mixedMA} below.

\begin{thm}\label{thm:mixedenbound} Assume $\om_0,\dots,\om_n\in\cZ_+$ are commensurable, and set $\d:=\max_{i,j}\dT(\om_i,\om_j)$. If $0\ge \f_i\in\cD_{\om_i}$ for $i=1,\dots,n$, then 
$$
0\ge(\om_0,\f_0)\inter(\om_n,\f_n)\gtrsim e^{O(\d)}\min_i(\om_i,\f_i)^{n+1}. 
$$
\end{thm}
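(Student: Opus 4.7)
The upper bound is immediate from (equ:enmon1) applied with $\f'_i \equiv 0 \in \cD_{\om_i}$ (valid since $\om_i \ge 0$, cf.\ Example~\ref{exam:cstpsh}) together with (equ:ennorm): we get $(\om_0,\f_0)\inter\cdots\inter(\om_n,\f_n) \le (\om_0,0)\inter\cdots\inter(\om_n,0) = 0$. For the lower bound, my plan is two-staged: first reduce to the equi-$\om$ case using the Thompson bounds $\om_i \le e^\d\om_j$ together with the monotonicity properties (equ:enmon1)--(equ:enmon2), then establish an Alexandrov--Fenchel-type estimate in the equi-$\om$ case by iterated Cauchy--Schwarz applied to the negative semidefinite multilinear form (equ:negdef).

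For the reduction, since $\om_i \le e^\d\om_0$ and $\f_i\le 0$, (equ:enmon2) gives
\begin{equation*}
(\om_0,\f_0)\inter\cdots\inter(\om_n,\f_n) \ge (e^\d\om_0,\f_0)\inter\cdots\inter(e^\d\om_0,\f_n).
\end{equation*}
Setting $\tilde\f_i := e^{-\d}\f_i$ and using multilinearity with $(e^\d\om_0,\f_i) = e^\d(\om_0,\tilde\f_i)$, the right-hand side rewrites as $e^{(n+1)\d}(\om_0,\tilde\f_0)\inter\cdots\inter(\om_0,\tilde\f_n)$, where each $\tilde\f_i \le 0$ lies in $\cD_{\om_0}$ via the decomposition
\begin{equation*}
\om_0 + \ddc\tilde\f_i = (\om_0 - e^{-\d}\om_i) + e^{-\d}(\om_i + \ddc\f_i) \ge 0
\end{equation*}
(using $\om_0 \ge e^{-\d}\om_i$). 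A symmetric argument, combining $\om_0 \le e^\d\om_i$ with (equ:enmon2) and using (equ:enmon1) to absorb the further scaling factor, gives $(\om_0,\tilde\f_i)^{n+1}\ge e^{(n+1)\d}(\om_i,\f_i)^{n+1}$. Taking minima, the problem reduces to proving, for a single $\om\in\cZ_+$ and $\p_i\in\cD_\om$ with $\p_i\le 0$, the equi-$\om$ bound
\begin{equation*}
(\om,\p_0)\inter\cdots\inter(\om,\p_n) \gtrsim \min_i (\om,\p_i)^{n+1}.
\end{equation*}

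For this equi-$\om$ case, the plan is to establish the stronger H\"older estimate $|(\om,\p_0)\inter\cdots\inter(\om,\p_n)|^{n+1}\lesssim \prod_{i=0}^n|(\om,\p_i)^{n+1}|$, which then implies the claim via the elementary inequality $\bigl(\prod|T_i|\bigr)^{1/(n+1)}\le\max_i|T_i|$. The H\"older estimate is obtained by expanding the pairing via multilinearity and applying Cauchy--Schwarz repeatedly to the nonnegative semidefinite bilinear form $(u,v)\mapsto -(0,u)\cdot(0,v)\cdot(\om,\p_{j_1})\inter\cdots\inter(\om,\p_{j_{n-1}})$ supplied by (equ:negdef), each step reducing a mixed index pair to a diagonal one. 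This Cauchy--Schwarz cascade, going back to Blocki and Berman--Boucksom--Guedj--Zeriahi, is the content of Appendix~A in our synthetic formalism.

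The main obstacle is the combinatorial organization of the cascade: each application of Cauchy--Schwarz must be arranged so that the bookkeeping of the $(\om,0)$ versus $(0,\p_i)$ contributions in the multilinear expansion progresses monotonically towards the pure diagonal pairings $(\om,\p_i)^{n+1}$, rather than generating fresh mixed terms that re-proliferate. Once the abstract cascade of Appendix~A is in place, upgrading to commensurable $\om_i$ via the two-stage reduction is straightforward, at the cost of the explicit $e^{O(\d)}$ factor.
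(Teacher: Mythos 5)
Your upper bound and your first reduction stage are both correct: the passage from commensurable $\om_i$ to a single $\om$ via \eqref{equ:enmon1}--\eqref{equ:enmon2} and the rescaling $\tilde\f_i=e^{-\d}\f_i$ is sound, and it parallels what the paper does in Lemma~\ref{lem:ennef}. The problem is the second stage: the ``stronger H\"older estimate'' $|(\om,\p_0)\inter(\om,\p_n)|^{n+1}\lesssim\prod_{i=0}^n|(\om,\p_i)^{n+1}|$ is false. Take $\p_0\equiv-1$ and $\p_1=\dots=\p_n=0$ (all lie in $\cD_\om$ since $\om\ge0$). By \eqref{equ:enint} and \eqref{equ:ennorm} one has $(\om,\p_0)\cdot(\om,0)^{n}=\int\p_0\,\om^n=-V_\om$, so the left-hand side equals $V_\om^{\,n+1}>0$, whereas the right-hand side vanishes because $(\om,0)^{n+1}=0$. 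No Cauchy--Schwarz cascade can produce an inequality that is false, and since your route to the min-type bound passes through the geometric-mean-versus-max step applied to this product estimate, the argument breaks down here. (The min-type conclusion itself survives this example, which is precisely why it cannot be a consequence of a multiplicative bound.)

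The paper's proof avoids multiplicative estimates altogether. It expands $(\om_0+\dots+\om_n,\f_0+\dots+\f_n)^{n+1}$ by multilinearity; every term of the expansion is $\le0$ by \eqref{equ:enmon1} and \eqref{equ:ennorm}, so discarding all but the $(n+1)!$ fully mixed terms gives
$$
(n+1)!\,(\om_0,\f_0)\inter(\om_n,\f_n)\ \ge\ (\om_0+\dots+\om_n,\f_0+\dots+\f_n)^{n+1},
$$
and the theorem then follows from the superadditivity estimate of Lemma~\ref{lem:ensum}, $(\sum_i\om_i,\sum_i\f_i)^{n+1}\ge C_{n,n}\,t^{n^2}\sum_i(\om_i,\f_i)^{n+1}$ with $t=e^\d$, itself proved by induction from concavity of $\f\mapsto(\om,\f)^{n+1}$ (Lemma~\ref{lem:enconc}) and Lemma~\ref{lem:ennef} --- the same two ingredients you already use in your reduction, but applied to sums rather than products. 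If you want to keep your two-stage structure, replace the product bound in the equi-$\om$ case by the additive bound $(\om,\p_0)\inter(\om,\p_n)\gtrsim\sum_i(\om,\p_i)^{n+1}$, obtained exactly by this expansion-plus-concavity argument; the iterated Cauchy--Schwarz machinery of Appendix~A is not needed for this theorem beyond the concavity statement.
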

Here $\dT$ denotes the Thompson metric~\eqref{equ:dT}, and the implicit constant in $O(\d)$ only depends on $n$ (see~\S\ref{sec:notation}). 

\begin{lem}\label{lem:enconc} For any $\theta\in\cZ$, $\f\mapsto(\theta,\f)^{n+1}$ is concave on $\cD_\theta$. 
\end{lem}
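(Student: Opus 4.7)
The plan is to exploit the multilinearity of the energy pairing to reduce concavity of $\f\mapsto(\theta,\f)^{n+1}$ on $\cD_\theta$ to the seminegativity inequality~\eqref{equ:negdef}.

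Fix $\f,\p\in\cD_\theta$ and let $u:=\p-\f\in\cD$. Since $\cD_\theta$ is convex, $\f_t:=(1-t)\f+t\p$ lies in $\cD_\theta$ for every $t\in[0,1]$. Because the energy pairing is $(n+1)$-linear symmetric in the pairs $(\theta_i,\f_i)\in\cZ\times\cD$, and the pair depends linearly on the $\cD$-slot when the $\cZ$-slot is held fixed, we have the identity of elements of $\cZ\times\cD$
\begin{equation*}
(\theta,\f_{t_0+s})=(\theta,\f_{t_0})+s\,(0,u)
\end{equation*}
for any $t_0\in(0,1)$ and $s$ small. Expanding by multilinearity yields the polynomial identity
\begin{equation*}
(\theta,\f_{t_0+s})^{n+1}=\sum_{k=0}^{n+1}\binom{n+1}{k}s^k\,(\theta,\f_{t_0})^{n+1-k}\cdot(0,u)^k.
\end{equation*}

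In particular, $s\mapsto(\theta,\f_{t_0+s})^{n+1}$ is a polynomial of degree at most $n+1$, and its second derivative at $s=0$ is
\begin{equation*}
\left.\tfrac{d^2}{ds^2}\right|_{s=0}(\theta,\f_{t_0+s})^{n+1}=n(n+1)\,(0,u)^2\cdot(\theta,\f_{t_0})^{n-1}.
\end{equation*}
Since $\f_{t_0}\in\cD_\theta$, applying the seminegativity property~\eqref{equ:negdef} with $\theta_1=\cdots=\theta_{n-1}=\theta$ and $\f_1=\cdots=\f_{n-1}=\f_{t_0}$ shows that this second derivative is $\le 0$. As $t_0\in(0,1)$ was arbitrary, $t\mapsto(\theta,\f_t)^{n+1}$ has nonpositive second derivative on $(0,1)$, hence is concave on $[0,1]$, which is the desired conclusion.

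There is essentially no obstacle beyond checking that~\eqref{equ:negdef} is the right hypothesis to invoke: the key observation is simply that the $(n+1)$-homogeneous polynomial $\f\mapsto(\theta,\f)^{n+1}$ restricted to an affine line in $\cD_\theta$ has second derivative given precisely by the quadratic form that~\eqref{equ:negdef} declares seminegative.
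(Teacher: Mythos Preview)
Your proof is correct and follows essentially the same idea as the paper's: both reduce concavity to the seminegativity inequality~\eqref{equ:negdef}. The only cosmetic difference is that you compute the second derivative along segments directly, whereas the paper (via Appendix~\ref{sec:CS}, Lemma~\ref{lem:deltaconv}) uses the first-order characterization, observing from the explicit formula~\eqref{equ:dapp} that $\d(x,y)\ge 0$.
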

\begin{proof} This is a formal consequence of~\eqref{equ:negdef}, see Appendix~\ref{sec:CS}. 
\end{proof}

\begin{lem}\label{lem:ennef} Pick $\om,\om'\in\cZ_+$ and $t\ge 1$ such that $\om\le\om'\le t\om$. For any  $\f\in\cD_\om\subset\cD_{\om'}$ such that $\f\le 0$, we then have
$$
0\ge(\om',\f)^{n+1}\ge t^n(\om,\f)^{n+1}.
$$ 
\end{lem}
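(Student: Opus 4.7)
The plan is to treat the two inequalities separately. The upper bound $0\ge(\om',\f)^{n+1}$ is a direct consequence of the monotonicity~\eqref{equ:enmon1} and the normalization~\eqref{equ:ennorm}: since $\f\le 0$ and both $\f$ and the zero function belong to $\cD_{\om'}$ (the latter by Example~\ref{exam:cstpsh}), applying~\eqref{equ:enmon1} with all entries equal to the corresponding pair gives $(\om',\f)^{n+1}\le (\om',0)^{n+1}=0$. The same argument shows $(\om,\f)^{n+1}\le 0$, so the entire chain concerns nonpositive numbers—a point worth keeping in mind for the bookkeeping below.

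For the lower bound, I plan to combine three ingredients: monotonicity in the class via~\eqref{equ:enmon2}, the $(n+1)$-linearity of the energy pairing on $(\cZ\times\cD)^{n+1}$, and the concavity of $\psi\mapsto(\om,\psi)^{n+1}$ on $\cD_\om$ provided by Lemma~\ref{lem:enconc}. First I observe that $\f\in\cD_{t\om}$, equivalently $\f/t\in\cD_\om$, because $t\om+\ddc\f=(t-1)\om+\om_\f$ is a sum of two nonnegative forms. Since $\om'\le t\om$, $\f\le 0$, and $\f\in\cD_{\om'}\cap\cD_{t\om}$, the monotonicity~\eqref{equ:enmon2}, applied with all entries equal, yields
$$
(t\om,\f)^{n+1}\le(\om',\f)^{n+1}.
$$
The $(n+1)$-linearity of the pairing in its arguments $(\theta_i,\f_i)\in\cZ\times\cD$ then produces the rescaling identity $(t\om,\f)^{n+1}=t^{n+1}(\om,\f/t)^{n+1}$.

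It remains to compare $(\om,\f/t)^{n+1}$ with $(\om,\f)^{n+1}$ inside $\cD_\om$. Writing $\f/t=(1/t)\f+(1-1/t)\cdot 0$ as a convex combination in $\cD_\om$ and using concavity (Lemma~\ref{lem:enconc}) together with $(\om,0)^{n+1}=0$ from~\eqref{equ:ennorm}, we obtain $(\om,\f/t)^{n+1}\ge(1/t)(\om,\f)^{n+1}$. Multiplying by $t^{n+1}>0$ and chaining gives
$$
(\om',\f)^{n+1}\;\ge\;t^{n+1}(\om,\f/t)^{n+1}\;\ge\;t^{n+1}\cdot(1/t)(\om,\f)^{n+1}\;=\;t^n(\om,\f)^{n+1},
$$
as required. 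I do not anticipate any genuine obstacle here: the argument is a clean assembly of tools already established in this section. The only point requiring care is the sign bookkeeping, where the factor $1/t\le 1$ arising from concavity \emph{increases} the nonpositive quantity $(\om,\f)^{n+1}$ and is what produces the exponent $n$ rather than $n+1$ after multiplying by $t^{n+1}$.
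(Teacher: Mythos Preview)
Your proof is correct and follows essentially the same route as the paper: use~\eqref{equ:enmon2} to pass from $\om'$ to $t\om$, rescale via multilinearity to $t^{n+1}(\om,t^{-1}\f)^{n+1}$, and then invoke the concavity of Lemma~\ref{lem:enconc} on the segment between $\f$ and $0$ to extract the factor $t^{-1}$. The paper's write-up is terser but structurally identical.
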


\begin{proof} By~\eqref{equ:enmon2} we have 
$$
0\ge (\om',\f)^{n+1}\ge (t\om,\f)^{n+1}=t^{n+1}(\om,t^{-1}\f)^{n+1}. 
$$
Since $t^{-1}\in[0,1]$, concavity of the energy (Lemma~\ref{lem:enconc}) yields
$(\om,t^{-1}\f)^{n+1}\ge t^{-1}(\om,\f)^{n+1}$, and the result follows. 
\end{proof}

 \begin{lem}\label{lem:ensum}
Pick $\om_0,\dots,\om_r\in\cZ_+$ and $0\ge\f_i\in\cD_{\om_i}$ for $i=0,\dots,r$. Assume also given $t\ge 1$ such that $\om_i\le t\om_j$ for all $i,j$. Then 
\begin{equation}\label{equ:ensum}
(\sum_i\om_i,\sum_i\f_i)^{n+1}\ge C_{r,n} t^{rn}\sum_i(\om_i,\f_i)^{n+1}
\end{equation}
with $C_{r,n}:=\left(2^r r!\right)^n$. 
\end{lem}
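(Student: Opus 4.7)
The plan is to prove Lemma~\ref{lem:ensum} by induction on $r$, with the base case $r=0$ reducing to the trivial identity $(\om_0,\f_0)^{n+1}\ge(\om_0,\f_0)^{n+1}$ in view of $C_{0,n}=1$. For the inductive step, I would set $\om':=\sum_{i=1}^r\om_i$ and $\f':=\sum_{i=1}^r\f_i$; since the $r$-tuple $(\om_i,\f_i)_{1\le i\le r}$ still satisfies the hypotheses with the same parameter $t$, the inductive hypothesis gives
\[
(\om',\f')^{n+1}\ge C_{r-1,n}\,t^{(r-1)n}\sum_{i=1}^r(\om_i,\f_i)^{n+1}.
\]
A direct computation shows that $\om_0$ and $\om'$ are pairwise commensurable with parameter $rt$: one has $\om_0\le t\om_1\le t\om'$ (using $\om_1\le\om'$), and $\om'=\sum_{i\ge 1}\om_i\le rt\om_0$ (summing $\om_i\le t\om_0$ over $i\ge 1$).

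Next, I would combine $(\om_0,\f_0)$ with $(\om',\f')$ via the $r=1$ case of the lemma, applied with parameter $rt$, yielding
\[
(\om,\f)^{n+1}\ge 2^n(rt)^n\bigl[(\om_0,\f_0)^{n+1}+(\om',\f')^{n+1}\bigr].
\]
Substituting the inductive bound and using $(\om_0,\f_0)^{n+1}\le 0$ together with $C_{r-1,n}t^{(r-1)n}\ge 1$ (so that $(\om_0,\f_0)^{n+1}\ge C_{r-1,n}t^{(r-1)n}(\om_0,\f_0)^{n+1}$), we obtain $(\om,\f)^{n+1}\ge 2^n r^n C_{r-1,n}t^{rn}\sum_{i=0}^r(\om_i,\f_i)^{n+1}$. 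This produces the recursion $C_{r,n}=2^n r^n C_{r-1,n}$, which together with $C_{0,n}=1$ yields $C_{r,n}=\prod_{k=1}^r(2k)^n=(2^r r!)^n$ as required.

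The main obstacle is thus the $r=1$ case: for commensurable $\om_0,\om_1\in\cZ_+$ with $\om_i\le t\om_j$ and $0\ge\f_i\in\cD_{\om_i}$, establishing
\[
(\om_0+\om_1,\f_0+\f_1)^{n+1}\ge 2^n t^n\bigl[(\om_0,\f_0)^{n+1}+(\om_1,\f_1)^{n+1}\bigr].
\]
My approach would be to expand the left-hand side via multilinearity in the pair variables as a binomial sum $\sum_{k=0}^{n+1}\binom{n+1}{k}E_{k,n+1-k}$, where $E_{k,n+1-k}:=(\om_0,\f_0)^{[k]}(\om_1,\f_1)^{[n+1-k]}$ is the mixed energy pairing with $k$ copies of $(\om_0,\f_0)$ and $n+1-k$ copies of $(\om_1,\f_1)$. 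Each such mixed term is nonpositive by~\eqref{equ:negdef}. A Khovanskii--Teissier-type bound derived from the iterated Cauchy--Schwarz estimates of Appendix~A applied to the seminegative bilinear form~\eqref{equ:Hodge}, combined with Lemma~\ref{lem:ennef} to absorb the commensurability parameter, should yield $|E_{k,n+1-k}|\le t^n\bigl(\tfrac{k}{n+1}|E_{n+1,0}|+\tfrac{n+1-k}{n+1}|E_{0,n+1}|\bigr)$. Summing against the weights $\binom{n+1}{k}$ and invoking the standard identity $\sum_{k}\binom{n+1}{k}\tfrac{k}{n+1}=2^n$ then produces the target constant $2^n t^n$ in front of $|E_{n+1,0}|+|E_{0,n+1}|$, completing the $r=1$ case.
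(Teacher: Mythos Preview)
Your inductive reduction from general $r$ to the $r=1$ case is correct and identical to the paper's argument.

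The $r=1$ case is where your sketch has a genuine gap. The claimed termwise bound
\[
|E_{k,n+1-k}|\le t^n\Bigl(\tfrac{k}{n+1}|E_{n+1,0}|+\tfrac{n+1-k}{n+1}|E_{0,n+1}|\Bigr)
\]
does not follow directly from the Cauchy--Schwarz machinery of Appendix~\ref{sec:CS}: since $(\om_0,\f_0)-(\om_1,\f_1)=(\om_0-\om_1,\f_0-\f_1)\notin V_0=\{0\}\times\cD$ when $\om_0\ne\om_1$, there is no way to compare adjacent mixed terms via~\eqref{equ:negdef}. To repair this you must first pass to a common form $\om$ with $\om_i\le\om\le t\om_i$ (for instance $\om=\tfrac{t}{1+t}(\om_0+\om_1)$). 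Then~\eqref{equ:enmon2} gives $|E_{k,n+1-k}|\le|(\om,\f_0)^k(\om,\f_1)^{n+1-k}|$; the second difference in $k$ of the right-hand side equals $(0,\f_0-\f_1)^2\cdot(\om,\f_0)^{k-1}(\om,\f_1)^{n-k}\le 0$ by~\eqref{equ:negdef}, yielding convexity of $k\mapsto|(\om,\f_0)^k(\om,\f_1)^{n+1-k}|$; and Lemma~\ref{lem:ennef} then supplies the factor $t^n$ on the endpoints. (Incidentally, nonpositivity of $E_{k,n+1-k}$ follows from~\eqref{equ:enmon1} and~\eqref{equ:ennorm}, not from~\eqref{equ:negdef}.)

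But once the auxiliary $\om$ is in place, you have reproduced the paper's proof, which is more direct: rather than expanding multilinearly and bounding each mixed term, it applies the concavity of $\f\mapsto(\om,\f)^{n+1}$ (Lemma~\ref{lem:enconc}) once, at the midpoint $\tfrac12(\f_0+\f_1)$, sandwiched between~\eqref{equ:enmon2} and Lemma~\ref{lem:ennef}. Your binomial identity $\sum_k\binom{n+1}{k}\tfrac{k}{n+1}=2^n$ is exactly what that single midpoint inequality encodes after expansion, so the two routes coincide; the paper's packaging simply avoids the detour through individual mixed terms.
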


\begin{proof} Assume first $r=1$. Set $\om:=\frac{t}{1+t}(\om_0+\om_1)$, and observe that $\om_0\le\om\le t\om_0$, $\om_1\le\om\le t\om_1$, and hence $\tfrac 12(\om_0+\om_1)\le\om\le\tfrac t2(\om_0+\om_1)$. Thus
\begin{multline*}
\left(\om_0+\om_1,\f_0+\f_1\right)^{n+1}=2^{n+1}\left(\tfrac 12(\om_0+\om_1),\tfrac 12(\f_0+\f_1)\right)^{n+1}\\
  \ge 2^{n+1}\left(\om,\tfrac 12(\f_0+\f_1)\right)^{n+1}
\ge 2^n\left((\om,\f_0)^{n+1}+(\om,\f_1)^{n+1}\right)\\
\ge (2t)^n\left((\om_0,\f_0)^{n+1}+(\om_1,\f_1)^{n+1}\right), 
\end{multline*}
where the first inequality holds by~\eqref{equ:enmon2}, the second one by Lemma~\ref{lem:enconc}, and the third one by Lemma~\ref{lem:ennef}. 

Assume now $r\ge 2$, and set $\om'_0:=\sum_{i>0}\om_i$, $\f'_0:=\sum_{i>0}\f_i$. Since $(rt)^{-1}\om_0\le\om'_0\le rt\om_0$, the first part of the proof yields 
$$
(\sum_i\om_i,\sum_i\f_i)^{n+1}=(\om_0+\om'_0,\f_0+\f'_0)^{n+1}\ge (2rt)^n\left((\om_0,\f_0)^{n+1}+(\om'_0,\f'_0)^{n+1}\right)
$$
By induction, we have on the other hand
$$
(\om'_0,\f'_0)^{n+1}\ge C_{r-1,n} t^{(r-1)n} \sum_{i>0}(\om_i,\f_i)^{n+1},
$$
The result follows, since $(\om_0,\f_0)^{n+1}\le 0$ and 
$$
C_{r,n} t^{rn}=(2rt)^n C_{r-1,n}\ge(2rt)^n. 
$$
\end{proof}

\begin{proof}[Proof of Theorem~\ref{thm:mixedenbound}] Expanding out $(\om_0+\dots+\om_n,\f_0+\dots+\f_n)^{n+1}$ yields
$$
(n+1)! (\om_0,\f_0)\inter(\om_n,\f_n)\ge (\om_0+\dots+\om_n,\f_0+\dots+\f_n)^{n+1}, 
$$
and we conclude by Lemma~\ref{lem:ensum} with $r=n$. 
\end{proof}

%
%
%%%%%%%%%%%%%%%%%%%%%%%%%%%%%%%%%%%%%%%%%%%%%%%%%%%%%%%%%%%%%%%%%%%
%
\subsection{The Monge--Amp\`ere operator and the submean value property}\label{sec:submean}
Pick $\om\in\cZ_+$ with $[\om]\in\Pos(X)$. We define its \emph{volume} as 
$$
V_\om:=\int\om^n=[\om]^n, 
$$
which is positive by Lemma~\ref{lem:posint}, and introduce the probability measure
$$
\mu_\om:=V_\om^{-1}\om^n. 
$$
\begin{defi} The \emph{Monge--Amp\`ere operator} $\MA_\om\colon\cD_\om\to\cM$ is defined by setting
$$
\MA_\om(\f):=V_\om^{-1}\om_\f^n. 
$$
\end{defi}
Equivalently, 
\begin{equation}\label{equ:MAmu}
\MA_\om(\f)=\mu_{\om_\f}.
\end{equation}

By~\eqref{equ:enlip}, the \emph{Chern--Levine--Nirenberg inequality} 
\begin{equation}\label{equ:CLN}
\left|\int\tau\left(\MA_\om(\f)-\MA_\om(\p)\right)\right|\le n\sup |\f-\p|
\end{equation}
is satisfied for all $\f,\p,\tau\in\cD_\om$.

We next consider the quantity
\begin{equation}\label{equ:submean1}
T_\om:=\sup\left\{\sup\f-\int\f\,\mu_\om\mid\f\in\cD_\om\right\}\in [0,+\infty]. 
\end{equation}
Thus $T_\om<\infty$ iff there exists $C>0$ such that the submean value inequality
\begin{equation}\label{equ:submean2}
\sup \f\le\int\f\,\mu_\om+C
\end{equation}
holds for all $\f\in\cD_\om$. 

\begin{prop}\label{prop:submean} If $T_\om<\infty$, then $T_{\om'}<\infty$ for any $\om'\in\cZ_+$ with $[\om']\in\Pos(X)$.
\end{prop}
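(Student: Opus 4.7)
The plan is to establish the result in three stages, decoupling the dependence of $T_\om$ on the representative within a fixed cohomology class from the dependence on the class itself.

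First, I would handle the case $[\om]=[\om']$, writing $\om'=\om+\ddc\tau$ with $\tau\in\cD_\om$ (forcing $\om'\ge 0$). The key observation is that $\f\mapsto\p:=\f+\tau$ gives a bijection $\cD_{\om'}\simeq\cD_\om$, and that $\mu_{\om'}=\MA_\om(\tau)$, since $V_{\om'}=[\om']^n=[\om]^n=V_\om$. Applying the hypothesis $T_\om<\infty$ to $\p$ controls $\sup\p$ in terms of $\int\p\,\mu_\om$; swapping $\mu_\om$ for $\MA_\om(\tau)=\mu_{\om'}$ at the cost of the Chern--Levine--Nirenberg inequality~\eqref{equ:CLN} applied to the triple $(\p,0,\tau)$ (all three of which lie in $\cD_\om$, using Example~\ref{exam:cstpsh}), and then rewriting $\int\p\,\mu_{\om'}=\int\f\,\mu_{\om'}+\int\tau\,\mu_{\om'}$, should yield a bound of the form $T_{\om'}\le T_\om+(n+2)\sup|\tau|<\infty$. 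By symmetry, finiteness of $T_\om$ then depends only on the class $[\om]\in\Pos(X)\cap\HBC(X)$.

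Next, I would handle the case where $\om,\om'\in\cZ_+$ are commensurable, say $C^{-1}\om\le\om'\le C\om$ for some $C\ge 1$. Given $\f\in\cD_{\om'}$, normalize so that $\sup\f=0$; the inequality $C\om+\ddc\f\ge\om'+\ddc\f\ge 0$ puts $\f/C\in\cD_\om$, and the hypothesis gives $-\int\f\,\mu_\om\le C\,T_\om$. To convert this into an estimate against $\mu_{\om'}$, I would exploit the telescoping identity
$$
(C\om)^n-(\om')^n=(C\om-\om')\wedge\sum_{k=0}^{n-1}(C\om)^k\wedge(\om')^{n-1-k},
$$
which, by multilinearity and positivity of the wedge product on $\cZ_+$, yields $(\om')^n\le C^n\om^n$ as measures, and similarly $V_{\om'}\ge C^{-n}V_\om$; since $\f\le 0$, combining these comparisons produces a bound of the form $-\int\f\,\mu_{\om'}\le C^{2n+1}\,T_\om$, hence $T_{\om'}<\infty$.

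Finally, for the general case, Proposition~\ref{prop:comm} applied to $\{[\om],[\om']\}\subset\Pos(X)$ furnishes commensurable representatives $\tilde\om\in[\om]\cap\cZ_+$ and $\tilde\om'\in[\om']\cap\cZ_+$. Chaining the first step (invariance within each cohomology class) and the second step (transfer between commensurable forms) along the path $\om\rightsquigarrow\tilde\om\rightsquigarrow\tilde\om'\rightsquigarrow\om'$ delivers the conclusion. The main subtlety I anticipate is the bookkeeping of the simultaneous change of the reference measure $\mu_\om\to\mu_{\om'}$ and of the space of admissible test functions $\cD_\om\to\cD_{\om'}$; this is precisely what the CLN inequality and the wedge-comparison above are tailored to absorb, so no genuinely new estimate should be required.
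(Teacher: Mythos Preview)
Your proposal is correct and follows essentially the same route as the paper: the paper packages your first two steps as Lemma~\ref{lem:MV} (yielding precisely your bound $T_{\om_\tau}\le T_\om+(n+2)\sup|\tau|$ via the CLN inequality, and the commensurable estimate via $T_{t\om}=tT_\om$ together with $\om'\le\om\Rightarrow V_{\om'}T_{\om'}\le V_\om T_\om$), and then concludes exactly as you do by invoking Proposition~\ref{prop:comm}.
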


\begin{defi}\label{defi:submean} We say that the \emph{submean value property} holds if $T_\om<\infty$ for some (hence any) $\om\in\cZ_+$ with $[\om]\in\Pos(X)$. 
\end{defi}
When $X$ is a compact K\"ahler or projective Berkovich space, this property holds iff $X$ is irreducible (see Theorem~\ref{thm:submean} below). 

\begin{lem}\label{lem:MV} Pick $\tau\in\cD_\om$, and $\om'\in\cZ_+$ commensurable to $\om$. Then
\begin{equation}\label{equ:Mddc}
T_{\om_\tau}\le T_\om+(n+2)\sup|\tau| 
\end{equation}
and
\begin{equation}\label{equ:Thom}
T_{\om'}\le e^{O(\d)} T_\om
\end{equation}
with $\d:=\dT(\om,\om')$. 
\end{lem}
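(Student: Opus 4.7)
The plan for \eqref{equ:Mddc} is to reduce the supremum defining $T_{\om_\tau}$ to the one defining $T_\om$ via the translation $\f\mapsto\f+\tau$. For any $\f\in\cD_{\om_\tau}$, the identity $\om+\ddc(\f+\tau)=\om_\tau+\ddc\f\ge 0$ shows $\f+\tau\in\cD_\om$. I would then decompose
$$
\sup\f-\int\f\,\mu_{\om_\tau}=\Big[\sup(\f+\tau)-\int(\f+\tau)\,\mu_\om\Big]+\big[\sup\f-\sup(\f+\tau)\big]+\int(\f+\tau)(\mu_\om-\mu_{\om_\tau})+\int\tau\,\mu_{\om_\tau}.
$$
The first bracket is $\le T_\om$ by definition. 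The second is trivially $\le\sup(-\tau)\le\sup|\tau|$. Since $\mu_\om=\MA_\om(0)$ and $\mu_{\om_\tau}=V_\om^{-1}\om_\tau^n=\MA_\om(\tau)$ (as $V_{\om_\tau}=[\om_\tau]^n=[\om]^n=V_\om$), and $\f+\tau\in\cD_\om$, the Chern--Levine--Nirenberg inequality \eqref{equ:CLN} bounds the third term by $n\sup|\tau|$. The last term is $\le\sup\tau\le\sup|\tau|$ because $\mu_{\om_\tau}$ is a probability measure. Summing the four bounds and taking $\sup$ over $\f\in\cD_{\om_\tau}$ gives $T_{\om_\tau}\le T_\om+(n+2)\sup|\tau|$.

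For \eqref{equ:Thom} the key is a Thompson-type rescaling. Given $\f\in\cD_{\om'}$, the inequality $\om'\le e^\d\om$ forces $e^\d\om+\ddc\f=(e^\d\om-\om')+(\om'+\ddc\f)\ge 0$, so $\psi:=e^{-\d}\f\in\cD_\om$. Since $\sup\f-\int\f\,\mu_{\om'}$ is invariant under additive constants, I normalize $\sup\f=0$, hence $\psi\le 0$. From $e^{-\d}\om\le\om'\le e^\d\om$ and multilinearity/monotonicity of the top-degree measure-valued product, I obtain $(\om')^n\le e^{n\d}\om^n$ as measures and $V_{\om'}\ge e^{-n\d}V_\om$, which combine to give $\mu_{\om'}\le e^{2n\d}\mu_\om$ as measures. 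Integrating the nonpositive function $\psi$ against this inequality and rescaling back yields
$$
\sup\f-\int\f\,\mu_{\om'}=-e^\d\int\psi\,\mu_{\om'}\le e^{(2n+1)\d}\Big(-\int\psi\,\mu_\om\Big)\le e^{(2n+1)\d}\,T_\om,
$$
where the last inequality uses $\sup\psi=0$ together with $\psi\in\cD_\om$. Taking the supremum over $\f$ gives $T_{\om'}\le e^{(2n+1)\d}\,T_\om$, as required.

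I do not anticipate any serious obstacle in either part: both arguments are elementary once one identifies the correct change of variable ($\f\leftrightarrow\f+\tau$ in the additive case, $\f\leftrightarrow e^{-\d}\f$ in the multiplicative case), and one observes that the two essential analytic inputs---Chern--Levine--Nirenberg for \eqref{equ:Mddc}, and monotonicity of top-degree intersections for \eqref{equ:Thom}---are already available in the formalism. The only mild subtlety is that $T_{\om'}$ in \eqref{equ:Thom} is a priori taken for $\om'\in\cZ_+$ commensurable with $\om$ and one should confirm $[\om']\in\Pos(X)$; this follows because $[\om']=e^{-\d}[\om]+([\om']-e^{-\d}[\om])$ is a sum of a class in $\Pos(X)$ with a class in the image of $\cZ_+$, and $\Pos(X)$ is stable under such additions.
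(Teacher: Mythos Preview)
Your proof is correct and follows essentially the same route as the paper's: for \eqref{equ:Mddc} both arguments translate $\f\mapsto\f+\tau$ and invoke the Chern--Levine--Nirenberg inequality~\eqref{equ:CLN} to compare $\mu_\om$ with $\mu_{\om_\tau}=\MA_\om(\tau)$, and for \eqref{equ:Thom} the paper packages your direct rescaling computation into the two properties $T_{t\om}=tT_\om$ and $\om'\le\om\Rightarrow V_{\om'}T_{\om'}\le V_\om T_\om$, which combine to give the same $e^{(2n+1)\d}$ bound. Your closing remark on $[\om']\in\Pos(X)$ is correct but not strictly needed, since $V_{\om'}\ge e^{-n\d}V_\om>0$ already follows from commensurability.
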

Recall that, in this paper, the implicit constant in $O$ only depends on $n$ (see~\S\ref{sec:notation}). 

\begin{proof} Pick $\f\in\cD_{\om_\tau}$. Then $\f+\tau\in\cD_\om$, and the Chern--Levine--Nirenberg inequality~\eqref{equ:CLN} yields
$$
\f+\tau\le\int(\f+\tau)\MA_\om(0)+T_\om\le\int(\f+\tau)\MA_\om(\tau)+n\sup|\tau|+T_\om. 
$$
This proves~\eqref{equ:Mddc},  in view of~\eqref{equ:MAmu}. 

For each $t>0$ we have $\cD_{t\om}=t\cD_\om$, and hence $T_{t\om}=tT_\om$. Further, $\om'\le\om$ implies $\cD_{\om'}\subset\cD_\om$ and 
$$
V_{\om'}\mu_{\om'}=(\om')^n\le\om^n=V_\om\mu_\om,
$$ 
and hence $V_{\om'} T_{\om'}\le V_\om T_{\om}$. These properties imply~\eqref{equ:Thom}. 
\end{proof}

\begin{proof}[Proof of Proposition~\ref{prop:submean}] The condition $T_\om<\infty$ only depends on the commensurability class of $\om$, by~\eqref{equ:Thom}, and it only depends on $[\om]\in\Pos(X)$, by~\eqref{equ:Mddc}. This condition is thus independent of $\om$, since any two classes in $\Pos(X)$ admit commensurable representatives (see Proposition~\ref{prop:comm}). 
\end{proof}

\begin{thm}\label{thm:submean} Assume $X$ is either a compact K\"ahler space or a projective Berkovich space over a non-Archimedean field $k$. Then the submean value property holds iff $X$ is irreducible. 
\end{thm}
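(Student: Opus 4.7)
The plan is to prove the two implications separately. The easier direction, reducibility $\Rightarrow$ failure of the submean value property, is handled by a direct construction; the other direction, irreducibility $\Rightarrow$ submean, reduces to known normalization results on irreducible compact K\"ahler spaces and irreducible projective Berkovich spaces respectively.

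To establish the failure direction, suppose $X=X_1\cup X_2$ with $X_1,X_2\subsetneq X$ proper closed subsets. The goal is to produce a sequence $(\f_k)$ in $\cD_\om$ with $\sup\f_k$ uniformly bounded and $\int\f_k\,\mu_\om\to-\infty$, which by~\eqref{equ:submean2} forces $T_\om=+\infty$. Assume first that $\mu_\om(X_2\setminus X_1)>0$, which holds whenever some top-dimensional component of $X$ is contained in $X_2\setminus X_1$. In the K\"ahler case, choose a sufficiently ample line bundle $L$ on $X$ together with a section $s\in H^0(X,L)$ with $s|_{X_2}\equiv 0$ but $s|_{X_1}\not\equiv 0$ (possible since $X_1\not\subset X_2$, using global generation of $L\otimes\mathcal{I}_{X_2}$ for $L$ sufficiently ample), fix a smooth Hermitian metric $h$ on $L$, and pick $N$ large enough that $N^{-1}c_1(L,h)\le\om$ (possible by ampleness of $[\om]$). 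Then $N^{-1}\log|s|_h^2$ is $\om$-psh on $X$, bounded above, and equal to $-\infty$ on $X_2$. A standard regularization (Demailly-type regularization, applied on a resolution in the singular case) yields smooth $\om$-psh approximations $\f_k\in\cD_\om$ with $\sup\f_k$ uniformly bounded and $\f_k\to N^{-1}\log|s|_h^2$ in $L^1(\mu_\om)$, so that $\int\f_k\,\mu_\om\to-\infty$. The residual subcases in which $\mu_\om$ does not see $X_2\setminus X_1$ (e.g.\ $X_2$ of dimension strictly less than $n$, or embedded components) are handled by a localized bump-function construction supported near a point $p\in X_2\setminus X_1$, producing test functions of arbitrarily large supremum and vanishing $\mu_\om$-mean. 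The non-Archimedean case is entirely analogous: one realizes the separation of the components on a sufficiently high model of $X$, and the role of $N^{-1}\log|s|_h^2$ is played by the PL function associated to a suitable vertical Cartier divisor.

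For the converse, the inequality $\sup\f\le\int\f\,\mu_\om+C$ for all $\f\in\cD_\om$ is essentially classical in both settings once $X$ is irreducible. In the K\"ahler manifold case it is the standard sub-mean-value estimate for smooth $\om$-psh functions (see for instance~\cite{GZ2}); the singular K\"ahler case reduces to the manifold case by passing to a (connected) resolution $\pi\colon\widetilde X\to X$, choosing a K\"ahler form $\widetilde\om$ on $\widetilde X$ dominating $\pi^*\om$, and comparing $\pi_*\mu_{\widetilde\om}$ with $\mu_\om$. In the non-Archimedean case, the analogue for continuous (hence PL) $\om$-psh functions on an irreducible projective Berkovich space is established by model-theoretic arguments and the use of positivity of ample classes in~\cite{BE,BGM}. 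The main obstacle is therefore technical rather than conceptual: in the failure direction one must ensure that the approximations genuinely lie in $\cD$ (smooth in the K\"ahler case, PL in the NA case), which in the non-Archimedean setting requires a careful choice of model on which $X_1$ and $X_2$ are separated by vertical Cartier divisors, and some bookkeeping to pass from the $\om$-psh function $\log|s|$ (a priori only a limit of PL functions) to an actual element of $\cD_\om$.
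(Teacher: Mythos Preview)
Your overall strategy matches the paper's, but there are two genuine gaps in the failure direction and one in the converse.

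\textbf{Failure direction.} First, the ``residual subcases'' bump-function argument does not work: an $\om$-psh test function cannot be made arbitrarily large near a single point while remaining bounded in $\mu_\om$-mean, since $\om$-plurisubharmonicity forbids such localized peaks. Fortunately this case is unnecessary. The volume $V_\om>0$ forces $X$ to have at least one $n$-dimensional irreducible component $Y$, and one can always take $X_2=Y$ and $X_1$ the union of the remaining components; then $\mu_\om(X_2\setminus X_1)>0$ automatically, and your main construction applies. The paper organizes this as a contrapositive: assuming the submean value property, the sequence $\f_j$ going to $-\infty$ on $Y$ forces $\sup\f_j\to-\infty$, contradicting boundedness on $X\setminus Y$ unless $X=Y$.

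Second, your main construction in the K\"ahler case uses a section of an ample line bundle, which tacitly assumes $X$ is projective. The paper avoids this: its Lemma~\ref{lem:submean} builds the sequence $(\f_j)$ from \emph{local} generators of the ideal sheaf of $Y$ glued by a partition of unity (the Demailly trick $\f_j=\log\sum_i\chi_i^2 e^{\f_{ij}}$), which works on any compact K\"ahler space.

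\textbf{Converse direction.} Your K\"ahler argument via resolution is exactly the paper's (invoking \cite[Proposition~2.7]{GZ} on the resolution). In the non-Archimedean case, however, the references \cite{BE,BGM} concern orthogonality/differentiability, not the submean inequality. The paper's argument is more concrete: for a PL metric determined by an ample model $(\cX,\cL)$, every $\f\in\cD_\om$ attains its sup on the finite Shilov set $\Gamma$ attached to $\cX$; this same $\Gamma$ is the support of $\mu_\om$; and \cite[Theorem~2.21]{trivvalold} gives a uniform bound on the oscillation of $\f$ over $\Gamma$. These three facts combine to yield $\sup\f\le\int\f\,\mu_\om+C$.
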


\begin{lem}\label{lem:submean} Let $\fa\subset\cO_X$ be a coherent ideal sheaf. Then we can find $\om\in\cZ_+$ with $[\om]\in\Pos(X)$ and a decreasing sequence $(\f_j)_j$ in $\cD_{\om}$ such that, for any given choice of local generators $(f_\a)$ of $\fa$ near a point of $X$, $\f_j-\max\{\log\max_\a|f_\a|,-j\}$ is locally bounded uniformly with respect to $j$. 
\end{lem}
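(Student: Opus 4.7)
The plan is to treat the K\"ahler and non-Archimedean cases separately, in each producing $\f_j$ as a smooth (resp.\ PL) regularization of $\log\max|s_i|$, where $(s_i)$ is a system of global sections of a twist $\fa\otimes\cL$ of $\fa$ by a sufficiently positive line bundle $\cL$.

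\textbf{K\"ahler case.} Assuming $X$ is projective (the general compact K\"ahler case should reduce to this via a projective modification), pick an ample line bundle $\cL$ on $X$ with a smooth positively curved Hermitian metric $h$, and set $\om:=c_1(\cL,h)\in\cZ_+$, so that $[\om]\in\Pos(X)$. After replacing $\cL$ by a large tensor power, Serre vanishing makes $\fa\otimes\cL$ globally generated; fix $s_1,\dots,s_N\in H^0(X,\fa\otimes\cL)$ generating everywhere and set
\[
\f_j:=\tfrac 12\log\Bigl(\sum_{i=1}^N\|s_i\|_h^2+e^{-2j}\Bigr)\in\cD.
\]
This is manifestly decreasing in $j$. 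In a local trivialization $e$ of $\cL$ with $\|e\|_h^2=e^{-2\phi}$ (so that $\phi$ is a local psh potential of $\om$) and $s_i=g_ie$, we compute
\[
\f_j+\phi=\tfrac 12\log\Bigl(\sum_i|g_i|^2+e^{2\phi-2j}\Bigr)=\tfrac 12\log\sum_{k=1}^{N+1}e^{\psi_k}
\]
with $\psi_k\in\{2\log|g_i|,\,2\phi-2j\}$ all plurisubharmonic; this is psh by the closure of psh functions under log-sum-exp of a convex non-decreasing function, so $\f_j\in\cD_\om$. The $(g_i)$ generate $\fa$ locally, and by coherence any other local generating set $(f_\a)$ is related to $(g_i)$ by locally bounded matrices in both directions, hence $\log\max_\a|f_\a|=\log\max_i|g_i|+O(1)$ on any compact set. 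Combined with the elementary sandwich $\max_ku_k\le\log\sum_ke^{u_k}\le\max_ku_k+\log(N+1)$ and local boundedness of $\phi$, this yields $\f_j-\max\{\log\max_\a|f_\a|,-j\}=O(1)$ locally, uniformly in $j$.

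\textbf{Non-Archimedean case.} The argument is parallel. Extend $\fa$ to a coherent ideal $\cI\subset\cO_\cX$ on a model $\cX$ of $X$ (or test configuration in the trivially valued case), and choose an ample line bundle $\cL$ on $\cX$ such that $\cI\otimes\cL$ is globally generated; let $\om\in\cZ_+$ be the corresponding form, so $[\om]\in\Pos(X)$. Given generating sections $s_1,\dots,s_N\in H^0(\cX,\cI\otimes\cL)$, set
\[
\f_j:=\max\bigl\{\log|s_1|_\cL,\dots,\log|s_N|_\cL,-j\bigr\},
\]
which is a PL function on $X$ lying in $\cD_\om$ by the standard fact that the max of finitely many $\om$-psh PL functions is $\om$-psh in the non-Archimedean setting; the local comparison with $\max\{\log\max_\a|f_\a|,-j\}$ is immediate from the definition of the model metric $|\cdot|_\cL$.

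\textbf{Main obstacle.} The technically sensitive point in both cases is verifying $\f_j\in\cD_\om$ for a fixed $\om$ independent of $j$, which reduces to standard closure properties of psh functions---log-sum-exp in the K\"ahler case and max in the NA case. A mild K\"ahler subtlety is the shift by the local potential $\phi$ introduced by the trivialization, which is absorbed into the locally bounded error. In the NA setting one further needs that the chosen model $\cX$ be high enough to capture $\fa$ scheme-theoretically, which can always be arranged after blowing up $\cI$.
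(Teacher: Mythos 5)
Your non-Archimedean argument is essentially the paper's (ample $L$ with $L\otimes\fa$ globally generated, a PL metric with curvature $\om$, and $\f_j=\max\{\log\max_\a|s_\a|,-j\}$), and your projective K\"ahler argument is correct as far as it goes. The gap is the very first step of the K\"ahler case: the lemma must hold for an arbitrary compact K\"ahler space $X$ (it feeds into Theorem~\ref{thm:submean}, which is stated for compact K\"ahler spaces), and your claim that the general compact K\"ahler case ``should reduce to this via a projective modification'' is false. A generic compact complex torus of dimension $\ge 2$ is K\"ahler but not Moishezon, so it admits no projective modification at all; more generally, on a non-projective $X$ there is no ample line bundle, no Serre vanishing, and no finite system of global sections generating $\fa\otimes\cL$. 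Since your construction of $\f_j$ is built entirely from such global sections, it produces nothing outside the projective case.

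The paper's proof avoids global generation by staying local: cover $X$ by finitely many opens $U_i$ on which $\fa$ is generated by $(f_{i\a})_\a$, set $\f_{ij}:=\tfrac12\log(\sum_\a|f_{i\a}|^2+e^{-2j})$, observe that $|\f_{ij}-\f_{kj}|$ is bounded on compact subsets of $U_i\cap U_k$ uniformly in $j$, and glue with a partition of unity $(\chi_i^2)$ via $\f_j:=\log\sum_i\chi_i^2e^{\f_{ij}}$. The uniform bound on the overlaps is exactly what yields a single K\"ahler form $\om$ with $\ddc\f_j\ge-\om$ for all $j$, as in~\cite[Lemma~3.5]{Demreg}. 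To repair your argument you would need to replace the global-sections step by such a local gluing (your local computation of $\f_j+\phi$ as a log-sum-exp of psh functions is fine and is the local model for what the gluing achieves globally).
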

\begin{proof} Assume first $X$ is compact K\"ahler. Pick a finite open cover $(U_i)$ of $X$ on which $\fa$ is generated by a finite subset $(f_{i\a})_\a$ of $\cO(U_i)$. Set 
$$
\f_{ij}:=\tfrac 12\log(\sum_\a|f_{i\a}|^2+e^{-2j}),
$$
and observe that, for all $i,k$, $|\f_{ij}-\f_{kj}|$ is uniformly bounded with respect to $j$ on each compact subset of $U_i\cap U_k$. Now choose a partition of unity subordinate to $(U_i)$, of the form $(\chi_i^2)$ with $\chi_i\in C^\infty_c(U_i)$, and set 
$$
\f_j:=\log\sum_i\chi_i^2 e^{\f_{ij}}. 
$$
As in~\cite[Lemma~3.5]{Demreg}, we find a K\"ahler form $\om$ such that $\ddc\f_j\ge -\om$ for all $j$, and the result follows. 

Assume now $X$ is a projective Berkovich space. Pick an ample line bundle $L$ such that $L\otimes\fa$ is generated by global sections $(s_\a)$, and choose a PL metric on $L$ with curvature form $\om\in\cZ_+$. Then setting $\f_j:=\max\{\log\max_\a|s_\a|,-j\}$ yields the result. 
\end{proof}

\begin{proof}[Proof of Theorem~\ref{thm:submean}] Assume first that the submean value property holds, and pick an $n$-dimensional irreducible component $Y$ of $X$. Applying Lemma~\ref{lem:submean} to the ideal sheaf of $Y$ in $X$ yields $\om\in\cZ_+$ with $[\om]\in\Pos(X)$ and a decreasing sequence $(\f_j)$ in $\cD_\om$ such that $\sup_Y\f_j\to-\infty$ while $(\f_j)$ is uniformly bounded on compact subsets of $X\setminus Y$. Since $\int_Y\om^n=[\om]^n\cdot [Y]>0$, we get $\int\f_j\,\mu_\om\to-\infty$, and hence $\sup \f_j\to-\infty$, by the submean value property. It follows that $X=Y$ is irreducible. 

Conversely, assume $X$ is irreducible. In the K\"ahler case, pick a K\"ahler form $\om$ on $X$, and choose a resolution of singularities $\pi\colon X'\to X$ with $X'$ a connected compact K\"ahler manifold, and pick a K\"ahler form $\om'$ on $X'$ such that $\om'\ge\pi^\star\om$. Since $\mu:=V_\om^{-1}\pi^\star\om^n=f\om'^n$ satisfies $\PSH(X',\om')\subset L^1(\mu)$, \cite[Proposition~2.7]{GZ} yields $C>0$ such that $\sup_{X'}\p\le\int\p\,\mu+C$ for all $\p\in\PSH(X',\om')$. Applying this to $\p=\pi^\star\f$ with $\f\in\cD_\om$ yields the submean value property. 

In the non-Archimedean case, pick an ample line bundle $L$ and a PL metric on $L$ determined by an ample model/test configuration $(\cX,\cL)$ of $(X,L)$, with curvature form $\om\in\cZ_+$. Each $\f\in\cD_\om$ satisfies $\sup \f=\max_\Ga\f$, where $\Ga\subset X$ is the (finite) set of Shilov points attached to $\cX$ (see~\cite[Proposition~4.22]{GM} or~\cite[Lemma~6.3]{BE}). On the other hand, $\Ga$ is also the support of $\mu_\om$, by definition of the measure $\om^n$ in terms of intersection numbers. Now~\cite[Theorem~2.21]{trivvalold} yields $C>0$ such that $|\f(x)-\f(y)|\le C$ for all $x,y\in\Ga$ and all $\f\in\cD_\om$, and we infer, as desired, $\sup \f\le\int\f\,\mu_\om+C$ for all $\f\in\cD_\om$.  
\end{proof}

%
%
%%%%%%%%%%%%%%%%%%%%%%%%%%%%%%%%%%%%%%%%%%%%%%%%%%%%%%%%%%%%%%%%%%%
%
\subsection{Monge--Amp\`ere energy and the Dirichlet functional}\label{sec:enfunc}
From now on we fix $\om\in\cZ_+$ with $[\om]\in\Pos(X)$. 

\begin{defi} The \emph{Monge--Amp\`ere energy} $\en_\om\colon\cD\to\R$ is defined by 
\begin{equation}\label{equ:Eom}
\en_\om(\f):=\frac{(\om,\f)^{n+1}}{(n+1)V_\om}. 
\end{equation}
\end{defi}
For all $\f,\p\in\cD$ we have 
\begin{equation}\label{equ:Evar}
\en_\om(\f)-\en_\om(\p)=\frac{1}{n+1}\sum_{j=0}^n V_\om^{-1}\int(\f-\p)\,\om_\f^j\wedge\om_\p^{n-j}. 
\end{equation}
By~\eqref{equ:enint}, this amounts indeed to the basic identity 
$$
(\om,\f)^{n+1}-(\om,\p)^{n+1}=(0,\f-\p)\cdot\sum_{j=0}^n(\om,\f)^j\cdot(\om,\p)^{n-j}. 
$$
Assume now $\f,\p\in\cD_\om$. By~\eqref{equ:Evar}, we then have 
\begin{equation}\label{equ:Emon}
\f\le\p\Longrightarrow\en_\om(\f)\le\en_\om(\p).
\end{equation}
Further, $\en_\om((1-t)\f+t\p)$ is a polynomial function of $t\in [0,1]$, with 
\begin{equation}\label{equ:deren}
\frac{d}{dt}\bigg|_{t=0}\en_\om((1-t)\f+t\p)=\int(\p-\f)\MA_\om(\f). 
\end{equation}
This characterizes $\en_\om$ as the unique primitive of the Monge--Amp\`ere operator that vanishes at $0\in\cD_\om$. By Lemma~\ref{lem:enconc}, $\en_\om$ is concave on $\cD_\om$, which translates into
\begin{equation}\label{equ:enconc}
\en_\om(\p)\le\en_\om(\f)+\int(\p-\f)\MA_\om(\f)
\end{equation}
for all $\f,\p\in\cD_\om$. 

\begin{lem}\label{lem:entrans} For any $\f\in\cD$, we have 
\begin{equation}\label{equ:entrans}
\en_{\om_\tau}(\f)=\en_{\om}(\f+\tau)-\en_\om(\tau).
\end{equation}
Further, $\f\in\cD_{\om_\tau}\Longleftrightarrow\f+\tau\in\cD_\om$. 
\end{lem}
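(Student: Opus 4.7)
The strategy is to reduce the identity $\en_{\om_\tau}(\f) = \en_\om(\f+\tau) - \en_\om(\tau)$ to the translation formula~\eqref{equ:pairingtrans} for the energy pairing, and to deduce the equivalence $\f \in \cD_{\om_\tau} \Leftrightarrow \f + \tau \in \cD_\om$ directly from the definition of the psh condition.

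For the first assertion, I will start by recalling that $V_{\om_\tau} = [\om_\tau]^n = [\om]^n = V_\om$, since the intersection pairing descends to Bott--Chern cohomology. In view of the definition $\en_\om(\f) = (\om,\f)^{n+1}/\big((n+1)V_\om\big)$, it therefore suffices to establish the identity
\begin{equation*}
(\om_\tau, \f)^{n+1} = (\om, \f+\tau)^{n+1} - (\om, \tau)^{n+1}.
\end{equation*}
This is exactly the content of~\eqref{equ:pairingtrans} applied with all $\theta_i = \om$, all $\tau_i = \tau$ and all $\f_i = \f$. Dividing by $(n+1)V_\om$ yields~\eqref{equ:entrans}.

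For the second assertion, I simply unfold the definitions: since $\ddc$ is linear,
\begin{equation*}
(\om_\tau)_\f = \om + \ddc\tau + \ddc\f = \om + \ddc(\tau + \f) = \om_{\tau+\f},
\end{equation*}
so $\f \in \cD_{\om_\tau}$ (i.e.\ $(\om_\tau)_\f \ge 0$) is equivalent to $\f + \tau \in \cD_\om$.

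I do not expect any real obstacle here: the whole statement is an immediate corollary of~\eqref{equ:pairingtrans}, which was already checked in Proposition-Definition~\ref{propdef:en}. The only mild subtlety is noting that $\en_{\om_\tau}$ is well-defined on all of $\cD$ (not only on $\cD_{\om_\tau}$) through formula~\eqref{equ:Eom}, so that~\eqref{equ:entrans} holds as an identity on $\cD$ without any positivity assumption on $\f$ or $\tau$.
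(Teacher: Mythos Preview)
Your proof is correct and follows exactly the same approach as the paper, which simply says the first point follows from~\eqref{equ:pairingtrans} and then unfolds the definition for the second. Your added remark that $V_{\om_\tau}=V_\om$ (since the intersection pairing descends to $\HBC(X)$) makes the reduction to~\eqref{equ:pairingtrans} explicit, but otherwise the arguments are identical.
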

\begin{proof} The first point follows from~\eqref{equ:pairingtrans}. Further, $\f\in\cD_{\om_\tau}$ iff $\om+\ddc\tau+\ddc\f\ge 0$, which is also equivalent to $\f+\tau\in\cD_\om$. 
\end{proof}

\begin{defi} We define the \emph{Dirichlet functional} $\jj_\om\colon\cD_\om\times\cD_\om\to\R_{\ge 0}$ by setting 
$$
\jj_\om(\f,\p)=\jj_\om(\f,\p):=\en_\om(\f)-\en_\om(\p)+\int(\p-\f)\MA_\om(\f). 
$$
\end{defi}
By concavity of $\en_\om$, $\jj_\om(\f,\p)$ is a convex function of $\p\in\cD_\om$. Note also that 
\begin{equation}\label{equ:IJ}
\jj_\om(\f,\p)+\jj_\om(\p,\f)=\int(\f-\p)(\MA_\om(\p)-\MA_\om(\f)).
\end{equation}
By~\eqref{equ:dapp}, we more explicitly have
\begin{equation}\label{equ:Dirichlet}
\jj_\om(\f,\p)=V_\om^{-1}\sum_{j=0}^{n-1}\frac{j+1}{n+1}\int(\f-\p)\,\ddc(\p-\f)\wedge\om_\f^j\wedge\om_\p^{n-1-j}. 
\end{equation}
We simply write
\begin{equation}\label{equ:J}
\jj_\om(\f):=\jj_\om(0,\f)=\int\f\,\mu_\om-\en_\om(\f). 
\end{equation}

\begin{exam} When $n=1$, we have
$$
\jj_\om(\f,\p)=\jj_\om(\p,\f)=\tfrac12\int(\f-\p)\ddc(\p-\f),
$$
which recovers the usual expression for the Dirichlet functional on a Riemann surface. 
\end{exam}

For each $R>0$ we set
\begin{equation}\label{equ:DR}
\cD_{\om,R}:=\{\f\in\cD_\om\mid\jj_\om(\f)\le R\}. 
\end{equation} 
\begin{lem}\label{lem:Jspan} For each $R>0$, $\cD_{\om,R}$ is a convex subset of $\cD$ that generates it. 
\end{lem}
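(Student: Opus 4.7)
My plan is to handle the two assertions separately. Convexity of $\cD_{\om,R}$ will follow directly from the concavity of $\en_\om$ already established in Lemma~\ref{lem:enconc}, and the spanning property will be reduced to Corollary~\ref{cor:pshspan} via a simple scaling argument based on the fact that $\jj_\om(0)=0$.

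For convexity, I would decompose $\jj_\om(\f) = \int\f\,\mu_\om - \en_\om(\f)$ into its linear part (trivially convex) and the term $-\en_\om$, which is convex on $\cD_\om$ since $\en_\om$ is concave there by Lemma~\ref{lem:enconc} (or equivalently~\eqref{equ:enconc}). Recalling that $\cD_\om$ itself is convex, $\jj_\om$ is a convex functional on a convex domain, and $\cD_{\om,R}$ is a sublevel set thereof, hence a convex subset of $\cD_\om$ and therefore of $\cD$.

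For the generating property, I would first observe that $0\in\cD_\om$ (by Example~\ref{exam:cstpsh}, since $\om\ge 0$) and that $\en_\om(0)=0$ directly from the definition~\eqref{equ:Eom}, so $\jj_\om(0)=0$. Convexity of $\jj_\om$ on $\cD_\om$ then yields $\jj_\om(s\f)\le s\,\jj_\om(\f)$ for every $\f\in\cD_\om$ and every $s\in[0,1]$; in particular, choosing $s>0$ small enough (depending on $\f$) places $s\f$ inside $\cD_{\om,R}$. Given any $f\in\cD$, Corollary~\ref{cor:pshspan} supplies a decomposition $f=f^+-f^-$ with $f^\pm\in t\cD_\om$ for some $t>0$; applying the scaling observation to $\f^\pm:=t^{-1}f^\pm\in\cD_\om$ writes each of them as a positive multiple of an element of $\cD_{\om,R}$, and hence $f$ itself as an $\R$-linear combination of elements of $\cD_{\om,R}$.

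I do not anticipate any real obstacle: the whole argument rests on already-proved ingredients, namely the concavity of the Monge--Amp\`ere energy, the normalization $\en_\om(0)=0$, and the spanning property of $\cD_\om$ from Corollary~\ref{cor:pshspan}.
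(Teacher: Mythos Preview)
Your proof is correct and follows essentially the same approach as the paper: convexity of $\cD_{\om,R}$ from convexity of $\jj_\om=\jj_\om(0,\cdot)$ on $\cD_\om$, and the spanning property from Corollary~\ref{cor:pshspan} combined with $\jj_\om(t\f)\to 0$ as $t\to 0$. The only cosmetic difference is that the paper obtains the latter by noting that $\jj_\om(t\f)$ is a polynomial in $t$ vanishing at $0$, whereas you use the convexity bound $\jj_\om(s\f)\le s\,\jj_\om(\f)$; both arguments are equally valid.
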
 
\begin{proof} By convexity of $\jj_\om=\jj_\om(0,\cdot)$, $\cD_{\om,R}$ is convex. Since $\cD_\om$ spans $\cD$ (see Corollary~\ref{cor:pshspan}), to see that $\cD_{\om,R}$ spans it suffices to show that any $\f\in\cD_\om$ satisfies $\jj_\om(t\f)\le R$ for $0<t\ll 1$, which holds since $\jj_\om(t\f)$ is a polynomial function of $t$ with $\jj_\om(0)=0$.  
\end{proof}

We may now collect the fundamental properties of the Dirichlet functional in the next result. 

\begin{thm}\label{thm:est} For all $\f,\f',\p,\p',\tau\in\cD_\om$ and $t\in[0,1]$, the following holds:
\begin{itemize}
\item \emph{quasi-symmetry:} 
\begin{equation}\label{equ:IJsum}
\jj_\om(\f,\p)\approx\jj_\om(\p,\f); 
\end{equation}
\item \emph{quasi-triangle inequality:}
\begin{equation}\label{equ:qtri}
\jj_\om(\f,\p)\lesssim\jj_\om(\f,\tau)+\jj_\om(\tau,\p);
\end{equation}
\item \emph{quadratic estimate:}
\begin{equation}\label{equ:Jquad}
\jj_\om(\f,(1-t)\f+t\p)\lesssim t^2 \jj_\om(\f,\p);
\end{equation}
\item \emph{uniform concavity:} 
$$
\en_\om((1-t)\f+t\p)-[(1-t)\en_\om(\f)+t \en_\om(\p)]\gtrsim t (1-t)\jj_\om(\f,\p). 
$$
\end{itemize}
For all $\f,\f',\p,\p'\in\cD_{\om,R}$, we further have the following H\"older estimates: 
\begin{equation}\label{equ:holdMA}
\left|\int(\f-\f')\left(\MA_\om(\p)-\MA_\om(\p')\right)\right|\lesssim\jj_\om(\f,\f')^\a \jj_\om(\p,\p')^{1/2} R^{1/2-\a}; 
\end{equation}
and
\begin{equation}\label{equ:holdJ}
\left|\jj_\om(\f,\p)-\jj_\om(\f',\p')\right|\lesssim\max\{\jj_\om(\f,\f'),\jj_\om(\p,\p')\}^\a R^{1-\a}, 
\end{equation}
where $\a:=2^{-n}$. 
\end{thm}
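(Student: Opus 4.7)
The plan is to derive all six estimates from the seminegativity \eqref{equ:semineg} together with the Cauchy--Schwarz toolkit isolated in Appendix~\ref{sec:CS}. Throughout, set
\[
T_j(\f,\p) := -V_\om^{-1}\int(\f-\p)\,\ddc(\f-\p)\wedge\om_\f^j\wedge\om_\p^{n-1-j}\ge 0,
\]
so that \eqref{equ:Dirichlet} reads $\jj_\om(\f,\p) = \sum_{j=0}^{n-1}\tfrac{j+1}{n+1}T_j(\f,\p)$. Since $T_j$ is symmetric in $(\f,\p)$, re-indexing $j\mapsto n-1-j$ also gives $\jj_\om(\p,\f) = \sum_{j=0}^{n-1}\tfrac{n-j}{n+1}T_j(\f,\p)$. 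Both weight sequences take values in $[\tfrac{1}{n+1},1]$, so $\jj_\om(\f,\p)\approx\jj_\om(\p,\f)\approx\sum_j T_j(\f,\p)$, establishing the quasi-symmetry \eqref{equ:IJsum} at once.

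For the quadratic estimate, set $\p_t := (1-t)\f+t\p$; then $\p_t\in\cD_\om$ since $\om_{\p_t}=(1-t)\om_\f+t\om_\p\ge 0$. Using $\f-\p_t=-t(\p-\f)$ and expanding $\om_{\p_t}^{n-1-j}$ by the binomial formula, a direct calculation yields
\[
T_j(\f,\p_t) = t^2\sum_{l=0}^{n-1-j}\binom{n-1-j}{l}(1-t)^{n-1-j-l}t^l\,T_{n-1-l}(\f,\p),
\]
a nonnegative combination of the $T_{n-1-l}(\f,\p)$. Summing over $j$ and bounding the binomials by $2^{n-1}$ proves \eqref{equ:Jquad}. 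The same expansion also yields uniform concavity via the identity
\[
D(t) := \en_\om(\p_t)-(1-t)\en_\om(\f)-t\en_\om(\p) = t\,\jj_\om(\f,\p)-\jj_\om(\f,\p_t),
\]
which follows at once from the definition of $\jj_\om$ at $\p$ and at $\p_t$. The coefficient of each $T_{n-1-l}(\f,\p)$ in $D(t)$ is then a polynomial in $t$ which vanishes at $t=0$ and at $t=1$ (the latter because $D(1)=0$), hence factors as $t(1-t)R_l(t)$; a direct polynomial computation shows $R_l(0)=n-l$, $R_l(1)=l+1$, and more generally $R_l\gtrsim 1$ on $[0,1]$, giving the desired uniform lower bound.

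The quasi-triangle inequality \eqref{equ:qtri} is the first place the Cauchy--Schwarz framework of Appendix~\ref{sec:CS} becomes essential. Expanding $T_j(\f,\p)$ using $\f-\p=(\f-\tau)+(\tau-\p)$, the cross term is dominated via Cauchy--Schwarz for the seminegative bilinear form on the positive background $\om_\f^j\wedge\om_\p^{n-1-j}$, leading to $T_j(\f,\p)\lesssim \widetilde T_j(\f,\tau)+\widetilde T_j(\tau,\p)$ where $\widetilde T_j$ still has background $\om_\f^j\wedge\om_\p^{n-1-j}$ rather than the correct $\om_\f^j\wedge\om_\tau^{n-1-j}$ (resp.\ $\om_\tau^j\wedge\om_\p^{n-1-j}$). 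Iteratively applying Cauchy--Schwarz to swap each factor $\om_\p$ for the correct one, using a $\tau$-background as a hinge, then produces the bound $\jj_\om(\f,\p)\lesssim\jj_\om(\f,\tau)+\jj_\om(\tau,\p)$.

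Finally, both H\"older estimates follow from the same iterated Cauchy--Schwarz bookkeeping. Starting from the telescoping identity
\[
\int(\f-\f')(\MA_\om(\p)-\MA_\om(\p')) = V_\om^{-1}\sum_{j=0}^{n-1}\int(\f-\f')\,\ddc(\p-\p')\wedge\om_\p^j\wedge\om_{\p'}^{n-1-j},
\]
a single Cauchy--Schwarz on each term gives an exponent $1/2$ on both $\jj_\om(\f,\f')$ and $\jj_\om(\p,\p')$, but with mismatched backgrounds $\om_\p,\om_{\p'}$. Correcting these by $n-1$ further rounds of Cauchy--Schwarz halves the $\jj_\om(\f,\f')$-exponent each time and produces \eqref{equ:holdMA} with $\a=2^{-n}$, the remaining powers of $T$ being absorbed into $R^{1/2-\a}$. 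The same procedure applied to the expansion \eqref{equ:Dirichlet} yields \eqref{equ:holdJ}. The bookkeeping of this iterated Cauchy--Schwarz, rather than any individual step, is the main technical obstacle, and is precisely what Appendix~\ref{sec:CS} is designed to handle.
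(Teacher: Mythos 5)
Your proposal follows essentially the same route as the paper: the paper's entire proof of Theorem~\ref{thm:est} is a reduction to Theorem~\ref{thm:CS} of Appendix~\ref{sec:CS} (applied to $V=\cZ\times\cD$, $P=\{(\theta,\f)\mid\f\in\cD_\theta\}$, $F(\theta,\f)=-(\theta,\f)^{n+1}/((n+1)V_\om)$), and everything you do --- the re-indexing for quasi-symmetry, the binomial expansion for the quadratic estimate, the iterated Cauchy--Schwarz with background-swapping for the quasi-triangle inequality and the H\"older estimates --- is precisely the content of that appendix unwound in the concrete setting. The computations you make explicit (the identity $T_j(\f,\p_t)=t^2\sum_l\binom{n-1-j}{l}(1-t)^{n-1-j-l}t^l\,T_{n-1-l}(\f,\p)$ and the identity $D(t)=t\,\jj_\om(\f,\p)-\jj_\om(\f,\p_t)$) are correct, and deferring the quasi-triangle and H\"older bookkeeping to Appendix~\ref{sec:CS} is legitimate, since that is exactly what the paper does.

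The one point where you argue on your own is uniform concavity, and there the key claim --- that each coefficient polynomial factors as $t(1-t)R_l(t)$ with $R_l\gtrsim 1$ on all of $[0,1]$ --- is asserted via ``a direct polynomial computation'' but not proved: evaluating $R_l$ at the endpoints does not control it on the interior, and for general $n$ its positivity there is not self-evident. The claim is true, and a clean way to close the gap is to differentiate twice along the segment: using~\eqref{equ:deren} and~\eqref{equ:semineg},
$$
-D''(t)=n\sum_{k=0}^{n-1}\binom{n-1}{k}(1-t)^{n-1-k}t^k\,T_{n-1-k}(\f,\p)\ge 0,
$$
so that $D(t)=\int_0^1G(t,s)\,(-D''(s))\,ds$ with the Dirichlet Green's function $G(t,s)=\min(t,s)(1-\max(t,s))\ge t(1-t)s(1-s)$; integrating $s^{k+1}(1-s)^{n-k}$ gives a positive constant for each $k$, whence $D(t)\gtrsim t(1-t)\sum_jT_j(\f,\p)\approx t(1-t)\jj_\om(\f,\p)$. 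With that repair the proposal is complete.
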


\begin{proof} In view of~\eqref{equ:negdef}, this is a direct consequence of Theorem~\ref{thm:CS} applied to
\begin{itemize}
\item the vector space $V:=\cZ\times\cD$ with projection $\pi\colon V\to\cZ$ onto the first factor;
\item the convex cone $P:=\{(\theta,\f)\in V\mid\f\in\cD_\theta\}$; 
\item the homogeneous polynomial $F\colon V\to\R$ defined by $F(\theta,\f):=-\frac{(\theta,\f)^{n+1}}{(n+1)V_\om}$. 
\end{itemize}
\end{proof}

As a simple consequence of~\eqref{equ:entrans}, we finally note: 
\begin{lem}\label{lem:Jtrans} For each $\tau\in\cD_\om$ and $\f,\p\in\cD_{\om_\tau}$ we have $\f+\tau,\p+\tau\in\cD_\om$ and 
\begin{equation}\label{equ:Jtrans}
\jj_{\om_\tau}(\f,\p)=\jj_\om(\f+\tau,\p+\tau). 
\end{equation}
\end{lem}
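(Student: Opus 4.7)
The plan is to reduce the identity directly to Lemma~\ref{lem:entrans} combined with the translation-invariance of the Monge--Amp\`ere operator under a shift of the reference form.

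First, the inclusion $\f+\tau,\p+\tau\in\cD_\om$ for $\f,\p\in\cD_{\om_\tau}$ is exactly the equivalence stated at the end of Lemma~\ref{lem:entrans}, so no extra work is needed for that clause. For the identity of Dirichlet functionals, I would unwind the definition
$$
\jj_{\om_\tau}(\f,\p)=\en_{\om_\tau}(\f)-\en_{\om_\tau}(\p)+\int(\p-\f)\,\MA_{\om_\tau}(\f)
$$
and handle the two parts in turn. The difference $\en_{\om_\tau}(\f)-\en_{\om_\tau}(\p)$ equals $\en_\om(\f+\tau)-\en_\om(\p+\tau)$ by applying \eqref{equ:entrans} twice, since the constants $-\en_\om(\tau)$ cancel.

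For the integral term, the key observation is that $\om_\tau+\ddc\f=\om+\ddc(\tau+\f)$, and that $[\om_\tau]=[\om]$ in $\HBC(X)=\cZ/\ddc\cD$, so $V_{\om_\tau}=[\om_\tau]^n=[\om]^n=V_\om$. Consequently $\MA_{\om_\tau}(\f)=V_{\om_\tau}^{-1}(\om_\tau+\ddc\f)^n=V_\om^{-1}(\om+\ddc(\f+\tau))^n=\MA_\om(\f+\tau)$, and the integral $\int(\p-\f)\MA_{\om_\tau}(\f)$ coincides with $\int((\p+\tau)-(\f+\tau))\MA_\om(\f+\tau)$. Assembling the two computations yields exactly $\jj_\om(\f+\tau,\p+\tau)$.

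This is routine bookkeeping, so there is no genuine obstacle; the only point worth flagging is the identity $\MA_{\om_\tau}(\f)=\MA_\om(\f+\tau)$, which depends on the class equality $[\om_\tau]=[\om]$ in Bott--Chern cohomology, and hence on the very definition of $\HBC(X)$ as $\cZ/\ddc\cD$.
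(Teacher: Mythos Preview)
Your argument is correct and is precisely the intended unwinding: the paper gives no explicit proof, simply presenting the lemma as ``a simple consequence of~\eqref{equ:entrans}'', and what you have written is the natural verification of that claim via the definition of $\jj_\om$ together with $\MA_{\om_\tau}(\f)=\MA_\om(\f+\tau)$.
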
 

\begin{rmk} The above formalism recovers that of~\cite{BBGZ,BBEGZ,nama,trivval} in the  K\"ahler and non-Archimedean settings. However, in contrast to those works, we do not explicitly introduce here the functional $\ii_\om(\f,\p)$, which corresponds to the right-hand side of~\eqref{equ:IJ}.
\end{rmk}
%
%
%%%%%%%%%%%%%%%%%%%%%%%%%%%%%%%%%%%%%%%%%%%%%%%%%%%%%%%%%%%%%%%%%%%
%
\section{Measures of finite energy}\label{sec:mes} 
In what follows, we pick $\om\in\cZ_+$ with $[\om]\in\Pos(X)$. We define the space $\cM^1_\om$ of measures of finite energy with respect to $\om$, and show, assuming a certain orthogonality property, that it is complete with respect to a quasi-metric $\d_\om$ induced by the Dirichlet functional. 
%
%
%%%%%%%%%%%%%%%%%%%%%%%%%%%%%%%%%%%%%%%%%%%%%%%%%%%%%%%%%%%%%%%%%%%
%
\subsection{The energy of a measure}

\begin{defi}\label{defi:enmes} We define the \emph{energy of $\mu\in\cM$ relative to $\p\in\cD_\om$} as 
\begin{equation}\label{equ:Jmuvar}
\jj_\om(\mu,\p):=\sup_{\f\in\cD_\om}\{\en_\om(\f)-\en_\om(\p)+\int(\p-\f)\mu\}\in[0,+\infty]. 
\end{equation}
\end{defi}
The choice of notation is justified by~\eqref{equ:JMA} below. When $\p=0$, we simply write 
\begin{equation}\label{equ:envar}
\jj_\om(\mu):=\jj_\om(\mu,0)=\sup_{\f\in\cD_\om}\{\en_\om(\f)-\int\f\,\mu\}, 
\end{equation}
and call it the \emph{energy\footnote{This corresponds to $\en_\om^\vee(\mu)$ in the notation of~\cite{BBGZ,trivval}, and to $\|\mu\|_\om$ in that of~\cite{nakstab2}.} of $\mu$} (with respect to $\om$). Note that 
\begin{equation}\label{equ:envar2}
\jj_\om(\mu)=\sup_{\f\in\cD_\om}\{\int\f(\mu_\om-\mu)-\jj_\om(\f)\}, 
\end{equation}
by~\eqref{equ:J}. 
\begin{prop}\label{prop:Jmu} For each $\p\in\cD_\om$, the functional $\jj_\om(\cdot,\p)\colon\cM\to[0,+\infty]$ is convex and weakly lsc, and satisfies, for all $\mu\in\cM$ and $\f\in\cD_\om$, 
\begin{equation}\label{equ:Jmu}
\jj_\om(\mu,\p)=\jj_\om(\mu)+\int\p\,\mu-\en_\om(\p);
\end{equation}
\begin{equation}\label{equ:JMA}
\jj_\om(\MA_\om(\f),\p)=\jj_\om(\f,\p);
\end{equation} 
\begin{equation}\label{equ:JMA2}
\jj_\om(\MA_\om(\f))\approx\jj_\om(\f);
\end{equation}
\begin{equation}\label{equ:Jmutri}
\jj_\om(\f,\p)\lesssim\jj_\om(\mu,\f)+\jj_\om(\mu,\p). 
\end{equation} 
\end{prop}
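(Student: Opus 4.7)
The five assertions split into a formal block (convexity and weak lower semicontinuity of $\jj_\om(\cdot,\p)$, \eqref{equ:Jmu}, \eqref{equ:JMA} and \eqref{equ:JMA2}), obtained by unwinding the variational definition \eqref{equ:Jmuvar} in combination with the concavity of $\en_\om$ established in~\S\ref{sec:enfunc}, and the quasi-triangle inequality \eqref{equ:Jmutri}, which is where the real work lies.

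\textbf{The formal part.} For convexity and weak lsc of $\jj_\om(\cdot,\p)$ on $\cM$, I would observe that for each fixed $\f\in\cD_\om$ the map $\mu\mapsto\en_\om(\f)-\en_\om(\p)+\int(\p-\f)\mu$ is affine and weakly continuous, so that the supremum in \eqref{equ:Jmuvar} is automatically convex and lsc. To obtain \eqref{equ:Jmu} I would simply pull the $\p$-dependent terms out of the sup and recognise what remains as $\jj_\om(\mu)$. For \eqref{equ:JMA}, I would apply the concavity inequality \eqref{equ:enconc} in the form $\en_\om(\tau)-\en_\om(\f)\leq\int(\tau-\f)\MA_\om(\f)$ inside the supremum $\sup_\tau\{\en_\om(\tau)-\en_\om(\p)+\int(\p-\tau)\MA_\om(\f)\}$; each summand is then bounded above by $\en_\om(\f)-\en_\om(\p)+\int(\p-\f)\MA_\om(\f)=\jj_\om(\f,\p)$, with equality at $\tau=\f$. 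Finally \eqref{equ:JMA2} is \eqref{equ:JMA} specialised to $\p=0$, combined with the quasi-symmetry \eqref{equ:IJsum} of the Dirichlet functional.

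\textbf{The quasi-triangle inequality \eqref{equ:Jmutri}.} This is the step I expect to be the main obstacle. The naive substitutions $\tau=\p$ in $\jj_\om(\mu,\f)$ and $\tau=\f$ in $\jj_\om(\mu,\p)$ only yield the vacuous lower bound $\jj_\om(\mu,\f)+\jj_\om(\mu,\p)\geq 0$. Instead, I would exploit the convexity of $\cD_\om$ and insert the midpoint $\tau=\tfrac{1}{2}(\f+\p)\in\cD_\om$ into the variational definition of $\jj_\om(\mu,\f)$. Applying the uniform concavity of $\en_\om$ from Theorem~\ref{thm:est} at $t=1/2$ yields
\[
\en_\om\bigl(\tfrac{1}{2}(\f+\p)\bigr)-\tfrac{1}{2}\bigl(\en_\om(\f)+\en_\om(\p)\bigr)\gtrsim\jj_\om(\f,\p),
\]
so combining with the identity $\en_\om(\p)-\en_\om(\f)+\int(\f-\p)\mu=\jj_\om(\mu,\f)-\jj_\om(\mu,\p)$ (an immediate consequence of \eqref{equ:Jmu}) produces
\[
\jj_\om(\mu,\f)\gtrsim\tfrac{1}{2}\bigl(\jj_\om(\mu,\f)-\jj_\om(\mu,\p)\bigr)+\jj_\om(\f,\p).
\]
Rearranging gives $\jj_\om(\f,\p)\lesssim\jj_\om(\mu,\f)+\jj_\om(\mu,\p)$, the case where the right-hand side is infinite being trivial. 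The only conceptual input beyond formal manipulations is thus the interaction between the midpoint test function and the uniform strong concavity of the Monge--Amp\`ere energy; once one commits to $\tau=\tfrac12(\f+\p)$, the estimate is essentially forced.
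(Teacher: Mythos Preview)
Your proposal is correct and follows essentially the same approach as the paper. The only cosmetic difference is in the bookkeeping for~\eqref{equ:Jmutri}: the paper inserts the midpoint $\tau=\tfrac12(\f+\p)$ into \emph{both} variational expressions $\jj_\om(\mu,\f)$ and $\jj_\om(\mu,\p)$ and adds, so the $\mu$-integrals cancel directly, whereas you insert it into only one and then invoke the identity coming from~\eqref{equ:Jmu} to eliminate the $\mu$-dependence; the two computations are algebraically equivalent and rely on the same key input (uniform concavity from Theorem~\ref{thm:est}).
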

\begin{proof} Convexity and lower semicontinuity are clear from~\eqref{equ:Jmuvar}, which also directly yields~\eqref{equ:Jmu}. By~\eqref{equ:enconc}, any $\f\in\cD_\om$ further computes the supremum defining $\jj_\om(\MA_\om(\f),\p)$, which is thus equal to 
$$
\en_\om(\f)-\en_\om(\p)+\int(\p-\f)\MA_\om(\f)=\jj_\om(\f,\p).
$$ 
This proves~\eqref{equ:JMA}, which implies 
$$
\jj_\om(\MA_\om(\f))=\jj_\om(\MA_\om(\f),0)=\jj_\om(\f,0)\approx\jj_\om(0,\f)=\jj_\om(\f),
$$
see~\eqref{equ:IJsum}. Finally, pick $\mu\in\cM$, and set $\tau=\tfrac 12(\f+\p)\in\cD_\om$. By~\eqref{equ:Jmuvar}, we have 
$$
\jj_\om(\mu,\f)\ge\en_\om(\tau)-\en_\om(\f)+\int(\f-\tau)\,\mu,\quad\jj_\om(\mu,\p)\ge\en_\om(\tau)-\en_\om(\p)+\int(\p-\tau)\,\mu, 
$$
and hence
$$
\jj_\om(\mu,\f)+\jj_\om(\mu,\p)\ge2\en_\om(\tau)-(\en_\om(\f)+\en_\om(\p)). 
$$
On the other hand, 
$$
2\en_\om(\tau)-(\en_\om(\f)+\en_\om(\p))\gtrsim\jj_\om(\f,\p)
$$
by uniform concavity of $\en_\om$ (see Theorem~\ref{thm:est}), and~\eqref{equ:Jmutri} follows. 
\end{proof}

Generalizing Lemma~\ref{lem:Jtrans}, we note: 
\begin{lem}\label{lem:Jmestrans} For all $\tau\in\cD_\om$, $\p\in\cD_{\om_\tau}$ and $\mu\in\cM$ we have 
\begin{equation}\label{equ:Jmestrans}
\jj_{\om_\tau}(\mu,\p)=\jj_\om(\mu,\p+\tau). 
\end{equation}
In particular, 
\begin{equation}\label{equ:Jmestrans2}
\jj_{\om_\tau}(\mu)=\jj_\om(\mu)+\int\tau\,\mu-\en_\om(\tau).
\end{equation}
\end{lem}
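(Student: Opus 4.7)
The plan is to reduce the first identity to a direct change of variables in the supremum defining $\jj_{\om_\tau}(\mu,\p)$, using the transformation properties of $\en_\om$ and $\cD_\om$ under the translation $\om\mapsto\om_\tau$ already recorded in Lemma~\ref{lem:entrans}. The second identity will then follow by specialization.

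More concretely, by Lemma~\ref{lem:entrans}, the map $\f\mapsto\tf:=\f+\tau$ is a bijection from $\cD_{\om_\tau}$ onto $\cD_\om$, and $\en_{\om_\tau}(\f)=\en_\om(\tf)-\en_\om(\tau)$ for every such $\f$. Applied to both $\f\in\cD_{\om_\tau}$ and to the fixed $\p\in\cD_{\om_\tau}$, this gives
\begin{align*}
\jj_{\om_\tau}(\mu,\p)
&=\sup_{\f\in\cD_{\om_\tau}}\left\{\en_{\om_\tau}(\f)-\en_{\om_\tau}(\p)+\int(\p-\f)\,\mu\right\}\\
&=\sup_{\tf\in\cD_\om}\left\{\en_\om(\tf)-\en_\om(\p+\tau)+\int\bigl((\p+\tau)-\tf\bigr)\,\mu\right\}\\
&=\jj_\om(\mu,\p+\tau),
\end{align*}
where the constants $-\en_\om(\tau)$ cancel and $\p-\f=(\p+\tau)-\tf$. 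This proves~\eqref{equ:Jmestrans}.

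For~\eqref{equ:Jmestrans2}, specialize $\p=0\in\cD_{\om_\tau}$ (which is legitimate since $\om_\tau\ge 0$). Applying~\eqref{equ:Jmu} with $\om$ replaced by $\om_\tau$ gives $\jj_{\om_\tau}(\mu,0)=\jj_{\om_\tau}(\mu)-\en_{\om_\tau}(0)=\jj_{\om_\tau}(\mu)$, since $\en_{\om_\tau}(0)=\en_\om(\tau)-\en_\om(\tau)=0$ by Lemma~\ref{lem:entrans}. On the other hand, the first identity gives $\jj_{\om_\tau}(\mu,0)=\jj_\om(\mu,\tau)$, and a second application of~\eqref{equ:Jmu} yields $\jj_\om(\mu,\tau)=\jj_\om(\mu)+\int\tau\,\mu-\en_\om(\tau)$. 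Combining these gives the stated formula.

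No step appears delicate: everything reduces to the affine transformation formula for $\en_\om$ and the formal identity~\eqref{equ:Jmu}. The only point worth checking carefully is the bookkeeping of the constant $\en_\om(\tau)$, which drops out because it appears with opposite signs in the transformed energies of $\f$ and $\p$.
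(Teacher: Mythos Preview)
Your proof is correct and follows essentially the same approach as the paper: both use Lemma~\ref{lem:entrans} to rewrite the defining supremum via the bijection $\f\mapsto\f+\tau$ between $\cD_{\om_\tau}$ and $\cD_\om$, and then deduce~\eqref{equ:Jmestrans2} by specializing to $\p=0$ and invoking~\eqref{equ:Jmu}. Your version is simply more explicit about the cancellation of the constant $\en_\om(\tau)$ and the legitimacy of $0\in\cD_{\om_\tau}$.
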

\begin{proof} By Lemma~\ref{lem:entrans}, we have $\f\in\cD_{\om_\tau}\Leftrightarrow\f+\tau\in\cD_\om$, and
$$
\en_{\om_\tau}(\f)-\en_{\om_\tau}(\p)+\int(\f-\p)\mu=\en_\om(\f+\tau)-\en_\om(\p+\tau)+\int((\f+\tau)-(\p+\tau))\mu. 
$$
Taking the sup over $\f$ yields~\eqref{equ:Jmestrans}, and~\eqref{equ:Jmestrans2} follows, by~\eqref{equ:Jmu}. 
\end{proof}

\begin{rmk}\label{rmk:Jnonpos} If we drop the assumption that $\om\ge 0$, but still require $[\om]\in\Pos(X)$, then $\en_\om(\f)$ and $\jj_\om(\mu)$ can still defined by~\eqref{equ:Eom} and~\eqref{equ:envar}, respectively. Then Lemma~\ref{lem:entrans}, and hence~\eqref{equ:Jmestrans2}, remain valid for any $\tau\in\cD$. This will only get used in the context of Theorem~\ref{thm:diffen} below. 
\end{rmk}

%
%
%%%%%%%%%%%%%%%%%%%%%%%%%%%%%%%%%%%%%%%%%%%%%%%%%%%%%%%%%%%%%%%%%%%
%
\subsection{Measures of finite energy}

\begin{defi}\label{defi:M1} The space of \emph{measures of finite energy (with respect to $\om$)} is defined as
$$
\cM^1_\om:=\{\mu\in\cM\mid\jj_\om(\mu)<\infty\}.
$$
It is endowed with the \emph{strong topology}, defined as the coarsest refinement of the weak topology in which $\jj_\om\colon\cM^1_\om\to\R_{\ge 0}$ becomes continuous. 
\end{defi}
In other words, a net $(\mu_i)$ converges strongly to $\mu$ in $\cM^1_\om$ iff $\mu_i\to\mu$ weakly in $\cM$ and $\jj_\om(\mu_i)\to\jj_\om(\mu)$. For any $R>0$ we also set
\begin{equation}\label{equ:M1J}
 \cM^1_{\om,R}:=\{\mu\in\cM^1_\om\mid\jj_\om(\mu)\le R\}.
 \end{equation}
By Proposition~\ref{prop:Jmu}, this set is convex and weakly compact. By~\eqref{equ:Jmu}, 
$$
\jj_\om(\cdot,\p)\colon\cM^1_\om\to\R_{\ge 0}
$$ 
is continuous in the strong topology for any $\p\in\cD_\om$. By Lemma~\ref{lem:Jmestrans}, this yields: 

\begin{prop}\label{prop:M1ind} The topological space $\cM^1_\om$ only depends on the positive class $[\om]\in\Pos(X)$.
\end{prop}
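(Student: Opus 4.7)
The proof should be a direct application of the translation identity \eqref{equ:Jmestrans2} in Lemma~\ref{lem:Jmestrans}, combined with the fact that the map $\mu\mapsto\int\tau\,\mu$ is weakly continuous on $\cM$ for any test function $\tau\in\cD$. My plan is as follows.

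Fix $\a\in\Pos(X)$ and let $\om,\om'\in\cZ_+$ be two representatives of $\a$. Write $\om'-\om=\ddc\tau$ for some $\tau\in\cD$. Since $\om'=\om+\ddc\tau\ge 0$, the function $\tau$ lies in $\cD_\om$ by definition, so we may legitimately apply Lemma~\ref{lem:Jmestrans} with this $\tau$. Equation \eqref{equ:Jmestrans2} then gives, for every $\mu\in\cM$,
\[
\jj_{\om'}(\mu)=\jj_\om(\mu)+\int\tau\,\mu-\en_\om(\tau).
\]
Since $\tau$ is bounded and continuous on the compact space $X$, the functional $\mu\mapsto\int\tau\,\mu-\en_\om(\tau)$ is weakly continuous on $\cM$ and uniformly bounded. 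In particular $\jj_{\om'}(\mu)<\infty\iff\jj_\om(\mu)<\infty$, proving that $\cM^1_{\om'}=\cM^1_\om$ as subsets of $\cM$.

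For the topologies, recall that the strong topology on $\cM^1_\om$ is the coarsest refinement of the weak topology making $\jj_\om$ continuous, and similarly for $\cM^1_{\om'}$. From the identity above, $\jj_{\om'}$ differs from $\jj_\om$ by a weakly continuous functional, so along any net $(\mu_i)$ in $\cM^1_\om$ converging weakly to $\mu\in\cM^1_\om$, one has $\jj_\om(\mu_i)\to\jj_\om(\mu)$ if and only if $\jj_{\om'}(\mu_i)\to\jj_{\om'}(\mu)$. Hence the two strong topologies agree on the common set $\cM^1_\om=\cM^1_{\om'}$.

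There is really no obstacle here once Lemma~\ref{lem:Jmestrans} is in hand; the only subtle point worth spelling out is that the hypothesis $\om'\ge 0$ automatically places $\tau$ in the relevant class $\cD_\om$, so that \eqref{equ:Jmestrans2} is applicable without any additional positivity adjustment on~$\tau$.
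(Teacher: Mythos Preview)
Your proof is correct and follows exactly the approach indicated in the paper: the paper derives Proposition~\ref{prop:M1ind} directly from Lemma~\ref{lem:Jmestrans} (specifically~\eqref{equ:Jmestrans2}), noting that $\jj_{\om_\tau}$ differs from $\jj_\om$ by the weakly continuous affine functional $\mu\mapsto\int\tau\,\mu-\en_\om(\tau)$. Your write-up simply makes explicit the details the paper leaves implicit, including the observation that $\om'=\om+\ddc\tau\ge 0$ forces $\tau\in\cD_\om$.
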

One should be wary of the fact that, in the present generality, even a `nice' probability measure of the form $\mu=\theta_1\winter\theta_n$ with $\theta_i\in\cZ_+$ need not be of finite energy with respect to $\om$ in general (see however Theorem~\ref{thm:mixedMA} below): 

\begin{exam}\label{exam:masscompo} Let $X$ be either a compact K\"ahler or projective Berkovich space. For each irreducible component $Y$ of $X$ and each $\mu\in\cM^1_\om$, we then have 
\begin{equation}\label{equ:masscomp}
\mu(Y)=\frac{[\om|_Y]^n}{[\om]^n}. 
\end{equation}
Indeed, this is proved in~\cite[Corollary~9.13]{trivval} in the trivially valued case, and the proof can be adapted to the general case. Now~\eqref{equ:masscomp} fails in general for $\mu=\mu_{\om'}$ with $\om'\in\cZ_+$ such that $[\om]\ne [\om']\in\Pos(X)$, and hence $\cM^1_{\om'}\ne\cM^1_\om$. 
\end{exam} 

By~\eqref{equ:envar2}, we have, for all $\f\in\cD_\om$ and $\mu\in\cM$, 
$$
\int\f\,(\mu_\om-\mu)\le\jj_\om(\f)+\jj_\om(\mu).
$$
The following converse will come in handy. 

\begin{lem}\label{lem:finen} Assume that $\mu\in\cM$ satisfies
$$
S:=\sup_{\f\in\cD_{\om,R}}\int\f(\mu_\om-\mu)<\infty
$$
for some $R>0$. Then $\mu$ has finite energy, and 
\begin{equation}\label{equ:JS}
\jj_\om(\mu)\lesssim S(1+R^{-1}S).
\end{equation}
\end{lem}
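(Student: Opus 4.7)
The plan is to leverage the variational formula~\eqref{equ:envar2},
$$
\jj_\om(\mu)=\sup_{\f\in\cD_\om}\left\{\int\f(\mu_\om-\mu)-\jj_\om(\f)\right\},
$$
by exhibiting, for any given $\f\in\cD_\om$, a rescaling $t\f$ that lies in $\cD_{\om,R}$. The hypothesis then controls $\int(t\f)(\mu_\om-\mu)$, and undoing the scaling will yield the desired bound.

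First I would observe that for any $\f\in\cD_\om$ and $t\in[0,1]$, we have $t\f\in\cD_\om$, since
$$
\om+\ddc(t\f)=(1-t)\om+t\,\om_\f\ge 0.
$$
Applying the quadratic estimate~\eqref{equ:Jquad} with the roles of $\f,\p$ played by $0$ and $\f$, and using $\jj_\om(\cdot)=\jj_\om(0,\cdot)$, I get $\jj_\om(t\f)\le C\,t^2\,J$, where $J:=\jj_\om(\f)$ and $C$ is the implicit constant in~\eqref{equ:Jquad}. In particular, for
$$
t_*:=\min\bigl\{1,\sqrt{R/(CJ)}\bigr\}
$$
(with the convention $t_*=1$ if $J=0$), one has $t_*\f\in\cD_{\om,R}$, so by hypothesis
$$
\int(t_*\f)(\mu_\om-\mu)\le S,\quad\text{hence}\quad\int\f(\mu_\om-\mu)\le S/t_*=\max\bigl\{S,\,S\sqrt{CJ/R}\bigr\}.
$$

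Next I would bound $\int\f(\mu_\om-\mu)-J$ by a short case analysis on $J$. When $J\le R/C$, the bound reads $S-J\le S$. When $J>R/C$, it reads $S\sqrt{CJ/R}-J$, a concave function of $\sqrt{J}$ whose global maximum is attained at $J=S^2C/(4R)$ with maximal value $S^2C/(4R)$. In either case,
$$
\int\f(\mu_\om-\mu)-J\lesssim S+S^2/R=S(1+R^{-1}S).
$$
Taking the supremum over $\f\in\cD_\om$ yields both the finiteness of $\jj_\om(\mu)$ and the estimate~\eqref{equ:JS}.

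There is no serious obstacle here: the only non-trivial input is the quadratic estimate for $\jj_\om$ provided by Theorem~\ref{thm:est}, which is precisely what enables the scaling argument. One should just take a little care to treat uniformly the two regimes $J\le R/C$ and $J>R/C$, to ensure that the resulting constant depends only on $n$.
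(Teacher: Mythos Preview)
Your proof is correct and follows essentially the same approach as the paper's: rescale $\f$ by a factor $t\in[0,1]$ to force $t\f\in\cD_{\om,R}$ via the quadratic estimate~\eqref{equ:Jquad}, then optimize. The paper's version is slightly more streamlined---it avoids the explicit case split by writing $a:=1/t_*$ as $1\le a\lesssim 1+(R^{-1}J)^{1/2}$ and then invoking the elementary identity $\sup_{y\ge 0}(xy^{1/2}-y)=x^2/4$---but your case analysis achieves the same end with the same constants.
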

\begin{proof} Pick $\f\in\cD_\om$ and set $J:=\jj_\om(\f)$. By~\eqref{equ:Jquad}, we have $\jj_\om(t\f)\lesssim t^2 J$ for any $t\in [0,1]$, and we can thus choose 
$$
1\le a\lesssim 1+(R^{-1} J)^{1/2}
$$
such that $\jj_\om(a^{-1}\f)\le R$. By assumption, we then have $\int a^{-1}\f\,(\mu_\om-\mu)\le S$, and hence 
$$
\en_\om(\f)-\int\f\,\mu=\int\f\,(\mu_\om-\mu)-\jj_\om(\f)\le a S-J
$$
$$
\lesssim S+S R^{-1/2}J^{1/2}-J\le S+\tfrac 14 S^2 R^{-1}, 
$$
where the last inequality follows from the elementary estimate $\sup_{y\ge 0}(x y^{1/2}-y)=x^2/4$ for any $x\ge 0$. Taking the supremum over $\f$ yields~\eqref{equ:JS}. 
\end{proof}

%
%%%%%%%%%%%%%%%%%%%%%%%%%%%%%%%%%%%%%%%%%%%%%%%%%%%%%%%%%%%%%%%%%%%
%
\subsection{Legendre transform of the energy} 

Here we compute the Legendre transform of the convex functional $\jj_\om=\jj_\om(\cdot,0)\colon\cM\to[0,+\infty]$. 

\begin{defi}\label{defi:ten} For any $f\in\Cz(X)$ we set 
$$
\ten_\om(f):=\sup_{f\ge\f\in\cD_\om}\en_\om(\f).
$$
\end{defi}
By monotonicity of $\en_\om$ on $\cD_\om$ (see~\eqref{equ:Emon}), the functional $\ten_\om\colon\Cz(X)\to\R$ so defined restricts to $\en_\om$ on $\cD_\om$. Like the latter, $\ten_\om$ is further concave, monotone increasing, and equivariant with respect to translation, \ie 
$$
\ten_\om(f+c)=\ten_\om(f)+c\text{ for }c\in\R.
$$

\begin{prop}\label{prop:enleg} For all $f\in\Cz(X)$ and $\mu\in\cM$ we have 
\begin{equation}\label{equ:legdual}
\ten_\om(f)=\inf_{\nu\in\cM}\{\jj_\om(\nu)+\int f\,\nu\};\quad\jj_\om(\mu)=\sup_{g\in\Cz(X)}\{\ten_\om(g)-\int g\,\mu\}.
\end{equation}

\end{prop}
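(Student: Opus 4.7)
The plan is to realize $\ten_\om$ and $\jj_\om$ as a Legendre-conjugate pair on $C^0(X)$ and its dual (signed Radon measures), and then derive the duality from Fenchel--Moreau. I would first dispatch the second equality by a direct comparison. Fix $\mu\in\cM$. For any $g\in C^0(X)$ and any $\f\in\cD_\om$ with $\f\le g$, nonnegativity of $\mu$ gives $\int\f\,\mu\le\int g\,\mu$, hence
\[
\en_\om(\f)-\int g\,\mu\le\en_\om(\f)-\int\f\,\mu\le\jj_\om(\mu);
\]
taking the supremum first over $\f$ (which recovers $\ten_\om(g)-\int g\,\mu$) and then over $g$ yields $\sup_g\{\ten_\om(g)-\int g\,\mu\}\le\jj_\om(\mu)$. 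The converse inequality is immediate: $\ten_\om(\f)\ge\en_\om(\f)$ for any $\f\in\cD_\om$ (plugging $\f$ into the defining supremum), so restricting the outer sup to $g\in\cD_\om\subset C^0(X)$ yields at least $\sup_{\f\in\cD_\om}\{\en_\om(\f)-\int\f\,\mu\}=\jj_\om(\mu)$.

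\textbf{Structural properties of $\ten_\om$.} For the first identity I would collect the properties of $\ten_\om\colon C^0(X)\to\R$ needed to invoke Fenchel--Moreau: (i) $\ten_\om$ is finite, with upper bound $\ten_\om(f)\le\sup f$ coming from monotonicity and $\en_\om(\sup f)=\sup f$ (a consequence of \eqref{equ:encst}), and a lower bound via any $\f_0\in\cD_\om$ shifted by a constant (using Corollary~\ref{cor:pshspan} to produce one); (ii) $\ten_\om$ is translation equivariant, $\ten_\om(f+c)=\ten_\om(f)+c$, which together with monotonicity makes it $1$-Lipschitz in the sup norm, hence continuous; (iii) $\ten_\om$ is concave, since for $\f_i\in\cD_\om$ with $\f_i\le f_i$, the combination $(1-t)\f_1+t\f_2$ lies in $\cD_\om$ by convexity, is bounded above by $(1-t)f_1+t f_2$, and $\en_\om$ is concave on $\cD_\om$ by Lemma~\ref{lem:enconc}.

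\textbf{Fenchel--Moreau and restriction to $\cM$.} With $\ten_\om$ finite, continuous and concave on the Banach space $C^0(X)$, Fenchel--Moreau duality yields
\[
\ten_\om(f)=\inf_{\nu}\{\textstyle\int f\,\nu-\ten_\om^\vee(\nu)\},\quad\ten_\om^\vee(\nu):=\inf_g\{\textstyle\int g\,\nu-\ten_\om(g)\},
\]
where $\nu$ ranges over signed Radon measures, and by the second identity $\ten_\om^\vee(\mu)=-\jj_\om(\mu)$ for $\mu\in\cM$. The remaining point, which is the main piece of bookkeeping, is to show $\ten_\om^\vee(\nu)=-\infty$ for any signed $\nu$ outside $\cM$: if $\nu$ has total mass $t\ne 1$, translation equivariance drives
\[
\int(g+c)\,\nu-\ten_\om(g+c)=\int g\,\nu-\ten_\om(g)+c(t-1)\to-\infty\quad\text{as }c\to\pm\infty;
\]
if $\nu$ has mass $1$ but a nontrivial negative part, pick a continuous $g\ge 0$ with $\int g\,\nu<0$ and use $\ten_\om(\lambda g)\ge\ten_\om(0)=0$ (monotonicity) to conclude $\int\lambda g\,\nu-\ten_\om(\lambda g)\to-\infty$ as $\lambda\to+\infty$. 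The infimum thus reduces to $\nu\in\cM$, giving the first identity. The only real conceptual step is checking the concavity and continuity of $\ten_\om$ needed to legitimize Fenchel--Moreau; the reduction to probability measures is routine.
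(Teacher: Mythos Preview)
Your proof is correct and follows essentially the same route as the paper's: both prove the identity $\sup_{g\in\Cz(X)}\{\ten_\om(g)-\int g\,\mu\}=\jj_\om(\mu)$ by the two-sided comparison you give, reduce the infimum over signed measures to $\cM$ via monotonicity and translation equivariance of $\ten_\om$, and then invoke Fenchel--Moreau for the first identity. You are simply more explicit about verifying the concavity and continuity hypotheses (which the paper records just before the proposition and then uses without further comment), and you reverse the order of the two steps; otherwise the arguments coincide.
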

\begin{proof} Define the (convex) Legendre transform $\ten_\om^\vee\colon\Cz(X)^\vee\to\R\cup\{+\infty\}$ as the right-hand side of~\eqref{equ:legdual}, \ie 
$$
\ten_\om^\vee(\mu):=\sup_{g\in\Cz(X)}\{\ten_\om(g)-\int f\,\mu\}. 
$$
Since $\ten_\om$ is increasing and equivariant, it is straightforward to see that $\ten_\om^\vee(\mu)<\infty$ implies $\mu\ge 0$ and $\int\mu=1$, \ie $\mu\in\cM$ (compare~\cite[Proposition~9.8]{trivval}). By Legendre duality, the result is thus equivalent to $\ten_\om^\vee(\mu)=\jj_\om(\mu)$ for $\mu\in\cM$. Since $\ten_\om$ restricts to $\en_\om$ on $\cD_\om$, we trivially have $\ten_\om^\vee(\mu)\ge\jj_\om(\mu)$. Conversely, pick $f\in\Cz(X)$ and $\f\in\cD_\om$ with $\f\le f$. Then 
$$
\jj_\om(\mu)\ge\en_\om(\f)-\int\f\,\mu\ge\en_\om(\f)-\int f\,\mu,
$$
where the first and second inequality respectively follow from \eqref{equ:envar} and $\f\le f$. Taking the supremum over $\f$ and then over $f$ yields $\jj_\om(\mu)\ge\ten_\om(f)-\int f\,\mu$ and $\jj_\om(\mu)\ge\ten_\om^\vee(\mu)$. 
\end{proof}

%
%
%%%%%%%%%%%%%%%%%%%%%%%%%%%%%%%%%%%%%%%%%%%%%%%%%%%%%%%%%%%%%%%%%%%
%
\subsection{Orthogonality and differentiability}

\begin{defi} We say that $\cD_\om$ \emph{admits maxima} if, for all $\f,\p\in\cD_\om$ and $f\in\cD$ such that $\max\{\f,\p\}<f$ pointwise on $X$, there exists $\tau\in\cD_\om$ with $\max\{\f,\p\}\le\tau<f$. 
\end{defi}
This equivalently means that, for any $f\in\cD$, the poset 
$$
\cD_{\om,<f}:=\{\f\in\cD_\om\mid\f<f\}
$$ 
is inductive. We can then consider limits of nets indexed by $\cD_{\om,<f}$. For instance, note that
\begin{equation}\label{equ:tenlim}
\ten_\om(f)=\lim_{\f\in\cD_{\om,<f}}\en_\om(\f). 
\end{equation}

\begin{exam} If $X$ is a compact K\"ahler space, then $\cD_\om$ admits maxima: take $\tau:=\widetilde\max(\f,\p)$ for an appropriate regularized max function $\widetilde\max$. 
\end{exam}

\begin{exam} If $X$ is a projective Berkovich space, then $\cD_\om$ also admits maxima, since $\Q$-PL functions in $\cD_\om$ are dense in $\cD_\om$, and stable under max.
\end{exam}

\begin{rmk} While we will not pursue this direction here, one can introduce as in~\cite{siminag,BE,trivval} the space $\PSH(\om)$ of $\om$-psh functions $\f\colon X\to\R\cup\{-\infty\}$, defined as usc functions that can be obtained as pointwise limits of decreasing nets in $\cD_\om$, and such that $\int\f\,\mu_\om>-\infty$. Then $\cD_\om$ admits maxima iff $\PSH(\om)$ (or, equivalently, the subspace $\CPSH(\om):=\PSH(\om)\cap\Cz(X)$ of continuous $\om$-phs functions) is stable under max. 
\end{rmk}

\begin{defi}\label{defi:ortho} We say that $\om$ has the \emph{orthogonality property} if $\cD_\om$ admits maxima and 
\begin{equation}\label{equ:ortho}
\lim_{\f\in\cD_{\om,<f}}\int(f-\f)\MA_\om(\f)=0
\end{equation}
for all $f\in\cD$. 
\end{defi}
Explicitly, this means for that for any $\e>0$ there exists $\f_0\in\cD_{\om}$ such that $\f_0<f$ and $\int(f-\f)\MA_\om(\f)\le\e$ for all $\f\in\cD_{\om}$ with $\f_0\le\f<f$. 

\begin{rmk} The orthogonality property for $\om$ only depends on $[\om]\in\Pos(X)$. 
Indeed, for any $\tau\in\cD_\om$ and $f\in\cD$ we have  
\begin{equation}\label{equ:Domtau}
\f\in\cD_{\om_\tau,<f}\Longleftrightarrow\f+\tau\in\cD_{\om,<f+\tau}
\end{equation}
This implies that $\cD_\om$ has finite maxima iff $\cD_{\om_\tau}$ does, and similarly for the orthogonality property, using 
$$
\MA_{\om_\tau}(\f)=\MA_\om(\f+\tau)\quad\text{and}\quad\int(f-\f)\MA_{\om_\tau}(\f)=\int\left((f+\tau)-(\f+\tau)\right)\MA_{\om}(\f+\tau).
$$
\end{rmk}

\begin{exam}\label{exam:AOK} Assume $X$ is a compact K\"ahler space. Conjecturally, the orthogonality property always holds. This is known when $X$ is nonsingular, or $X$ is projective and $[\om]\in\Amp(X)$, see Appendix~\ref{sec:orthocomplex} for a more detailed discussion. 
\end{exam} 

\begin{rmk} More generally, the orthogonality property also holds in the setting of Remark~\ref{rmk:npos} (assuming $X$ is a compact K\"ahler manifold). This follows indeed from~\cite[Theorem~3.2]{LN}. 
\end{rmk}

Recall from Corollary~\ref{cor:pshspan} that any test function $f\in\cD$ can be written as 
\begin{equation}\label{equ:fdiff}
f=f^+-f^-,\quad f^\pm\in\cD_{C\om}
\end{equation}
for some $C=C(f)>0$. In line with~\cite[\S 7]{nama}, we show: 

\begin{prop}\label{prop:ortho} Assume $\cD_\om$ admits maxima. The following properties are then equivalent:
\begin{itemize}
\item[(i)] $\om$ has the orthogonality property; 
\item[(ii)] for any $f\in\cD$ written as~\eqref{equ:fdiff} for a given $C>0$, we have 
\begin{equation}\label{equ:unifdiff1}
\left|\ten_\om(\f+f)-\en_\om(\f)-\int f\,\MA_\om(\f)\right|\lesssim C\sup|f|
\end{equation}
for all $\f\in\cD_\om$; 
\item[(iii)] in the setting of (ii), we have 
\begin{equation}\label{equ:unifdiff2}
\left|\ten_\om(\f+t f)-\en_\om(\f)-t\int f\,\MA_\om(\f)\right|\lesssim Ct^2 \sup|f|
\end{equation}
for all $\f\in\cD_\om$ and $t\in\R$. 
\end{itemize}
\end{prop}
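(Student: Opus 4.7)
The implications (iii)$\Rightarrow$(ii) and (ii)$\Rightarrow$(iii) are routine. The former follows by setting $t=1$, and the latter by rescaling: given $f=f^+-f^-$ with $f^\pm\in\cD_{C\om}$ and $t\in\R$, write $tf=(tf^+)-(tf^-)$ when $t\ge 0$ and $tf=(|t|f^-)-(|t|f^+)$ when $t<0$; in both cases the summands lie in $\cD_{|t|C\om}$, so (ii) applied to $tf$ yields the bound $|t|C\cdot\sup|tf|=t^2C\sup|f|$. Observe also that the upper bound $\ten_\om(\f+tf)\le\en_\om(\f)+t\int f\,\MA_\om(\f)$ in (ii) and (iii) is unconditional: for $\psi\in\cD_\om$ with $\psi\le\f+tf$, concavity of $\en_\om$ at $\f$ gives $\en_\om(\psi)\le\en_\om(\f)+\int(\psi-\f)\MA_\om(\f)\le\en_\om(\f)+t\int f\,\MA_\om(\f)$ (using $\psi-\f\le tf$ and $\MA_\om(\f)\ge 0$); take the sup over $\psi$.

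To prove (iii)$\Rightarrow$(i), I would fix $f\in\cD$ and $\f\in\cD_{\om,<f}$, and set $g:=f-\f$. The decomposition $g=f^+-(f^-+\f)$ with $f^+\in\cD_{C\om}$ and $f^-+\f\in\cD_{(C+1)\om}$ (valid since $\om+\ddc(f^-+\f)=\om_\f+\ddc f^-\ge-C\om$) has constant $C':=C+1$. Applying (iii) to $g$ with parameter $\lambda\in[0,1]$, and noting that $\f+\lambda g=(1-\lambda)\f+\lambda f\le f$ gives $\ten_\om(\f+\lambda g)\le\ten_\om(f)$ by monotonicity, one obtains
$$
\ten_\om(f)-\en_\om(\f)\ge\lambda\int g\,\MA_\om(\f)-C'\lambda^2\sup|g|\quad\text{for all }\lambda\in[0,1].
$$
Since $\int g\,\MA_\om(\f)\le\sup g=\sup|g|$ (as $g\ge 0$), the optimum $\lambda_0=\int g\,\MA_\om(\f)/(2C'\sup|g|)$ lies in $[0,1/2]$, and substitution yields
$$
\Bigl(\int(f-\f)\MA_\om(\f)\Bigr)^2\le 4(C+1)\sup|f-\f|\cdot[\ten_\om(f)-\en_\om(\f)].
$$
As $\f$ ranges over the inductive poset $\cD_{\om,<f}$ with $\f\ge\f_0$ for some fixed initial $\f_0$, the right-hand side tends to $0$ by~\eqref{equ:tenlim}, while $\sup|f-\f|$ stays uniformly bounded. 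Hence $\int(f-\f)\MA_\om(\f)\to 0$, proving orthogonality.

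The main obstacle is (i)$\Rightarrow$(iii); by symmetry $(t,f)\leftrightarrow(-t,-f)$ one may restrict to $t\in[0,1]$, and only the lower bound $\ten_\om(\f+tf)\ge\en_\om(\f)+t\int f\,\MA_\om(\f)-O(Ct^2\sup|f|)$ requires the orthogonality hypothesis. The strategy is to exhibit $\tau\in\cD_\om$ with $\tau\le\f+tf$ whose energy achieves the claimed bound. Since the orthogonality property only depends on $[\om]\in\Pos(X)$, it holds for $\om_\f$, yielding $\eta\in\cD_{\om_\f,<f}$ with $\int(f-\eta)\MA_{\om_\f}(\eta)\le\e$ for any prescribed $\e>0$. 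Then $\tau:=\f+t\eta$ lies in $\cD_\om$ for $t\in[0,1]$ (as $\om_\f+t\ddc\eta=(1-t)\om_\f+t\om_{\f+\eta}\ge 0$) and satisfies $\tau<\f+tf$. Concavity of $\en_\om$ at $\tau$ gives $\en_\om(\tau)\ge\en_\om(\f)+t\int f\,\MA_\om(\tau)-O(\e)$, reducing the matter to estimating $\int f\,(\MA_\om(\tau)-\MA_\om(\f))$ by $O(Ct\sup|f|)$. The binomial expansion of $V_\om\MA_\om(\tau)=(\om_\f+t\ddc\eta)^n$ produces a linear-in-$t$ correction proportional to $\int f\,\om_\f^{n-1}\wedge\ddc\eta$, which integration by parts and the psh decomposition $\ddc f=\ddc f^+-\ddc f^-$ with $\ddc f^\pm+C\om\ge 0$ convert into mixed integrals of $\eta$, bounded via the energy pairing estimates of~\S\ref{sec:enpair}; higher-order terms in $t$ are controlled analogously. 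The delicate point is to ensure $\sup|\eta|$ is controlled by $\sup|f|$, achievable by working within $\cD_{\om_\f,<f}$ and exploiting that it admits maxima. This is the technical heart of the argument and parallels the approach of~\cite[\S 7]{nama}.
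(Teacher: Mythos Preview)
Your handling of (ii)$\Leftrightarrow$(iii) and (iii)$\Rightarrow$(i) is correct and matches the paper's argument; your explicit quadratic optimization over $\lambda$ is a clean variant of the paper's divide-by-$t$ trick, and the decomposition $g=f^+-(f^-+\f)$ with constant $C+1$ is exactly what the paper uses.

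The gap lies in (i)$\Rightarrow$(iii). The claim ``concavity of $\en_\om$ at $\tau$ gives $\en_\om(\tau)\ge\en_\om(\f)+t\int f\,\MA_\om(\tau)-O(\e)$'' is not justified. Concavity yields only $\en_\om(\tau)\ge\en_\om(\f)+t\int\eta\,\MA_\om(\tau)$, so you would need $t\int(f-\eta)\MA_\om(\tau)\le O(\e)$. But orthogonality controls $\int(f-\eta)\MA_{\om_\f}(\eta)=\int(f-\eta)\MA_\om(\f+\eta)$, whereas $\MA_\om(\tau)=V_\om^{-1}\bigl((1-t)\om_\f+t\om_{\f+\eta}\bigr)^n$. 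Expanding binomially, only the top term $t^n\om_{\f+\eta}^n$ is governed by $\e$; the mixed integrals $\int(f-\eta)\,\om_\f^{n-j}\wedge\om_{\f+\eta}^j$ for $j<n$ are not. Without that control your argument produces an error of order $t\sup|f-\eta|=O(t\sup|f|)$, not the required $O(t^2\sup|f|)$, so the bound is one power of $t$ short.

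The paper's proof of (i)$\Rightarrow$(ii) is far more direct and sidesteps this obstacle. One works with $\psi\in\cD_{\om,<\f+f}$ rather than passing through $\om_\f$: concavity sandwiches $\en_\om(\psi)-\en_\om(\f)$ between $\int(\psi-\f)\MA_\om(\psi)$ and $\int f\,\MA_\om(\f)$, and the Chern--Levine--Nirenberg inequality~\eqref{equ:CLN} (applied to $C^{-1}f^\pm\in\cD_\om$) gives $\bigl|\int f(\MA_\om(\f)-\MA_\om(\psi))\bigr|\le 2nC\sup|\f-\psi|$. Taking the limit over the inductive poset $\cD_{\om,<\f+f}$, orthogonality annihilates $\int\bigl((\f+f)-\psi\bigr)\MA_\om(\psi)$, while eventually $\sup|\f-\psi|\le\sup|f|$ (since $\psi$ dominates the constant competitor $\f-\sup|f|$). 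This single use of~\eqref{equ:CLN} replaces both your binomial expansion and the uncontrolled term $\int(f-\eta)\MA_\om(\tau)$ in one stroke.
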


\begin{proof}  Assume (i). Write $f\in\cD$ as in~\eqref{equ:fdiff}, and pick $\f\in\cD_\om$. For any $\p\in\cD_{\om,<\f+\e f}$, \eqref{equ:enconc} yields 
$$
\int(\p-\f)\MA_\om(\p)\le\en_\om(\p)-\en_\om(\f)\le\int(\p-\f)\MA_\om(\f)\le\int f\,\MA_\om(\f). 
$$
By~\eqref{equ:CLN}, we also have $\left|\int f (\MA_\om(\f)-\MA_\om(\p))\right|\le 2n C\sup|\f-\p|$, and we infer
\begin{equation}\label{equ:qunidiff}
\left|\en_\om(\p)-\en_\om(\f)-\int f\,\MA_\om(\f)\right|\le\int\left((\f+f)-\p\right)\MA_\om(\p)+2n C\sup|\f-\p|.
\end{equation}
Now $\lim_{\p\in\cD_{\om,<\f+ f}}\en_\om(\p)=\ten_\om(\f+f)$ (see~\eqref{equ:tenlim}), while orthogonality yields
$$
\lim_{\p\in\cD_{\om,<\f+f}}\int\left((\f+f)-\p\right)\MA_\om(\p)=0.
$$
Further, any $\p\in\cD_{\om,<\f+f}$ large enough is greater than $\f-\sup|f|\in\cD_{\om,<\f+f}$, and hence satisfies $\sup|\f-\p|\le\sup|f|$. As a result, \eqref{equ:qunidiff} implies
$$
\left|\ten_\om(\f+ f)-\en_\om(\f)-\int f\,\MA_\om(\f)\right|\le 2n C\sup|f|, 
$$
which shows (i)$\Rightarrow$ (ii). Next, (ii)$\Rightarrow$(iii), since $f=f^+-f^-$ with $f\in\cD_{C\om}$ implies $tf=\mathrm{sgn}(t)(|t| f^+-|t|f^-)$ with $|t|f^\pm\in\cD_{C|t|\om}$.  

Finally, assume (iii), and pick $f\in\cD$. We need to show that 
$$
L:=\limsup_{\f\in\cD_{\om,<f}}\int(f-\f)\MA_\om(\f)\ge 0
$$
vanishes. Write $f$ as in~\eqref{equ:fdiff} for some $C>0$. Pick also $\f\in\cD_{\om,<f}$, and set $g:=f-\f\in\cD$. For any $t\in[0,1]$ we have $\f+t g=(1-t)\f+t f\le f$, and hence $\ten_\om(\f+t g)\le\ten_\om(f)$. On the other hand, since $f-\f=f^+-(f^-+\f)$ with $f^+,<f^-+\f\in\cD_{(C+1)\om}$, (iii) yields a constant $A>0$ only depending on $\f$ such that 
$$
0\le t\int(f-\f)\MA_\om(\f)\le\ten_\om(\f+t g)-\en_\om(\f)+t^2 A\sup|g|\le \ten_\om(f)-\en_\om(\f)+t^2 A\sup|g|. 
$$
Since any $\f\in\cD_{\om,<f}$ large enough is greater then $f^+-\sup f^-$, it satisfies 
$$
0\le g=f-\f\le\sup f^--f^-\le B
$$
with $B$ only depending on $f$. We infer $0\le tL\le t^2 AB$. Dividing by $t>0$ and letting $t\to 0_+$ yields, as desired, $L=0$. This proves (iii)$\Rightarrow$(i). 
\end{proof}

\begin{exam}\label{exam:AONA} Assume $X$ is a projective Berkovich space over a non-Archimedean field. Then \cite[Theorem~A]{BE} combined with the uniform differentiability estimate of~\cite[Lemma~3.2]{BGM} shows that~\eqref{equ:unifdiff1} is satisfied (compare~\cite[Lemma~8.7]{trivval}). By Proposition~\ref{prop:ortho}, it follows that the orthogonality property always holds in this setting.  
\end{exam}

%
%
%%%%%%%%%%%%%%%%%%%%%%%%%%%%%%%%%%%%%%%%%%%%%%%%%%%%%%%%%%%%%%%%%%%
%
\subsection{Maximizing sequences}

\begin{defi} We say that a sequence $(\p_i)$ in $\cD_\om$ is \emph{maximizing for $\mu\in\cM^1_\om$} if it computes the energy of $\mu$~\eqref{equ:envar}, \ie $\en_\om(\p_i)-\int\p_i\,\mu\to\jj_\om(\mu)$. 
\end{defi}
Equivalently, $(\p_i)$ is maximizing for $\mu$ iff $\jj_\om(\mu,\p_i)\to 0$, see~\eqref{equ:Jmu}. 

\begin{exam}\label{exam:maxMA} For any $\f\in\cD_\om$ the constant sequence $\p_i=\f$ is maximizing for $\mu=\MA_\om(\f)$ (see~\eqref{equ:JMA}). 
\end{exam}

As a key consequence of Proposition~\ref{prop:ortho}, we show: 

\begin{thm}\label{thm:maxcv} Assume $\om$ has the orthogonality property. Pick $\mu\in\cM^1_\om$ and a maximizing sequence $\p_i\in\cD_\om$. Then the measures $\mu_i:=\MA_\om(\p_i)$ converge strongly to $\mu$ in $\cM^1_\om$, \ie $\mu_i\to\mu$ weakly and $\jj_\om(\mu_i)\to\jj_\om(\mu)$. In particular, the image of the Monge--Amp\`ere operator 
$$
\MA_\om\colon\cD_\om\to\cM^1_\om
$$
is dense in the strong topology. 
\end{thm}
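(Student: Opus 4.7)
The plan is to establish weak convergence $\mu_i \to \mu$ and energy convergence $\jj_\om(\mu_i) \to \jj_\om(\mu)$ separately; together they yield the required strong convergence. The final density claim then follows at once, since for any $\mu \in \cM^1_\om$ the definition of $\jj_\om(\mu)$ as a supremum produces a maximizing sequence. Set
$$
\e_i := \jj_\om(\mu, \p_i) = \jj_\om(\mu) - \en_\om(\p_i) + \int \p_i \, \mu,
$$
which is nonnegative by \eqref{equ:Jmu} and tends to $0$ by the maximizing assumption on $(\p_i)$.

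For the weak convergence, I would exploit the uniform differentiability of $\ten_\om$ afforded by orthogonality. Fix $f \in \cD$ and write $f = f^+ - f^-$ with $f^\pm \in \cD_{C\om}$ for some $C > 0$. By Proposition~\ref{prop:ortho}(iii), there is a constant $C' > 0$ depending only on $f$ such that
$$
\en_\om(\p_i) + t \int f \, \MA_\om(\p_i) - C' t^2 \le \ten_\om(\p_i + t f)
$$
for every $i$ and $t \in \R$. On the other hand, the Legendre duality \eqref{equ:legdual} applied to the continuous function $\p_i + t f$ yields
$$
\ten_\om(\p_i + t f) \le \jj_\om(\mu) + \int (\p_i + t f) \, \mu.
$$
Subtracting these inequalities and invoking the definition of $\e_i$ gives
$$
t \int f \, (\mu_i - \mu) \le \e_i + C' t^2
$$
for every $t \in \R$. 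For $t > 0$, optimizing over $t$ (and then applying the bound to $-f$) yields $\left|\int f \, (\mu_i - \mu)\right| \lesssim \sqrt{\e_i}$ with implicit constant depending on $f$, so $\int f \, \mu_i \to \int f \, \mu$ for every $f \in \cD$. Since $\cD$ is dense in $\Cz(X)$ and the $\mu_i$ are probability measures, this upgrades to $\mu_i \to \mu$ weakly.

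For energy convergence, note that $\jj_\om(\mu_i, \p_i) = \jj_\om(\MA_\om(\p_i), \p_i) = \jj_\om(\p_i, \p_i) = 0$ by \eqref{equ:JMA}, so \eqref{equ:Jmu} gives $\jj_\om(\mu_i) = \en_\om(\p_i) - \int \p_i \, \mu_i$. Combined with the defining relation for $\e_i$, this yields
$$
\jj_\om(\mu_i) - \jj_\om(\mu) = \int \p_i \, (\mu - \mu_i) - \e_i,
$$
and it remains to show $\int \p_i \, (\mu - \mu_i) \to 0$. Unpacking the definitions of $\jj_\om(\mu, \p_i)$, $\jj_\om(\mu, \p_j)$ and $\jj_\om(\p_i, \p_j)$ and cancelling the $\en_\om$ terms yields the algebraic identity
$$
\int (\p_i - \p_j)(\mu - \mu_i) = \jj_\om(\p_i, \p_j) + \e_i - \e_j.
$$
The quasi-triangle inequality \eqref{equ:Jmutri} gives $\jj_\om(\p_i, \p_j) \lesssim \e_i + \e_j$, whence
$$
\left| \int \p_i \, (\mu - \mu_i) \right| \lesssim \e_i + \e_j + \left| \int \p_j \, (\mu - \mu_i) \right|.
$$
Fixing $j$ and letting $i \to \infty$, the last term vanishes by weak convergence and continuity of $\p_j \in \cD$, so $\limsup_i \left|\int \p_i \, (\mu - \mu_i)\right| \lesssim \e_j$; letting $j \to \infty$ concludes.

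The crux of the argument is the weak convergence step: without the orthogonality property, there is no apparent way to pass from the maximizing condition on $(\p_i)$---formulated purely in terms of $\en_\om$ on $\cD_\om$---to information about the Monge--Amp\`ere measures $\MA_\om(\p_i)$. Proposition~\ref{prop:ortho}(iii) is precisely the quantitative tool that enables this transfer, via a standard convexity/Legendre trick. The energy convergence is by contrast essentially formal, relying only on the algebra of the Dirichlet functional, the quasi-triangle inequality, and weak convergence already in hand.
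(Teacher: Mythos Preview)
Your proof is correct and follows the paper's approach for weak convergence (orthogonality via Proposition~\ref{prop:ortho} combined with the Legendre dual bound~\eqref{equ:legdual}), with only a cosmetic difference in the endgame: you optimize over $t$ to extract a rate $\sqrt{\e_i}$, whereas the paper takes $\limsup_i$ first and then lets $t\to 0_+$.

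For the energy convergence, however, your route is genuinely different and more elementary. The paper bounds $\left|\int\p_i(\mu_i-\mu_j)\right|$ using the H\"older estimate~\eqref{equ:holdMA} (which rests on the Cauchy--Schwarz machinery of Appendix~\ref{sec:CS}), together with a boundedness argument for $\jj_\om(\p_i)$. You instead derive the exact identity
$$
\int(\p_i-\p_j)(\mu-\mu_i)=\jj_\om(\p_i,\p_j)+\e_i-\e_j,
$$
which reduces everything to the quasi-triangle inequality~\eqref{equ:Jmutri} and weak convergence already established. This bypasses~\eqref{equ:holdMA} entirely. The paper's approach, on the other hand, naturally fits into the broader H\"older framework exploited later in Theorem~\ref{thm:Jmes}, so both have their merits.
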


\begin{proof} Pick $f\in\cD$, and choose $C>0$ such that $f=f^+-f^-$ with $f^\pm\in\cD_{C\om}$. Since we assume orthogonality, Proposition~\ref{prop:ortho} yields $A>0$ such that 
$$
\left|\ten_\om(\p_i+t f)-\en_\om(\p_i)-t\int f\,\mu_i\right|\le At^2\sup|f|
$$
for all $i$ and $t>0$. By Proposition~\ref{prop:enleg} and~\eqref{equ:Jmu}, we have, on the other hand,
$$
\ten_\om(\p_i+t f)\le\jj_\om(\mu)+\int(\p_i+t f)\mu=\jj_\om(\mu,\p_i)+\en_\om(\p_i)+t\int f\,\mu. 
$$
Combining these estimates, we get
$$
t\int f\,\mu_i\le t\int f\,\mu+\jj_\om(\mu,\p_i)+A t^2.
$$
Since $\jj_\om(\mu,\p_i)\to 0$, we infer 
$$
t\limsup_i\int f\,\mu_i\le t\int f\,\mu+A t^2.
$$
Dividing by $t$ and letting $t\to 0_+$ yields $\limsup_i\int f\,\mu_i\le\int f\,\mu$. Replacing $f$ with $-f$, we get $\lim_i\int f\,\mu_i=\int f\,\mu$. By density of $\cD$ in $\Cz(X)$, this shows $\mu_i\to\mu$ weakly. 

For each $i$ we have $\jj_\om(\mu_i)=\en_\om(\p_i)-\int\p_i\,\mu_i$ (see~\eqref{equ:JMA}), and $\en_\om(\p_i)-\int\p_i\,\mu\to\jj_\om(\mu)$, since $(\p_i)$ is maximizing for $\mu$. It only remains to prove $\int\p_i\,(\mu_i-\mu)\to 0$. Since $\jj_\om(\p_i)$ is bounded (see~\eqref{equ:Jmutri},~\eqref{equ:holdMA} yields $C>0$ such that  
$$
\left|\int\p_i\,(\mu_i-\mu_j)\right|\lesssim C\jj_\om(\p_i,\p_j)^\a
$$
for all $i,j$, and hence 
$$
\left|\int\p_i\,(\mu_i-\mu_j)\right|\lesssim C\max\{\jj_\om(\mu,\p_i),\jj_\om(\mu,\p_j)\}^\a,
$$
by~\eqref{equ:Jmutri}. Since $\mu_j\to\mu$ weakly and $\jj_\om(\mu,\p_j)\to 0$ as $j\to\infty$, we infer 
$$
\left|\int\p_i\,(\mu_i-\mu)\right|\lesssim C\jj_\om(\mu,\p_i)^\a,
$$
and we conclude, as desired, that the left-hand side tends to $0$ as $i\to\infty$.    
\end{proof}
%
%
%%%%%%%%%%%%%%%%%%%%%%%%%%%%%%%%%%%%%%%%%%%%%%%%%%%%%%%%%%%%%%%%%%%
%
\subsection{The Dirichlet quasi-metric}\label{sec:Dirqm} 
From now on, \textbf{we assume that the orthogonality property holds} for $\om$. Recall from Examples~\ref{exam:AOK} and~\ref{exam:AONA}, that this is the case if $X$ is a compact K\"ahler manifold, and for any projective Berkovich space.

\begin{thm}\label{thm:Jmes} There exists a unique continuous functional
$$
\d_\om\colon\cM^1_\om\times\cM^1_\om\to\R_{\ge 0}, 
$$ 
such that 
\begin{equation}\label{equ:dMA}
\d_\om(\MA_\om(\f),\MA_\om(\p))=\jj_\om(\f,\p) 
\end{equation}
for all $\f,\p\in\cD_\om$. Furthermore:
\begin{itemize}
\item[(i)] for all $\mu\in\cM^1_\om$ and $\p\in\cD_\om$ 
we have 
\begin{equation}\label{equ:JmuMA}
\d_\om(\mu,\MA_\om(\p))=\jj_\om(\mu,\p),\quad\d_\om(\mu,\mu_\om)=\jj_\om(\mu);
\end{equation}
\item[(ii)] $\d_\om$ is a quasi-metric: for all $\mu,\nu,\rho\in\cM^1_\om$ we have 
\begin{equation}\label{equ:Jqmetr}
\d_\om(\mu,\nu)=0\Leftrightarrow\mu=\nu,\quad\d_\om(\mu,\nu)\approx\d_\om(\nu,\mu),\quad\d_\om(\mu,\nu)\lesssim\d_\om(\mu,\rho)+\d_\om(\rho,\nu); 
\end{equation}
\item[(iii)] the quasi-metric $\d_\om$ satisfies the H\"older continuity property
\begin{equation}\label{equ:holdJmes}
\left|\d_\om(\mu,\nu)-\d_\om(\mu',\nu')\right|\lesssim\max\{\d_\om(\mu,\mu'),\d_\om(\nu,\nu')\}^\a R^{1-\a}
\end{equation}
for all $R>0$ and $\mu,\mu',\nu,\nu'\in\cM^1_{\om,R}$, with $\a:=2^{-n}$; 
\item[(iv)] for all $R>0$ and $\f,\p\in\cD_{\om,R}$, $\mu,\nu\in\cM^1_{\om,R}$, we have the H\"older estimate
\begin{equation}\label{equ:holdmes}
\left|\int(\f-\p)(\mu-\nu)\right|\lesssim\jj_\om(\f,\p)^\a\d_\om(\mu,\nu)^{1/2} R^{1/2-\a}. 
\end{equation} 
\end{itemize}
\end{thm}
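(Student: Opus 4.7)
The overall strategy is to define $\d_\om$ by extension from $\MA_\om(\cD_\om)$, exploiting density via Theorem~\ref{thm:maxcv} and the H\"older estimate~\eqref{equ:holdJ} of Theorem~\ref{thm:est}. Given $\mu,\nu\in\cM^1_\om$, pick maximizing sequences $(\f_i),(\p_i)\subset\cD_\om$, so that $\jj_\om(\mu,\f_i),\jj_\om(\mu,\p_i)\to 0$. The quasi-triangle inequality~\eqref{equ:Jmutri} applied with $\rho=0$ yields $\jj_\om(\f_i)=\jj_\om(\f_i,0)\lesssim\jj_\om(\mu,\f_i)+\jj_\om(\mu)$, so the sequences are eventually contained in some $\cD_{\om,R}$; the same inequality gives $\jj_\om(\f_i,\f_j)\lesssim\jj_\om(\mu,\f_i)+\jj_\om(\mu,\f_j)\to 0$, and similarly for $(\p_i)$. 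The H\"older estimate~\eqref{equ:holdJ} then produces
$$
|\jj_\om(\f_i,\p_i)-\jj_\om(\f_j,\p_j)|\lesssim\max\{\jj_\om(\f_i,\f_j),\jj_\om(\p_i,\p_j)\}^\a R^{1-\a}\to 0,
$$
so $(\jj_\om(\f_i,\p_i))$ is Cauchy in $\R_{\ge 0}$. The same argument, applied to alternative maximizing sequences, shows the limit is independent of choices; one defines $\d_\om(\mu,\nu)$ to be this limit. Since constant sequences $\f_i=\f$ are maximizing for $\MA_\om(\f)$ by Example~\ref{exam:maxMA}, this definition extends $\jj_\om$ in the sense of~\eqref{equ:dMA}.

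The remaining properties are obtained by passing to the limit in the corresponding properties of $\jj_\om$. For (i), \eqref{equ:JMA} and~\eqref{equ:Jmu} give the identity $\jj_\om(\f_i,\p)=\jj_\om(\MA_\om(\f_i))-\en_\om(\p)+\int\p\,\MA_\om(\f_i)$; the strong convergence $\MA_\om(\f_i)\to\mu$ provided by Theorem~\ref{thm:maxcv} then yields $\jj_\om(\f_i,\p)\to\jj_\om(\mu)-\en_\om(\p)+\int\p\,\mu=\jj_\om(\mu,\p)$, with the specialization $\p=0$ giving $\d_\om(\mu,\mu_\om)=\jj_\om(\mu)$. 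The quasi-symmetry and quasi-triangle inequality in (ii) are inherited from~\eqref{equ:IJsum} and~\eqref{equ:qtri} by passing to the limit on maximizing sequences. The H\"older bound~\eqref{equ:holdJmes} follows directly from~\eqref{equ:holdJ} upon the same limit, and in turn establishes joint continuity of $\d_\om$ in the strong topology. For (iv), approximate $\mu,\nu$ by $\MA_\om(\sigma_i),\MA_\om(\tau_i)$ along maximizing sequences with energy uniformly $\lesssim R$; the H\"older estimate~\eqref{equ:holdMA} gives
$$
\Bigl|\int(\f-\p)(\MA_\om(\sigma_i)-\MA_\om(\tau_i))\Bigr|\lesssim\jj_\om(\f,\p)^\a \jj_\om(\sigma_i,\tau_i)^{1/2} R^{1/2-\a},
$$
and $\jj_\om(\sigma_i,\tau_i)\to\d_\om(\mu,\nu)$ together with weak convergence yields~\eqref{equ:holdmes} in the limit.

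It remains to verify separation of points and uniqueness. If $\d_\om(\mu,\nu)=0$, then (iv) applied with $\p=0\in\cD_\om$ (using Example~\ref{exam:cstpsh}, since $\om\ge 0$) forces $\int\f\,\mu=\int\f\,\nu$ for all $\f\in\cD_\om$; Corollary~\ref{cor:pshspan} implies $\cD_\om$ spans $\cD$, which is dense in $\Cz(X)$, whence $\mu=\nu$. Uniqueness of $\d_\om$ follows from density of $\MA_\om(\cD_\om)$ in $\cM^1_\om$ combined with the required continuity. The main obstacle in this plan is the first step: establishing that the limit defining $\d_\om(\mu,\nu)$ exists and is independent of the maximizing sequences, which crucially combines the H\"older estimate~\eqref{equ:holdJ} (requiring the uniform energy bound on maximizing sequences), the quasi-triangle inequality~\eqref{equ:Jmutri} (to convert $\jj_\om(\mu,\f_i)\to 0$ into $\jj_\om(\f_i,\f_j)\to 0$), and the strong convergence $\MA_\om(\f_i)\to\mu$ from Theorem~\ref{thm:maxcv}. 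Once this is in place, every other assertion reduces to a routine passage to the limit.
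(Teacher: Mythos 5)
Your proposal is correct and follows essentially the same route as the paper: extend $\jj_\om$ from the image of $\MA_\om$ by density (Theorem~\ref{thm:maxcv}), using~\eqref{equ:Jmutri} and the H\"older estimates~\eqref{equ:holdJ},~\eqref{equ:holdMA} to obtain the Cauchy property and the limiting inequalities, and the spanning property of $\cD_\om$ for separation of points. The only real difference is that you approximate both measures by maximizing sequences, whereas the paper approximates only the second argument and sets $\d_\om(\mu,\nu)=\lim_i\jj_\om(\mu,\p_i)$; the paper's one-sided Cauchy bound is uniform in $\mu\in\cM^1_{\om,R}$ and hence immediately yields strong continuity of $\d_\om(\cdot,\nu)$, which is the one point you gloss over --- joint continuity does not follow from~\eqref{equ:holdJmes} alone but requires this separate continuity, recoverable in your setup by combining (i) with~\eqref{equ:holdJmes} applied to $\nu'=\MA_\om(\p_i)$.
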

We call $\d_\om$ the \emph{Dirichlet quasi-metric} of $\cM^1_\om$. 

\begin{lem} For all $\mu\in\cM^1_{\om,R}$ and $\f,\p,\tau\in\cD_{\om,R}$, we have  
\begin{equation}\label{equ:holdJmesMA}
\left|\jj_\om(\mu,\f)-\jj_\om(\MA_\om(\tau),\p)\right|\lesssim\max\{\jj_\om(\mu,\tau),\jj_\om(\f,\p)\}^\a  R^{1-\a}; 
\end{equation}
\begin{equation}\label{equ:holdmesMA}
\left|\int(\f-\p)(\mu-\MA_\om(\tau))\right|\lesssim\jj_\om(\f,\p)^\a\jj_\om(\mu,\tau)^{1/2} R^{1/2-\a}. 
\end{equation} 
\end{lem}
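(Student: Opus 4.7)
The plan is to approximate $\mu$ by Monge--Amp\`ere measures $\mu_i := \MA_\om(\p_i)$ arising from a maximizing sequence, and then simply transfer the already-established H\"older estimates \eqref{equ:holdMA} and \eqref{equ:holdJ} from $\cD_\om$ to $\cM^1_\om$ by passing to the limit. Since we are assuming the orthogonality property, Theorem~\ref{thm:maxcv} guarantees that for any $\mu \in \cM^1_\om$ a maximizing sequence $(\p_i)$ exists and satisfies $\mu_i \to \mu$ strongly, in particular weakly; moreover $\jj_\om(\mu,\p_i) \to 0$.

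First I would verify the a priori bound $\p_i \in \cD_{\om,CR}$ for some $C=C(n)>0$ and all $i$ sufficiently large. Indeed, by the quasi-triangle inequality \eqref{equ:Jmutri} applied to the triple $(\mu,0,\p_i)$,
\[
\jj_\om(\p_i) = \jj_\om(0,\p_i) \lesssim \jj_\om(\mu,0) + \jj_\om(\mu,\p_i) = \jj_\om(\mu) + \jj_\om(\mu,\p_i),
\]
so that $\jj_\om(\p_i) \lesssim R$ eventually. Similarly $\jj_\om(\p_i,\tau) \lesssim \jj_\om(\mu,\p_i) + \jj_\om(\mu,\tau)$, and hence $\limsup_i \jj_\om(\p_i,\tau) \lesssim \jj_\om(\mu,\tau)$. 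By \eqref{equ:JMA2} we also have $\jj_\om(\MA_\om(\tau)) \approx \jj_\om(\tau) \le R$, so $\MA_\om(\tau) \in \cM^1_{\om,CR}$ as well.

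For \eqref{equ:holdJmesMA}: apply the H\"older estimate \eqref{equ:holdJ} to $(\p_i,\f)$ and $(\tau,\p)$ (all of energy $\lesssim R$), which gives
\[
\bigl|\jj_\om(\p_i,\f) - \jj_\om(\tau,\p)\bigr| \lesssim \max\{\jj_\om(\p_i,\tau),\jj_\om(\f,\p)\}^\a R^{1-\a}.
\]
By \eqref{equ:JMA}, $\jj_\om(\p_i,\f) = \jj_\om(\MA_\om(\p_i),\f) = \jj_\om(\mu_i,\f)$, which converges to $\jj_\om(\mu,\f)$ as $i\to\infty$ by strong continuity of $\nu \mapsto \jj_\om(\nu,\f)$ (itself a consequence of \eqref{equ:Jmu} and the definition of the strong topology). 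Likewise, $\jj_\om(\tau,\p) = \jj_\om(\MA_\om(\tau),\p)$ by \eqref{equ:JMA}. Passing to the limit and using the bound on $\limsup_i \jj_\om(\p_i,\tau)$ established above yields \eqref{equ:holdJmesMA}.

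For \eqref{equ:holdmesMA}: apply \eqref{equ:holdMA} to $(\f,\p)$ and $(\p_i,\tau)$ to obtain
\[
\left|\int(\f-\p)\bigl(\MA_\om(\p_i) - \MA_\om(\tau)\bigr)\right| \lesssim \jj_\om(\f,\p)^\a \jj_\om(\p_i,\tau)^{1/2} R^{1/2-\a}.
\]
The left-hand side converges to $\bigl|\int(\f-\p)(\mu - \MA_\om(\tau))\bigr|$ since $\mu_i \to \mu$ weakly and $\f-\p \in \Cz(X)$, while the right-hand side has limsup $\lesssim \jj_\om(\f,\p)^\a \jj_\om(\mu,\tau)^{1/2} R^{1/2-\a}$.

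No serious obstacle is expected: the only point requiring care is ensuring the energies of $\p_i$ stay comparable to $R$, handled by one application of the quasi-triangle inequality. Everything else is a direct combination of Theorem~\ref{thm:maxcv}, Proposition~\ref{prop:Jmu}, and the estimates of Theorem~\ref{thm:est}.
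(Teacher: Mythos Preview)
Your proof is correct and follows the same approach as the paper's: reduce to the case $\mu=\MA_\om(\p_i)$ via Theorem~\ref{thm:maxcv}, invoke the estimates \eqref{equ:holdJ} and \eqref{equ:holdMA} via \eqref{equ:JMA}, and pass to the limit using strong continuity of $\nu\mapsto\jj_\om(\nu,\cdot)$ and weak convergence. The paper's version is terser (it simply appeals to density of the image of $\MA_\om$ and continuity of all $\mu$-dependent terms), whereas you spell out the energy bound $\jj_\om(\p_i)\lesssim R$ and the $\limsup$ control on $\jj_\om(\p_i,\tau)$ explicitly, which is entirely appropriate.
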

\begin{proof} When $\mu$ lies in the image of $\MA_\om\colon\cD_\om\to\cM^1_\om$ this is equivalent to~\eqref{equ:holdJ} and~\eqref{equ:holdMA}, in view of~\eqref{equ:JMA} and~\eqref{equ:JMA2}. By Theorem~\ref{thm:maxcv} the image of $\MA_\om$ is dense in $\cM^1_\om$, and the general case thus follows by continuity in the strong topology of all functions of $\mu$ involved. 
\end{proof}

\begin{proof}[Proof of Theorem~\ref{thm:Jmes}] Uniqueness is clear, since~\eqref{equ:dMA} determines $\d_\om$ on the image of $\MA_\om\colon\cD_\om\to\cM^1_\om$, which is dense in the strong topology, by Theorem~\ref{thm:maxcv}. To show existence, pick $\mu,\nu\in\cM^1_\om$, and choose a maximizing sequences $(\p_i)$ for $\nu$. We can then find $R>0$ such that $\mu\in\cM^1_{\om,R}$ and $\p_i\in\cD_{\om,R}$ for all $i$, and~\eqref{equ:holdJmesMA} and~\eqref{equ:Jmutri} yield 
\begin{align*}
|\jj_\om(\mu,\p_i)-\jj_\om(\mu,\p_j)| & \lesssim \jj_\om(\p_i,\p_j)^\a R^{1-\a}\\
& \lesssim\max\{\jj_\om(\nu,\p_i),\jj_\om(\nu,\p_j)\}^\a R^{1-\a}. 
\end{align*}
This estimate implies that $(\jj_\om(\mu,\p_i))$ is a Cauchy sequence, which thus admits a limit 
\begin{equation}\label{equ:dpi}
\d_\om(\mu,\nu):=\lim_i\jj_\om(\mu,\p_i).
\end{equation}
The same estimate also shows that the limit is independent of the choice of maximizing sequence $(\p_i)$, and that the convergence in~\eqref{equ:dpi} is uniform with respect to $\mu\in\cM^1_{\om,R}$. As a consequence, $\mu\mapsto\d_\om(\mu,\nu)$ so defined is continuous on $\cM^1_\om$ for each $\nu\in\cM^1_\om$. 

By construction, \eqref{equ:JmuMA} holds, and hence also~\eqref{equ:dMA}, by~\eqref{equ:JMA}. This proves (i). 

Next, \eqref{equ:holdJmes} holds when $\nu,\nu'$ lie in the image of $\MA_\om\colon\cD_\om\to\cM^1_\om$, by applying \eqref{equ:holdJmesMA} to a maximizing sequence for $\mu'$, and the general case follows by using maximizing sequences for $\nu,\nu'$. This shows (iii), which also yields the continuity of $\d_\om$ on $\cM^1_\om\times\cM^1_\om$ (and hence conclude the proof of existence), since $\mu_i\to\mu$ strongly implies $\d_\om(\mu_i,\mu)\to \d_\om(\mu,\mu)=0$, by continuity of $\d_\om(\cdot,\mu)$. 

Similarly, (iv) follows follows by applying~\eqref{equ:holdmesMA} to a maximizing sequence $(\tau_i)$ for $\nu$.  

Finally, the first point in (ii) follows from~\eqref{equ:holdmes}, since $\cD_\om$ spans the dense subspace $\cD$ of $\Cz(X)$ (see Corollary~\ref{cor:pshspan}). By~\eqref{equ:IJsum} and~\eqref{equ:qtri}, the last two properties in (ii) hold when the measures lie in the image of $\MA_\om$, and hence in general, by continuity of $\d_\om$. 

\end{proof}

We next show: 

\begin{thm}\label{thm:M1} The quasi-metric space $(\cM^1_\om,\d_\om)$ only depends on the class $[\om]$. It is complete, and its topology coincides with the strong topology. 
\end{thm}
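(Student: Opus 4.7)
The plan is to prove the three assertions in order.

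For independence of the quasi-metric from the choice of $\om$ within its class, I would pick $\tau\in\cD_\om$ and combine Lemma~\ref{lem:Jtrans} (giving $\jj_{\om_\tau}(\f,\p)=\jj_\om(\f+\tau,\p+\tau)$) with the identity $\MA_{\om_\tau}(\f)=\MA_\om(\f+\tau)$. This immediately shows that $\d_{\om_\tau}$ and $\d_\om$ agree on the image of the Monge--Amp\`ere operator, which is strongly dense by Theorem~\ref{thm:maxcv}, so the two quasi-metrics coincide everywhere by the continuity statement in Theorem~\ref{thm:Jmes}. Since $\HBC(X)=\cZ/\ddc\cD$, any two representatives $\om,\om'\in\cZ_+$ of the same positive class differ by $\ddc\tau$ with $\tau\in\cD_\om$ and $-\tau\in\cD_{\om'}$, so invariance follows.

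For equality of topologies, one direction is direct: strong convergence $\mu_i\to\mu$ forces $\d_\om(\mu_i,\mu)\to\d_\om(\mu,\mu)=0$ by the continuity of $\d_\om(\cdot,\mu)$ in the strong topology (Theorem~\ref{thm:Jmes}). Conversely, assume $\d_\om(\mu_i,\mu)\to 0$. The identity $\jj_\om(\mu_i)=\d_\om(\mu_i,\mu_\om)$ together with the quasi-triangle inequality bounds $\jj_\om(\mu_i)\le R$ uniformly. The H\"older estimate~\eqref{equ:holdJmes} then yields $\jj_\om(\mu_i)\to\jj_\om(\mu)$, while~\eqref{equ:holdmes} applied to differences $f=\f-\p$ with $\f,\p\in\cD_{\om,R}$ gives $\int f(\mu_i-\mu)\to 0$; density of such differences in $\Cz(X)$ (via Corollary~\ref{cor:pshspan} and density of $\cD$) upgrades this to weak convergence, hence strong convergence.

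Completeness is the main step. Given a Cauchy sequence $(\mu_i)$, the quasi-triangle inequality again bounds $\jj_\om(\mu_i)=\d_\om(\mu_i,\mu_\om)$ uniformly by some $R$; weak compactness of $\cM^1_{\om,R}$ combined with weak lower semicontinuity of $\jj_\om$ yields a weakly convergent subsequence (still denoted $\mu_i$) with limit $\mu\in\cM^1_{\om,R}$. Using Theorem~\ref{thm:maxcv} to approximate each $\mu_i$ by $\nu_i:=\MA_\om(\f_i)$ with $\d_\om(\mu_i,\nu_i)<1/i$, the quasi-triangle inequality makes $(\nu_i)$ also Cauchy, $\nu_i\to\mu$ weakly (via~\eqref{equ:holdmes}), and $\jj_\om(\nu_i)$ uniformly bounded. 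The crucial observation is that by~\eqref{equ:JmuMA} and~\eqref{equ:Jmu},
$$
\d_\om(\nu,\nu_k)=\jj_\om(\nu,\f_k)=\jj_\om(\nu)+\int\f_k\,\nu-\en_\om(\f_k),
$$
so $\nu\mapsto\d_\om(\nu,\nu_k)$ is the sum of the weakly lsc functional $\jj_\om$ and a weakly continuous term, hence weakly lsc. Passing to $\liminf_j$ along $\nu_j\to\mu$ weakly then gives $\d_\om(\mu,\nu_k)\le\liminf_j\d_\om(\nu_j,\nu_k)$, which tends to $0$ as $k\to\infty$ by Cauchyness. A final quasi-triangle step yields $\d_\om(\mu_i,\mu)\to 0$ along the subsequence, and the Cauchy property forces convergence of the whole sequence.

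The main obstacle I expect is the last step: upgrading weak subsequential convergence to $\d_\om$-convergence. The key insight making this work is the explicit form of $\d_\om(\cdot,\nu_k)$ when $\nu_k$ lies in the image of $\MA_\om$, which shows weak lower semicontinuity in the first variable. This is why the prior density result (Theorem~\ref{thm:maxcv}), itself relying on the orthogonality property, is indispensable.
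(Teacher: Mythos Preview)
Your proposal is correct and follows essentially the same route as the paper. The only organizational difference worth noting is in the completeness argument: the paper isolates a preliminary lemma (Lemma~\ref{lem:weakJ}) showing that $\d_\om(\cdot,\nu)$ is weakly lsc on bounded sets for \emph{every} $\nu\in\cM^1_\om$ (by approximating $\nu$ via a maximizing sequence and using uniform convergence from~\eqref{equ:holdJmes}), and then applies it directly with $\nu=\mu_j$. You instead approximate the $\mu_i$'s by Monge--Amp\`ere measures $\nu_i=\MA_\om(\f_i)$ first, which lets you invoke only the special case where the second argument is in the image of $\MA_\om$ and the explicit formula $\d_\om(\cdot,\nu_k)=\jj_\om(\cdot)+\int\f_k\,(\cdot)-\en_\om(\f_k)$ applies. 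Both reductions rest on the same density result (Theorem~\ref{thm:maxcv}) and the same explicit formula; the paper's version is slightly cleaner since it avoids tracking the auxiliary sequence $(\nu_i)$, while yours makes the role of the Monge--Amp\`ere image more explicit.
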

%We emphasize that, in the first assertion, the function $\d_\om$ only depends on $[\om]$. 

\begin{lem}\label{lem:weakJ} For any $\nu\in\cM^1$ and $R>0$, $\d_\om(\cdot,\nu)$ is weakly lsc on $\cM^1_{\om,R}$. 
\end{lem}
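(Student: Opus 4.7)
The plan is to reduce to the case $\nu=\MA_\om(\p)$ for $\p\in\cD_\om$, where weak lower semicontinuity is immediate, and then handle general $\nu\in\cM^1$ by uniform approximation on the bounded set $\cM^1_{\om,R}$.

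First I would treat the case $\nu=\MA_\om(\p)$. By \eqref{equ:JmuMA} and \eqref{equ:Jmu},
\[
\d_\om(\mu,\MA_\om(\p))=\jj_\om(\mu,\p)=\jj_\om(\mu)+\int\p\,\mu-\en_\om(\p).
\]
The right-hand side is weakly lsc in $\mu\in\cM$: indeed $\jj_\om\colon\cM\to[0,+\infty]$ is weakly lsc by Proposition~\ref{prop:Jmu}, the functional $\mu\mapsto\int\p\,\mu$ is weakly continuous (since $\p\in\cD\subset\Cz(X)$), and $\en_\om(\p)$ is a constant. Hence $\d_\om(\cdot,\MA_\om(\p))$ is weakly lsc on all of $\cM$, and in particular on $\cM^1_{\om,R}$.

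Next, for an arbitrary $\nu\in\cM^1$, I would pick a maximizing sequence $(\p_i)\subset\cD_\om$ for $\nu$; in particular $\jj_\om(\nu,\p_i)\to 0$. By the construction of $\d_\om$ in the proof of Theorem~\ref{thm:Jmes} (cf.~\eqref{equ:dpi} and the estimate immediately preceding it based on \eqref{equ:holdJmesMA}), we have
\[
\d_\om(\mu,\nu)=\lim_i\jj_\om(\mu,\p_i)=\lim_i\d_\om(\mu,\MA_\om(\p_i)),
\]
and the convergence is \emph{uniform} in $\mu\in\cM^1_{\om,R}$. Since each $\d_\om(\cdot,\MA_\om(\p_i))$ is weakly lsc on $\cM^1_{\om,R}$ by the first step, and since a uniform limit of lower semicontinuous functions is lower semicontinuous, $\d_\om(\cdot,\nu)$ is weakly lsc on $\cM^1_{\om,R}$.

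There is no real obstacle here: all the work was done when setting up $\d_\om$ in Theorem~\ref{thm:Jmes}. The only point that requires a little care is to invoke uniformity of the convergence on $\cM^1_{\om,R}$ (as opposed to mere pointwise convergence), which is precisely what the H\"older estimate \eqref{equ:holdJmesMA} delivers and which is needed to preserve lower semicontinuity in the passage to the limit.
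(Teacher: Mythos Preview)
Your proof is correct and follows essentially the same approach as the paper: reduce to $\nu=\MA_\om(\p)$ where $\d_\om(\cdot,\nu)=\jj_\om(\cdot,\p)$ is weakly lsc, then pass to general $\nu$ via uniform approximation on $\cM^1_{\om,R}$ by a maximizing sequence. The only cosmetic difference is that the paper cites \eqref{equ:holdJmes} directly for the uniform convergence, whereas you invoke the equivalent uniformity established in the construction of $\d_\om$ via \eqref{equ:holdJmesMA}.
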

\begin{proof} When $\nu=\MA_\om(\p)$ with $\p\in\cD_\om$, \eqref{equ:JmuMA} yields $\d_\om(\cdot,\nu)=\jj_\om(\cdot,\p)$, which is weakly lsc on $\cM^1_\om$ (see~\eqref{equ:Jmuvar}). In the general case, pick a maximizing sequence $(\p_i)$ for $\nu$, and set $\nu_i:=\MA_\om(\p_i)$. By~\eqref{equ:holdJmes}, we have $\d_\om(\mu,\nu_i)\to\d_\om(\mu,\nu)$ uniformly for $\mu\in\cM^1_{\om,R}$, and the result follows. 
\end{proof}

\begin{proof}[Proof of Theorem~\ref{thm:M1}] We already know that $\cM^1_\om$ only depends on $[\om]$ (see Proposition~\ref{prop:M1ind}). Pick $\tau\in\cD_\om$, $\mu,\nu\in\cM^1_\om=\cM^1_{\om_\tau}$, and choose maximizing sequences $(\f_i)$, $(\p_i)$ in $\cD_{\om_\tau}$ for $\mu,\nu$, so that $\d_{\om_\tau}(\mu,\nu)=\lim_i\jj_{\om_\tau}(\f_i,\p_i)$. By~\eqref{equ:Jmestrans}, $(\f_i+\tau)$ and $(\p_i+\tau)$ are maximizing sequences in $\cD_{\om}$ for $\mu,\nu$, and hence $\d_\om(\mu,\nu)=\lim_i\jj_\om(\f_i,\p_i)$. Now~\eqref{equ:Jtrans} yields $\jj_{\om_\tau}(\f_i,\p_i)=\jj_\om(\f_i+\tau,\p_i+\tau)$, which proves $\d_{\om_\tau}(\mu,\nu)=\d_\om(\mu,\nu)$ and proves that $\d_\om$ only depends on $[\om]$. 

We next show that the topology of $(\cM^1_\om,\d_\om)$ is the strong topology, \ie a net $(\mu_i)$ converges strongly to $\mu\in\cM^1_\om$ iff $\d_\om(\mu_i,\mu)\to 0$. When the latter holds, \eqref{equ:holdmes} implies $\mu_i\to\mu$ weakly (since $\cD_\om$ spans the dense subspace $\cD$ of $\Cz(X)$), while~\eqref{equ:holdJmes} yields $\jj_\om(\mu_i)=\jj_\om(\mu_i,0)\to\jj_\om(\mu)$. Thus $\mu_i\to\mu$ strongly, and the converse holds by strong continuity of $\d_\om$. 

Finally, consider a Cauchy net $(\mu_i)$ in $(\cM^1,\d_\om)$. Then $\jj_\om(\mu_i)=\d_\om(\mu_i,\mu_\om)$ is eventually bounded. By weak compactness of $\cM$, we may assume, after passing to a subnet, that $(\mu_i)$ admits a weak limit $\mu\in\cM$. Since $\jj_\om$ is weakly lsc on $\cM$, we get $\jj_\om(\mu)\le\liminf_i\jj_\om(\mu_i)<+\infty$, \ie $\mu\in\cM^1$. It remains to show $\d_\om(\mu_i,\mu)\to 0$. To see this, pick $\e>0$ and $i_0$ such that $\d_\om(\mu_i,\mu_j)\le\e$ for all $i,j\ge i_0$. Since $\jj_\om(\mu_j)$ is bounded and $\mu_j\to\mu$ weakly, Lemma~\ref{lem:weakJ} yields $\d_\om(\mu_i,\mu)\le\liminf_j\d_\om(\mu_i,\mu_j)\le\e$, and we are done. 
\end{proof}

To conclude this section, we show: 
\begin{prop} For each $\nu\in\cM^1_\om$, $\d_\om(\cdot,\nu)\colon\cM^1_\om\to\R_{\ge 0}$ is strictly convex, and we further have the uniform convexity estimate
$$
(1-t)\d_\om(\mu_0,\nu)+t\d_\om(\mu_1,\nu)-\d_\om((1-t)\mu_0+t\mu_1,\nu)\gtrsim t(1-t)\d_\om(\mu_0,\mu_1). 
$$
for all $\mu_0,\mu_1\in\cM^1_\om$ and $t\in [0,1]$. 
\end{prop}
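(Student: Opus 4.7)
The plan is to reduce to the case $\nu=\MA_\om(\p)$ for some $\p\in\cD_\om$, where a clean variational identity for the convexity defect becomes available, and then to extract the claimed lower bound from the quasi-metric axioms already established for $\d_\om$.

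By the strong density of $\MA_\om(\cD_\om)\subset\cM^1_\om$ (Theorem~\ref{thm:maxcv}) combined with the joint strong continuity of $\d_\om$ on $\cM^1_\om\times\cM^1_\om$ (Theorem~\ref{thm:Jmes}), it suffices to establish the estimate for $\nu=\MA_\om(\p)$: given any $\nu$, pick a maximizing sequence $(\p_i)$ and set $\nu_i:=\MA_\om(\p_i)\to\nu$ strongly; since $\mu_t:=(1-t)\mu_0+t\mu_1$ lies in $\cM^1_\om$ by convexity of $\jj_\om$, the estimate with $\nu_i$ passes to the limit. Assuming now $\nu=\MA_\om(\p)$, identity~\eqref{equ:JmuMA} rewrites $\d_\om(\mu,\nu)=\jj_\om(\mu,\p)$ for every $\mu\in\cM^1_\om$. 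Plugging the linear decomposition $\int(\p-\f)\mu_t=(1-t)\int(\p-\f)\mu_0+t\int(\p-\f)\mu_1$ into the variational formula~\eqref{equ:Jmuvar} and introducing the nonnegative \emph{defects}
$$
D_j(\f):=\jj_\om(\mu_j,\p)-\bigl[\en_\om(\f)-\en_\om(\p)+\int(\p-\f)\mu_j\bigr]\ge 0,
$$
one obtains, by taking the supremum over $\f\in\cD_\om$, the identity
$$
(1-t)\d_\om(\mu_0,\nu)+t\d_\om(\mu_1,\nu)-\d_\om(\mu_t,\nu)=\inf_{\f\in\cD_\om}\bigl[(1-t)D_0(\f)+tD_1(\f)\bigr].
$$
Expanding $\jj_\om(\mu_j,\p)$ via~\eqref{equ:Jmu} and simplifying yields $D_j(\f)=\jj_\om(\mu_j,\f)$, which by~\eqref{equ:JmuMA} equals $\d_\om(\mu_j,\MA_\om(\f))$. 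This already recovers convexity of $\d_\om(\cdot,\nu)$.

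For the quantitative lower bound, the quasi-triangle inequality and quasi-symmetry in~\eqref{equ:Jqmetr} give
$$
\d_\om(\mu_0,\mu_1)\lesssim\d_\om(\mu_0,\MA_\om(\f))+\d_\om(\mu_1,\MA_\om(\f))
$$
for every $\f\in\cD_\om$. Combined with the elementary bound $(1-t)a+tb\ge\min(t,1-t)(a+b)\ge t(1-t)(a+b)$ valid for $a,b\ge 0$ and $t\in[0,1]$ (the second step since $t(1-t)\le\min(t,1-t)$ on $[0,1]$), one gets
$$
(1-t)D_0(\f)+tD_1(\f)\gtrsim t(1-t)\d_\om(\mu_0,\mu_1)
$$
uniformly in $\f$; taking the infimum concludes the estimate. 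Strict convexity is then immediate since $\d_\om(\mu_0,\mu_1)>0$ whenever $\mu_0\ne\mu_1$ by the separation axiom of the quasi-metric.

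The only conceptually delicate point is the identification of the convexity defect as the infimum $\inf_\f[(1-t)D_0(\f)+tD_1(\f)]$: once this sup--inf reformulation is recognized and the defects are identified with quasi-distances to Monge--Amp\`ere measures via~\eqref{equ:Jmu}--\eqref{equ:JmuMA}, the remainder is a formal consequence of the quasi-metric structure of $(\cM^1_\om,\d_\om)$. In particular, the argument does \emph{not} require refining the uniform concavity of $\en_\om$ on $\cD_\om$ from Theorem~\ref{thm:est}; the required convexity on the measure side is produced purely from the variational characterization together with density of $\MA_\om(\cD_\om)$ and the quasi-metric axioms.
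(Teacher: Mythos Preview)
Your proof is correct and follows essentially the same approach as the paper: reduce to $\nu=\MA_\om(\p)$ by density and continuity, rewrite the convexity defect as $\inf_{\f\in\cD_\om}\bigl[(1-t)\jj_\om(\mu_0,\f)+t\jj_\om(\mu_1,\f)\bigr]$ via~\eqref{equ:Jmu}, apply the elementary inequality $(1-t)a+tb\ge t(1-t)(a+b)$, and conclude using the quasi-triangle inequality from~\eqref{equ:Jqmetr} together with the identification $\jj_\om(\mu_j,\f)=\d_\om(\mu_j,\MA_\om(\f))$. Your defect notation and explicit sup--inf reformulation are a cleaner way of packaging the same computation the paper carries out.
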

\begin{proof} By density of the image of $\MA_\om\colon\cD_\om\to\cM^1_\om$ and continuity of $\d_\om$, we may assume without loss 
$\nu=\MA_\om(\p)$ with $\p\in\cD_\om$, and hence $\d_\om(\cdot,\nu)=\jj_\om(\cdot,\p)$. Set 
$$
J_t:=(1-t)\jj_\om(\mu_0,\p)+t\jj_\om(\mu_1,\p),\quad\mu_t:=(1-t)\mu_0+t\mu_1,
$$ 
and pick $\f\in\cD_\om$. Applying~\eqref{equ:Jmu} to $\mu_0$ and $\mu_1$ yields 
$$
J_t =\en_\om(\f)-\en_\om(\p)+\int(\p-\f)\mu_t+(1-t)\jj_\om(\mu_0,\f)+t\jj_\om(\mu_1,\f), 
$$
and hence 
$$
J_t-\en_\om(\f)+\en_\om(\p)+\int(\f-\p)\mu_t\ge t(1-t)(\jj_\om(\mu_0,\p)+\jj_\om(\mu_1,\p)),  
$$
using the elementary estimate $(1-t)a+t b\ge t(1-t)(a+b)$ for $a,b\ge 0$ (see for instance~\cite[Lemma~7.29]{trivval}).
By~\eqref{equ:Jqmetr}, this implies
$$
J_t-\en_\om(\f)+\en_\om(\p)+\int(\f-\p)\mu_t\gtrsim t(1-t)\d_\om(\mu_0,\mu_1),
$$
and taking the infimum over $\f$ shows $J_t-\jj_\om(\mu_t,\p)\gtrsim t(1-t)\dd_\om(\mu_0,\mu_1)$, which concludes the proof. 
\end{proof}

%
%
%%%%%%%%%%%%%%%%%%%%%%%%%%%%%%%%%%%%%%%%%%%%%%%%%%%%%%%%%%%%%%%%%%%
\subsection{An equivalent metric on $\cM^1$}\label{sec:dR}
As recalled in~\S\ref{sec:notation}, the quasi-metric space $(\cM^1_\om,\d_\om)$ is metrizable, by general theory. Here we introduce a concrete metric that define the strong topology of $\cM^1_\om$. Recall from~\eqref{equ:DR} that 
$$
\cD_{\om,1}=\{\f\in\cD_\om\mid\jj_\om(\f)\le 1\}.
$$
\begin{prop}\label{prop:dR} Setting 
\begin{equation}\label{equ:dR}
\dd_{\om}(\mu,\nu):=\sup_{\f\in\cD_{\om,1}}\left|\int\f(\mu-\nu)\right|
\end{equation}
yields a complete metric on $\cM^1_\om$ that defines the strong topology. Furthermore:
\begin{itemize}
\item[(i)] the metric $\dd_\om$ and the Dirichlet quasi-metric $\d_\om$ share the same bounded sets;
\item[(ii)] they are H\"older equivalent on bounded sets, \ie 
\begin{equation}\label{equ:dJ}
\dd_\om(\mu,\nu)\lesssim\d_{\om}(\mu,\nu)^{1/2} R^{1/2}\quad\text{and}\quad\d_\om(\mu,\nu)\lesssim \dd_\om(\mu,\nu) R^{1/2}
\end{equation}
for all $\mu,\nu\in\cM^1_{\om,R}$ with $R\ge 1$; 
\item[(iii)] for all $\f\in\cD_\om$ and $\mu,\nu\in\cM^1_\om$ we have 
\begin{equation}\label{equ:intbound} 
\left|\int\f\,(\mu-\nu)\right|\lesssim(\jj_\om(\f)^{1/2}+1)\dd_\om(\mu,\mu). 
\end{equation}
\end{itemize}
\end{prop}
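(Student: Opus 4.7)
The plan is to establish the two inequalities in \eqref{equ:dJ} in sequence and derive the rest from them.

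\textbf{Forward Hölder bound and $\dd_\om$ as a metric.} I would first prove \eqref{equ:intbound} by rescaling. Given $\f\in\cD_\om$ with $J:=\jj_\om(\f)$, set $t:=\min\{1,c/\sqrt{J}\}$ for a sufficiently small constant $c$; since $\cD_\om$ is convex and contains $0$, we have $t\f\in\cD_\om$, and \eqref{equ:Jquad} yields $\jj_\om(t\f)\lesssim t^2 J\le 1$, hence $t\f\in\cD_{\om,1}$ after shrinking $c$. Thus
\[
\left|\int\f(\mu-\nu)\right|=t^{-1}\left|\int t\f(\mu-\nu)\right|\le t^{-1}\dd_\om(\mu,\nu)\lesssim(1+\sqrt{J})\dd_\om(\mu,\nu),
\]
which is \eqref{equ:intbound}. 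The first inequality of \eqref{equ:dJ} then follows from \eqref{equ:holdmes} with $\p=0$: for $\f\in\cD_{\om,1}\subset\cD_{\om,R}$ and $\mu,\nu\in\cM^1_{\om,R}$ with $R\ge 1$, we get $|\int\f(\mu-\nu)|\lesssim\jj_\om(\f)^\a\d_\om(\mu,\nu)^{1/2}R^{1/2-\a}\le\d_\om(\mu,\nu)^{1/2}R^{1/2}$, and taking the sup over $\f$ gives the claim. Symmetry and the triangle inequality for $\dd_\om$ are immediate; finiteness follows from the first inequality of \eqref{equ:dJ}, and separation of points from \eqref{equ:intbound} together with Corollary~\ref{cor:pshspan} and density of $\cD$ in $\Cz(X)$.

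\textbf{Reverse Hölder bound.} This is the main step. For $\mu=\MA_\om(\f)$, $\nu=\MA_\om(\p)$ with $\jj_\om(\f),\jj_\om(\p)\lesssim R$ (which we may arrange using \eqref{equ:JMA2}), the identity \eqref{equ:IJ} together with quasi-symmetry \eqref{equ:IJsum} yields
\[
\d_\om(\mu,\nu)=\jj_\om(\f,\p)\lesssim\left|\int(\f-\p)(\MA_\om(\p)-\MA_\om(\f))\right|,
\]
and \eqref{equ:intbound} applied to $\f$ and $\p$ separately bounds the right-hand side by $\lesssim\sqrt R\,\dd_\om(\mu,\nu)$ (using $1+\sqrt R\le 2\sqrt R$ for $R\ge 1$). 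To extend to arbitrary $\mu,\nu\in\cM^1_{\om,R}$, pick maximizing sequences $(\f_i),(\p_i)$ with $\MA_\om(\f_i)\to\mu$ and $\MA_\om(\p_i)\to\nu$ strongly (Theorem~\ref{thm:maxcv}); \eqref{equ:Jmutri} combined with $\jj_\om(\mu,\f_i)\to 0$ ensures $\jj_\om(\f_i),\jj_\om(\p_i)\lesssim R$ eventually. Then $\d_\om(\MA_\om(\f_i),\MA_\om(\p_i))\to\d_\om(\mu,\nu)$ by continuity of $\d_\om$ (Theorem~\ref{thm:Jmes}), while $\dd_\om(\MA_\om(\f_i),\MA_\om(\p_i))\to\dd_\om(\mu,\nu)$ because the forward bound and the triangle inequality for $\dd_\om$ give $\dd_\om(\MA_\om(\f_i),\mu)\lesssim\d_\om(\MA_\om(\f_i),\mu)^{1/2}R^{1/2}\to 0$ (and similarly for $\p_i,\nu$).

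\textbf{Bounded sets, topology, completeness.} For (i), the forward bound immediately gives $\d_\om$-bounded $\Rightarrow$ $\dd_\om$-bounded; conversely, $\dd_\om(\mu,\mu_\om)\le S$ forces $\int\f(\mu_\om-\mu)\le S$ for every $\f\in\cD_{\om,1}$, so Lemma~\ref{lem:finen} yields $\jj_\om(\mu)=\d_\om(\mu,\mu_\om)\lesssim S(1+S)$. Combined with \eqref{equ:dJ}, this makes $\dd_\om$ and $\d_\om$ Hölder equivalent on bounded sets, so they define the same topology there; since every convergent sequence is bounded and $\d_\om$ defines the strong topology (Theorem~\ref{thm:M1}), so does $\dd_\om$. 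Finally, a $\dd_\om$-Cauchy sequence is $\dd_\om$-bounded, hence $\d_\om$-bounded by (i), hence $\d_\om$-Cauchy by the reverse bound; completeness of $(\cM^1_\om,\d_\om)$ together with the forward bound then produces a $\dd_\om$-limit. The main obstacle is the reverse Hölder inequality, whose extension from $\MA_\om$-images to arbitrary measures requires the simultaneous use of strong continuity of $\d_\om$ and of $\dd_\om$, the latter itself being a bootstrap from the forward bound.
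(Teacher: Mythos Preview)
Your proof is correct and follows essentially the same route as the paper: rescaling via~\eqref{equ:Jquad} to get~\eqref{equ:intbound}, the forward bound from~\eqref{equ:holdmes}, the reverse bound on the image of $\MA_\om$ via~\eqref{equ:IJ} and~\eqref{equ:intbound}, and then extension by density/maximizing sequences. The only minor deviation is that you derive the $\dd_\om$-bounded $\Rightarrow$ $\d_\om$-bounded direction of~(i) from Lemma~\ref{lem:finen}, whereas the paper instead applies the already-established~\eqref{equ:dJ} with $\nu=\mu_\om$ (setting $R:=\max\{1,\jj_\om(\mu)\}$, $S:=\max\{1,\dd_\om(\mu,\mu_\om)\}$, so that $R\lesssim SR^{1/2}$ gives $R\lesssim S^2$); both arguments are short and equivalent in spirit.
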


\begin{proof}[Proof of Proposition~\ref{equ:dR}] Pick $\f\in\cD_{\om}$. Since $\jj_\om(a^{-1}\f)\lesssim a^{-2}\jj_\om(\f)$ for $a\ge 1$ (see~\eqref{equ:Jquad}), we can choose $1\le a\lesssim \jj_\om(\f)^{1/2}+1$ such that $\jj_\om(a^{-1}\f)\le 1$. Then $\left|\int a^{-1}\f\,(\mu-\nu)\right|\le\dd_\om(\mu,\nu)$, which proves~\eqref{equ:intbound}. 

The first part of~\eqref{equ:dJ} is a direct consequence of~\eqref{equ:holdmes}. It shows, in particular, that $\dd_\om$ is finite valued. It is also clear that $\dd_\om$ is symmetric, vanishes on the diagonal, and satisfies the triangle inequality. Since $\cD_{\om,1}$ spans the dense subspace $\cD$ of $\Cz(X)$ (see Lemma~\ref{lem:Jspan}), $\dd_\om$ further separates points, and hence defines a metric on $\cM^1_\om$. 

The first part of~\eqref{equ:dJ} also shows that $\mu_i\to\mu$ in $\cM^1_\om$ implies $\dd_\om(\mu_i,\mu)\to 0$, by continuity of $\d_\om$, and it follows that the metric $\dd_\om$ is continuous. By density of the image of $\MA_\om$, it is thus enough to show the second half of~\eqref{equ:dJ} when $\mu=\MA_\om(\f)$ and $\nu=\MA_\om(\p)$ with $\f,\p\in\cD_\om$. Then $\jj_\om(\f)\approx\jj_\om(\mu)\le R$ and $\jj_\om(\p)\approx\jj_\om(\nu)\le R$ (see~\eqref{equ:JMA2}), while 
$$
\d_\om(\mu,\nu)=\jj_\om(\f,\p)\le\int(\f-\p)(\mu-\nu), 
$$ 
by~\eqref{equ:dMA} and~\eqref{equ:IJ}. Using~\eqref{equ:intbound}, we get the second half of~\eqref{equ:dJ}. Next pick $\mu\in\cM^1_\om$ and set $R:=\max\{1,\d_\om(\mu,\mu_\om)\}$ and $S:=\max\{1,\dd_\om(\mu,\mu_\om)\}$. Applying~\eqref{equ:dJ} to $\nu=\mu_\om$ yields
$S\lesssim R$, and also $R\lesssim S R^{1/2}$, \ie $R\lesssim S^2$. This proves (i) and (ii). Since $\d_\om$ defines the strong topology of $\cM^1_\om$ and is complete (see Theorem~\ref{thm:M1}), the same therefore holds, as desired, for $\dd_\om$. 
\end{proof}

By Theorem~\ref{thm:M1}, the quasi-metric space $(\cM^1_\om,\d_\om)$ only depends on $[\om]\in\Pos(X)$.  Here we show: 

\begin{lem}\label{lem:M1equiv} For each $\tau\in\cD_\om$ we have $\dd_{\om_\tau}\lesssim (\jj_\om(\tau)^{1/2}+1)\dd_\om$. In particular, the Lipschitz equivalence class of the metric space $(\cM^1_\om,\dd_\om)$ only depends on $[\om]$. 
\end{lem}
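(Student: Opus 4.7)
The plan is to reduce estimation of $\dd_{\om_\tau}$ to that of $\dd_\om$ by a shift by $\tau$. Given $\f\in\cD_{\om_\tau,1}$, i.e.\ $\jj_{\om_\tau}(\f)\le 1$, I will set $h:=\f+\tau$, which belongs to $\cD_\om$ by Lemma~\ref{lem:entrans}, and decompose
\[
\int\f(\mu-\nu)=\int h(\mu-\nu)-\int\tau(\mu-\nu),
\]
so that each piece can be controlled by the key estimate~\eqref{equ:intbound}. The crucial input will be a bound of the form $\jj_\om(h)\lesssim\jj_\om(\tau)+1$, which will propagate through to $\jj_\om(h)^{1/2}+1\lesssim\jj_\om(\tau)^{1/2}+1$.

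To derive that intermediate energy bound, I combine the shift identity~\eqref{equ:Jtrans}, quasi-symmetry~\eqref{equ:IJsum}, and the quasi-triangle inequality~\eqref{equ:qtri}. Explicitly, writing $\jj_{\om_\tau}(\f)=\jj_{\om_\tau}(0,\f)=\jj_\om(\tau,h)$ via~\eqref{equ:Jtrans}, the quasi-triangle inequality applied to the triple $(0,\tau,h)$ yields
\[
\jj_\om(h)=\jj_\om(0,h)\lesssim\jj_\om(0,\tau)+\jj_\om(\tau,h)=\jj_\om(\tau)+\jj_{\om_\tau}(\f)\le\jj_\om(\tau)+1.
\]
Then applying~\eqref{equ:intbound} to $h$ and to $\tau$ separately, and using the elementary $\sqrt{a+1}\lesssim\sqrt{a}+1$ for $a\ge 0$, both $\left|\int h(\mu-\nu)\right|$ and $\left|\int\tau(\mu-\nu)\right|$ are bounded by a constant multiple of $(\jj_\om(\tau)^{1/2}+1)\dd_\om(\mu,\nu)$. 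Taking the supremum over $\f\in\cD_{\om_\tau,1}$ then gives the claimed inequality.

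For the second assertion, I would argue as follows: any two representatives $\om,\om'\in\cZ_+$ of the same class in $\Pos(X)$ satisfy $\om'=\om+\ddc\tau$ for some $\tau\in\cD$, and the positivity of both $\om$ and $\om'$ forces $\tau\in\cD_\om$ together with $-\tau\in\cD_{\om'}$. Applying the first part once to $(\om,\tau)$ and once to $(\om',-\tau)$ yields two-sided Lipschitz estimates between $\dd_\om$ and $\dd_{\om'}$ on the space $\cM^1_\om=\cM^1_{\om'}$, which already coincide as sets by Proposition~\ref{prop:M1ind}.

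The main (mild) obstacle is the first step: keeping track of the two arguments of $\jj_\om(\cdot,\cdot)$ and of the switch between $\jj_\om$ and $\jj_{\om_\tau}$ under shifting. This is purely a matter of invoking~\eqref{equ:Jtrans}, \eqref{equ:IJsum}, and~\eqref{equ:qtri} in the right order; once $\jj_\om(h)\lesssim\jj_\om(\tau)+1$ is established, the remainder of the proof is a direct application of~\eqref{equ:intbound}.
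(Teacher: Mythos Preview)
Your proof is correct and follows essentially the same route as the paper: shift by $\tau$ to pass from $\cD_{\om_\tau}$ to $\cD_\om$, use \eqref{equ:Jtrans} and the quasi-triangle inequality \eqref{equ:qtri} to bound $\jj_\om(\f+\tau)\lesssim\jj_\om(\tau)+1$, then apply \eqref{equ:intbound} to both $\f+\tau$ and $\tau$ and subtract. Your treatment of the second assertion (applying the first part in both directions via $\tau\in\cD_\om$ and $-\tau\in\cD_{\om'}$) is also exactly the intended argument, just spelled out more explicitly than the paper does.
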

\begin{proof} For any $\f\in\cD_{\om_\tau}$, \eqref{equ:Jtrans} yields $\jj_{\om_\tau}(\f)=\jj_{\om_\tau}(0,\f)=\jj_\om(\tau,\f+\tau)$. 
When $\jj_{\om_\tau}(\f)\le 1$, the quasi-triangle inequality~\eqref{equ:qtri} thus yields $\jj_\om(\f+\tau)=\jj_\om(0,\f+\tau)\lesssim 1+J$  with $J:=\jj_\om(\tau)=\jj_\om(0,\tau)$. By~\eqref{equ:intbound} we infer
$$
\left|\int(\f+\tau)(\mu-\nu)\right|\lesssim(1+J^{1/2})\dd_\om(\mu,\nu),\quad\left|\int\tau(\mu-\nu)\right|\lesssim(1+J^{1/2})\dd_\om(\mu,\nu). 
$$
Thus $\left|\int\f(\mu-\nu)\right|\lesssim(1+J^{1/2})\dd_\om(\mu,\nu)$. Taking the supremum over $\f\in\cD_{\om_\tau}$ such that $\jj_{\om_\tau}(\f)\le 1$ yields the result.  
\end{proof}

%
%
%%%%%%%%%%%%%%%%%%%%%%%%%%%%%%%%%%%%%%%%%%%%%%%%%%%%%%%%%%%%%%%%%%%
%
\section{Lipschitz and H\"older estimates for the energy}\label{sec:further}
In what follows we consider $\om\in\cZ_+$ with $[\om]\in\Pos(X)$. As above, we assume that the orthogonality property holds. From now on, we further assume the submean value property (see Definition~\ref{defi:submean}), and use it to investigate the dependence of $\cM^1_\om$ on $\om$ and establish a H\"older continuity estimate for the energy pairing. 

Recall that the standing assumptions hold when $X$ is a compact connected K\"ahler manifold, or any irreducible projective Berkovich space (see Theorem~\ref{thm:submean} and Examples~\ref{exam:AOK} and~\ref{exam:AONA}). 

%
%
%%%%%%%%%%%%%%%%%%%%%%%%%%%%%%%%%%%%%%%%%%%%%%%%%%%%%%%%%%%%%%%%%%%
\subsection{Lipschitz estimates for the energy}\label{sec:lipen}
Recall from~\S\ref{sec:dR} the metric $\dd_\om$, which defines the strong topology of $\cM^1_\om$. As a first key consequence of the submean value property, we show: 

\begin{thm}\label{thm:M1ind} The Lipschitz equivalence class of the metric space $(\cM^1_\om,\dd_\om)$ is independent of $\om$. 
\end{thm}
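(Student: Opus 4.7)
The plan is to reduce to commensurable forms and then establish both the set equality $\cM^1_\om=\cM^1_{\om'}$ and Lipschitz equivalence of metrics via a uniform scaling comparison. By Lemma~\ref{lem:M1equiv}, the Lipschitz equivalence class of $(\cM^1_\om,\dd_\om)$ depends only on $[\om]\in\Pos(X)$, and by Proposition~\ref{prop:comm}, any two classes in $\Pos(X)$ admit commensurable representatives. It therefore suffices to treat the case of commensurable $\om,\om'\in\cZ_+$ with classes in $\Pos(X)$, say $e^{-\delta}\om\le\om'\le e^\delta\om$ for some $\delta\ge0$.

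The central estimate is: for $\f\in\cD_{\om,1}$ normalized so that $\sup\f=0$, the rescaled function $\psi:=e^{-\delta}\f$ lies in $\cD_\om\cap\cD_{\om'}$ with $\sup\psi=0$ (these inclusions follow directly from the commensurability inequalities), and satisfies $\jj_{\om'}(\psi)\le K$ for a constant $K=K(n,\delta,T_\om)$. Indeed, the submean value property for $\om$ applied to $\f\in\cD_\om$ gives $\int\f\,\mu_\om\ge -T_\om$, hence $\en_\om(\f)\ge\int\f\,\mu_\om-\jj_\om(\f)\ge -(T_\om+1)$. For the $\om'$-energy, the monotonicity~\eqref{equ:enmon2} applied to $0\le\om'\le e^\delta\om$ and $\psi\le0$ gives $(\om',\psi)^{n+1}\ge(e^\delta\om,\psi)^{n+1}$; invoking the bilinear scaling of pairs $(e^\delta\om,e^{-\delta}\f)=e^\delta\cdot(\om,e^{-2\delta}\f)$ so that
\[
(e^\delta\om,\psi)^{n+1}=e^{(n+1)\delta}(\om,e^{-2\delta}\f)^{n+1},
\]
together with concavity of $\en_\om$ on $\cD_\om$ (which yields $\en_\om(e^{-2\delta}\f)\ge e^{-2\delta}\en_\om(\f)$) and the volume bound $V_\om/V_{\om'}\ge e^{-n\delta}$, I obtain $\en_{\om'}(\psi)\ge e^{(2n-1)\delta}\en_\om(\f)\ge-e^{(2n-1)\delta}(T_\om+1)$. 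Combined with $\int\psi\,\mu_{\om'}\le 0$, this yields the bound on $\jj_{\om'}(\psi)$. Applying the H\"older estimate~\eqref{equ:intbound} for $\om'$ to $\psi$ then gives $|\int\psi(\mu-\nu)|\lesssim\dd_{\om'}(\mu,\nu)$ for $\mu,\nu\in\cM^1_\om\cap\cM^1_{\om'}$; since $|\int\f(\mu-\nu)|=e^\delta|\int\psi(\mu-\nu)|$, taking the supremum over $\f\in\cD_{\om,1}$ yields $\dd_\om\lesssim\dd_{\om'}$ on the intersection, and the reverse follows by symmetry.

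To upgrade this to set equality, I first check $\MA_\om(\cD_\om)\subset\cM^1_{\om'}$: for $\f\in\cD_\om$ fixed and $\p\in\cD_{\om',R}$ with $\sup\p=0$, the rescaling $e^{-\delta}\p$ lies in $\cD_\om$, so the Chern--Levine--Nirenberg inequality~\eqref{equ:CLN} bounds $|\int e^{-\delta}\p(\MA_\om(\f)-\mu_\om)|\le n\sup|\f|$, while the submean bounds on $\int e^{-\delta}\p\,\mu_\om$ and $\int\p\,\mu_{\om'}$ give a uniform upper bound on $\int\p(\mu_{\om'}-\MA_\om(\f))$. Lemma~\ref{lem:finen} then gives $\MA_\om(\f)\in\cM^1_{\om'}$. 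Now given $\mu\in\cM^1_\om$, Theorem~\ref{thm:maxcv} produces a strongly approximating sequence $\mu_i=\MA_\om(\f_i)\in\cM^1_\om\cap\cM^1_{\om'}$. The Lipschitz estimate makes $(\mu_i)$ into a $\dd_{\om'}$-Cauchy sequence, and completeness of $(\cM^1_{\om'},\dd_{\om'})$ (Theorem~\ref{thm:M1}) provides a limit $\mu'\in\cM^1_{\om'}$; uniqueness of weak limits identifies $\mu=\mu'\in\cM^1_{\om'}$. The symmetric argument yields $\cM^1_\om=\cM^1_{\om'}$.

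The most delicate point is the lower bound on $\en_{\om'}(\psi)$, which combines the monotonicity of the energy pairing in the forms, the bilinear scaling of pairs, and the concavity of the Monge--Amp\`ere energy; the submean value property is then essential to turn the lower bound on $\en_\om(\f)$ into a uniform one.
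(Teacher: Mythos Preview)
Your argument is correct and follows the same route as the paper: reduce to commensurable forms via Lemma~\ref{lem:M1equiv} and Proposition~\ref{prop:comm}, then show that rescaling carries $\cD_{\om,1}$ into $\cD_{\om'}$ with uniformly bounded $\jj_{\om'}$, using the submean value property and concavity of the energy---this is exactly Lemma~\ref{lem:M1comm}. One slip: to obtain $\en_{\om'}(\psi)\ge e^{(2n-1)\delta}\en_\om(\f)$ you need $V_\om/V_{\om'}\le e^{n\delta}$, not $\ge e^{-n\delta}$, since $\en_\om(\f)\le 0$; your conclusion is correct but the stated volume bound points the wrong way. Your set-equality argument via CLN, density of $\MA_\om(\cD_\om)$, and completeness is valid but more elaborate than necessary: the paper states the metric comparison~\eqref{equ:dRcomp} for all $\mu,\nu\in\cM$ (both sides possibly $+\infty$, since~\eqref{equ:intbound} extends verbatim), and then $\cM^1_\om=\cM^1_{\om'}$ follows directly from Lemma~\ref{lem:finen} once one checks $\mu_{\om'}\in\cM^1_\om$, which is immediate from the submean value property for both $\om$ and $\om'$.
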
 

In particular, the topological space $\cM^1_\om$ is independent of $\om$ (see Proposition~\ref{prop:dR}), and will henceforth simply by denoted by $\cM^1$. 

\begin{lem}\label{lem:M1comm} Assume $\om'\in\cZ_+$ is commensurable to $\om$, with $[\om']\in\Pos(X)$. For all $\mu,\nu\in\cM$ we then have
\begin{equation}\label{equ:dRcomp}
\dd_{\om'}(\mu,\nu)\le e^{O(\d)}(1+T_\om)^{1/2}\dd_\om(\mu,\nu)
\end{equation}
with $\d:=\dT(\om,\om')\in\R_{\ge 0}$. 
\end{lem}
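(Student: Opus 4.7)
The plan is to estimate $\dd_{\om'}(\mu,\nu)$ by taking a test function $\f\in\cD_{\om',1}$ and carefully transporting it back to $\cD_\om$ via scaling, then applying the bound~\eqref{equ:intbound} together with a control on $\jj_\om$ of the transported function.

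First I would observe two simple normalizations. Since $\mu,\nu$ are probability measures, $\int c(\mu-\nu)=0$ for any constant $c$, and adding a constant to $\f$ leaves $\jj_{\om'}(\f)$ unchanged. Hence in the sup defining $\dd_{\om'}(\mu,\nu)$ I may restrict to $\f\in\cD_{\om',1}$ with $\sup \f=0$, so $\f\le 0$. Next, from $\om'\le e^\d\om$ one checks directly that $\psi:=e^{-\d}\f$ satisfies $\om+\ddc\psi\ge e^{-\d}(e^\d\om-\om')\ge 0$, so $\psi\in\cD_\om$ and $\psi\le 0$. Then
\[
\Bigl|\int\f(\mu-\nu)\Bigr|=e^{\d}\Bigl|\int\psi(\mu-\nu)\Bigr|\lesssim e^{\d}\bigl(1+\jj_\om(\psi)^{1/2}\bigr)\dd_\om(\mu,\nu)
\]
by~\eqref{equ:intbound}, so the whole problem reduces to bounding $\jj_\om(\psi)$ by $e^{O(\d)}(1+T_\om)$.

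The hard part is precisely this bound on $\jj_\om(\psi)=\int\psi\,\mu_\om-\en_\om(\psi)$. Since $\psi\le 0$, the first term satisfies $\int\psi\,\mu_\om\le 0$, so I only need an upper bound on $-\en_\om(\psi)\ge 0$. The key device will be Lemma~\ref{lem:ennef}, applied with its roles of $\om,\om'$ played by $e^{-\d}\om'\le\om$; combined with $\om\le e^{2\d}\cdot e^{-\d}\om'$, this yields
\[
0\ge(\om,e^{-\d}\f)^{n+1}\ge e^{2n\d}(e^{-\d}\om',e^{-\d}\f)^{n+1}=e^{(n-1)\d}(\om',\f)^{n+1},
\]
the last equality by $(n+1)$-linearity of the energy pairing. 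Dividing by $V_\om$ and using $V_{\om'}\le e^{n\d}V_\om$ gives
\[
0\le -\en_\om(\psi)\le e^{O(\d)}\bigl(-\en_{\om'}(\f)\bigr).
\]
Finally, from $\jj_{\om'}(\f)\le 1$ and $\f\le 0$ I get $-\en_{\om'}(\f)=\jj_{\om'}(\f)-\int\f\,\mu_{\om'}\le 1+T_{\om'}$, and the invariance-under-class estimate~\eqref{equ:Thom} of Lemma~\ref{lem:MV} yields $T_{\om'}\le e^{O(\d)}T_\om$. Thus $-\en_\om(\psi)\le e^{O(\d)}(1+T_\om)$, and hence $\jj_\om(\psi)\le e^{O(\d)}(1+T_\om)$.

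Putting everything together,
\[
\Bigl|\int\f(\mu-\nu)\Bigr|\lesssim e^{\d}\bigl(1+e^{O(\d)/2}(1+T_\om)^{1/2}\bigr)\dd_\om(\mu,\nu)\le e^{O(\d)}(1+T_\om)^{1/2}\dd_\om(\mu,\nu),
\]
and taking the supremum over the normalized $\f\in\cD_{\om',1}$ delivers~\eqref{equ:dRcomp}. The main delicate point is really the chain of homogeneity and monotonicity estimates on the energy pairing that converts the bound on $\en_{\om'}(\f)$ into one on $\en_\om(e^{-\d}\f)$; once that is in place, the submean value inequality and~\eqref{equ:intbound} do the rest mechanically.
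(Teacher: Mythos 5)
Your argument is correct and is essentially the paper's own proof: normalize $\sup\f=0$, observe $e^{-\d}\f\in\cD_\om$, bound $\jj_\om(e^{-\d}\f)\le-\en_\om(e^{-\d}\f)\le e^{O(\d)}(-\en_{\om'}(\f))\le e^{O(\d)}(1+T_\om)$ via Lemma~\ref{lem:ennef} and~\eqref{equ:Thom}, then conclude with~\eqref{equ:intbound}. The only (cosmetic) difference is how you package the homogeneity step, writing $(\om,e^{-\d}\f)^{n+1}\ge e^{2n\d}(e^{-\d}\om',e^{-\d}\f)^{n+1}$ rather than the paper's $(e^\d\om,\f)^{n+1}\ge e^{O(\d)}(\om',\f)^{n+1}$.
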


\begin{proof} Pick any $\f\in\cD_{\om'}$ such that $\jj_{\om'}(\f)=\int\f\,\mu_{\om'}-\en_{\om'}(\f)\le 1$. We need to show
\begin{equation}\label{equ:dRcompbis}
\left|\int\f(\mu-\nu)\right|\le e^{O(\d)}(1+T_\om)^{1/2}\dd_\om(\mu,\nu).
\end{equation}
By translation invariance of $\jj_\om$ on $\cD_\om$, we may assume wlog $\sup\f=0$. Then 
$$
-\en_{\om'}(\f)\le 1+T_{\om'}\le 1+ e^{O(\d)} T_\om,
$$
by~\eqref{equ:Thom}. On the other hand, 
$$
0\ge e^{(n+1)\d}(\om,e^{-\d}\f)^{n+1}=(e^\d\om,\f)^{n+1}\ge e^{O(\d)}(\om',\f)^{n+1},
$$
where the last inequality follows from Lemma~\ref{lem:ennef}, since $\om'\le e^\d\om\le e^{2\d}\om'$. Dividing by $(n+1)V_\om=e^{O(\d)}(n+1)V_{\om'}$, we get
$$
0\le\jj_{\om}(e^{-\d}\f)\le-\en_\om(e^{-\d}\f)\le -e^{O(\d)}\en_{\om'}(\f)\le e^{O(\d)}(1+T_\om).
$$
By~\eqref{equ:intbound} this implies
$$
\left|\int e^{-\d}\f(\mu-\nu)\right|\le e^{O(\d)}(1+T_\om)^{1/2}\dd_\om(\mu,\nu),
$$
and hence~\eqref{equ:dRcompbis}. 
\end{proof}

\begin{proof}[Proof of Theorem~\ref{thm:M1ind}] We argue as in the proof of Proposition~\ref{prop:submean}. On the one hand, the Lipschitz equivalence class only depends on the positive class $[\om]$, by Lemma~\ref{lem:M1equiv}. On the other hand, it only depends on the commensurability class of $\om$, by Lemma~\ref{lem:M1comm}, and we conclude since any two positive classes admit commensurable representatives, by Proposition~\ref{prop:comm}. 
\end{proof}

%
%
%%%%%%%%%%%%%%%%%%%%%%%%%%%%%%%%%%%%%%%%%%%%%%%%%%%%%%%%%%%%%%%%%%%
\subsection{Mixed Monge--Amp\`ere measures}\label{sec:mixedMA}
It will be convenient to introduce, for $\f\in\cD_\om$ and $\mu\in\cM$, the quantities
\begin{equation}\label{equ:tJ}
\tJ_\om(\f):=\jj_\om(\f)+T_\om,\quad\text{ and }\quad\tJ_\om(\mu):=\jj_\om(\mu)+T_\om. 
\end{equation}

\begin{lem}\label{lem:tJ} For each $\f\in\cD_\om$ we have 
\begin{equation}\label{equ:EtJ}
0\le\sup\f-\en_\om(\f)\le\tJ_\om(\f); 
\end{equation}
\begin{equation}\label{equ:tJapp}
\tJ_\om(\f)\approx\tJ_\om(\MA_\om(\f)). 
\end{equation}
\end{lem}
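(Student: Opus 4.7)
The lemma is essentially a bookkeeping exercise combining the submean value inequality with the already-established equivalence $\jj_\om(\f)\approx\jj_\om(\MA_\om(\f))$.

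For the first claim~\eqref{equ:EtJ}, my plan is to split the difference $\sup\f-\en_\om(\f)$ into two manifestly non-negative pieces. Writing
\[
\sup\f-\en_\om(\f)=\left(\sup\f-\int\f\,\mu_\om\right)+\left(\int\f\,\mu_\om-\en_\om(\f)\right)=\left(\sup\f-\int\f\,\mu_\om\right)+\jj_\om(\f),
\]
the first bracket is $\ge 0$ because $\mu_\om$ is a probability measure and $\f\le\sup\f$, and it is $\le T_\om$ by the very definition~\eqref{equ:submean1} of $T_\om$; the second bracket equals $\jj_\om(\f)$, which is $\ge 0$ by concavity of $\en_\om$ (equivalently~\eqref{equ:enconc}). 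Adding these two estimates yields both $\sup\f-\en_\om(\f)\ge 0$ and $\sup\f-\en_\om(\f)\le T_\om+\jj_\om(\f)=\tJ_\om(\f)$.

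For the second claim~\eqref{equ:tJapp}, I invoke the bound~\eqref{equ:JMA2} from Proposition~\ref{prop:Jmu}, which gives $\jj_\om(\MA_\om(\f))\approx\jj_\om(\f)$. Since the $\approx$ relation is preserved by adding a common non-negative constant (if $a\le C b$ then $a+c\le C(b+c)$ for $c\ge 0$, and symmetrically), adding $T_\om\ge 0$ to both sides yields
\[
\tJ_\om(\MA_\om(\f))=\jj_\om(\MA_\om(\f))+T_\om\approx\jj_\om(\f)+T_\om=\tJ_\om(\f),
\]
as required.

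Both steps are short and rely only on previously established identities (the definition of $T_\om$, monotonicity of the probability measure $\mu_\om$, concavity of $\en_\om$, and~\eqref{equ:JMA2}); I do not foresee any serious obstacle. The only point worth being careful about is that the additive shift by $T_\om$ really does preserve the $\approx$ relation even though $T_\om$ may be large compared to $\jj_\om$, which is exactly why the submean value property (finiteness of $T_\om$) is needed here.
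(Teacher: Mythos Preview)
Your proof is correct and follows essentially the same route as the paper: the same decomposition $\sup\f-\en_\om(\f)=\jj_\om(\f)+(\sup\f-\int\f\,\mu_\om)$ for~\eqref{equ:EtJ}, and the same appeal to~\eqref{equ:JMA2} for~\eqref{equ:tJapp}. You are in fact slightly more explicit than the paper in justifying the lower bound $0\le\sup\f-\en_\om(\f)$ and in spelling out why adding the constant $T_\om$ preserves the $\approx$ relation.
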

\begin{proof} By~\eqref{equ:J}, we have 
$$
\sup\f-\en_\om(\f)=\jj_\om(\f)+(\sup\f-\int\f\,\mu_\om)\le\jj_\om(\f)+T_\om.  
$$
This yields~\eqref{equ:EtJ}, while~\eqref{equ:tJapp} is a direct consequence of~\eqref{equ:JMA2}. 
\end{proof}

We next establish a key energy estimate for mixed Monge--Amp\`ere measures. 

\begin{thm}\label{thm:mixedMA} For $i=1,\dots,n$, pick $\om_i\in\cZ_+$ with $[\om_i]\in\Pos(X)$ and $\f_i\in\cD_{\om_i}$, and set
\begin{equation}\label{equ:mixed}
\mu:=([\om_1]\inter[\om_n])^{-1}(\om_1+\ddc\f_1)\winter(\om_n+\ddc\f_n)\in\cM. 
\end{equation}
Then $\mu$ lies in $\cM^1$, and satisfies: 
\begin{itemize}
\item[(i)] if each $\om_i$ is commensurable to $\om$, then 
$$
\tJ_\om(\mu)\lesssim e^{O(\d)}\max_i\tJ_{\om_i}(\f_i)
$$
with $\d:=\max_i\dT(\om_i,\om)$; 
\item[(ii)] in the general case, 
$$
\jj_\om(\mu)\lesssim C(\max_i\jj_{\om_i}(\f_i)+1), 
$$
where $C>0$ only depends on $\om$ and the $\om_i$. 
\end{itemize}
\end{thm}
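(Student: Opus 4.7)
The plan is to prove (i) by a uniform variational bound on $\en_\om(\p)-\int\p\,\mu$ over $\p\in\cD_\om$, and to reduce (ii) to (i) by choosing commensurable representatives. For (i), the invariance of $\en_\om(\p)-\int\p\,\mu$, of the measure $\mu$, and of each $\tJ_{\om_i}(\f_i)$ under shifting the respective potential by a constant allows us to normalize $\sup\p=0$ and $\sup\f_i=0$. Under this normalization, the submean value property combined with $\jj_\om\ge 0$ yields $-\en_\om(\p)\le\tJ_\om(\p)$ and similarly $|\en_{\om_i}(\f_i)|\le\tJ_{\om_i}(\f_i)$.

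The first step is to expand $V\int\p\,\mu$, with $V:=[\om_1]\inter[\om_n]$, via multilinearity in the first slot together with \eqref{equ:enint}:
$$
V\int\p\,\mu=(\om,\p)(\om_1,\f_1)\cdots(\om_n,\f_n)-(\om,0)(\om_1,\f_1)\cdots(\om_n,\f_n).
$$
By \eqref{equ:enpairing}, the remainder equals $\sum_i\int\f_i\,\om\wedge\om_1\winter\om_{i-1}\wedge(\om_{i+1}+\ddc\f_{i+1})\winter(\om_n+\ddc\f_n)\le 0$, and $(\om,\p)(\om_1,\f_1)\cdots(\om_n,\f_n)\le 0$ since each potential is $\le 0$; hence $|V\int\p\,\mu|\le|(\om,\p)(\om_1,\f_1)\cdots(\om_n,\f_n)|$.

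The main step is a Cauchy--Schwarz-type multiplicative bound for the mixed energy pairing, obtained by iterating the seminegativity~\eqref{equ:negdef} via the polarization framework of Appendix~\ref{sec:CS}, with the commensurability factor controlled using Theorem~\ref{thm:mixedenbound}:
$$
|(\om,\p)(\om_1,\f_1)\cdots(\om_n,\f_n)|\lesssim e^{O(\d)}\,|(\om,\p)^{n+1}|^{\alpha}\max_i|(\om_i,\f_i)^{n+1}|^{1-\alpha}
$$
for some $\alpha\in(0,1)$. Setting $A:=|(\om,\p)^{n+1}|$ and $B:=\max_i|(\om_i,\f_i)^{n+1}|$, Young's inequality with a weight tuned so that its $A$-contribution matches $A/((n+1)V_\om)=-\en_\om(\p)$ gives, using $V\approx e^{O(\d)}V_\om$,
$$
\en_\om(\p)-\int\p\,\mu\le-\tfrac{A}{(n+1)V_\om}+e^{O(\d)}V^{-1}A^{\alpha}B^{1-\alpha}\lesssim e^{O(\d)}V_\om^{-1}B.
$$
Since $B\le(n+1)V_{\om_i}\tJ_{\om_i}(\f_i)\lesssim e^{O(\d)}V_\om\max_i\tJ_{\om_i}(\f_i)$, taking the supremum over $\p$ and adding $T_\om\le e^{O(\d)}T_{\om_i}\le e^{O(\d)}\tJ_{\om_i}(\f_i)$ from \eqref{equ:Thom} delivers (i).

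For (ii), Proposition~\ref{prop:comm} provides $\om'_i\in\cZ_+$ with $[\om'_i]=[\om_i]$ such that $\om,\om'_1,\ldots,\om'_n$ are pairwise commensurable. Writing $\om'_i=\om_i+\ddc\tau_i$ for some $\tau_i\in\cD$ and setting $\f'_i:=\f_i-\tau_i\in\cD_{\om'_i}$ (by Lemma~\ref{lem:entrans}) keeps $\mu$ unchanged, since $\om'_i+\ddc\f'_i=\om_i+\ddc\f_i$. Applying (i) to the primed data and using $\jj_{\om'_i}(\f'_i)=\jj_{\om_i}(\tau_i,\f_i)$ from~\eqref{equ:Jtrans}, which is $\lesssim\jj_{\om_i}(\f_i)+\jj_{\om_i}(\tau_i)$ by~\eqref{equ:qtri} and~\eqref{equ:IJsum}, together with the bound on $T_{\om'_i}$ from~\eqref{equ:Mddc}, yields (ii) with $C$ absorbing all $\om,\om_i$-dependent quantities. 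The principal obstacle is the multiplicative H\"older bound itself: Theorem~\ref{thm:mixedenbound} alone gives only $|(\om,\p)\cdots(\om_n,\f_n)|\lesssim e^{O(\d)}\max(A,B)$, which is insufficient in the regime $A\gg B$; the iterated Cauchy--Schwarz of Appendix~\ref{sec:CS} is genuinely needed to produce an exponent strictly less than $1$ on $A$, so that Young's inequality can cancel the $A$-dependence against $-\en_\om(\p)$.
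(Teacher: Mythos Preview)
Your normalization, the expansion of $V\int\p\,\mu$ via~\eqref{equ:enint}, and your reduction of (ii) to (i) via Proposition~\ref{prop:comm} all match the paper. The gap is in the ``main step'' for (i).

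The multiplicative H\"older bound
$$
|(\om,\p)\cdot(\om_1,\f_1)\inter(\om_n,\f_n)|\lesssim e^{O(\d)}A^{\alpha}B^{1-\alpha}
$$
is asserted but not proven, and neither Theorem~\ref{thm:mixedenbound} nor Appendix~\ref{sec:CS} yields it. Theorem~\ref{thm:mixedenbound} gives only $\lesssim e^{O(\d)}\max(A,B)$, as you yourself note. The Cauchy--Schwarz inequality~\eqref{equ:CS} of Appendix~\ref{sec:CS} applies to elements of the kernel $V_0$ (i.e.\ pairs $(0,f)$), and its iterated form (Lemma~\ref{lem:hold1} and~\eqref{equ:hold1}) works within a \emph{single} fiber $P_\theta$; here the factors $(\om,\p),(\om_1,\f_1),\dots,(\om_n,\f_n)$ lie in different fibers and none is in $V_0$. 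Extracting an exponent strictly below $1$ on $A$ in this mixed setting would require a genuine additional argument, which you have not supplied.

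The paper sidesteps this entirely. Theorem~\ref{thm:mixedenbound} already gives, after normalizing $\sup\p=\sup\f_i=0$,
$$
\int\p\,(\mu_\om-\mu)\le-\int\p\,\mu\lesssim e^{O(\d)}\bigl(R+J\bigr)
$$
for all $\p\in\cD_\om$ with $R:=\jj_\om(\p)$ and $J:=\max_i\tJ_{\om_i}(\f_i)$. This bound is linear in $R$, but that suffices: Lemma~\ref{lem:finen} (which itself uses only the quadratic estimate~\eqref{equ:Jquad}) converts $\sup_{\cD_{\om,R}}\int\p(\mu_\om-\mu)\lesssim e^{O(\d)}(R+J)=:S$ into $\jj_\om(\mu)\lesssim S(1+R^{-1}S)$, and optimizing over $R$ (take $R\approx J$) yields $\jj_\om(\mu)\lesssim e^{O(\d)}J$. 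In effect, Lemma~\ref{lem:finen} performs the absorption you attempted with Young's inequality, but with only the $\min$ bound of Theorem~\ref{thm:mixedenbound} as input.
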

\begin{proof} Assume first that each $\om_i$ is commensurable to $\om$. Set 
$$
J:=\max_i\tJ_{\om_i}(\f_i),\quad V:=V_\om,\quad V_i:=V_{\om_i},
$$
and observe that
\begin{equation}\label{equ:volcomp}
e^{-n\d} V\le V_i\le e^{n\d}V,\quad e^{-n\d} V\le [\om_1]\inter[\om_n]\le e^{n\d} V.
\end{equation}
Since $\mu$ is unchanged when the $\f_i$'s are translated by constants, we may assume without loss that $\sup\f_i=0$. Then
$$
0\ge(\om_i,\f_i)^{n+1}=(n+1)V_i\en_{\om_i}(\f_i)\ge-(n+1)V_i\tJ_{\om_i}(\f_i),
$$
by~\eqref{equ:EtJ}, and hence  
\begin{equation}\label{equ:enbelow1}
0\ge(\om_i,\f_i)^{n+1}\gtrsim -e^{O(\d)}V J, 
\end{equation}
using~\eqref{equ:volcomp}. Now pick $\p\in\cD_\om$ such that $\sup\p=0$, and set $R:=\jj_\om(\p)$. On the one hand, 
\begin{equation}\label{equ:enbelow2}
0\ge (\om,\p)^{n+1}=(n+1)V\en_\om(\p)=-(n+1)V R. 
\end{equation}
On the other hand, using~\eqref{equ:volcomp} again, we have 
\begin{align*}
0 & \ge e^{-n\d} V\int\p\,\mu\ge\int\p\,(\om_1+\ddc\f_1)\winter(\om_n+\ddc\f_n)\\
& =(\om,\p)\cdot(\om_1,\f_1)\inter(\om_n,\f_n)-(\om,0)\cdot(\om_1,\f_1)\inter(\om_n,\f_n)\\
& \ge (\om,\p)\cdot(\om_1,\f_1)\inter(\om_n,\f_n)\gtrsim e^{O(\d)}\min\{(\om,\p)^{n+1},\min_i (\om_i,\f_i)^{n+1}\}, 
\end{align*}
by~\eqref{equ:enmon2} and Theorem~\ref{thm:mixedenbound}. Combined with~\eqref{equ:enbelow1} and~\eqref{equ:enbelow2}, this yields 
$$
\int\p(\mu_\om-\mu)\le -\int\p\,\mu\lesssim e^{O(\d)}(R+J). 
$$
By Lemma~\ref{lem:finen}, we infer, 
$$
\jj_\om(\mu)\lesssim e^{O(\d)}\inf_{R>0}(R+J)\left(1+R^{-1}(R+J)\right)\le e^{O(\d)}J,
$$
which concludes the proof of (i) (using~\eqref{equ:Thom}). 

We now consider the general case. By Proposition~\ref{prop:comm}, we can choose $\tau\in\cD_\om$ and $\tau_i\in\cD_{\om_i}$ such that $\om':=\om_{\tau}$ and $\om'_i:=\om_{i,\tau_i}$ are commensurable for all $i$. Then 
$$
\mu=([\om'_1]\inter[\om'_n])^{-1}(\om'_1+\ddc\f'_1)\winter(\om'_n+\ddc\f_n)
$$ 
with $\f'_i:=\f_i-\tau_i$, and hence
$$
\jj_{\om'}(\mu)\lesssim\max_i(\jj_{\om'_i}(\f'_i)+T_{\om'_i}), 
$$
by Theorem~\ref{thm:mixedMA}. By~\eqref{equ:Jmestrans2}, we have $\jj_\om(\mu)\le\jj_{\om'}(\mu)+C$ and $\jj_{\om'_i}(\f'_i)\le\jj_\om(\f_i)+C$ with $C>0$ independent of $\mu$, and (ii) follows. 
\end{proof}

As a consequence, we get the following Lipschitz estimate for the energy:

\begin{cor}\label{cor:lipen} Pick $\om'\in\cZ_+$ such that $[\om']\in\Pos(X)$, and $\mu\in\cM^1$. 
\begin{itemize}
\item[(i)] If $\om'$ is commensurable to $\om$, then
$$
\tJ_{\om'}(\mu)\approx e^{O(\d)}\tJ_\om(\mu)
$$
with $\d:=\dT(\om,\om')$. 
\item[(ii)] In the general case, there exists $C>0$ only depending on $\om,\om'$ such that 
$$
\jj_{\om'}(\mu)\le C(\jj_\om(\mu)+1). 
$$
\end{itemize}
\end{cor}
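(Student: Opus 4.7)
The strategy for (i) is to reduce from an arbitrary $\mu\in\cM^1$ to $\mu=\MA_\om(\f)$ via density, for which the desired bound is a direct instance of Theorem~\ref{thm:mixedMA}(i). Using Theorem~\ref{thm:maxcv}, I would choose $\f_i\in\cD_\om$ with $\mu_i:=\MA_\om(\f_i)\to\mu$ strongly in $\cM^1_\om$. Each $\mu_i=V_\om^{-1}(\om+\ddc\f_i)^n$ fits into~\eqref{equ:mixed} with all inner forms equal to $\om$ and all inner functions equal to $\f_i$. Since $\om$ is commensurable to $\om'$ with $\dT(\om,\om')=\d$, applying Theorem~\ref{thm:mixedMA}(i) with $\om'$ playing the role of the outer reference form yields
\[
\tJ_{\om'}(\mu_i)\;\lesssim\;e^{O(\d)}\,\tJ_\om(\f_i)\;\approx\;e^{O(\d)}\,\tJ_\om(\mu_i),
\]
where the second comparison uses~\eqref{equ:tJapp}. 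By definition of the strong topology $\tJ_\om(\mu_i)\to\tJ_\om(\mu)$, while weak lower semicontinuity of $\jj_{\om'}$ (Proposition~\ref{prop:Jmu}) gives $\tJ_{\om'}(\mu)\le\liminf_i\tJ_{\om'}(\mu_i)$. Combining these yields $\tJ_{\om'}(\mu)\lesssim e^{O(\d)}\tJ_\om(\mu)$, and the reverse inequality follows by exchanging the roles of $\om$ and $\om'$ (the Thompson distance being symmetric), giving the claimed $\approx e^{O(\d)}$ comparison.

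For (ii), the reduction to (i) follows the same scheme as in the second half of the proof of Theorem~\ref{thm:mixedMA}. By Proposition~\ref{prop:comm}, I would pick representatives $\tilde\om\in[\om]\cap\cZ_+$ and $\tilde\om'\in[\om']\cap\cZ_+$ that are commensurable. Writing $\tilde\om=\om_\tau$ with $\tau\in\cD_\om$ (automatic since $\tilde\om\ge 0$) and $\tilde\om'=\om'_{\tau'}$ analogously, the translation formula~\eqref{equ:Jmestrans2} and the fact that $\mu$ is a probability measure give
\[
|\jj_{\tilde\om}(\mu)-\jj_\om(\mu)|\;\le\;\sup|\tau|+|\en_\om(\tau)|,
\]
and similarly on the primed side. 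Combining this with part (i) applied to the commensurable pair $(\tilde\om,\tilde\om')$, together with the finiteness of $T_{\tilde\om}$ and $T_{\tilde\om'}$ granted by the submean value property (Proposition~\ref{prop:submean}), chains into $\jj_{\om'}(\mu)\le C(\jj_\om(\mu)+1)$ with $C$ depending only on $\om$ and $\om'$ (through $\tau,\tau',\tilde\om,\tilde\om'$ and their $\dT$-distance).

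The one subtle point is the limit passage in (i): strong $\om$-convergence ensures continuity of $\jj_\om$ along $(\mu_i)$, but a priori only weak convergence is available on the $\om'$-side, so only lsc of $\jj_{\om'}$ can be invoked. This one-sided control is exactly what the argument needs, and the symmetry between $\om$ and $\om'$ then supplies the opposite inequality, circumventing any need to invoke the independence of the strong topology from the polarization (Theorem~\ref{thm:M1ind}).
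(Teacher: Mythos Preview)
Your proof is correct and follows the same overall strategy as the paper: approximate $\mu$ by $\MA_\om(\f_i)$, apply Theorem~\ref{thm:mixedMA}(i) to each $\mu_i$, pass to the limit, and then reduce~(ii) to~(i) via commensurable representatives and the translation formula~\eqref{equ:Jmestrans2}.

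The one noteworthy difference is in the limit passage for~(i). The paper, having already established Theorem~\ref{thm:M1ind} (so that the strong topology of $\cM^1$ is independent of the polarization), simply invokes strong continuity of $\jj_{\om'}$ to get $\jj_{\om'}(\mu_i)\to\jj_{\om'}(\mu)$. You instead use only weak lower semicontinuity of $\jj_{\om'}$ on $\cM$ (Proposition~\ref{prop:Jmu}) to obtain $\tJ_{\om'}(\mu)\le\liminf_i\tJ_{\om'}(\mu_i)$, and then recover the reverse inequality by swapping the roles of $\om$ and $\om'$ (legitimate once the first inequality shows $\mu\in\cM^1_{\om'}$). This is a logically leaner route that makes the corollary independent of Theorem~\ref{thm:M1ind}, at the modest cost of running the approximation argument twice.
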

We refer to~\eqref{equ:tJvar} below for a more precise estimate when $\d$ is small. 

\begin{proof} Assume first $\om'$ commensurable. Pick a maximizing sequence $(\f_j)$ for $\mu$ in $\cD_\om$, and set $\mu_j:=\MA_\om(\f_j)$. Then $\mu_j\to\mu$ strongly in $\cM^1$ (see Theorem~\ref{thm:maxcv}), and hence $\jj_\om(\mu_j)\to\jj_\om(\mu)$ and $\jj_{\om'}(\mu_j)\to\jj_{\om'}(\mu)$. For each $j$ we have $\jj_\om(\f_j)\approx\jj_\om(\mu_j)$ (see~\eqref{equ:JMA2}). Theorem~\ref{thm:mixedMA} thus yields 
$\tJ_{\om'}(\mu_j)\lesssim e^{O(\d)}\tJ_\om(\mu_j)$, and (i) follows. 

In the general case, we can choose $\tau\in\cD_\om$ and $\tau'\in\cD_{\om'}$ such that $\om_\tau$ and $\om_{\tau'}$ are commensurable (see Proposition~\ref{prop:comm}). By~\eqref{equ:Jmestrans2}, we then have $\jj_{\om'}(\mu)\le\jj_{\om_\tau}(\mu)+C$ and $\jj_{\om_{\tau'}}(\mu)\le\jj_{\om'}(\mu)+C$ with $C>0$ independent of $\mu$, and (ii) now follows from (i). 
\end{proof}

%
%%%%%%%%%%%%%%%%%%%%%%%%%%%%%%%%%%%%%%%%%%%%%%%%%%%%%%%%%%%%%%%%%%%
%
\subsection{H\"older continuity of the energy pairing}
Recall from~\S\ref{sec:pos} that 
$$
\cZ_\om=\left\{\theta\in\cZ\mid\|\theta\|_\om<\infty\right\}.
$$
Using the above results, we establish a general H\"older continuity property for the energy pairing. 

\begin{thm}\label{thm:enhold} For $i=0,\dots,n$, pick $\theta_i,\theta'_i\in\cZ_\om$ and $\f_i,\f'_i\in\cD_\om$, normalized by $\int\f_i\,\mu_\om=\int\f'_i\,\mu_\om=0$. Then
$$
\left|(\theta_0,\f_0)\inter(\theta_n,\f_n)-(\theta'_0,\f'_0)\inter(\theta'_n,\f'_n)\right|\lesssim A\left(\max_i\|\theta_i-\theta'_i\|_\om J+\max_i\jj_\om(\f_i,\f'_i)^\a J^{1-\a}\right). 
$$
with $\a:=2^{-n}$ and 
$$
A:=V_\om\prod_i\left(1+\|\theta_i\|_\om+\|\theta'_i\|_\om\right),\quad J:=\max_i\tJ_\om(\f_i). 
$$
In particular, 
\begin{equation}\label{equ:enbd}
\left|(\theta_1,\f_1)\inter(\theta_n,\f_n)\right|\lesssim V_\om\prod_i\left(1+\|\theta_i\|_\om\right)\max_i\tJ_\om(\f_i). 
\end{equation}
\end{thm}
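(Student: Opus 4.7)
The statement packages two ingredients: a Lipschitz bound in the forms $\theta_i$ and a H\"older bound in the potentials $\f_i$. I would first establish the auxiliary bound~\eqref{equ:enbd} and then telescope.

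For~\eqref{equ:enbd}: the submean value property combined with $\int\f_i\,\mu_\om=0$ gives $0\le\sup\f_i\le T_\om$. Shifting each $\f_i$ by $-\sup\f_i$ via~\eqref{equ:encst} modifies the pairing by at most $\sum_i|\sup\f_i|\cdot|[\theta_0]\inter\widehat{[\theta_i]}\inter[\theta_n]|\lesssim V_\om J\prod_i(1+\|\theta_i\|_\om)$, using $T_\om\le\tJ_\om(\f_i)\le J$. So WLOG $\sup\f_i=0$, with each $\f_i\in\cD_\om$ nonpositive. Writing $(\theta_j,\f_j)=(\theta_j,0)+(0,\f_j)$ in each slot and applying multilinearity and~\eqref{equ:enint} iteratively expresses the pairing as a finite linear combination of integrals $\int\f_{j_0}\,\ddc\f_{j_1}\winter\ddc\f_{j_k}\wedge\theta_{i_1}\winter\theta_{i_l}$ with $k+l=n$. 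Decomposing each $\theta_{i_l}$ as a difference of $\om$-bounded positive forms and applying Theorem~\ref{thm:mixedenbound} (through Lemmas~\ref{lem:ennef}--\ref{lem:ensum}) bounds each such integral by $V_\om\prod_l\|\theta_{i_l}\|_\om\cdot\max_j|\en_\om(\f_j)|$; since $|\en_\om(\f_j)|\le\tJ_\om(\f_j)\le J$ by~\eqref{equ:EtJ}, summing yields~\eqref{equ:enbd}.

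For the full estimate, I would telescope in two blocks of $n+1$ steps. First, replace $\theta_i\to\theta'_i$ one index at a time with all $\f_j$ fixed; each increment is an $(n+1)$-linear expression carrying $(\theta_i-\theta'_i,\f_i)$ in slot $i$, so~\eqref{equ:enbd} (with $\|\theta_i-\theta'_i\|_\om$ in slot $i$ and $\|\theta_j\|_\om+\|\theta'_j\|_\om$ elsewhere) gives an $\lesssim A\|\theta_i-\theta'_i\|_\om J$ contribution, summing to the first target term. Next, replace $\f_i\to\f'_i$ one index at a time; by multilinearity the $i$-th increment equals the pairing with slot $i$ replaced by $(0,\f_i-\f'_i)$, slots $j<i$ carrying $(\theta'_j,\f'_j)$, and slots $j>i$ carrying $(\theta'_j,\f_j)$. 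By~\eqref{equ:enint} this is
$$
\int(\f_i-\f'_i)\,\bigwedge_{j<i}(\theta'_j+\ddc\f'_j)\wedge\bigwedge_{j>i}(\theta'_j+\ddc\f_j).
$$
Decomposing each $\theta'_j$ into $\om$-bounded positive pieces, integrating by parts~\eqref{equ:intpart} to symmetrize, and applying the Cauchy--Schwarz machinery of Appendix~\ref{sec:CS} to the seminegative form~\eqref{equ:negdef} yields a H\"older estimate with exponent $\alpha=2^{-n}$, contributing $\lesssim A\jj_\om(\f_i,\f'_i)^\a J^{1-\a}$, which sums to the second target term.

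The principal technical difficulty lies in the potential-change step: the Cauchy--Schwarz machinery of Appendix~\ref{sec:CS} is designed for homogeneous polynomials on a single positive cone (associated to a fixed $\om$), whereas the increment above involves a multilinear form mixing several $\om$-bounded $\theta'_j$'s of indefinite sign. The resolution is to decompose $\theta'_j=\|\theta'_j\|_\om\om-(\|\theta'_j\|_\om\om-\theta'_j)$ into a difference of $\om$-dominated positive forms, expand termwise, and apply the Cauchy--Schwarz estimates of Appendix~\ref{sec:CS} to each resulting term; ensuring that the accumulated $\om$-norms assemble exactly into the constant $A$ of the theorem while preserving the optimal exponent $\alpha=2^{-n}$ is the most delicate bookkeeping.
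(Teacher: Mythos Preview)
Your telescoping structure and the Lipschitz-in-$\theta$ step are fine, and close to the paper's. The gap is in the H\"older-in-$\f_i$ step. You propose to decompose each $\theta'_j$ into $\om$-dominated positive pieces and then ``apply the Cauchy--Schwarz estimates of Appendix~\ref{sec:CS} to each resulting term.'' But the estimates of Appendix~\ref{sec:CS} (in particular~\eqref{equ:hold1} and Lemma~\ref{lem:hold1}) are formulated for tuples $x_i,y_i,z_j$ lying in a \emph{single} fiber $P_\theta$: the proof averages $z:=\tfrac1{n-1}(z_2+\dots+z_n)$ and compares $\d(\cdot,z)$, which only makes sense when all $z_j$ project to the same $\theta$. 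After your decomposition the $n$ background factors lie in different fibers $P_{\theta_j^\pm}$, and no further splitting reduces them to a common one (unless $[\theta'_j]=[\om]$, which is not assumed). So Appendix~\ref{sec:CS} alone does not deliver the bound $\jj_\om(\f_i,\f'_i)^\a J^{1-\a}$ for your increment.

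The paper sidesteps this obstruction by routing through measures: after normalizing $\|\theta_i\|_\om\le 1$ and decomposing so that $\om\le\theta_i\le 3\om$, the increment becomes $\int(\f_0-\f'_0)\,\mu$ (times a volume factor) for the mixed Monge--Amp\`ere measure $\mu$; Theorem~\ref{thm:mixedMA} gives $\jj_\om(\mu)\lesssim J$, and then the \emph{measure-level} H\"older estimate~\eqref{equ:holdmes} (which was obtained from Appendix~\ref{sec:CS} via maximizing sequences and density of $\MA_\om$-images) yields the bound. This passage through $\cM^1_\om$ is precisely what lets one handle distinct commensurable forms in the background slots; it is the missing idea in your sketch. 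Once this step is in place, the paper derives~\eqref{equ:enbd} as a corollary (take $\f'_i=0$), rather than proving it first.
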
 

\begin{proof} Assume first $\theta_i=\theta'_i$ for all $i$. By symmetry of the energy pairing, we may assume $\f_i=\f'_i$ for $i\ge 1$. For $i=1,\dots,n$ set $t_i:=1+\|\theta_i\|_\om$. Then $\|t_i^{-1}\theta_i\|_\om\le 1$, and 
$$
\jj_\om(t_i^{-1}\f_i)\le t_i^{-1}\jj_\om(\f_i)\le\jj_\om(\f_i), 
$$
by convexity of $\jj_\om$ on $\cD_\om$. By homogeneity, we may thus assume $\|\theta_i\|_\om\le 1$ for all $i=1,\dots,n$. Thus $-\om\le\theta_i\le\om$, and hence $\theta_i=\theta_i^+-\theta_i^-$ where $\theta_i^+:=\theta_i+2\om$ and $\theta_i^-:=2\om$ both satisfy $\om\le\theta_i^\pm\le 3\om$. By multilinearity, we may finally assume $\om\le\theta_i\le 3\om$. By Proposition~\ref{prop:enpairing}~(i), it then suffices to show 
\begin{equation}\label{equ:mixedMA}
\left|\int(\f_0-\f'_0)(\theta_1+\ddc\f_1)\winter(\theta_n+\ddc\f_n)\right|\le V_\om\jj_\om(\f_0,\f'_0)^\a J^{1-\a}.
\end{equation}
Since $\om\le\theta_i\le 3\om$, Theorem~\ref{thm:mixedMA} shows that 
$$
\mu:=([\theta_0]\inter[\theta_n])^{-1}(\theta_1+\ddc\f_1)\winter(\theta_n+\ddc\f_n)\in\cM
$$
satisfies $\jj_\om(\mu)\lesssim J$. By~\eqref{equ:holdmes}, we infer
$$
\left|\int(\f_0-\p_0)(\mu-\mu_\om)\right|\lesssim  \jj_\om(\f_0,\p_0)^\a J^{1-\a},
$$
which yields~\eqref{equ:mixedMA} since $\int(\f_0-\p_0)\mu_\om=0$ and $[\theta_0]\inter[\theta_n]\lesssim V_\om$.

In the general case, we may again assume $(\theta_i,\f_i)=(\theta'_i,\f'_i)$ for $i\ge 1$, by symmetry and multilinearity of the energy pairing. Then
$$
(\theta_0,\f_0)\cdot(\theta_1,\f_1)\inter(\theta_n,\f_n)-(\theta'_0,\f'_0)\cdot(\theta_1,\f_1)\inter(\theta_n,\f_n)
$$
$$
=(\theta_0-\theta'_0,0)\cdot (\theta_1,\f_1)\inter(\theta_n,\f_n)+(0,\f_0-\f'_0)\cdot(\theta_1,\f_1)\inter(\theta_n,\f_n), 
$$
where the last term has already been estimated by~\eqref{equ:mixedMA}. We are thus reduced to showing
$$
\left|(\theta_0,0)\cdot (\theta_1,\f_1)\inter(\theta_n,\f_n)\right|\lesssim A J\|\theta_0\|_\om. 
$$
By homogeneity we may further assume $\|\theta_0\|_\om=1$, and the desired estimate now follows from the first step of the proof applied to $\f'_i=0$, using $(\theta_0,0)\inter(\theta_n,0)=0$. 
\end{proof}

%
%
%%%%%%%%%%%%%%%%%%%%%%%%%%%%%%%%%%%%%%%%%%%%%%%%%%%%%%%%%%%%%%%%%%%
%
\section{Twisted energy and differentiability}\label{sec:twisteddiff} 
As in~\S\ref{sec:further}, we assume that the orthogonality and submean properties hold, and recall that this is satisfied when $X$ is a compact connected K\"ahler manifold or any irreducible projective Berkovich space. We fix $\om\in\cZ_+$ with $[\om]\in\Pos(X)$. In this section we introduce and study the twisted energy of a measure, and show that it computes certain directional derivatives of the energy. 
%
%
%%%%%%%%%%%%%%%%%%%%%%%%%%%%%%%%%%%%%%%%%%%%%%%%%%%%%%%%%%%%%%%%%%%
\subsection{The twisted energy of a measure}\label{sec:twisted}

In view of~\eqref{equ:Eom}, the directional derivative 
\begin{equation}\label{equ:twEder}
\nabla_\theta\en_\om(\f):=\frac{d}{dt}\bigg|_{t=0}\en_{\om+t\theta}(\f)
\end{equation}
is well-defined for any $\theta\in\cZ$ and $\f\in\cD$, and given by  
\begin{equation}\label{equ:nablaE}
\nabla_\theta\en_\om(\f)=\en_\om^\theta(\f)-V_\om^\theta\en_\om(\f), 
\end{equation}
where 
\begin{equation}\label{equ:enomth}
\en_\om^\theta(\f):=V_\om^{-1}(\theta,0)\cdot(\om,\f)^n
\end{equation}
and 
$$
V_\om^\theta:=n V_\om^{-1}[\theta]\cdot[\om]^{n-1}. 
$$
Note that $\en_\om^\theta(\f)$ is a linear function of $\theta\in\cZ$, and 
\begin{equation}\label{equ:twistedtrans}
\en_\om^\theta(\f+c)=\en_\om^\theta(\f)+c V_\om^\theta\text{ for }c\in\R,
\end{equation}
while $\nabla_\theta\en_\om$ is translation invariant. For all $\f,\p\in\cD$, we further have 
\begin{equation}\label{equ:twen}
\en_\om^\theta(\f)-\en_\om^\theta(\p)=\sum_{j=0}^{n-1}V_\om^{-1}\int(\f-\p)\theta\wedge\om_\f^j\wedge\om_\p^{n-1-j}. 
\end{equation}

\begin{exam} By~\eqref{equ:Eom}, we have $\en_{(1+t)\om}((1+t)\f)=(1+t)\en_\om(\f)$. By~\eqref{equ:deren}, this implies 
$\nabla_\om\en_\om(\f)+\int\f\,\MA_\om(\f)=\en_\om(\f)$, and hence 
\begin{equation}\label{equ:nablaom}
\nabla_\om\en_\om(\f)=\en_\om(\f)-\int\f\,\MA_\om(\f)=\jj_\om(\f,0)\approx\jj_\om(0,\f)=\jj_\om(\f). 
\end{equation}
\end{exam}

\begin{exam} For each $\p\in\cD$ and $t\in\R$, \eqref{equ:entrans} yields $\en_{\om+t\ddc\p}(\f)=\en_\om(\f+t\p)-\en_\om(t\p)$, and using again~\eqref{equ:deren} we get
\begin{equation}\label{equ:nabladdc}
\nabla_{\ddc\p}\en_\om(\f)=\int\p(\MA_\om(\f)-\mu_\om).
\end{equation}
\end{exam}

\begin{exam} For any $\tau\in\cD_\om$, we similarly have $\en_{\om_\tau+t\theta}(\f)=\en_{\om+t\theta}(\f+\tau)-\en_{\om+t\theta}(\tau)$ and  
\begin{equation}\label{equ:nablatrans}
\nabla_\theta\en_{\om_\tau}(\f)=\nabla_\theta\en_\om(\f+\tau)-\nabla_\theta\en_\om(\tau).
\end{equation}
\end{exam}

\begin{lem}\label{lem:nablaholder} For all $\theta\in\cZ$ and $\f,\p\in\cD_\om$, the following holds: 
\begin{itemize}
\item[(i)] if $\theta\in\cZ_\om$, then
\begin{equation}\label{equ:nablaEhold}
|\nabla_\theta\en_\om(\f)-\nabla_\theta\en_\om(\p)|\lesssim\jj_\om(\f,\p)^{\a}\max\{\tJ_\om(\f),\tJ_\om(\p)\}^{1-\a}\|\theta\|_\om
\end{equation}
with $\a:=2^{-n}$; 
\item[(ii)] in the general case, there exist $C>0$ only depending on $\om$ and $\theta$ such that 
\begin{equation}\label{equ:nablaEhold2}
|\nabla_\theta\en_\om(\f)-\nabla_\theta\en_\om(\p)|\le C\jj_\om(\f,\p)^{\a} (\max\{\jj_\om(\f),\jj_\om(\p)\}+1)^{1-\a}. 
\end{equation}
\end{itemize}
\end{lem}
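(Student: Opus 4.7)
The plan is to use the explicit formula~\eqref{equ:nablaE}, which presents $\nabla_\theta\en_\om(\f) - \nabla_\theta\en_\om(\p)$ as a combination of two energy-pairing differences, and then apply the H\"older continuity estimate of Theorem~\ref{thm:enhold}. Since $\nabla_\theta\en_\om(\f)$ is translation invariant in $\f$ (easily checked from~\eqref{equ:twistedtrans} together with $\en_\om(\f+c)=\en_\om(\f)+c$), the first step is to normalize $\int\f\,\mu_\om=\int\p\,\mu_\om=0$, so that the hypotheses of Theorem~\ref{thm:enhold} are met.

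For (i), with $\theta\in\cZ_\om$, I would write
$$\nabla_\theta\en_\om(\f)-\nabla_\theta\en_\om(\p)=V_\om^{-1}\bigl[(\theta,0)\cdot(\om,\f)^n-(\theta,0)\cdot(\om,\p)^n\bigr]-V_\om^\theta\bigl[\en_\om(\f)-\en_\om(\p)\bigr]$$
and apply Theorem~\ref{thm:enhold} to each bracket separately: for the first, take $\theta_0=\theta_0'=\theta$, $\f_0=\f_0'=0$, and $(\theta_i,\theta_i')=(\om,\om)$, $(\f_i,\f_i')=(\f,\p)$ for $i\ge 1$; for the second, take all $\theta_i=\om$ and $(\f_i,\f_i')=(\f,\p)$. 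Combined with the elementary bound $|V_\om^\theta|\lesssim\|\theta\|_\om$ (from $-\|\theta\|_\om\om\le\theta\le\|\theta\|_\om\om$), this produces an estimate with prefactor $(1+\|\theta\|_\om)$ in place of the desired $\|\theta\|_\om$. To reach the sharper bound~\eqref{equ:nablaEhold}, the plan is to exploit the linearity of $\nabla_\theta\en_\om(\f)$ in $\theta$: applying the provisional bound to $s\theta$, dividing by $s>0$, and letting $s\to+\infty$ yields the homogeneous prefactor. This scaling upgrade is the one non-routine step in the argument.

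For (ii), the plan is to reduce to (i) via translation. Using Proposition~\ref{prop:normfin} applied to the one-dimensional subspace $\R\theta\subset\cZ$, pick $\tau\in\cD_\om$ such that $\theta\in\cZ_{\om_\tau}$. The identity~\eqref{equ:nablatrans} then gives
$$\nabla_\theta\en_\om(\f)-\nabla_\theta\en_\om(\p)=\nabla_\theta\en_{\om_\tau}(\f-\tau)-\nabla_\theta\en_{\om_\tau}(\p-\tau),$$
to which (i) applies. The resulting $\om_\tau$-quantities will be converted back to $\om$-quantities using~\eqref{equ:Jtrans} (giving $\jj_{\om_\tau}(\f-\tau,\p-\tau)=\jj_\om(\f,\p)$ and $\jj_{\om_\tau}(\f-\tau)=\jj_\om(\tau,\f)$), the quasi-triangle inequality~\eqref{equ:qtri} (to absorb $\jj_\om(\tau,\f)\lesssim\jj_\om(\f)+\jj_\om(\tau)$), and Lemma~\ref{lem:MV} (which dominates $T_{\om_\tau}$ by $T_\om+(n+2)\sup|\tau|$). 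All factors depending on $\om$, $\theta$, $\tau$ but not on $\f,\p$ then collapse into the single constant $C$ of the statement.
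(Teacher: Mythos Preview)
Your proposal is correct and follows essentially the same route as the paper: normalize by translation, split via~\eqref{equ:nablaE}, apply Theorem~\ref{thm:enhold} to each piece, and use homogeneity in $\theta$ to upgrade the prefactor from $1+\|\theta\|_\om$ to $\|\theta\|_\om$; then for (ii) reduce to (i) by choosing $\tau\in\cD_\om$ with $\theta\in\cZ_{\om_\tau}$ and translating back via~\eqref{equ:nablatrans} and~\eqref{equ:Jtrans}. The only cosmetic difference is that the paper applies the homogeneity argument directly to the $\en_\om^\theta$ term (which is already linear in $\theta$), whereas you phrase it as a scaling limit on the full expression $\nabla_\theta\en_\om$; both are the same observation.
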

\begin{proof} By translation invariance of $\nabla_\theta\en_\om$, we may assume $\int\f\,\mu_\om=\int\p\,\mu_\om=0$. 
When $\theta$ lies in $\cZ_\om$, Theorem~\ref{thm:enhold} applied to~\eqref{equ:Eom} and~\eqref{equ:enomth} yields 
$$
\left|\en_\om(\f)-\en_\om(\p)\right|\le C\quad\text{and}\quad\left|\en_\om^\theta(\f)-\en_\om^\theta(\p)\right|\le C(1+\|\theta\|_\om)
$$
with $C:=\jj_\om(\f,\p)^{\a}\max\{\tJ_\om(\f),\tJ_\om(\p)\}^{1-\a}$. By homogeneity of $\en^\theta_\om$ with respect to $\theta$, we may replace $1+\|\theta\|_\om$ with $\|\theta\|_\om$ in the last estimate, and (i) now follows from~\eqref{equ:nablaE} together with $|V_\om^\theta|\le n\|\theta\|_\om$. 

In the general case, pick $\tau\in\cD_\om$ such that $\theta\in\cZ_{\om_\tau}$ (see Proposition~\ref{prop:normfin}). Given $\f,\p\in\cD_\om$, we then have $\f-\tau,\p-\tau\in\cD_{\om_\tau}$, and~\eqref{equ:nablatrans}, \eqref{equ:Jtrans} yield 
$$
\nabla_\theta\en_{\om_\tau}(\f-\tau)-\nabla_\theta\en_{\om_\tau}(\p-\tau)=\nabla_\theta\en_\om(\f)-\nabla_\theta\en_\om(\p),\quad\jj_{\om_\tau}(\f-\tau,\p-\tau)=\jj_\om(\f,\p),
$$
$$
\jj_{\om_\tau}(\f-\tau)=\jj_{\om_\tau}(0,\f-\tau)=\jj_\om(\tau,\f)\lesssim\jj_\om(\f)+\jj_\om(\tau),\quad \jj_{\om_\tau}(\p-\tau)\lesssim\jj_\om(\p)+\jj_\om(\tau). 
$$
By (i) we thus get
$$
|\nabla_\theta\en_\om(\f)-\nabla_\theta\en_\om(\p)|\lesssim\jj_\om(\f,\p)^{\a}\left(\max\{\tJ_\om(\f),\tJ_\om(\p)\}+\tJ_\om(\tau)\right)^{1-\a}\|\theta\|_{\om_\tau}, 
$$
which proves (ii). 
\end{proof}

\begin{prop}\label{prop:nablaJ} For any $\theta\in\cZ$, there exists a unique strongly continuous functional 
$\jj_\om^\theta\colon\cM^1\to\R$, the \emph{$\theta$-twisted energy}, such that 
\begin{equation}\label{equ:nablaJ}
\jj_\om^\theta(\MA_\om(\f))=\nabla_\theta\en_\om(\f)
\end{equation}
for all $\f\in\cD_\om$. For all $\mu,\nu\in\cM^1$, we also have: 
\begin{itemize}
\item[(i)] $\jj_\om^\theta(\mu)$ is a linear function of $\theta$; 
\item[(ii)] if $\theta\in\cZ_\om$, then
\begin{equation}\label{equ:nablaJhold}
|\jj_\om^\theta(\mu)-\jj_\om^\theta(\nu)|\lesssim\d_\om(\mu,\nu)^{\a} \max\{\tJ_\om(\mu),\tJ_\om(\nu)\}^{1-\a}\|\theta\|_\om; 
\end{equation}
with $\a:=2^{-n}$, and hence
\begin{equation}\label{equ:twistedJbd}
|\jj_\om^\theta(\mu)|\lesssim\tJ_\om(\mu)\|\theta\|_\om;
\end{equation}
 \item[(iii)] in the general case, there exist $C>0$ only depending on $\om$ and $\theta$ such that 
\begin{equation}\label{equ:nablaJhold2}
|\jj_\om^\theta(\mu)-\jj_\om^\theta(\nu)|\le C\d_\om(\mu,\nu)^{\a} (\max\{\jj_\om(\mu),\jj_\om(\nu)\}+1)^{1-\a},
\end{equation}
and hence
\begin{equation}\label{equ:twistedJbd2}
|\jj_\om^\theta(\mu)|\le C(\jj_\om(\mu)+1). 
\end{equation}
\end{itemize}
\end{prop}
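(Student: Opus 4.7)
The plan is to define $\jj_\om^\theta$ first on the image $\MA_\om(\cD_\om) \subset \cM^1$ by the prescribed formula \eqref{equ:nablaJ}, then extend by uniform continuity to all of $\cM^1$ using the density result of Theorem~\ref{thm:maxcv}. Uniqueness is then immediate from this density combined with strong continuity.

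First I would check that the assignment $\MA_\om(\f) \mapsto \nabla_\theta \en_\om(\f)$ is well defined. If $\MA_\om(\f) = \MA_\om(\p)$ with $\f,\p \in \cD_\om$, then by \eqref{equ:dMA} we have $\jj_\om(\f,\p) = \d_\om(\MA_\om(\f), \MA_\om(\p)) = 0$, and Lemma~\ref{lem:nablaholder} forces $\nabla_\theta \en_\om(\f) = \nabla_\theta \en_\om(\p)$, since both estimates \eqref{equ:nablaEhold} and \eqref{equ:nablaEhold2} carry a factor $\jj_\om(\f,\p)^\a$.

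Next I would verify uniform continuity of $\jj_\om^\theta$ on $\d_\om$-bounded subsets of $\MA_\om(\cD_\om)$. Using \eqref{equ:dMA} and $\jj_\om(\MA_\om(\f)) \approx \jj_\om(\f)$ from \eqref{equ:JMA2}, Lemma~\ref{lem:nablaholder} translates directly into inequalities of the form \eqref{equ:nablaJhold}, \eqref{equ:nablaJhold2} restricted to pairs in $\MA_\om(\cD_\om)$. For a general $\mu \in \cM^1$, Theorem~\ref{thm:maxcv} provides a maximizing sequence $(\f_i)$ with $\mu_i := \MA_\om(\f_i) \to \mu$ strongly; by Theorem~\ref{thm:M1} the strong topology coincides with that of $\d_\om$, so $(\mu_i)$ is $\d_\om$-Cauchy and $\d_\om$-bounded. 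The uniform continuity just established makes $(\jj_\om^\theta(\mu_i))$ a Cauchy sequence in $\R$, and I define $\jj_\om^\theta(\mu)$ to be its limit. The same estimate ensures independence of the choice of approximating sequence and strong continuity of the extension on all of $\cM^1$.

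Finally, linearity (i) in $\theta$ is inherited from the linearity of $\nabla_\theta \en_\om(\f)$ in $\theta$, which is clear from \eqref{equ:nablaE} since both $\en_\om^\theta(\f)$ and $V_\om^\theta$ are linear in $\theta$. The Hölder estimates \eqref{equ:nablaJhold} and \eqref{equ:nablaJhold2} on $\cM^1$ are obtained by passing to the limit along maximizing sequences, using the strong continuity of $\d_\om$ and $\jj_\om$. The bounds \eqref{equ:twistedJbd} and \eqref{equ:twistedJbd2} then follow by specializing $\nu = \mu_\om = \MA_\om(0)$: we have $\jj_\om^\theta(\mu_\om) = \nabla_\theta \en_\om(0) = 0$ since $(\om + t\theta, 0)^{n+1} \equiv 0$ by \eqref{equ:ennorm}, while $\d_\om(\mu, \mu_\om) = \jj_\om(\mu)$ by \eqref{equ:JmuMA} and $\tJ_\om(\mu_\om) = T_\om \le \tJ_\om(\mu)$, so the elementary inequality $x^\a y^{1-\a} \le y$ for $0 \le x \le y$ finishes the job. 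I do not expect a serious obstacle here, as the analytic content is already packaged into Lemma~\ref{lem:nablaholder} together with Theorems~\ref{thm:maxcv} and~\ref{thm:M1}; the main step is the clean bookkeeping of the density-plus-continuity extension.
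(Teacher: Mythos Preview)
Your proposal is correct and follows essentially the same approach as the paper: define $\jj_\om^\theta$ on $\MA_\om(\cD_\om)$ via \eqref{equ:nablaJ}, check well-definedness and the H\"older estimates there using Lemma~\ref{lem:nablaholder} together with \eqref{equ:dMA} and \eqref{equ:JMA2}, then extend to $\cM^1$ by density (Theorem~\ref{thm:maxcv}) and completeness (Theorem~\ref{thm:M1}). Your derivation of \eqref{equ:twistedJbd} and \eqref{equ:twistedJbd2} by specializing $\nu=\mu_\om$ and using $\jj_\om^\theta(\mu_\om)=\nabla_\theta\en_\om(0)=0$ is more explicit than the paper's, which simply records these as consequences of (ii) and (iii).
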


\begin{proof} Assume $\f,\p\in\cD_\om$ satisfy $\MA_\om(\f)=\MA_\om(\p)$. Then $\jj_\om(\f,\p)=0$, and hence $\nabla_\theta\en_\om(\f)=\nabla_\theta\en_\om(\p)$, by~\eqref{equ:nablaEhold2}. As a result, there exists a unique function $\jj_\om^\theta$ on the image of $\MA_\om\colon\cD_\om\to\cM^1$ such that \eqref{equ:nablaJ} is satisfied. For all $\mu,\nu\in\cM^1$ in the image of $\MA_\om$, it further follows from~\eqref{equ:nablaEhold2} that~\eqref{equ:nablaJhold2} holds. This shows that $\jj_\om^\theta$ is uniformly continuous on a dense subspace of the quasi-metric space $(\cM^1,\d_\om)$, and hence admits a unique continuous extension $\jj_\om^\theta\colon\cM^1\to\R$. Finally, (i) holds by linearity of $\nabla_\theta\en_\om$ with respect to $\theta$,(ii) and (iii) follow, by continuity, from Lemma~\ref{lem:nablaholder}.
\end{proof}

Using~\eqref{equ:nablaJ} (with $\f=0$), \eqref{equ:nablaom}, \eqref{equ:nabladdc} and~\eqref{equ:nablatrans}, we further get, for all $\mu\in\cM^1$, $\p\in\cD$ and $\tau\in\cD_\om$, 
\begin{equation}\label{equ:Jomzero}
\jj_\om^\theta(\mu_\om)=0; 
\end{equation}
\begin{equation}\label{equ:Jomom}
\jj_\om^\om(\mu)=\jj_\om(\mu); 
\end{equation}
\begin{equation}\label{equ:Jomddc}
\jj_\om^{\ddc\p}(\mu)=\int\p(\mu-\mu_\om); 
\end{equation}
\begin{equation}\label{equ:Jomtau}
\jj_{\om_\tau}^{\theta}(\mu)=\jj_\om^\theta(\mu)+c
\end{equation}
with $c\in\R$ uniquely determined by~\eqref{equ:Jomzero}, \ie $c=\jj_{\om_\tau}^\theta(\mu_\om)=-\jj_\om^\theta(\mu_{\om_\tau})$. 

%
%%%%%%%%%%%%%%%%%%%%%%%%%%%%%%%%%%%%%%%%%%%%%%%%%%%%%%%%%%%%%%%%%%%
%
\subsection{H\"older continuity of the twisted energy}\label{sec:holdertwist}
The following estimates will be the key ingredients for the continuity of coercivity thresholds (see Theorem~\ref{thm:threshcont} below).

\begin{thm}\label{thm:twistedhold} Pick $\om,\om'\in\cZ_+$ with $\d:=\dT(\om,\om')\le 1$ and $[\om']\in\Pos(X)$. For all $\theta,\theta'\in\cZ_\om$ and $\mu\in\cM^1$, we then have 
\begin{equation}\label{equ:twistedlip}
\left|\jj_{\om}^{\theta}(\mu)-\jj_{\om'}^{\theta'}(\mu)\right|\lesssim\left(\d^a\|\theta\|_\om+\|\theta-\theta'\|_\om\right)\tJ_\om(\mu)
\end{equation}
and 
\begin{equation}\label{equ:tJvar} 
\tJ_{\om}(\mu)=(1+O(\d^\a))\tJ_{\om'}(\mu).
\end{equation}
\end{thm}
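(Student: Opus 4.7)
The second estimate reduces to the first by specializing to $\theta=\om,\theta'=\om'$: since $\jj_\om^\om(\mu)=\jj_\om(\mu)$ and $\jj_{\om'}^{\om'}(\mu)=\jj_{\om'}(\mu)$ by~\eqref{equ:Jomom}, with $\|\om\|_\om=1$ and $\|\om-\om'\|_\om\le e^\d-1\lesssim\d\le\d^\a$, the first estimate yields $|\jj_\om(\mu)-\jj_{\om'}(\mu)|\lesssim\d^\a\tJ_\om(\mu)$; combined with $|T_\om-T_{\om'}|\lesssim\d T_\om$ from~\eqref{equ:Thom}, one gets $|\tJ_\om(\mu)-\tJ_{\om'}(\mu)|\lesssim\d^\a\tJ_\om(\mu)$, hence (using $\tJ_\om\approx\tJ_{\om'}$ from Corollary~\ref{cor:lipen}(i)) the ratio claim.

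For the first estimate, split
$$|\jj_\om^\theta(\mu)-\jj_{\om'}^{\theta'}(\mu)|\le|\jj_\om^\theta(\mu)-\jj_{\om'}^\theta(\mu)|+|\jj_{\om'}^{\theta-\theta'}(\mu)|.$$
The second summand is $\lesssim\|\theta-\theta'\|_{\om'}\tJ_{\om'}(\mu)\lesssim\|\theta-\theta'\|_\om\tJ_\om(\mu)$ by linearity (Proposition~\ref{prop:nablaJ}(i)), estimate~\eqref{equ:twistedJbd}, and commensurability. Thus the essential task is to show $|\jj_\om^\theta(\mu)-\jj_{\om'}^\theta(\mu)|\lesssim\d^\a\|\theta\|_\om\tJ_\om(\mu)$. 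By homogeneity, normalize $\|\theta\|_\om=1$; by strong continuity of both twisted energies and density of $\MA_\om(\cD_\om)$ in $\cM^1$ (Theorem~\ref{thm:maxcv}), reduce to $\mu=\MA_\om(\f)$ for $\f\in\cD_\om$ with $\int\f\,\mu_\om=0$, so that $\jj_\om^\theta(\mu)=\nabla_\theta\en_\om(\f)$ and $\tJ_\om(\mu)\approx\tJ_\om(\f)$.

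To access $\jj_{\om'}^\theta(\mu)$, set $\tf:=e^{-\d}\f$, which lies in $\cD_{\om'}$ since $\om'+\ddc\tf\ge e^{-\d}(\om+\ddc\f)\ge 0$. Then by~\eqref{equ:JmuMA} and~\eqref{equ:nablaJhold},
$$|\jj_{\om'}^\theta(\mu)-\nabla_\theta\en_{\om'}(\tf)|\lesssim\jj_{\om'}(\mu,\tf)^\a\tJ_{\om'}(\mu)^{1-\a}\|\theta\|_{\om'}.$$
The control of $\jj_{\om'}(\mu,\tf)$ comes from expanding it via~\eqref{equ:Jmu} and applying Theorem~\ref{thm:enhold} to the resulting energy-pairing differences $(\om,\f)^{n+1}$ vs.\ $(\om',\tf)^{n+1}$ (and similar mixed expressions): the Lipschitz-in-form part gives $\|\om-\om'\|_\om\lesssim\d$ times bounded energies, while the H\"older-in-function part uses $\jj_\om(\f,\tf)\lesssim(1-e^{-\d})^2\jj_\om(\f)\lesssim\d^2\tJ_\om(\f)$ via~\eqref{equ:Jquad}, producing $\jj_\om(\f,\tf)^\a\lesssim\d^{2\a}\tJ_\om(\f)^\a$. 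Both contributions are $\lesssim\d^\a\tJ_\om(\mu)$ since $\d,\d^{2\a}\le\d^\a$ when $\d\le 1$. The residual quantity $|\nabla_\theta\en_\om(\f)-\nabla_\theta\en_{\om'}(\tf)|$ is estimated analogously using Theorem~\ref{thm:enhold} applied to both $(\theta,0)\cdot(\om,\f)^n$ vs.\ $(\theta,0)\cdot(\om',\tf)^n$ and to $V_\om^\theta\en_\om(\f)$ vs.\ $V_{\om'}^\theta\en_{\om'}(\tf)$, again yielding $\lesssim\d^\a\tJ_\om(\mu)$.

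The main obstacle is the orchestration of these H\"older estimates: since $\f\notin\cD_{\om'}$ in general, one must replace $\f$ by the rescaled $\tf=e^{-\d}\f$, which introduces a Dirichlet deviation $\jj_\om(\f,\tf)$ of order $\d^2\tJ_\om(\f)$. After passing through the exponent $\a=2^{-n}$ from Theorem~\ref{thm:enhold}, this becomes $\d^{2\a}$; combined with the Lipschitz-in-form contribution $\d$ (both dominated by $\d^\a$ under $\d\le 1$), it is exactly this balance that yields the claimed $\d^\a$ exponent, rather than a better linear bound in $\d$. The other subtlety is consistently trading between $\om$- and $\om'$-norms and $\tJ$'s, which is routine given Corollary~\ref{cor:lipen} and the commensurability $\dT(\om,\om')\le\d\le 1$.
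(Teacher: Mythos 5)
Your overall architecture (the three-term splitting, reduction to $\mu=\MA_\om(\f)$ by density, and the specialization $\theta=\om$, $\theta'=\om'$ to deduce \eqref{equ:tJvar}) matches the paper's, but there is a genuine quantitative gap in your middle term, and it sits exactly at the step you flag as the "main obstacle". Because you do not first reduce to the nested situation $\om\le\om'\le e^\d\om$, you must rescale $\f$ to $e^{-\d}\f\in\cD_{\om'}$, and the price is $\jj_\om(\f,e^{-\d}\f)\lesssim\d^2\tJ_\om(\f)$ by \eqref{equ:Jquad}. Feeding this into Theorem~\ref{thm:enhold} to control $\jj_{\om'}(\mu,e^{-\d}\f)=\d_{\om'}\bigl(\mu,\MA_{\om'}(e^{-\d}\f)\bigr)$, the H\"older-in-function term contributes $\jj_\om(\f,e^{-\d}\f)^\a\,\tJ^{1-\a}\approx\d^{2\a}\tJ$, so your bound on this quasi-distance is only $O(\d^{2\a})\tJ_\om(\mu)$ (note $\d^{2\a}\ge\d$ since $2\a\le1$). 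You must then pass this through \eqref{equ:nablaJhold}, which raises the quasi-distance to the power $\a$ \emph{again}: the resulting bound on $\left|\jj_{\om'}^\theta(\mu)-\nabla_\theta\en_{\om'}(e^{-\d}\f)\right|$ is $\d^{2\a^2}\tJ_\om(\mu)\|\theta\|_\om$. For $n\ge2$ one has $2\a^2<\a$, hence $\d^{2\a^2}\gg\d^{\a}$ for small $\d$, and the stated exponent is not obtained. Your sentence "both contributions are $\lesssim\d^\a\tJ_\om(\mu)$" conflates the bound on $\jj_{\om'}(\mu,e^{-\d}\f)$ itself with the bound obtained after the extra power $\a$ has been applied.

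The fix is the paper's preliminary reduction: set $\om'':=e^{-\d}\om$, so that $\om''\le\om\le e^\d\om''$ and $\om''\le\om'\le e^{2\d}\om''$, and compare each of $\om,\om'$ to $\om''$. In the nested case $\om\le\om'\le e^\d\om$ one has $\f\in\cD_\om\subset\cD_{\om'}$ with no rescaling, and the comparison of $\mu=\MA_\om(\f)$ with $\mu':=\MA_{\om'}(\f)$ changes only the form, not the function; Theorem~\ref{thm:enhold} then yields the \emph{linear} bound $\jj_{\om'}(\mu,\mu')\lesssim\d\,\tJ_\om(\mu)$ (this is \eqref{equ:distlip} in Lemma~\ref{lem:Lipen}, whose proof moreover requires a maximizing sequence for $\mu$ in $\cD_{\om'}$ rather than just "expanding via \eqref{equ:Jmu}", since $\jj_{\om'}(\mu,\cdot)$ is defined by a supremum). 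Only this linear bound survives the final power $\a$ in \eqref{equ:nablaJhold} to give $\d^\a$. Your treatment of the other two terms, and your derivation of \eqref{equ:tJvar} from \eqref{equ:twistedlip}, are correct.
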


\begin{lem}\label{lem:Lipen} Assume $\om\le\om'\le e^{\d}\om$ with $\d\in [0,2]$. Pick $\f\in\cD_\om\subset\cD_{\om'}$, and set $\mu:=\MA_\om(\f)$, $\mu':=\MA_{\om'}(\f)$. Then:
\begin{equation}\label{equ:Lipennabla}
|\nabla_\theta\en_\om(\f)-\nabla_\theta\en_{\om'}(\f)|\lesssim \d \tJ_\om(\mu)\|\theta\|_\om;
\end{equation}
\begin{equation}\label{equ:maxJ}
\max\{\tJ_{\om'}(\mu),\tJ_{\om'}(\mu')\}\lesssim \tJ_\om(\mu); 
\end{equation}
\begin{equation}\label{equ:distlip} 
\jj_{\om'}(\mu,\mu')\lesssim\d \tJ_\om(\mu).
\end{equation}
\end{lem}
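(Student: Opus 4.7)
The plan is to prove each of the three estimates by reducing to H\"older continuity of the energy pairing (Theorem~\ref{thm:enhold}), Corollary~\ref{cor:lipen}, and the H\"older estimate for the twisted energy (Proposition~\ref{prop:nablaJ}(ii)), exploiting throughout that $\d\in[0,2]$ keeps all constants of the form $e^{O(\d)}$ bounded and $\|\om'-\om\|_\om\le e^\d-1\lesssim\d$.

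First I would establish~\eqref{equ:Lipennabla}. Normalizing $\f$ by $\int\f\,\mu_\om=0$ (legitimate since $\nabla_\theta\en_\om$ is translation invariant in $\f$), the difference $\nabla_\theta\en_\om(\f)-\nabla_\theta\en_{\om'}(\f)$ decomposes via~\eqref{equ:nablaE} into a difference of energy pairings $(\theta,0)\cdot(\om,\f)^n-(\theta,0)\cdot(\om',\f)^n$ together with an intersection-number discrepancy between $V_\om^\theta\en_\om(\f)$ and $V_{\om'}^\theta\en_{\om'}(\f)$, each rescaled by the volumes $V_\om,V_{\om'}$. Each piece is controlled by applying Theorem~\ref{thm:enhold} with $\max_i\|\theta_i-\theta'_i\|_\om\lesssim\d$ and $J=\tJ_\om(\f)\approx\tJ_\om(\mu)$ (the latter via~\eqref{equ:tJapp}); the linear factor $\|\theta\|_\om$ comes out of the pairing. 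For~\eqref{equ:maxJ}, the bound $\tJ_{\om'}(\mu)\lesssim\tJ_\om(\mu)$ follows directly from Corollary~\ref{cor:lipen}(i) since $\dT(\om,\om')\le\d\le 2$. For $\mu'$, I would use~\eqref{equ:tJapp} to reduce to bounding $\tJ_{\om'}(\f)=\jj_{\om'}(\f)+T_{\om'}$, normalize $\sup\f=0$ so that $\jj_{\om'}(\f)\le-\en_{\om'}(\f)$, then apply Lemma~\ref{lem:ennef} (since $\om\le\om'\le e^\d\om$) to get $-\en_{\om'}(\f)\lesssim-\en_\om(\f)\le\tJ_\om(\f)\approx\tJ_\om(\mu)$ via~\eqref{equ:EtJ}; together with $T_{\om'}\lesssim T_\om\le\tJ_\om(\mu)$ from~\eqref{equ:Thom}, this yields the bound.

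The hard part will be~\eqref{equ:distlip}. The starting point is the identity
$$\jj_{\om'}(\mu,\f)=[\jj_{\om'}(\mu)-\jj_\om(\mu)]-[\en_{\om'}(\f)-\en_\om(\f)],$$
obtained by combining~\eqref{equ:Jmu} with $\jj_\om(\mu,\f)=0$ (a tautology since $\mu=\MA_\om(\f)$ makes $\f$ a maximizer for $\jj_\om(\cdot,\f)$). Setting $\om_s:=\om+s\eta$ with $\eta:=\om'-\om\ge 0$ and $s\in[0,1]$, the second bracket equals $\int_0^1\nabla_\eta\en_{\om_s}(\f)\,ds=\int_0^1\jj_{\om_s}^\eta(\mu_s)\,ds$ with $\mu_s:=\MA_{\om_s}(\f)$, via~\eqref{equ:nablaJ}. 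An envelope-type identity $\jj_{\om'}(\mu)-\jj_\om(\mu)=\int_0^1\jj_{\om_s}^\eta(\mu)\,ds$ would then yield
$$\jj_{\om'}(\mu,\f)=\int_0^1\bigl[\jj_{\om_s}^\eta(\mu)-\jj_{\om_s}^\eta(\mu_s)\bigr]\,ds.$$

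By Proposition~\ref{prop:nablaJ}(ii), each integrand is $\lesssim\|\eta\|_{\om_s}\,\d_{\om_s}(\mu,\mu_s)^\a\max\{\tJ_{\om_s}(\mu),\tJ_{\om_s}(\mu_s)\}^{1-\a}\lesssim\d\,\Delta(s)^\a\tJ_\om(\mu)^{1-\a}$, where $\Delta(s):=\d_{\om_s}(\mu,\mu_s)=\jj_{\om_s}(\mu,\f)$, using $\|\eta\|_{\om_s}\le e^\d-1\lesssim\d$ and~\eqref{equ:maxJ} applied along the path. A standard Gronwall bootstrap, exploiting $\a=2^{-n}<1$, then yields $\sup_{s\in[0,1]}\Delta(s)\lesssim\d^{1/(1-\a)}\tJ_\om(\mu)\lesssim\d\tJ_\om(\mu)$ for $\d\le 2$. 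The chief remaining obstacle is to rigorously justify the envelope identity for $\jj_{\om_s}(\mu)$; this I would pursue via a density/approximation argument using a maximizing sequence $(\psi_j)\subset\cD_\om\subset\cD_{\om_s}$ for $\jj_\om(\mu)$, combined with the strong continuity of the twisted energy from Proposition~\ref{prop:nablaJ} and the rational dependence in $s$ of $\en_{\om_s}(\psi_j)$ for each fixed $\psi_j$.
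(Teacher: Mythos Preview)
Your treatment of~\eqref{equ:Lipennabla} and~\eqref{equ:maxJ} matches the paper's proof closely: same normalizations, same appeals to Theorem~\ref{thm:enhold}, Lemma~\ref{lem:ennef}, Corollary~\ref{cor:lipen}, and~\eqref{equ:Thom}.

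The gap is in your approach to~\eqref{equ:distlip}. The envelope identity $\jj_{\om'}(\mu)-\jj_\om(\mu)=\int_0^1\jj_{\om_s}^\eta(\mu)\,ds$ is essentially Theorem~\ref{thm:diffen}, whose proof in the paper passes through Theorem~\ref{thm:twistedhold}, which in turn relies on the very lemma you are proving. Your proposed workaround via a maximizing sequence $(\psi_j)\subset\cD_\om$ does not close this: such a sequence is maximizing for $\jj_\om(\mu)$ but need not be maximizing for $\jj_{\om_s}(\mu)$ when $s>0$, so you only get a one-sided bound on $\jj_{\om_s}(\mu)$, not the identity you need. One could try to salvage the argument via the one-sided estimate of Lemma~\ref{lem:diffen} (which is proved independently of the present lemma), but this is not what you sketched, and the resulting bootstrap would be considerably more delicate than you suggest.

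The paper avoids all of this. It normalizes by $\int\f\,\mu_{\om'}=0$, picks a maximizing sequence $(\p_i)$ for $\mu$ \emph{in $\cD_{\om'}$} (also normalized by $\int\p_i\,\mu_{\om'}=0$), and writes $\jj_{\om'}(\mu,\mu')=\lim_i\jj_{\om'}(\p_i,\f)$. Using~\eqref{equ:IJ} and~\eqref{equ:holdmes}, this limit reduces to estimating $\int(\p_i-\f)(\mu'-\mu)$, which is a difference of energy pairings of the form $V_{\om'}^{-1}(0,\cdot)\cdot(\om',\f)^n-V_\om^{-1}(0,\cdot)\cdot(\om,\f)^n$ and is bounded directly by Theorem~\ref{thm:enhold}, yielding the factor $\|\om'-\om\|_{\om'}\lesssim\d$ times $\max\{\tJ_{\om'}(\f),\tJ_{\om'}(\p_i)\}$, which is controlled by~\eqref{equ:maxJ}. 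No differentiation of $\jj_{\om_s}(\mu)$ in $s$, no Gronwall, no circularity.
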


\begin{proof} All three estimates are invariant under translation of $\f$ by a constant, and we shall rely on a different normalization for each of them. We first normalize $\f$ by $\int\f\,\mu_\om=0$. By homogeneity, we may further assume $\|\theta\|_\om=1$. Since 
$$
\en_{\om}^\theta(\f)-\en_{\om'}^\theta(\f)=V_{\om}^{-1}(\theta,0)\cdot(\om,\f)^n-V_{\om'}^{-1}(\theta,0)\cdot(\om',\f)^n
$$
with $V_{\om'}/V_\om=1+O(\d)$, Theorem~\ref{thm:enhold} yields
$$
\left|\en_{\om}^\theta(\f)-\en_{\om'}^\theta(\f)\right|\lesssim\|\om'-\om\|_\om \tJ_\om(\mu)\lesssim\d \tJ_\om(\mu). 
$$
We similarly get $\left|\en_{\om}(\f)-\en_{\om'}(\f)\right|\lesssim\d\tJ_\om(\mu)$, and~\eqref{equ:Lipennabla} follows, using the trivial estimate
$$V_{\om'}^\theta=V_\om^\theta+O(\d).
$$
 Next, we normalize $\f$ by $\sup\f=0$. By Lemma~\ref{lem:ennef}, we then have 
$$
\jj_{\om'}(\mu')\approx\jj_{\om'}(\f)=\int\f\,\mu_{\om'}-\en_{\om'}(\f)\le-\en_{\om'}(\f)\lesssim -\en_{\om}(\f)\le \tJ_\om(\mu). 
$$
On the other hand, Corollary~\ref{cor:lipen} yields $\tJ_{\om'}(\mu)\lesssim \tJ_\om(\mu)$, and~\eqref{equ:maxJ} follows.

Finally, we normalize $\f$ by $\int\f\,\mu_{\om'}=0$. Pick a maximizing sequence $(\p_i)$ in $\cD_{\om'}$ for $\mu$, also normalized by $\int\p_i\,\mu_{\om'}=0$, and set $\mu_i:=\MA_{\om'}(\p_i)$. By Theorem~\ref{thm:maxcv}, $\jj_{\om'}(\mu,\mu')$ is the limit of 
$$
\jj_{\om'}(\mu_i,\mu')=\jj_{\om'}(\p_i,\f)\approx\int(\p_i-\f)(\mu'-\mu_i)=\int(\p_i-\f)(\mu'-\mu)+o(1),
$$
where the last two points hold by~\eqref{equ:IJ} and~\eqref{equ:holdmes}, respectively. 
Now 
\begin{align*}
\int(\p_i-\f)(\mu'-\mu) & =\left[V_{\om'}^{-1}(0,\p_i)\cdot(\om',\f)^n-V_\om^{-1}(0,\p_i)\cdot (\om,\f)^n\right]\\
& -\left[V_{\om'}^{-1}(0,\f)\cdot(\om',\f)^n-V_\om^{-1}(0,\f)\cdot (\om,\f)^n\right],
\end{align*} 
where 
$$
\left|V_{\om'}^{-1}(0,\p_i)\cdot(\om',\f)^n-V_\om^{-1}(0,\p_i)\cdot (\om,\f)^n\right|\lesssim\|\om'-\om\|_{\om'}\max\{\tJ_{\om'}(\f),\tJ_{\om'}(\p_i)\}
$$
and 
$$
\left|V_{\om'}^{-1}(0,\f)\cdot(\om',\f)^n-V_\om^{-1}(0,\f)\cdot (\om,\f)^n\right|\lesssim\|\om'-\om\|_{\om'}\tJ_{\om'}(\f), 
$$
by Theorem~\ref{thm:enhold}. Since $\|\om'-\om\|_{\om'}\lesssim\d$, $\jj_{\om'}(\f)\approx\jj_{\om'}(\mu')$ and $\jj_{\om'}(\p_i)\approx\jj_{\om'}(\mu_i)\to\jj_{\om'}(\mu)$, this proves~\eqref{equ:distlip}, thanks to~\eqref{equ:maxJ}. 
\end{proof}
 
\begin{proof}[Proof of Theorem~\ref{thm:twistedhold}] Note first that $\om'':=e^{-\d}\om\le\om'\le e^\d\om$, and hence
$$
\om''\le\om'\le e^{2\d}\om'',\quad\om''\le\om\le e^\d\om''.
$$
Arguing successively with $\om'',\om'$, and with $\om'',\om$, and relying on Corollary~\ref{cor:lipen}, it is thus enough to prove the result when $\om\le\om'\le e^\d\om$, which we henceforth assume. By density of the image of $\MA_\om\colon\cD_\om\to\cM^1$ and strong continuity of $\jj_\om^\theta$ and $\jj_{\om'}^\theta$ (recall that the strong topology of $\cM^1$ is independent of $\om$), we may assume $\mu=\MA_\om(\f)$ with $\f\in\cD_\om\subset\cD_{\om'}$. As in Lemma~\ref{lem:Lipen}, set 
$\mu':=\MA_{\om'}(\f)$. By~\eqref{equ:nablaJ}, we have 
$$
\jj_\om^\theta(\mu)=\nabla_\theta\en_\om(\f),\quad\jj_{\om'}^{\theta}(\mu')=\nabla_\theta\en_{\om'}(\f),
$$
and~\eqref{equ:Lipennabla} thus yields
\begin{equation}\label{equ:twJhold}
\left|\jj_\om^\theta(\mu)-\jj_{\om'}^{\theta}(\mu')\right|\lesssim\d \tJ_\om(\mu)\|\theta\|_\om. 
\end{equation}
On the other hand, \eqref{equ:nablaJhold}, implies
\begin{equation}\label{equ:twJhold2}
\left|\jj_{\om'}^\theta(\mu)-\jj_{\om'}^\theta(\mu')\right|\lesssim\d^\a \tJ_\om(\mu)\|\theta\|_\om,
\end{equation}
thanks to~\eqref{equ:maxJ} and~\eqref{equ:distlip}. Finally, \eqref{equ:twistedJbd} yields
\begin{equation}\label{equ:twJhold3} 
\left|\jj_{\om'}^\theta(\mu)-\jj_{\om'}^{\theta'}(\mu)\right|=\left|\jj_{\om'}^{\theta-\theta'}(\mu)\right|\lesssim\tJ_{\om'}(\mu)\|\theta-\theta'\|_{\om'},
\end{equation}
and summing up~\eqref{equ:twJhold}, \eqref{equ:twJhold2} and~\eqref{equ:twJhold3} yields~\eqref{equ:twistedlip}. Applying the latter estimate with $\theta:=\om$, $\theta':=\om'$, we get 
\begin{equation}\label{equ:Jvar}
\left|\jj_{\om}(\mu)-\jj_{\om'}(\mu)\right|\lesssim\d^\a\tJ_\om(\mu),
\end{equation}
in view of~\eqref{equ:Jomom}. Since we also have $T_{\om'}=(1+O(\d))T_\om$ (see~\eqref{equ:Thom}), \eqref{equ:tJvar} follows. 
\end{proof}

%
%
%%%%%%%%%%%%%%%%%%%%%%%%%%%%%%%%%%%%%%%%%%%%%%%%%%%%%%%%%%%%%%%%%%%
%
\subsection{Differentiability of the energy}

\begin{thm}\label{thm:diffen} For any $\theta\in\cZ$ and $\mu\in\cM^1$, we have 
$$
\frac{d}{dt}\bigg|_{t=0}\jj_{\om+t\theta}(\mu)=\jj_\om^\theta(\mu)
$$
\end{thm}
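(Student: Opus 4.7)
The plan is to establish the identity via a reduction followed by a two-sided envelope argument on a dense subset, and then extend by continuity.

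\emph{Reduction step.} By Proposition~\ref{prop:normfin} applied to the line $\R\theta\subset\cZ$, I can choose $\tau\in\cD$ such that $\om':=\om+\ddc\tau\in\cZ_+$ represents $[\om]$ and satisfies $\theta\in\cZ_{\om'}$. Using Remark~\ref{rmk:Jnonpos} to extend~\eqref{equ:Jmestrans2} to arbitrary $\tau\in\cD$, we write $\jj_{\om+t\theta}(\mu)=\jj_{\om'+t\theta}(\mu)+\en_{\om+t\theta}(\tau)-\int\tau\,\mu$ for $|t|$ small. Since $t\mapsto\en_{\om+t\theta}(\tau)$ is a smooth rational function with derivative $\nabla_\theta\en_\om(\tau)$ at $0$, and since~\eqref{equ:nablatrans} combined with strong continuity of $\jj^\theta$ (Proposition~\ref{prop:nablaJ}) yields $\jj_{\om'}^\theta(\mu)=\jj_\om^\theta(\mu)-\nabla_\theta\en_\om(\tau)$ on all of $\cM^1$, it suffices to treat the case $\theta\in\cZ_\om$. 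Writing $C:=\|\theta\|_\om$, we then have $\om+t\theta\ge\om/2$ for $|t|\le 1/(2C)$, and in particular $\cD_{\om/2}\subset\cD_{\om+t\theta}$.

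\emph{Sandwich estimate on a dense subclass.} Set $g(t):=\jj_{\om+t\theta}(\mu)$, and consider first $\mu=\MA_\om(\f)$ with $\f\in\cD_{\om/2}$, which should form a strongly dense subclass of $\cM^1$ (obtained by mildly regularizing the maximizing sequences of Theorem~\ref{thm:maxcv}). Since $\f\in\cD_{\om+t\theta}$ for $|t|\le 1/(2C)$, the variational definition yields $g(t)\ge\en_{\om+t\theta}(\f)-\int\f\,\mu$, with equality at $t=0$. Taylor-expanding the rational function $\en_{\om+t\theta}(\f)=(n+1)^{-1}V_{\om+t\theta}^{-1}(\om+t\theta,\f)^{n+1}$ about $t=0$ then produces the lower bound $g(t)-g(0)\ge t\jj_\om^\theta(\mu)+O(t^2)$. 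For the matching upper bound, I apply the same envelope argument with base point $\om+t\theta$ and direction $-\theta$: choose a maximizing sequence $\f_t^{(k)}\in\cD_{(\om+t\theta)/2}$ for $\jj_{\om+t\theta}(\mu)$, set $\mu^{(k)}:=\MA_{\om+t\theta}(\f_t^{(k)})\to\mu$ strongly by Theorem~\ref{thm:maxcv}, and evaluate the analogous lower-bound estimate at $s=t$, which rearranges to $\jj_{\om+t\theta}(\mu^{(k)})-\jj_\om(\mu^{(k)})\le t\jj_{\om+t\theta}^\theta(\mu^{(k)})+O(t^2)$. Passing to $k\to\infty$ and using Theorem~\ref{thm:twistedhold} to replace $\jj_{\om+t\theta}^\theta(\mu^{(k)})$ by $\jj_\om^\theta(\mu)+O(|t|^\a)$, the combination with the lower bound yields $g(t)-g(0)=t\jj_\om^\theta(\mu)+o(t)$ as $t\to 0$, proving the theorem on the dense subclass.

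\emph{Extension and main obstacle.} The extension to arbitrary $\mu\in\cM^1$ follows from strong continuity of $\jj_\om^\theta$, together with uniform H\"older control on the difference quotient over strongly bounded subsets of $\cM^1$, obtained by the same sandwich argument applied with constants depending only on $\tJ_\om(\mu)$. The principal technical difficulty is ensuring that the $O(t^2)$ remainder in the upper bound is \emph{uniform} in the approximating sequence $\f_t^{(k)}$: this requires bounding the second-order term of $t\mapsto\en_{\om+t\theta}(\f_t^{(k)})$ via the mixed energy estimate of Theorem~\ref{thm:enhold}, controlled by the quantity $\tJ_{\om+t\theta}(\f_t^{(k)})\approx\tJ_{\om+t\theta}(\mu^{(k)})$, which stays bounded along maximizing sequences; a related point is verifying the strong density of $\MA_\om(\cD_{\om/2})$ in $\cM^1$.
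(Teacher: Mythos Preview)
Your reduction step and the overall two-sided envelope strategy are in the same spirit as the paper. However, the density claim you flag as uncertain is in fact \emph{false}: $\MA_\om(\cD_{\om/2})$ is not strongly dense in $\cM^1$. Any $\psi\in\cD_{\om/2}$ satisfies $\om_\psi\ge\om/2$, and expanding $\om_\psi^n=\bigl((\om_\psi-\om/2)+\om/2\bigr)^n$ with all cross terms nonnegative shows $\om_\psi^n\ge 2^{-n}\om^n$, hence $\MA_\om(\psi)\ge 2^{-n}\mu_\om$. This lower bound survives weak (hence strong) limits, so no $\mu\in\cM^1$ violating it---for instance $\mu=\MA_\om(\f)$ with $\om_\f$ degenerate somewhere---can lie in the closure. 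The same obstruction kills the claim that maximizing sequences for $\jj_{\om+t\theta}(\mu)$ can be taken in $\cD_{(\om+t\theta)/2}$, so your upper bound collapses as well.

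The paper avoids this with a scaling trick that needs no extra room on $\f$. For an arbitrary $\f\in\cD_\om$ and $\e:=\|t\theta\|_\om$, one has $(1-\e)\f\in\cD_{(1-\e)\om}\subset\cD_{\om+t\theta}$; feeding $(1-\e)\f$ into the variational definition and expanding $(\om+t\theta,(1-\e)\f)^{n+1}$, with the second-order terms bounded via~\eqref{equ:enbd}, yields the inequality $\jj_{\om+t\theta}(\mu)\ge\jj_\om(\mu)+t\jj_\om^\theta(\mu)-O(t^2\tJ_\om(\mu))$ for \emph{every} $\mu\in\cM^1$ at once (Lemma~\ref{lem:diffen}). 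The theorem then follows by applying this with the roles of $\om$ and $\om+t\theta$ swapped and invoking Theorem~\ref{thm:twistedhold}; no further density argument is needed, and the uniformity of the remainder is automatic.
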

If we do not assume $\theta\in\cZ_\om$, the condition $\om+t\theta\ge 0$ will fail in general, but one can still make sense of $\jj_{\om+t\theta}(\mu)$, see Remark~\ref{rmk:Jnonpos}.

\begin{lem}\label{lem:entwbd} Pick $\theta\in\cZ_\om$ and $\f\in\cD_\om$ normalized by $\int\f\,\mu_\om=0$.Then
$$
\left|\en_\om(\f)\right|\lesssim \tJ_\om(\f),\quad\left|\int\f\,\MA_\om(\f)\right|\lesssim \tJ_\om(\f),\quad \left|\en_\om^\theta(\f)\right|\lesssim\|\theta\|_\om \tJ_\om(\f).
$$
\end{lem}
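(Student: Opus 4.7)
The plan is to treat the three estimates in sequence, exploiting the normalization $\int\f\,\mu_\om=0$ throughout, which forces $\jj_\om(\f)=-\en_\om(\f)$ by~\eqref{equ:J}. For the first bound, I would apply the concavity inequality~\eqref{equ:enconc} with $0$ and $\f$ in the roles of $\f$ and $\p$, using $\MA_\om(0)=\mu_\om$ and $\en_\om(0)=0$ (the latter from~\eqref{equ:ennorm}), to conclude $\en_\om(\f)\le\int\f\,\mu_\om=0$. Hence $|\en_\om(\f)|=-\en_\om(\f)=\jj_\om(\f)\le\tJ_\om(\f)$.

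For the second bound, expanding the definition of the Dirichlet functional gives
$$
\jj_\om(\f,0)=\en_\om(\f)-\int\f\,\MA_\om(\f)\ge 0,
$$
while quasi-symmetry~\eqref{equ:IJsum} yields $\jj_\om(\f,0)\lesssim\jj_\om(0,\f)=\jj_\om(\f)\le\tJ_\om(\f)$. The triangle inequality combined with the first bound then gives
$$
\left|\int\f\,\MA_\om(\f)\right|\le|\en_\om(\f)|+\jj_\om(\f,0)\lesssim\tJ_\om(\f).
$$

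The third bound is the most substantive. The plan is to apply~\eqref{equ:enbd} (a specialization of Theorem~\ref{thm:enhold}) to the $(n+1)$-tuple $((\theta,0),(\om,\f),\dots,(\om,\f))$. Here $\f_0=0$ is trivially normalized and $\f$ is so by hypothesis, while $\|\om\|_\om=1$ and $\tJ_\om(0)=T_\om\le\tJ_\om(\f)$. This yields
$$
|(\theta,0)\cdot(\om,\f)^n|\lesssim V_\om(1+\|\theta\|_\om)\,\tJ_\om(\f),
$$
whence $|\en_\om^\theta(\f)|\lesssim(1+\|\theta\|_\om)\tJ_\om(\f)$. The only remaining subtlety is upgrading $1+\|\theta\|_\om$ to $\|\theta\|_\om$: exploiting the linearity of $\en_\om^\theta(\f)$ in $\theta$ (clear from~\eqref{equ:enomth}), I would apply the preceding bound to $\lambda\theta$ for arbitrary $\lambda\in\R\setminus\{0\}$ and divide by $|\lambda|$, obtaining $|\en_\om^\theta(\f)|\lesssim(|\lambda|^{-1}+\|\theta\|_\om)\tJ_\om(\f)$, and then let $|\lambda|\to\infty$. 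I do not expect any serious obstacle; the real work is already encoded in the multilinear estimate~\eqref{equ:enbd}, and the lemma is essentially a specialization of it combined with the first two bounds.
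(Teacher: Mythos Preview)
Your proposal is correct. For the third bound you argue exactly as the paper does: apply~\eqref{equ:enbd} to $(\theta,0)\cdot(\om,\f)^n$ and then use homogeneity in $\theta$ to strip the additive $1$ from $1+\|\theta\|_\om$. The difference lies in the first two bounds. The paper treats all three estimates uniformly by expressing each quantity as an energy pairing---$(\om,\f)^{n+1}$, $(0,\f)\cdot(\om,\f)^n$, $(\theta,0)\cdot(\om,\f)^n$---and invoking~\eqref{equ:enbd} in each case. You instead handle the first two by more elementary means: concavity of $\en_\om$ gives $\en_\om(\f)\le 0$ directly, hence $|\en_\om(\f)|=\jj_\om(\f)$ exactly (no implicit constant), and the second then follows from quasi-symmetry of the Dirichlet functional. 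Your route is slightly sharper for those two (avoiding the constants buried in Theorem~\ref{thm:enhold}) and logically lighter, since concavity is established well before the H\"older machinery; the paper's route has the virtue of uniformity and brevity. Either is fine.
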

\begin{proof} We have 
$$
V_\om(n+1)\en_\om(\f)=(\om,\f)^{n+1},\quad V_\om\int\f\,\MA_\om(\f)=(0,\f)\cdot(\om,\f)^n,\quad V_\om\en_\om^\theta(\f)=(\theta,0)\cdot (\om,\f)^n,
$$
and the estimates thus follow from~\eqref{equ:enbd} (and homogeneity in $\theta$). 
\end{proof}

\begin{lem}\label{lem:diffen} Pick $\theta\in\cZ_\om$ such that $\|\theta\|_\om<1$ and $[\om+\theta]\in\Pos(X)$. For all $\mu\in\cM^1$ we then have 
$$
\jj_{\om+\theta}(\mu)\ge \jj_\om(\mu)+\jj_\om^\theta(\mu)-O(\tJ_\om(\mu)\|\theta\|_\om^2). 
$$
\end{lem}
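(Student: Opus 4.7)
My approach is to first reduce to the case $\mu=\MA_\om(\f)$ with $\f\in\cD_\om$. By Theorem~\ref{thm:maxcv} the image of $\MA_\om\colon\cD_\om\to\cM^1$ is strongly dense, and since the strong topology of $\cM^1$ is independent of the defining form (Theorem~\ref{thm:M1ind}), both $\jj_{\om+\theta}$ and $\tJ_\om$ are strongly continuous, as are $\jj_\om$ and $\jj_\om^\theta$. It thus suffices to prove the bound for $\mu=\MA_\om(\f)$. For such $\mu$ we have $\jj_\om(\mu)=\en_\om(\f)-\int\f\,\mu$ by~\eqref{equ:JMA} and $\jj_\om^\theta(\mu)=\nabla_\theta\en_\om(\f)$ by~\eqref{equ:nablaJ}, and I may normalize $\int\f\,\mu_\om=0$, since every term is invariant under adding a constant to $\f$.

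Set $t:=\|\theta\|_\om<1$. Since $\f$ itself need not lie in $\cD_{\om+\theta}$, I use the rescaled test function $\p:=(1-t)\f$: the bound $\theta\ge-t\om$ gives $\om+\theta\ge(1-t)\om$, so $(\om+\theta)+\ddc\p\ge(1-t)(\om+\ddc\f)\ge 0$, i.e., $\p\in\cD_{\om+\theta}$. The variational definition then yields
\begin{equation*}
\jj_{\om+\theta}(\mu)\ge\en_{\om+\theta}(\p)-\int\p\,\mu,
\end{equation*}
and the remainder of the argument amounts to expanding this right-hand side up to an error of $O(t^2\tJ_\om(\mu))$.

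The expansion proceeds in two stages. The first, in $\theta$, uses multilinearity of the energy pairing: expanding $(\om+\theta,\p)^{n+1}$ and $V_{\om+\theta}=[\om+\theta]^n$ as sums over $k$ copies of $(\theta,0)$, resp.~$[\theta]$, and extracting factors $t^k$ by homogeneity (applying~\eqref{equ:enbd} to the unit-norm form $\theta/t$), gives
\begin{equation*}
(\om+\theta,\p)^{n+1}=(\om,\p)^{n+1}+(n+1)(\theta,0)\cdot(\om,\p)^n+O\bigl(t^2 V_\om\tJ_\om(\p)\bigr),\quad V_{\om+\theta}=V_\om\bigl(1+V_\om^\theta+O(t^2)\bigr).
\end{equation*}
Combining these via Lemma~\ref{lem:entwbd} (to control $|\en_\om(\p)|$ and $|\en_\om^\theta(\p)|$) and the definition~\eqref{equ:nablaE} of $\nabla_\theta\en_\om$, one obtains
\begin{equation*}
\en_{\om+\theta}(\p)=\en_\om(\p)+\nabla_\theta\en_\om(\p)+O\bigl(t^2\tJ_\om(\p)\bigr).
\end{equation*}

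The second expansion, in $s$ along $s\mapsto s\f$ from $s=1-t$ to $s=1$, controls how $\en_\om$ and $\nabla_\theta\en_\om$ differ between $\f$ and $\p$. With $g(s):=\en_\om(s\f)$ and $h(s):=\nabla_\theta\en_\om(s\f)$, differentiation gives $g'(s)=\int\f\,\MA_\om(s\f)$ and (from~\eqref{equ:nablaE}) explicit formulas for $h'$, $g''$; another application of~\eqref{equ:enbd} to the expanded derivatives produces $|g''(s)|\lesssim\tJ_\om(\f)$ and $|h'(s)|\lesssim\|\theta\|_\om\tJ_\om(\f)=t\tJ_\om(\f)$ uniformly in $s\in[1-t,1]$. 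Taylor's theorem then yields $\en_\om(\p)=\en_\om(\f)-t\!\int\!\f\,\mu+O(t^2\tJ_\om(\f))$ and $\nabla_\theta\en_\om(\p)=\nabla_\theta\en_\om(\f)+O(t^2\tJ_\om(\f))$. Substituting all of this, together with $\int\p\,\mu=(1-t)\int\f\,\mu$ and $\tJ_\om(\p)\le\tJ_\om(\f)\approx\tJ_\om(\mu)$, collapses the right-hand side into $\jj_\om(\mu)+\jj_\om^\theta(\mu)+O(t^2\tJ_\om(\mu))$, finishing the proof. The main obstacle is really only bookkeeping: ensuring every error term in the two overlapping expansions is genuinely $O(t^2\tJ_\om(\mu))$ rather than merely $O(t\tJ_\om(\mu))$, which forces both the homogeneous-in-$\theta$ use of~\eqref{equ:enbd} in the first step and the extra factor of $t$ gained in the derivative bound $|h'|\lesssim t\tJ_\om(\f)$ in the second.
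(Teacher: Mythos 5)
Your proposal is correct and follows essentially the same route as the paper: reduce by strong density to $\mu=\MA_\om(\f)$ with $\int\f\,\mu_\om=0$, use the rescaled competitor $(1-\|\theta\|_\om)\f\in\cD_{\om+\theta}$ in the variational lower bound for $\jj_{\om+\theta}(\mu)$, and expand the energy pairing to second order with errors controlled by~\eqref{equ:enbd} and Lemma~\ref{lem:entwbd}. The only difference is organizational — the paper performs a single binomial expansion of $\left((\om,\f)+(\theta,-\e\f)\right)^{n+1}$ where you carry out two successive Taylor expansions (in $\theta$, then in the scaling parameter) — but the estimates invoked are identical.
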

Recall that, in this paper, the implicit constant in $O$ only depends on $n$. 

\begin{proof} Set $J:=\tJ_\om(\mu)$ and $\e:=\|\theta\|_\om$. If $\e=0$, then $\theta=0$ and the result is clear. We may thus assume $\e>0$ and write $\theta=\e\tilde\theta$ with $\|\tilde\theta\|_\om=1$ and $\e\in(0,1)$. 

By density of the image of $\MA_\om\colon\cD_\om\to\cM^1$, it is enough to prove the result when $\mu=\MA_\om(\f)$ with $\f\in\cD_\om$, which we normalize by $\int\f\,\mu_\om=0$. By~\eqref{equ:JMA}, \eqref{equ:nablaJ} and~\eqref{equ:nablaE}, we then have 
\begin{equation}\label{equ:ennab}
\jj_\om(\mu)=\en_\om(\f)-\int\f\,\mu,\quad\jj_\om^\theta(\mu)=\en_\om^\theta(\f)-V_\om^\theta\en_\om(\f).
\end{equation}
Note that 
\begin{equation}\label{equ:compom}
(1-\e)\om\le\om+\theta\le(1+\e)\om\le(1-\e)^{-1}\om,
\end{equation}
and hence 
$$
(1-\e)\f\in\cD_{(1-\e)\om}\subset\cD_{\om+\theta}. 
$$ 
By~\eqref{equ:envar}, this yields 
\begin{equation}\label{equ:Jbelow}
\jj_{\om+\theta}(\mu)\ge\en_{\om+\theta}\left((1-\e)\f\right)-(1-\e)\int\f\,\mu. 
\end{equation}
\begin{align*}
(n+1)V_{\om+\theta}\en_{\om+\theta}\left((1-\e)\f)\right) & =(\om+\theta,(1-\e)\f)^{n+1}=\left((\om,\f)+(\theta,-\e\f)\right)^{n+1}\\
&=(\om,\f)^{n+1}+(n+1)\left[(\theta,0)\cdot (\om,\f)^n-\e(0,\f)\cdot(\om,\f)^n\right]+\e^2 a(\e)\\
& =(n+1)V_\om\left[\en_\om(\f)+\en_\om^\theta(\f)-\e\int\f\,\mu\right]+\e^2 a(\e)
\end{align*}
with 
$$
a(\e):=\sum_{j=2}^{n+1}{n+1\choose j}\e^{j-2}(\tilde\theta,-\f)^j\cdot (\om,\f)^{n+1-j}. 
$$
Since $\|\tilde\theta\|_\om=1$, \eqref{equ:enbd} yields $|a(\e)|\lesssim V_\om J$. Combining this with 
$$
V_{\om+\theta}=[\om+\theta]^n=[\om]^n+n[\theta]\cdot[\om]^{n-1}+O(\e^2[\om]^n)=V_\om\left(1+V_\om^\theta+O(\e^2)\right), 
$$
and
$$
\left|V_\om^\theta\right|\lesssim\e,\quad\left|\en_\om(\f)\right|\lesssim J,\quad\left|\int\f\,\mu\right|\le J,\quad \left|\en_\om^\theta(\f)\right|\lesssim\e J
$$
(see Lemma~\ref{lem:entwbd}), we infer 
\begin{align*}
\en_{\om+\theta}((1-\e)\f) & \ge \left(1-V_\om^\theta+O(\e^2)\right)\left(\en_\om(\f)+\en^\theta(\f)-\e\int\f\,\mu-O(\e^2J)\right)\\
& = \en_\om(\f)+\en_\om^\theta(\f)-V_\om^\theta\en_\om(\f)-\e\int\f\,\mu-O(\e^2 J). 
\end{align*} 
Injecting this in~\eqref{equ:Jbelow} and using~\eqref{equ:ennab}, we get, as desired, 
$$
\jj_{\om+\theta}(\mu)\ge\jj_\om(\mu)+\jj_\om^\theta(\mu)-O(\e^2 J). 
$$
\end{proof}

\begin{proof}[Proof of Theorem~\ref{thm:diffen}] Assume first $\theta\in\cZ_\om$. Set $J:=\tJ_\om(\mu)$. By Lemma~\ref{lem:diffen} we then have 
$$
\jj_{\om+t\theta}(\mu)\ge\jj_\om(\mu)+t\jj_\om^\theta(\mu)-O(t^2\|\theta\|_\om J). 
$$
For $|t|\ll 1$, we also have $\d(\om+t\theta,\om)\le 1$, and hence $\tJ_{\om+t\theta}(\mu)\lesssim J$, by Corollary~\ref{cor:lipen}. Reversing the roles of $\om$ and $\om+t\theta$, Lemma~\ref{lem:diffen} thus yields
$$
\jj_\om(\mu)\ge\jj_{\om+t\theta}(\mu)-t\jj_{\om+t\theta}^\theta(\mu)-O(t^2\|\theta\|_\om J). 
$$
By Theorem~\ref{thm:twistedhold}, $\jj_{\om+t\theta}^\theta(\mu)\to\jj_\om^\theta(\mu)$ as $t\to 0$, and we conclude, as desired,  
$$
\jj_{\om+t\theta}(\mu)=\jj_\om(\mu+t\jj_\om^\theta(\mu)+o(t). 
$$
as $t\to 0$. 

In the general case, pick $\tau\in\cD_\om$ such that $\theta\in\cZ_{\om_\tau}$ (see Proposition~\ref{prop:normfin}). Then~\eqref{equ:Jmestrans2} yields 
$$
\jj_{\om+t\theta}(\mu)=\jj_{\om_\tau+t\theta}(\mu)-\int\tau\,\mu+\en_{\om+t\theta}(\tau),
$$
see Remark~\ref{rmk:Jnonpos}. We infer
$$
\frac{d}{dt}\bigg|_{t=0}\jj_{\om+t\theta}(\mu)=\jj_{\om_\tau}^\theta(\mu)+\nabla_\theta\en_\om(\tau)=\jj_{\om_\tau}^\theta(\mu)+\jj_\om^\theta(\mu_{\om_\tau})= \jj_\om^\theta(\mu),
$$
where the first equality follows from the first step of the proof and~\eqref{equ:nablaE}, the second one from~\eqref{equ:nablaJ}, and the third from~\eqref{equ:Jomtau}. 
\end{proof}

%
%
%%%%%%%%%%%%%%%%%%%%%%%%%%%%%%%%%%%%%%%%%%%%%%%%%%%%%%%%%%%%%%%%%%%
%
\section{Coercivity thresholds and free energy}\label{sec:freeen}
As in~\S\ref{sec:twisteddiff}, we assume that the orthogonality and submean value properties hold. We then establish a general continuity result for coercivity thresholds, and apply this to the free energy, which induces the Mabuchi K-energy on potentials.

%
%
%%%%%%%%%%%%%%%%%%%%%%%%%%%%%%%%%%%%%%%%%%%%%%%%%%%%%%%%%%%%%%%%%%%
\subsection{Continuity of coercivity thresholds}\label{sec:coer}
Fix for the moment $\om\in\cZ_+$ with $[\om]\in\Pos(X)$, and consider an arbitrary functional $F\colon\cM^1\to\R\cup\{+\infty\}$. 

\begin{defi} We define the \emph{coercivity threshold} of $F$ as
\begin{equation}\label{equ:thresh}
\sigma_\om(F):=\sup\left\{\sigma\in\R\mid F\ge\sigma\jj_\om+A\text{ for some }A\in\R\right\}\in[-\infty,+\infty]. 
\end{equation}
We say that $F$ is \emph{coercive} if $\sigma_\om(F)>0$, \ie $F\ge\sigma\jj_\om+A$ for some $\sigma>0$ and $A\in\R$. 
\end{defi}

By Corollary~\ref{cor:lipen}~(ii), the condition that $F$ is coercive (resp.~$\sigma_\om(F)=\pm\infty$) is independent of $\om$.

Recall from~\eqref{equ:JmuMA} that the energy of any $\mu\in\cM^1$ coincides with the quasi-distance to the base point $\mu_\om$, \ie $\jj_\om(\mu)=\d_\om(\mu,\mu_\om)$. As a result, the coercivity threshold measures the linear growth of $F$ with respect to quasi-metric $\d_\om$. As we next show, any other base point can be used in place of $\mu_\om$ (something that could fail for a general quasi-metric). 

\begin{lem}\label{lem:coerind} For any $F\colon\cM^1\to\R\cup\{+\infty\}$ and $\nu\in\cM^1$ we have 
\begin{equation}\label{equ:thresh2}
\sigma_\om(F)=\sup\left\{\sigma\in\R\mid F\ge\sigma\d_\om(\cdot,\nu)+A\text{ for some }A\in\R\right\}. 
\end{equation}
In particular, $\sigma_\om(F)$ only depends on $[\om]\in\Pos(X)$. 
\end{lem}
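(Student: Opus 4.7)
The plan is to show that $\d_\om(\cdot,\nu)$ and $\jj_\om(\cdot)=\d_\om(\cdot,\mu_\om)$ agree asymptotically, with their difference sublinear in $\jj_\om$, so that they produce the same linear-growth thresholds. The $[\om]$-independence will then follow from Theorem~\ref{thm:M1}.

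The first step is to apply the Hölder continuity estimate \eqref{equ:holdJmes} in Theorem~\ref{thm:Jmes}(iii) to the quadruple $(\mu,\nu,\mu,\mu_\om)$. Setting $R:=\max\{\jj_\om(\mu),\jj_\om(\nu)\}$ and using $\d_\om(\nu,\mu_\om)=\jj_\om(\nu)$ gives
$$
|\d_\om(\mu,\nu)-\jj_\om(\mu)|\lesssim\jj_\om(\nu)^\a\max\{\jj_\om(\mu),\jj_\om(\nu)\}^{1-\a}.
$$
Since $\a>0$, the elementary inequality $Cx^{1-\a}\le\e x+M_\e$ (valid for all $x\ge 0$ with $M_\e$ depending on $\e,C$) lets me absorb the right-hand side. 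This yields, for each $\e\in(0,1)$, a constant $M_\e\ge 0$ depending on $\e$ and $\nu$ such that
$$
(1-\e)\jj_\om(\mu)-M_\e\le\d_\om(\mu,\nu)\le(1+\e)\jj_\om(\mu)+M_\e
$$
for all $\mu\in\cM^1$.

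Next, let $\sigma_\om^\nu(F)$ denote the right-hand side of~\eqref{equ:thresh2}. To show $\sigma_\om(F)\le\sigma_\om^\nu(F)$, I will take any $\sigma<\sigma_\om(F)$ with $F\ge\sigma\jj_\om+A$. When $\sigma\ge 0$, rearranging the upper bound above as $\jj_\om(\mu)\ge(1+\e)^{-1}(\d_\om(\mu,\nu)-M_\e)$ yields $F\ge (\sigma/(1+\e))\,\d_\om(\cdot,\nu)+A'$; when $\sigma<0$, rearranging the lower bound as $\jj_\om(\mu)\le(1-\e)^{-1}(\d_\om(\mu,\nu)+M_\e)$ yields $F\ge(\sigma/(1-\e))\,\d_\om(\cdot,\nu)+A''$. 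In either case, letting $\e\to 0$ gives $\sigma\le\sigma_\om^\nu(F)$. The reverse inequality follows from the symmetric argument, swapping the roles of $\mu_\om$ and $\nu$ in the Hölder continuity step.

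For the final assertion, Theorem~\ref{thm:M1} states that the quasi-metric space $(\cM^1,\d_\om)$ depends only on $[\om]\in\Pos(X)$. Hence if $\om,\om'\in\cZ_+$ satisfy $[\om]=[\om']$, then $\d_{\om'}=\d_\om$, and applying \eqref{equ:thresh2} with $\nu=\mu_{\om'}$ gives $\sigma_{\om'}(F)=\sigma_\om^{\mu_{\om'}}(F)=\sigma_\om(F)$. The main technical input is the sublinear comparison between $\d_\om(\cdot,\nu)$ and $\jj_\om$ drawn from Hölder continuity; once this is established the rest is routine manipulation of the defining suprema, the only subtlety being the sign-sensitive handling of $\sigma$.
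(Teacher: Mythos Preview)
Your proof is correct and follows essentially the same approach as the paper: both apply the H\"older estimate~\eqref{equ:holdJmes} to compare $\d_\om(\mu,\nu)$ with $\jj_\om(\mu)=\d_\om(\mu,\mu_\om)$, derive the two-sided bound $(1-\e)\jj_\om-C_\e\le\d_\om(\cdot,\nu)\le(1+\e)\jj_\om+C_\e$, and conclude via Theorem~\ref{thm:M1}. Your version simply spells out more details (the Young-type absorption of the sublinear term and the sign-dependent handling of $\sigma$) that the paper leaves implicit.
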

\begin{proof} For any $\mu\in\cM^1$, \eqref{equ:holdJmes} yields 
$$
\left|\d_\om(\mu,\nu)-\jj_\om(\mu)\right|\lesssim\jj_\om(\nu)^\a\max\{\jj_\om(\mu),\jj_\om(\nu)\}^{1-\a}.
$$
For any $\e>0$, we can thus find $C_\e>0$ (depending on $\nu$) such that 
$$
(1-\e)\jj_\om(\mu)-C_\e\le\d_\om(\mu,\nu)\le(1+\e)\jj_\om(\mu)+C_\e
$$
for all $\mu\in\cM^1$. This implies~\eqref{equ:thresh2}, and the last point follows, since the Dirichlet quasi-metric $\d_\om$ only depends on $[\om]$ (see Theorem~\ref{thm:M1}). 
\end{proof}

\begin{defi} For each $\theta\in\cZ$, we introduce the \emph{twisted coercivity threshold} 
\begin{equation}\label{equ:twistedcoer}
\sigma_\om^\theta(F):=\sigma_\om(F+\jj_\om^\theta).
\end{equation}
\end{defi}
We refer to~\S\ref{sec:Mab} below for a discussion of the concrete case we have in mind. 

\begin{lem}\label{lem:coercohom} The following holds:
\begin{itemize}
\item[(i)] $\sigma_\om^0(F)=\sigma_\om(F)$, and $\sigma_\om^{\theta+t\om}(F)=\sigma_\om^\theta(F)+t$ for all $\theta\in\cZ$ and $t\in\R$; 
\item[(ii)] we have $\sigma_\om(F)\in\R$ (resp.~$\sigma_\om(F)=\pm\infty$) iff $\sigma_\om^\theta(F)\in\R$ (resp.~$\sigma_\om^\theta(F)=\pm\infty$) for all $\theta\in\cZ$. 
\end{itemize}
\end{lem}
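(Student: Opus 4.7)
The plan is to reduce both parts to two structural facts already established in the paper: first, linearity of $\theta\mapsto\jj_\om^\theta(\mu)$ (Proposition~\ref{prop:nablaJ}~(i)) together with the identity $\jj_\om^\om=\jj_\om$ from~\eqref{equ:Jomom}; second, the growth bound $|\jj_\om^\theta(\mu)|\le C(\jj_\om(\mu)+1)$ with $C=C(\om,\theta)>0$ from~\eqref{equ:twistedJbd2}. Once these are in hand, everything follows by elementary manipulation of the defining supremum~\eqref{equ:thresh}.

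For part (i), linearity in $\theta$ forces $\jj_\om^0\equiv 0$ on $\cM^1$, so $\sigma_\om^0(F)=\sigma_\om(F+\jj_\om^0)=\sigma_\om(F)$. For the shift identity, combining linearity with $\jj_\om^\om=\jj_\om$ yields the decomposition $\jj_\om^{\theta+t\om}=\jj_\om^\theta+t\jj_\om$. It is then immediate from the definition that $\sigma_\om(G+t\jj_\om)=\sigma_\om(G)+t$ for any functional $G$ and any $t\in\R$, since the inequality $G+t\jj_\om\ge\sigma\jj_\om+A$ is equivalent to $G\ge(\sigma-t)\jj_\om+A$. Applied to $G=F+\jj_\om^\theta$, this gives $\sigma_\om^{\theta+t\om}(F)=\sigma_\om^\theta(F)+t$.

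For part (ii), the bound~\eqref{equ:twistedJbd2} reads $-C(\jj_\om+1)\le\jj_\om^\theta\le C(\jj_\om+1)$ pointwise on $\cM^1$. Adding this to any admissible inequality $F\ge\sigma\jj_\om+A$ produces $F+\jj_\om^\theta\ge(\sigma-C)\jj_\om+(A-C)$, whence $\sigma_\om^\theta(F)\ge\sigma_\om(F)-C$. The symmetric argument applied to $-\theta$ (noting $\jj_\om^{-\theta}=-\jj_\om^\theta$ by linearity, with the same constant $C$) gives the reverse bound $\sigma_\om(F)=\sigma_\om^{-\theta}(F+\jj_\om^\theta)\ge\sigma_\om^\theta(F)-C$. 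Hence
$$
|\sigma_\om^\theta(F)-\sigma_\om(F)|\le C,
$$
which immediately yields the finiteness/$\pm\infty$ dichotomy claimed.

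I anticipate no serious obstacle: both parts are direct consequences of the earlier structural results on $\jj_\om^\theta$. The only point requiring minor care is that $F$ is allowed to take the value $+\infty$, so all inequalities must be interpreted pointwise on $\cM^1$; this is harmless since the argument only manipulates lower bounds.
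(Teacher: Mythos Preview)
Your proof is correct and follows essentially the same approach as the paper: part~(i) via linearity of $\theta\mapsto\jj_\om^\theta$ together with~\eqref{equ:Jomom}, and part~(ii) via the growth bound~\eqref{equ:twistedJbd2} to obtain $\sigma_\om(F)-C\le\sigma_\om^\theta(F)\le\sigma_\om(F)+C$. Your derivation of the upper bound by passing to $-\theta$ is a minor rephrasing of the paper's direct use of the two-sided inequality $|\jj_\om^\theta|\le C(\jj_\om+1)$, but the substance is identical.
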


\begin{proof} The first point is a direct consequence of~\eqref{equ:Jomom}. For any $\theta\in\cZ$, \eqref{equ:twistedJbd2} yields a constant $C>0$ such that $|\jj_\om^\theta|\le C(\jj_\om+1)$. This implies
$$
\sigma_\om(F)-C\le\sigma_\om^\theta(F)\le\sigma_\om(F)+C,
$$
and (iii) follows. 
\end{proof}

We can now state the main result of this section. 

\begin{thm}\label{thm:threshcont} For any functional $F\colon\cM^1\to\R\cup\{+\infty\}$, the twisted coercivity threshold $\sigma_\om^\theta(F)$ is a continuous function of $([\om],[\theta])\in\Pos(X)\times\HBC(X)$.
\end{thm}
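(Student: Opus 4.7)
The plan is to reduce the problem to a quantitative comparison of $\jj_\om^\theta(\mu)$ and $\jj_\om(\mu)$ for $([\om],[\theta])$ near a fixed base point $([\om_0],[\theta_0])$, using the H\"older estimates of Theorem~\ref{thm:twistedhold}, and then to propagate these bounds through the coercivity inequality defining $\sigma_\om^\theta(F)$. By Lemma~\ref{lem:coercohom}(ii), the locus $\{\sigma_\om^\theta(F)=\pm\infty\}$ equals either $\emptyset$ or all of $\Pos(X)\times\HBC(X)$; these degenerate cases give a constant function, so I may assume $\sigma_\om^\theta(F)\in\R$ throughout. Continuity in the finest vector space topology is tested on finite-dimensional subspaces, so I fix a base point $([\om_0],[\theta_0])$ and a finite-dimensional $W\subset\HBC(X)$ through it. By Proposition~\ref{prop:normfin} I pick a representative $\om_0\in\cZ_+$ and a linear section $\iota\colon W\to\cZ_{\om_0}$ with $\iota([\om_0])=\om_0$, $\iota([\theta_0])=\theta_0$. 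For $([\om],[\theta])$ in a small enough neighborhood of $([\om_0],[\theta_0])$ in $W$, the representatives $\om:=\iota([\om])$, $\theta:=\iota([\theta])$ have $\|\om-\om_0\|_{\om_0}$ and $\|\theta-\theta_0\|_{\om_0}$ arbitrarily small; in particular $\om$ lies in $\cZ_+$ and is commensurable to $\om_0$ with $\dT(\om,\om_0)\to 0$, by Lemma~\ref{lem:commdT}, so $\cZ_\om=\cZ_{\om_0}$ with equivalent norms.

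Applying Theorem~\ref{thm:twistedhold} once to $(\om,\theta)$ versus $(\om_0,\theta_0)$, and once to $(\om,\om)$ versus $(\om_0,\om_0)$ (using $\jj_{\om'}^{\om'}=\jj_{\om'}$ from~\eqref{equ:Jomom}), yields a quantity $\eta=\eta([\om],[\theta])\to 0$ such that for all $\mu\in\cM^1$
\begin{equation*}
|\jj_\om^\theta(\mu)-\jj_{\om_0}^{\theta_0}(\mu)|\le\eta\,\tJ_{\om_0}(\mu),\qquad |\jj_\om(\mu)-\jj_{\om_0}(\mu)|\le\eta\,\tJ_{\om_0}(\mu).
\end{equation*}
Since $T_{\om_0}$ is a fixed constant, these rewrite as $|\jj_\om^\theta-\jj_{\om_0}^{\theta_0}|\le\eta'(\jj_{\om_0}+1)$ and $|\jj_\om-\jj_{\om_0}|\le\eta'(\jj_{\om_0}+1)$, with $\eta'=\eta'([\om],[\theta])\to 0$ as $([\om],[\theta])\to([\om_0],[\theta_0])$.

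To conclude, pick any $\sigma<\sigma_{\om_0}^{\theta_0}(F)$ and $A\in\R$ with $F+\jj_{\om_0}^{\theta_0}\ge\sigma\jj_{\om_0}+A$. The first estimate above yields $F+\jj_\om^\theta\ge(\sigma-\eta')\jj_{\om_0}+A-\eta'$, and the second estimate then converts $\jj_{\om_0}$ into $\jj_\om$ at the cost of a multiplicative factor $(1+O(\eta'))$ and an additive $O(\eta')$, producing $F+\jj_\om^\theta\ge(\sigma-C(1+|\sigma|)\eta')\jj_\om+A''$ for some $A''\in\R$. Hence $\sigma_\om^\theta(F)\ge\sigma-O(\eta')$; letting $\sigma\nearrow\sigma_{\om_0}^{\theta_0}(F)$ gives $\sigma_\om^\theta(F)\ge\sigma_{\om_0}^{\theta_0}(F)-O(\eta')$. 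Interchanging the roles of $(\om_0,\theta_0)$ and $(\om,\theta)$ is symmetric since the hypotheses of Theorem~\ref{thm:twistedhold} are symmetric and the norms $\|\cdot\|_{\om_0}$ and $\|\cdot\|_\om$ are equivalent by commensurability, and yields the reverse inequality, hence continuity. The only real hurdle is this last bookkeeping step: the multiplicative error on $\jj_\om$ and the $\tJ_{\om_0}$-controlled additive error on $\jj_\om^\theta$ combine into an $O(\eta')$ perturbation of $\sigma$ precisely because the reduction to $\sigma_{\om_0}^{\theta_0}(F)\in\R$ keeps $|\sigma|$ uniformly bounded in a neighborhood.
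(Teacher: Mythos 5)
Your argument follows essentially the same route as the paper's: reduce to a finite-dimensional subspace of $\cZ_{\om_0}$ via Proposition~\ref{prop:normfin}, apply Theorem~\ref{thm:twistedhold} to compare $\jj_\om^\theta$ with $\jj_{\om_0}^{\theta_0}$ and (via $\jj_\om^\om=\jj_\om$) $\jj_\om$ with $\jj_{\om_0}$, and then push the resulting $\tJ_{\om_0}$-controlled errors through the coercivity inequality; this last bookkeeping step is exactly the content of the paper's Lemma~\ref{lem:threshcont}, where the constant $T_\om$ is absorbed by rewriting the threshold with $\tJ_\om$ in place of $\jj_\om$. The one step you omit is the verification that $\sigma_\om^\theta(F)$ depends only on the classes $[\om]\in\Pos(X)$ and $[\theta]\in\HBC(X)$: since you only test continuity along a fixed linear section $\iota\colon W\to\cZ_{\om_0}$, this independence is needed both for the statement to be well posed and for your local estimate to say anything about other representatives. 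It follows readily from Lemma~\ref{lem:coerind} together with~\eqref{equ:Jomtau} (for $\om\mapsto\om_\tau$) and~\eqref{equ:Jomddc} (for $\theta\mapsto\theta+\ddc\p$), which is how the paper opens its proof; with that supplied, your proof is complete and matches the paper's.
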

Continuity is understood with respect to the finest vector space topology of $\HBC(X)$, \ie for $[\om]$ and $[\theta]$ constrained in any given finite dimensional subspace. 

\begin{lem}\label{lem:threshcont} There exists $\d_n>0$ only depending on $n$ such that, for all $\om,\om'\in\cZ_+$ with $\d:=\dT(\om,\om')\le\d_n$ and all $\theta,\theta'\in\cZ_\om$, we have 
\begin{equation}\label{equ:threshvar} 
\sigma_{\om'}^{\theta'}(F)\ge(1+O(\d^\a))\left[\sigma_\om^\theta(F)+O(\d^\a\|\theta\|_\om+\|\theta-\theta'\|_\om)\right].
\end{equation}
\end{lem}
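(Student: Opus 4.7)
The plan is to unwind the definition of the twisted coercivity threshold and feed in the H\"older estimates from Theorem~\ref{thm:twistedhold}. Set $\sigma:=\sigma_\om^\theta(F)$. The cases $\sigma=\pm\infty$ are handled separately: by Lemma~\ref{lem:coercohom}(ii) together with the $\om$-independence of coercivity noted just after~\eqref{equ:thresh}, $\sigma=+\infty$ forces $\sigma_{\om'}^{\theta'}(F)=+\infty$, while $\sigma=-\infty$ makes the inequality vacuous. So I will assume $\sigma\in\R$. For each $\e>0$ pick $A_\e\in\R$ such that
\[
F(\mu)\ge(\sigma-\e)\jj_\om(\mu)-\jj_\om^\theta(\mu)+A_\e\qquad\text{for all }\mu\in\cM^1.
\]

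Writing $\eta:=\d^\a\|\theta\|_\om+\|\theta-\theta'\|_\om$, Theorem~\ref{thm:twistedhold} furnishes constants $C_n>0$ such that $\jj_\om^\theta(\mu)-\jj_{\om'}^{\theta'}(\mu)\le C_n\eta\,\tJ_\om(\mu)$ by~\eqref{equ:twistedlip}, and $|\jj_\om(\mu)-\jj_{\om'}(\mu)|\le C_n\d^\a\tJ_\om(\mu)$ by combining~\eqref{equ:tJvar} with $|T_{\om'}-T_\om|\lesssim\d T_\om$ from~\eqref{equ:Thom}. Substituting the first into the displayed inequality, using $\tJ_\om=\jj_\om+T_\om$, and setting $\tau:=\sigma-\e-C_n\eta$, I obtain
\[
F(\mu)+\jj_{\om'}^{\theta'}(\mu)\ge\tau\,\jj_\om(\mu)-C_n\eta\,T_\om+A_\e.
\]
It remains to replace $\jj_\om$ by $\jj_{\om'}$. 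Choosing $\d_n$ so that $C_n\d_n^\a\le\tfrac12$, the second H\"older estimate inverts to the two-sided bound $(1-2C_n\d^\a)\jj_{\om'}(\mu)-2C_n\d^\a T_\om\le\jj_\om(\mu)\le(1+2C_n\d^\a)\jj_{\om'}(\mu)+2C_n\d^\a T_\om$. Multiplying by $\tau$ and splitting on the sign of $\tau$ (in the case $\tau<0$ the direction of the inequality reverses), one finds in both cases a constant $B_\e$ such that $\tau\jj_\om(\mu)\ge\tau(1+O(\d^\a))\jj_{\om'}(\mu)+B_\e$, where the $O(\d^\a)$ may carry either sign. Plugging this back in and invoking the definition of the coercivity threshold yields $\sigma_{\om'}^{\theta'}(F)\ge\tau(1+O(\d^\a))$, and letting $\e\to 0$ with $\tau\to\sigma-C_n\eta$ gives exactly~\eqref{equ:threshvar}.

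\textbf{Main obstacle.} The delicate point is the sign dichotomy for $\tau$: inverting the two-sided comparison between $\jj_\om$ and $\jj_{\om'}$ yields a multiplicative perturbation $1\pm O(\d^\a)$, whose sign flips upon multiplication by a negative $\tau$. This is precisely what forces the estimate~\eqref{equ:threshvar} to be stated in the unsigned multiplicative form $(1+O(\d^\a))$ rather than a cleaner one-sided bound. Beyond this, one only needs to check that all constants implicit in $\lesssim$ depend on $n$ and the fixed quantity $T_\om$ (determined by $[\om]\in\Pos(X)$) but not on $\mu$ or $\theta$, so the argument is uniform in the relevant parameters.
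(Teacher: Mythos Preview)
Your proof is correct and follows essentially the same strategy as the paper: unwind the threshold definition, apply~\eqref{equ:twistedlip} to swap $\jj_\om^\theta$ for $\jj_{\om'}^{\theta'}$, then use~\eqref{equ:tJvar} to swap the energy functionals. The one cosmetic difference is that the paper first rewrites the threshold using $\tJ_\om$ in place of $\jj_\om$ (which is harmless since $\tJ_\om=\jj_\om+T_\om$ differs by a constant), and then invokes the purely multiplicative relation $\tJ_\om=(1+O(\d^\a))\tJ_{\om'}$ from~\eqref{equ:tJvar} directly. Because $\tJ_\om,\tJ_{\om'}>0$, the substitution $[\sigma+O(\eta)]\tJ_\om\ge(1+O(\d^\a))[\sigma+O(\eta)]\tJ_{\om'}$ goes through uniformly in the sign of the bracket, so the sign dichotomy you flag as the ``main obstacle'' never surfaces. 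Your route via $\jj_\om$ works too, but requires exactly the case split you describe; switching to $\tJ_\om$ at the outset buys a cleaner one-line substitution.
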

\begin{proof} Since $\tJ_\om=\jj_\om+T_\om$, we can replace $\jj_\om$ with $\tJ_\om$ in~\eqref{equ:thresh}, and hence 
\begin{equation}\label{equ:threshbis}
\sigma_\om^\theta(F)=\sup\{\sigma\in\R\mid F+\jj_\om^\theta\ge\sigma\tJ_\om+A\text{ for some }A\in\R\}.
\end{equation}
Pick $\sigma,A\in\R$ such that $F+\jj_\om^\theta\ge\sigma\tJ_\om+A$ on $\cM^1$. By~\eqref{equ:twistedlip} we get
\begin{align*}
F+\jj_{\om'}^{\theta'} & \ge F+\jj_\om^\theta+O(\d^\a\|\theta\|_\om+\|\theta-\theta'\|_\om)\tJ_{\om} \\
& \ge\sigma\tJ_\om+A+O(\d^\a\|\theta\|_\om+\|\theta-\theta'\|_\om)\tJ_{\om}\\
& \ge(1+O(\d^\a))\left[\sigma+O(\d^\a\|\theta\|_\om+\|\theta-\theta'\|_\om\right]\tJ_{\om'}+A, 
\end{align*}
using~\eqref{equ:tJvar}, and hence $\sigma_{\om'}^{\theta'}(F)\ge(1+O(\d^\a))\left[\sigma+O(\d^\a\|\theta\|_\om+\|\theta-\theta'\|_\om\right]$. Choosing $\d_n>0$ such that $1+O(\d^\a)\ge 0$ for $\d\le\d_n$ and taking the supremum over $\sigma$ yields~\eqref{equ:threshvar}. 
\end{proof}

\begin{proof}[Proof of Theorem~\ref{thm:threshcont}] We first show that $\sigma_\om^\theta(F)$ only depends on the classes $[\om]\in\Pos(X)$ and $[\theta]\in\HBC(X)$. For each $\tau\in\cD_\om$, we have 
$$
\sigma_{\om_\tau}^\theta(F)=\sigma_{\om_\tau}(F+\jj_{\om_\tau}^\theta)=\sigma_{\om_\tau}(F+\jj_\om^\theta)=\sigma_\om(F+\jj_\om^\theta)=\sigma_\om^\theta(F), 
$$
where the second equality follows from~\eqref{equ:Jomtau}, and the third from Lemma~\ref{lem:coerind}. This proves that $\sigma_\om^\theta(F)$ only depends on $[\om]$. On the other hand, for any $\p\in\cD$, \eqref{equ:Jomddc} yields 
$$
F+\jj_\om^\theta-C\le F+\jj_\om^{\theta+\ddc\p}\le F+\jj_\om^\theta+C
$$ 
with $C:=2\sup|\p|$. This implies $\sigma_\om^\theta(F)=\sigma_\om^{\theta+\ddc\p}(F)$, which thus only depends on $[\theta]$. 

As noted above, Corollary~\ref{cor:lipen}~(ii) shows that the condition $\sigma_\om(F)=\pm\infty$ is independent of $\om$, and in that case the result trivially holds, since all twisted thresholds are then equal to $\pm\infty$, by Lemma~\ref{lem:coercohom}~(ii). 

Assume now that the twisted thresholds are finite valued. Pick $\om_0\in\cZ_+$ with $[\om_0]\in\Pos(X)$, $\theta_0\in\cZ$. Choose a finite dimensional subspace $W\subset\HBC(X)$ containing $[\om_0]$, $[\theta_0]$, and a finite dimensional subspace $V\subset\cZ$ containing $\theta_0$, and whose image in $\HBC(X)$ contains $W$. Since $\sigma_{\om_0}^{\theta_0}(F)$ only depends on $[\om_0]$, we can assume wlog $V\subset\cZ_{\om_0}$ (see Proposition~\ref{prop:normfin}). After enlarging $V$, we may further assume that it contains $\om_0$. For each $\om,\theta\in V\subset\cZ_{\om_0}$ with $\om$ close enough to $\om_0$, \eqref{equ:threshbis} yields
$$
|\sigma_{\om}^{\theta}(F)-\sigma_{\om_0}^{\theta_0}(F)|\lesssim\d^\a\left[\sigma_{\om_0}^{\theta_0}(F)+O(\d^\a\|\theta_0\|_{\om_0}+\|\theta-\theta_0\|_{\om_0})\right]
$$
with $\d=\dT(\om,\om_0)$. By Lemma~\ref{lem:commdT}, this shows that the restriction of $(\om,\theta)\mapsto\sigma_{\om}^{\theta}(F)$ to $V$ is continuous at $(\om_0,\theta_0)$, and the result follows, since the image of $V$ contains $W$.
\end{proof}

%
%
%%%%%%%%%%%%%%%%%%%%%%%%%%%%%%%%%%%%%%%%%%%%%%%%%%%%%%%%%%%%%%%%%%%
\subsection{Free energy vs.~Mabuchi K-energy}\label{sec:Mab}
%
%
%%%%%%%%%%%%%%%%%%%%%%%%%%%%%%%%%%%%%%%%%%%%%%%%%%%%%%%%%%%%%%%%%%%

\subsubsection{The K\"ahler case} Consider first a compact connected K\"ahler manifold $X$. Any smooth metric $\rho$ on $K_X$ induces a volume form $\mu_\rho$, and hence a \emph{(relative) entropy} functional 
\begin{equation}\label{equ:EntK}
\Ent_\rho\colon\cM\to\R\cup\{+\infty\},
\end{equation}
such that 
$$
\Ent_\rho(\mu):=\int\log\left(\frac{\mu}{\mu_\rho}\right)\mu
$$
if $\mu$ is absolutely continuous with respect to $\mu_\rho$, and $\Ent_\rho(\mu)=+\infty$ otherwise. Note that
\begin{equation}\label{equ:enttrans}
\Ent_{\rho'}(\mu)=\Ent_\rho(\mu)+\int(\rho-\rho')\,\mu
\end{equation}
for any other metric $\rho'$ on $K_X$.  As is well-known, the relative entropy can also be written as the Legendre transform
$$
\Ent_\rho(\mu):=\sup_{f\in\cD}\left(\int f\,\mu-\log\int e^f \mu_\rho\right)
$$
which shows that~\eqref{equ:EntK} is convex and lsc. In particular, the restriction $\Ent_\rho\colon\cM^1\to\R\cup\{+\infty\}$ is lsc in the strong topology. While it is not continuous, we nevertheless have (see~\cite[Theorem~4.7]{BDL17}): 

\begin{lem}\label{lem:approxent} For each $\mu\in\cM^1$, there exists a sequence $(\f_i)$ in $\cD_\om$ such that $\mu_i:=\MA_\om(\f_i)$ satisfies $\mu_i\to\mu$ strongly in $\cM^1$ and $\Ent_\rho(\mu_i)\to\Ent_\rho(\mu)$. 
\end{lem}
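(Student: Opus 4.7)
The plan is to split into two cases according to the finiteness of $\Ent_\rho(\mu)$.

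When $\Ent_\rho(\mu)=+\infty$, pick any maximizing sequence $(\f_i)\subset\cD_\om$ for $\mu$. Theorem~\ref{thm:maxcv} yields $\mu_i:=\MA_\om(\f_i)\to\mu$ strongly in $\cM^1$, and in particular weakly in $\cM$. Since $\Ent_\rho$ is the Legendre transform of $f\mapsto\log\int e^f\,\mu_\rho$, it is weakly lsc on $\cM$, so $\liminf_i\Ent_\rho(\mu_i)\ge\Ent_\rho(\mu)=+\infty$, forcing $\Ent_\rho(\mu_i)\to+\infty=\Ent_\rho(\mu)$.

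The substantive case is $\Ent_\rho(\mu)<+\infty$. Writing $\mu=g\,\mu_\rho$ with $g\log g\in L^1(\mu_\rho)$, the finite entropy hypothesis implies finite energy via standard estimates of the form $\jj_\om\lesssim\Ent_\rho+C$ (see \cite{BBGZ}); in fact, the Monge--Amp\`ere equation $\MA_\om(\f)=\mu$ admits a solution $\f\in\cE^1(X,\om)$ in the finite-energy class of $\om$-psh functions, unique up to an additive constant. I would then approximate $\f$ by a decreasing sequence of smooth potentials $\f_i\in\cD_\om$ via Demailly's regularization theorem on the compact K\"ahler manifold $X$ (possibly with a vanishing $\e_i$-loss of positivity corrected along a suitable subsequence). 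The strong continuity of $\MA_\om$ along decreasing sequences in $\cE^1$, established in \cite{BBGZ,BBEGZ}, then guarantees $\mu_i:=\MA_\om(\f_i)\to\mu$ strongly in $\cM^1$.

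It remains to upgrade this to $\Ent_\rho(\mu_i)\to\Ent_\rho(\mu)$. Weak lower semicontinuity supplies the inequality $\liminf_i\Ent_\rho(\mu_i)\ge\Ent_\rho(\mu)$. For the reverse bound, write $\mu_i=h_i\,\mu_\rho$ with $h_i$ a smooth positive density, so that $\Ent_\rho(\mu_i)=\int\log h_i\,\mu_i$. Using that $\MA_\om(\f)=\mu$ is a genuine identity in $\cE^1$, together with the decreasing nature of $\f_i\searrow\f$ and the $L\log L$ integrability of $g$, a monotone/dominated convergence argument pins down
$$
\limsup_i\int\log h_i\,\mu_i\le\int\log g\,\mu=\Ent_\rho(\mu).
$$

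The main obstacle is precisely this upper bound. Unlike $\jj_\om$, the entropy is not strongly continuous on $\cM^1$, so Theorem~\ref{thm:maxcv} alone is insufficient: the approximating sequence must be compatible with the density structure of $\mu$. This is what forces the strategy of first solving the Monge--Amp\`ere equation in the (possibly unbounded) class $\cE^1$ and then regularizing the resulting potential, as opposed to regularizing the measure $\mu$ directly. This is precisely the technical input provided by \cite{BDL17}.
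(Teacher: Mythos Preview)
The paper does not give its own proof of this lemma; it simply cites \cite[Theorem~4.7]{BDL17}. So the relevant comparison is between your proposal and the actual argument in \cite{BDL17}.

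Your treatment of the case $\Ent_\rho(\mu)=+\infty$ is fine.

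In the finite-entropy case, however, there is a genuine gap, and in fact your strategy is inverted relative to \cite{BDL17}. You propose to solve $\MA_\om(\f)=\mu$ in $\cE^1$ and then regularize $\f$ by smooth $\f_i\searrow\f$. This does give $\mu_i:=\MA_\om(\f_i)\to\mu$ strongly, but it gives \emph{no} control whatsoever on $\Ent_\rho(\mu_i)$ from above. The density $h_i=\MA_\om(\f_i)/\mu_\rho$ is produced by a highly nonlinear operator applied to a smoothing of $\f$; there is no monotonicity, no pointwise domination of $h_i\log h_i$ by an integrable function, and no reason for $h_i$ to be close to $g$ in any $L\log L$ sense. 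The ``monotone/dominated convergence argument'' you invoke for $\limsup_i\int\log h_i\,\mu_i\le\int\log g\,\mu$ simply does not exist.

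The argument in \cite{BDL17} proceeds in the opposite order: one first approximates the density $g$ by smooth strictly positive probability densities $g_i$ with $\int g_i\log g_i\,\mu_\rho\to\int g\log g\,\mu_\rho$ (an elementary step), and \emph{then} invokes Yau's theorem to solve the smooth Monge--Amp\`ere equation $\MA_\om(\f_i)=g_i\,\mu_\rho$ with $\f_i\in\cD_\om$. Entropy convergence is then built in by construction, and strong convergence $\mu_i\to\mu$ follows from stability properties of the Monge--Amp\`ere operator. So contrary to your final paragraph, the correct route is precisely to regularize the measure directly and then solve, not to solve and then regularize the potential.
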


Extending the terminology of~\cite{thermo}, we introduce:
\begin{defi} We define the \emph{free energy} $\effe_\om\colon\cM^1\to\R\cup\{+\infty\}$ by setting
\begin{equation}\label{equ:free}
\effe_\om(\mu):=\Ent_\rho(\mu)-\Ent_\rho(\mu_\om)+\jj_\om^{\theta_\rho}(\mu),
\end{equation}
where $\theta_\rho\in\cZ$ denotes the curvature of $\rho$. 
\end{defi}
As the notation suggests, $\effe_\om$ is independent of the choice of $\rho$; this follows indeed from~\eqref{equ:enttrans} combined with~\eqref{equ:Jomddc}. Furthermore, \eqref{equ:Jomzero} and~\eqref{equ:Jomtau} show that $\effe_\om$ only depends on the K\"ahler class $[\om]$, up to an additive constant uniquely determined by the normalization $\effe_\om(\mu_\om)=0$. 

The \emph{raison d'\^etre} of the free energy is that its composition with the Monge--Amp\`ere operator coincides with the \emph{Mabuchi K-energy} $\mab_\om\colon\cD_\om\to\R$, \ie we have 
\begin{equation}\label{equ:freemab}
\effe_\om(\MA_\om(\f))=\mab_\om(\f)
\end{equation}
for all $\f\in\cD_\om$. Indeed, in view of~\eqref{equ:nablaJ}, \eqref{equ:freemab} is equivalent to the well-known \emph{Chen--Tian formula} for the K-energy, which can be written as
\begin{equation}\label{equ:CT}
\mab_\om(\f)=\Ent_\rho(\MA_\om(\f))+\nabla_{\theta_\rho}\en_\om(\f)+c
\end{equation}
in the present formalism, for a constant $c\in\R$ determined by the normalization $\mab_\om^\theta(0)=0$, \ie $c=-\Ent_\rho(\mu_\om)$. 
\begin{defi} Define the \emph{coercivity threshold} of $(X,\om)$ as $\sigma(X,\om):=\sigma_\om(\effe_\om)$. 
\end{defi}

\begin{thm} The coercivity threshold of $(X,\om)$ is a continuous function of the K\"ahler class $[\om]$, and it satisfies
$$
\sigma(X,\om)=\sup\left\{\sigma\in\R\mid\mab_\om\ge\sigma\jj_\om+A\text{ on }\cD_\om\text{ for some }A\in\R\right\}. 
$$ 
\end{thm}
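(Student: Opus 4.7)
The plan is to reduce continuity to Theorem~\ref{thm:threshcont} and the formula to a density argument via Lemma~\ref{lem:approxent} and the Chen--Tian formula~\eqref{equ:CT}.

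First, I would rewrite $\sigma(X,\om)$ to decouple it from $\om$. Since an additive constant does not affect a coercivity threshold, and $\Ent_\rho(\mu_\om) \in \R$ is independent of the variable $\mu \in \cM^1$, we have
$$
\sigma(X,\om) \;=\; \sigma_\om(\effe_\om) \;=\; \sigma_\om(\Ent_\rho + \jj_\om^{\theta_\rho}) \;=\; \sigma_\om^{\theta_\rho}(\Ent_\rho),
$$
using the definition~\eqref{equ:twistedcoer} of the twisted coercivity threshold. Here $\Ent_\rho \colon \cM^1 \to \R \cup \{+\infty\}$ is a fixed functional (determined by the chosen smooth metric $\rho$ on $K_X$), and the class $[\theta_\rho] = c_1(K_X) \in \HBC(X)$ is also fixed. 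Continuity of $[\om] \mapsto \sigma(X,\om)$ in $\Pos(X)$ then follows immediately from Theorem~\ref{thm:threshcont}, by restricting the joint continuity of $([\om],[\theta]) \mapsto \sigma_\om^\theta(\Ent_\rho)$ to the slice where $[\theta]$ is held fixed at $c_1(K_X)$.

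For the formula, one inclusion is immediate: if $\effe_\om \ge \sigma \jj_\om + A$ on $\cM^1$, then restricting to $\mu = \MA_\om(\f)$ with $\f \in \cD_\om$ and invoking the Chen--Tian identity $\mab_\om(\f) = \effe_\om(\MA_\om(\f))$ together with~\eqref{equ:JMA} gives the corresponding lower bound for $\mab_\om$ on $\cD_\om$, so the sup in the formula is at least $\sigma(X,\om)$.

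For the reverse inclusion, I would rely on the entropy approximation property (Lemma~\ref{lem:approxent}): given $\mu \in \cM^1$, choose a sequence $\f_i \in \cD_\om$ such that $\mu_i := \MA_\om(\f_i)$ converges strongly to $\mu$ and $\Ent_\rho(\mu_i) \to \Ent_\rho(\mu)$. Strong continuity of $\jj_\om$ (Theorem~\ref{thm:M1}) and of $\jj_\om^{\theta_\rho}$ (Proposition~\ref{prop:nablaJ}) then yield $\effe_\om(\mu_i) \to \effe_\om(\mu)$ and $\jj_\om(\mu_i) \to \jj_\om(\mu)$, so passing to the limit in $\mab_\om(\f_i) = \effe_\om(\mu_i) \ge \sigma \jj_\om(\MA_\om(\f_i)) + A$ produces the coercivity estimate for $\effe_\om$ on all of $\cM^1$. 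The main subtlety is the identification of $\jj_\om(\MA_\om(\f)) = \jj_\om(\f,0)$ with the potential-side functional $\jj_\om(\f) = \jj_\om(0,\f)$ from~\eqref{equ:J}; these are only quasi-equivalent via~\eqref{equ:IJsum}, so to conclude genuine equality of the two suprema one must use the base-point invariance of the coercivity threshold (Lemma~\ref{lem:coerind}) to transfer from the base point $\mu_\om$ to values of $\d_\om$ that recover the intended $\jj$-functional. This reconciliation of the two $\jj$-functionals is the only delicate step; everything else is a direct application of the strong continuity results developed in Sections~\ref{sec:mes} and~\ref{sec:twisteddiff} combined with the master continuity Theorem~\ref{thm:threshcont}.
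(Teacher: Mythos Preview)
Your approach matches the paper's exactly: rewrite $\sigma(X,\om)=\sigma_\om^{\theta_\rho}(\Ent_\rho)$ and invoke Theorem~\ref{thm:threshcont} for continuity, then use~\eqref{equ:freemab} together with Lemma~\ref{lem:approxent} for the formula. The paper's own proof is the two-line version of what you wrote.

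One correction on the subtlety you flag. You are right that $\jj_\om(\MA_\om(\f))=\jj_\om(\f,0)$ while the potential-side functional in the displayed formula is $\jj_\om(\f)=\jj_\om(0,\f)$, and these only satisfy the quasi-symmetry~\eqref{equ:IJsum}. However, Lemma~\ref{lem:coerind} does \emph{not} resolve this: that lemma changes the basepoint $\nu$ in $\d_\om(\cdot,\nu)$, and its proof works because~\eqref{equ:holdJmes} makes the discrepancy $|\d_\om(\mu,\nu)-\jj_\om(\mu)|$ sublinear in $\jj_\om(\mu)$ for fixed $\nu$. The discrepancy you need to control is $|\d_\om(\mu,\mu_\om)-\d_\om(\mu_\om,\mu)|$, which comes from the asymmetry of $\d_\om$ and is of the \emph{same} order as $\jj_\om(\mu)$ (as one sees from~\eqref{equ:Dirichlet}); no basepoint change is involved. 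The paper's terse ``simple consequence'' does not address this point either, so exact equality of the two suprema is left unjustified by the tools in the paper; what the argument does deliver cleanly is that the two thresholds are comparable up to dimensional constants, hence have the same sign, which is all that is needed for the application to openness of the cscK condition.
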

By~\cite{CC}, $\sigma(X,\om)>0$ iff there exists a unique constant scalar curvature K\"ahler (cscK) metric in $[\om]$. In particular, we recover the fact, originally proved in~\cite{LS}, that the set of K\"ahler classes of $X$ that contain a unique cscK metric is open.

\begin{proof} Pick a smooth metric $\rho$ on $K_X$. By~\eqref{equ:free}, we have $\sigma(X,\om)=\sigma_\om^{\theta_\rho}(\Ent_\rho)$, and the first point thus follows from Theorem~\ref{thm:threshcont}. As to the second point, it is a simple consequence of~\eqref{equ:freemab} and Lemma~\ref{lem:approxent}.
\end{proof}

\begin{rmk} Given $\theta\in\cZ$, one can more generally consider the \emph{$\theta$-twisted free energy} $\effe_\om^\theta:=\effe_\om+\jj_\om^\theta$, whose composition with the Monge--Amp\`ere operator coincides with the $\theta$-twisted K-energy $\mab_\om^\theta=\mab_\om+\nabla_\theta\en_\om$ (see~\cite{BDL17,CC}). Again, Theorem~\ref{thm:threshcont} shows that the coercivity threshold 
$$
\sigma^\theta(X,\om):=\sigma_\om(\effe_\om^\theta)=\sigma_\om^\theta(\Ent_\rho)
$$
is a continuous function of $[\om]$ and $[\theta]$, while Lemma~\ref{lem:approxent} shows that 
$$
\sigma^\theta(X,\om):=\sup\left\{\sigma\in\R\mid\mab_\om^\theta\ge\sigma\jj_\om+A\text{ on }\cD_\om\text{ for some }A\in\R\right\}
$$
as considered for instance in~\cite{SD2}.
\end{rmk}

%
%
%%%%%%%%%%%%%%%%%%%%%%%%%%%%%%%%%%%%%%%%%%%%%%%%%%%%%%%%%%%%%%%%%%%
\subsubsection{The non-Archimedean case}\label{sec:coerNA} Consider now a smooth, irreducible projective Berkovich space $X$ over a non-Archimedean field $k$ of characteristic $0$. Pick a PL metric $\rho$ on $K_X$, and define the associated \emph{non-Archimedean entropy} $\Ent_\rho\colon\cM\to\R\cup\{+\infty\}$ by setting 
$$
\Ent_\rho(\mu):=\int(\ld_X-\rho)\,\mu. 
$$
Here $\ld_X$ denotes the Temkin metric (see~\cite{TemkinMetric} and also~\cite{Ste19}) on $K_X$, and $\ld_X-\rho\colon X\to\R\cup\{+\infty\}$ is the corresponding lsc function, using additive notation for metrics (see~\cite[\S 5.7]{konsoib}). Again, $\Ent_\rho$ is convex and lsc, and~\eqref{equ:enttrans} holds for any other choice of PL metric $\rho'$ on $K_X$. See also~\cite{Ino22} for a related notion in the trivially valued case. 

As above, one can then define the (non-Archimedean) \emph{free energy} $\effe_\om\colon\cM^1\to\R\cup\{+\infty\}$ by~\eqref{equ:free}, with $\theta_\rho\in\cZ$ the curvature of $\rho$. Its composition with the Monge--Amp\`ere operator coincides with the (non-Archimedean) \emph{Mabuchi K-energy} $\mab_\om\colon\cD_\om\to\R$, defined by~\eqref{equ:CT}. 

However, the major difference in the non-Archimedean case is that the analogue of Lemma~\ref{lem:approxent} is only a conjecture (compare~\cite[Conjecture~2.5]{trivvalold} and~\cite[Conjecture~4.4]{Li22}). Explicitly: 

\begin{conj}\label{conj:approxent} For each $\mu\in\cM^1$, there exists a sequence of $\om$-psh PL functions $(\f_i)$ such that $\mu_i:=\MA_\om(\f_i)$ satisfies $\mu_i\to\mu$ in $\cM^1$ and $\Ent_\rho(\mu_i)\to\Ent_\rho(\mu)$. 
\end{conj}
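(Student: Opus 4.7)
The plan is to reduce to the finite entropy case, then combine the density of Monge--Amp\`ere images from Theorem~A with a model-based regularization argument that additionally controls entropy. First, suppose $\Ent_\rho(\mu)=+\infty$. By Theorem~A, the image of $\MA_\om\colon\cD_\om\to\cM^1$ is strongly dense, and after a standard rationality/approximation argument one can assume the approximants $\f_i$ are $\om$-psh PL. Any such sequence with $\mu_i\to\mu$ strongly automatically satisfies $\Ent_\rho(\mu_i)\to+\infty$, by the lower semicontinuity of $\Ent_\rho$ on $\cM$.

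Now assume $\Ent_\rho(\mu)<+\infty$. The idea is to exploit the variational solvability of the non-Archimedean Monge--Amp\`ere equation (in the spirit of the results used in~\cite{BE,BGM,trivval}): one solves $\MA_\om(\f)=\mu$ for some bounded $\om$-psh potential $\f\in\CPSH(\om)$, which should be feasible precisely because finite entropy of $\mu$ furnishes the integrability needed to make the energy functional coercive on $\cE^1(\om)$. Given such $\f$, the goal is to build a PL approximating sequence $(\f_i)$ in $\cD_\om$ with $\MA_\om(\f_i)\to\mu$ strongly in $\cM^1$ \emph{and} $\Ent_\rho(\MA_\om(\f_i))\to\Ent_\rho(\mu)$. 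The natural construction uses an increasing sequence of SNC models $\cX_i$ of $X$ with associated retractions $p_i\colon X\to\Sk(\cX_i)$: set $\f_i:=\f\circ p_i$ (or a regularized max-variant to ensure $\f_i\in\cD_\om$), which converges to $\f$ uniformly and whose Monge--Amp\`ere measures are supported on the Shilov points of $\cX_i$.

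Strong convergence $\MA_\om(\f_i)\to\mu$ in $(\cM^1,\d_\om)$ should follow from the H\"older estimate~\eqref{equ:holdJmes} once one shows $\jj_\om(\f_i,\f)\to 0$, which itself is a consequence of the quasi-triangle inequality~\eqref{equ:qtri} together with uniform energy bounds. The crucial and nontrivial step is the entropy convergence: one must show that, along the retractions, the mass of $\MA_\om(\f_i)$ at each divisorial point $v$, paired with the value $(\ld_X-\rho)(v)$, assembles into a quantity converging to $\int(\ld_X-\rho)\,\mu$. Since $\ld_X-\rho$ is only lsc, one has the easy direction $\liminf_i\Ent_\rho(\MA_\om(\f_i))\ge\Ent_\rho(\mu)$; the reverse inequality requires a quantitative control on how much the approximants concentrate mass at Shilov points of high log discrepancy.

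The hard part, and the reason this remains a genuine conjecture, is precisely this upper bound on the approximants' entropies. In the complex case it relies ultimately on Kolodziej's $L^\infty$ estimates, which transform a bound on the density of $\mu$ with respect to $\mu_\rho$ into uniform control on the solution $\f$, permitting a comparison argument along regularization. The non-Archimedean counterpart of such regularity is not available in the generality needed here: the Monge--Amp\`ere operator interacts with log discrepancies along arbitrary models in a way that current techniques (restricted volumes, Zariski decompositions on models) control only in very special settings such as the toric case or the Fano case with symmetry. Developing a robust non-Archimedean Kolodziej-type estimate---or, alternatively, a purely energetic proof that avoids pointwise regularity altogether by exhibiting an entropic minimizing sequence of PL functions directly---is, in my view, the central technical obstacle, and the step where a general proof is currently out of reach.
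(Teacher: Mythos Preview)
The statement you are addressing is \emph{not} proved in the paper: it is explicitly labeled and presented as a conjecture (Conjecture~\ref{conj:approxent}), introduced with the remark that ``the analogue of Lemma~\ref{lem:approxent} is only a conjecture'' in the non-Archimedean setting. There is therefore no proof in the paper to compare your proposal against. Your proposal is not a proof either, and to your credit you say so yourself in the final paragraph: you correctly identify the missing ingredient (an upper bound on the entropies of the PL approximants, or equivalently a non-Archimedean substitute for Ko\l{}odziej-type regularity) and state that it is currently out of reach. In that sense your write-up is an honest discussion of the difficulty, not a proof attempt, and it is broadly consistent with how the paper frames the problem.

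A few remarks on the details of your outline. The infinite-entropy reduction is indeed straightforward: lower semicontinuity of $\Ent_\rho$ forces $\Ent_\rho(\mu_i)\to+\infty$ along any strongly convergent sequence, and the passage from $\cD_\om$ to genuine PL functions is justified by the density of $\Q$-PL functions in $\cD_\om$ (used in the paper when checking that $\cD_\om$ admits maxima). In the finite-entropy case, however, your strategy has an additional gap beyond the one you flag: solving $\MA_\om(\f)=\mu$ for $\f\in\CPSH(\om)$ is itself not known in the generality of the paper---the non-Archimedean Calabi--Yau theorem is only established under restrictive hypotheses on $X$ and $k$, and finite entropy of $\mu$ does not by itself guarantee that $\mu$ lies in the image of the (extended) Monge--Amp\`ere operator on bounded potentials. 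So even before reaching the entropy-control step, the existence of the potential $\f$ you want to regularize is not secured. This does not change your overall conclusion, but it means the obstruction is somewhat earlier than you locate it.
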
 

As a consequence, the coercivity threshold $\sigma(X,\om):=\sigma_\om(\effe_\om)$ only satisfies
\begin{equation}\label{equ:threshineq}
\sigma(X,\om)\le\sup\{\sigma\in\R\mid\mab_\om\ge\sigma\jj_\om+A\text{ on }\cD_\om\text{ for some }A\in\R\},
\end{equation}
and equality holds if Conjecture~\ref{conj:approxent} is valid. 

\medskip

These definitions are especially relevant when $k$ is trivially valued and $\om$ lies in $\Amp(X)\hto\cZ_+$ (see~\eqref{equ:numtriv}). Indeed, the free energy $\effe_\om(\mu)$ then coincides with the invariant $\b_\om(\mu)$ introduced and studied in~\cite{nakstab2}; see also~\cite{DL,LiuYa}. By homogeneity with respect to the action of $\R_{>0}$, $\sigma(X,\om)=\sigma_\om(\effe_\om)$ is further equal to the \emph{divisorial stability threshold} $\sigma_\div(X,\om)$, which is positive iff $(X,\om)$ is \emph{divisorially stable}. 

On the other hand, the right-hand side of~\eqref{equ:threshineq} coincides, by definition, with the \emph{K-stability threshold} $\sigma_K(X,\om)$, which is positive iff $(X,\om)$ is \emph{uniformly K-stable} in the sense of~\cite{Der,BHJ1}). Thus divisorial stability implies uniform K-stability, and the converse holds if Conjecture~\ref{conj:approxent} is satisfied. 

\begin{rmk} Assume that $(X,L)$ is a polarized smooth projective variety over $\C$, and pick a K\"ahler form $\om\in c_1(L)$. Using~\cite{SD} and~\cite{Li22}, one can then show that the above thresholds satisfy 
$$
\sigma_\div(X,L)\le\sigma(X,\om)\le\sigma_K(X,L). 
$$
Conjecture~\ref{conj:approxent} would further yield $\sigma_K(X,L)=\sigma_\div(X,L)$, and hence conclude the proof of the `uniform version' of the Yau--Tian--Donaldson conjecture,  as noted in~\cite{Li22,nakstab2}. 
\end{rmk}

%
%
% 
%%%%%%%%%%%%%%%%%%%%%%%%%%%%%%%%%%%%%%%%%%%%%%%%%%%%%%%%%%%%%%%%%%%
%
%
\appendix
\section{Convexity estimates}\label{sec:CS}
%
%
% 
%%%%%%%%%%%%%%%%%%%%%%%%%%%%%%%%%%%%%%%%%%%%%%%%%%%%%%%%%%%%%%%%%%%
%
%
We consider the following data:
\begin{itemize}

\item a surjective map $\pi\colon V\to\Theta$ of $\R$-vector spaces, with fibers $V_\theta=\pi^{-1}(\theta)$; 
	
\item a homogeneous polynomial $F\colon V\to\R$ of degree $n+1$, $n\ge 1$, with associated symmetric multilinear map 
$$
V^{n+1}\to\R\quad(x_0,\dots,x_n)\mapsto x_0\inter x_n,
$$
\ie $F(x)=x^{n+1}$; 

\item a convex cone $P\subset V$ such that 
\begin{equation}\label{equ:posdef}
x^2\cdot x_2\inter x_n\ge 0\text{ for all }x\in V_0\text{ and }x_i\in P. 
\end{equation}
\end{itemize}
\begin{exam}
  The main example that we have in mind is the negative of the energy pairing in~\S\ref{sec:enpair}, where $V=\cZ\times\cD$ with its projection to $\Theta=\cZ$, and $P=\{(\theta,\f)\in V\mid \theta_\f\ge 0\}$. Another example is given by the negative of intersection pairing on a flat projective scheme over $\Z$.
\end{exam}

Our goal is to use~\eqref{equ:posdef} and the resulting \emph{Cauchy-Schwartz} inequality
\begin{equation}\label{equ:CS}
  (x\cdot y\cdot x_2\inter x_n)^2\le(x^2\cdot x_2\inter x_n)(y^2\cdot x_2\inter x_n)
\end{equation}
for all $x,y\in V_0$ and $x_i\in P$ to derive various inequalities and convexity statements.

For each $x\in V$ we define the linear form $F'(x)\in V^\vee$ by
$$
\langle F'(x),y\rangle:=\frac{d}{dt}\bigg|_{t=0}F(x+ty)=(n+1) x^n\cdot y,  
$$
and we set for all $x,y\in V$
\begin{equation}\label{equ:delta}
\d(x,y):=F(x)-F(y)-\langle F'(y),x-y\rangle. 
\end{equation}
A simple computation yields
\begin{equation}\label{equ:dapp}
\d(x,y)=\sum_{j=0}^{n-1}(j+1)(x-y)^2\cdot y^j\cdot x^{n-1-j}.
\end{equation}
In what follows, we fix $\theta\in\Theta$ and set $P_\theta:=V_\theta\cap P$.
\begin{lem}\label{lem:deltaconv}
  We have $\d(x,y)\ge0$ for $x,y\in P_\theta$. Moreover, $F$ is convex on $P_\theta$, and for every $y\in P_\theta$ we have that $x\mapsto\d(x,y)$ is convex. 
\end{lem}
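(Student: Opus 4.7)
The plan is to derive all three assertions from the explicit expansion \eqref{equ:dapp} together with the seminegativity/positivity hypothesis \eqref{equ:posdef}, the only point to verify in each case being that the two ``free'' slots land in $V_0$ and the remaining $n-1$ slots land in $P$.

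First I would establish $\delta(x,y)\ge 0$ for $x,y\in P_\theta$. Since $\pi(x)=\pi(y)=\theta$, we have $x-y\in V_0$. For each $0\le j\le n-1$, the product $y^j\cdot x^{n-1-j}$ is the intersection of $n-1$ elements of $P$ (namely $j$ copies of $y$ and $n-1-j$ copies of $x$), so the hypothesis \eqref{equ:posdef} applies with $x$ replaced by $x-y$ and yields
$$(x-y)^2\cdot y^j\cdot x^{n-1-j}\ge 0.$$
Summing the non-negative contributions in \eqref{equ:dapp} with the positive weights $j+1$ gives $\delta(x,y)\ge 0$.

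Next, for convexity of $F$ on $P_\theta$, the cleanest route is to differentiate along a segment. Pick $x_0,x_1\in P_\theta$ and set $x_t:=(1-t)x_0+t x_1$ for $t\in[0,1]$; since $P$ is a convex cone and both endpoints project to $\theta$, we have $x_t\in P_\theta$ and $x_1-x_0\in V_0$. A direct computation using the multilinear symmetric form shows
$$\frac{d^2}{dt^2}F(x_t)=n(n+1)\,(x_1-x_0)^2\cdot x_t^{n-1}.$$
Expanding $x_t^{n-1}$ by multilinearity produces a non-negative combination of terms of the form $(x_1-x_0)^2\cdot x_0^{n-1-k}\cdot x_1^k$, each of which is $\ge 0$ by \eqref{equ:posdef} (since $x_1-x_0\in V_0$ and $x_0,x_1\in P$). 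Hence $t\mapsto F(x_t)$ is convex on $[0,1]$, which is what is needed. (Alternatively, convexity follows from $\delta\ge 0$ via the tangent-line characterization of convexity for smooth functions on convex sets.)

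Finally, for fixed $y\in P_\theta$, the map $x\mapsto\delta(x,y)$ differs from $F(x)$ by the affine term $-F(y)-\langle F'(y),x-y\rangle$, so its convexity on $P_\theta$ follows immediately from that of $F$. There is no real obstacle to the argument; it is simply a matter of observing that in each summand of \eqref{equ:dapp} (and in the second derivative computation) the slot types match the hypothesis \eqref{equ:posdef}.
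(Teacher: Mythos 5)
Your proof is correct and follows essentially the same route as the paper: nonnegativity of $\d$ comes from matching each summand of~\eqref{equ:dapp} against the hypothesis~\eqref{equ:posdef}, and convexity of $x\mapsto\d(x,y)$ then follows since it differs from $F$ by an affine function. The only (immaterial) variation is that you establish convexity of $F$ on $P_\theta$ by a second-derivative computation along segments, whereas the paper deduces it directly from $\d\ge0$ via the tangent-line characterization --- an alternative you yourself note parenthetically.
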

\begin{proof}
  By~\eqref{equ:posdef} and~\eqref{equ:dapp}, we have $\d(x,y)\ge 0$ for $x,y\in P_\theta$, and this implies that $F$ is convex on $P_\theta$; it then follows from~\eqref{equ:delta} that $x\mapsto\d(x,y)$ is convex.
\end{proof}

\begin{thm}\label{thm:CS} For all $x,y,z\in P_\theta$ and $t\in [0,1]$, the following holds: 
\begin{itemize}
\item \emph{quasi-symmetry}:
$$
\d(x,y)\approx\d(y,x); 
$$
\item \emph{quasi-triangle inequality}: 
$$
\d(x,z)\lesssim \d(x,y)+\d(y,z); 
$$ 
\item \emph{quadratic estimate}:
  $$\d(x,(1-t)x+t y)\lesssim t^2\d(x,y);$$
\item \emph{uniform convexity}: 
$$
[(1-t) F(x)+t F(y)]-F((1-t)x+t y)\gtrsim t (1-t) \d(x,y).
$$
\end{itemize}
For any base point $x_*\in P_\theta$ and $x_i,y_i,z_j\in P_\theta$, the following \emph{H\"older estimates} further hold: 
\begin{equation}\label{equ:hold1}
|(x_0-y_0)\cdot(x_1-y_1)\cdot z_2\inter z_n|\lesssim \d(x_0,y_0)^{\a} \d(x_1,y_1)^\a M^{1-2\a}; 
\end{equation}
\begin{equation}\label{equ:hold2}
\left|\langle F'(x_0)-F'(y_0), x_1-y_1\rangle\right|\lesssim \d(x_0,y_0)^{1/2} \d(x_1,y_1)^\a M^{1/2-\a};
\end{equation}
\begin{equation}\label{equ:hold3}
\left|\d(x_0,x_1)- \d(y_0,y_1)\right|\lesssim \max\{\d(x_0,y_0),\d(x_1,y_1)\}^\a M^{1-\a};
\end{equation}
with $\a:=2^{-n}\in(0,1/2]$ and $M=\max_\xi\d(\xi,x_*)$, where in each case $\xi$ ranges over the elements of $P_\theta$ appearing in the left-hand side of the inequality.
\end{thm}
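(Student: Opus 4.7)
My plan is to derive all eight assertions from a single \emph{core estimate}: for $u = x - y$ with $x, y \in P_\theta$ and $z_2, \ldots, z_n \in P_\theta$,
\begin{equation*}
u^2 \cdot z_2 \cdots z_n \;\lesssim\; \delta(x, y)^{2^{-k}} M^{1 - 2^{-k}}, \qquad (\ast)
\end{equation*}
where $k$ denotes the number of indices $i$ with $z_i \notin \{x, y\}$ and $M = \max_\xi \delta(\xi, x_*)$. I would prove $(\ast)$ by induction on $k$. The base case $k = 0$ is immediate from \eqref{equ:dapp}, as each monomial $(x - y)^2 \cdot x^a y^{n-1-a}$ appears, up to a bounded factor, as a summand of $\delta(x, y)$. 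For the inductive step, given a free index $z_{i_0}$, write $z_{i_0} = x + w$ with $w = z_{i_0} - x \in V_0$ and split
\begin{equation*}
u^2 \cdot z_{i_0} \cdot z' = u^2 \cdot x \cdot z' + u^2 \cdot w \cdot z'.
\end{equation*}
The first summand has one fewer free index and falls under the induction hypothesis. For the second, the bilinear form $A(p, q) := pq \cdot u \cdot z'$ is not \emph{a priori} PSD on $V_0$, but the identity $u = x - y$ gives $A = A_x - A_y$ with $A_x(p, q) := pq \cdot x \cdot z'$ and $A_y(p, q) := pq \cdot y \cdot z'$ both PSD by \eqref{equ:posdef}. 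Applying Cauchy--Schwarz to each PSD piece yields
\begin{equation*}
|u^2 w z'| \le \sqrt{(u^2 x z')(w^2 x z')} + \sqrt{(u^2 y z')(w^2 y z')}.
\end{equation*}
The ``mixed'' factors $u^2 x z'$, $u^2 y z'$ have one fewer free index (induction hypothesis, exponent $2^{-(k-1)}$ on $\delta(x, y)$); the ``size'' factors $w^2 x z'$, $w^2 y z'$ do not involve the pair $(x, y)$ and are bounded $\lesssim M$ by a parallel but simpler induction tracking only powers of $M$. The Cauchy--Schwarz square root then halves the $\delta$-exponent to $2^{-k}$, closing the induction.

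From $(\ast)$, estimate \eqref{equ:hold1} is immediate: one further application of Cauchy--Schwarz to the PSD form $pq \cdot z_2 \cdots z_n$ gives $(u_0 u_1 z_2 \cdots z_n)^2 \le (u_0^2 z_2 \cdots z_n)(u_1^2 z_2 \cdots z_n)$, and plugging in $(\ast)$ at $k = n - 1$ produces the exponent $\alpha = 2^{-n}$. Estimate \eqref{equ:hold2} follows by expanding
\begin{equation*}
\langle F'(x_0) - F'(y_0), x_1 - y_1 \rangle = (n+1) \sum_{j=0}^{n-1} (x_0 - y_0)(x_1 - y_1) x_0^j y_0^{n-1-j},
\end{equation*}
applying Cauchy--Schwarz to each summand, and bounding one factor by a summand of $\delta(x_0, y_0)$ (yielding exponent $1/2$) and the other by \eqref{equ:hold1} with base $x_0^j y_0^{n-1-j}$ (yielding exponent $\alpha$). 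Estimate \eqref{equ:hold3} comes from the integral formula $\delta(x_0, x_1) - \delta(y_0, y_1) = \int_0^1 \tfrac{d}{ds} \delta(a(s), b(s)) \, ds$ along the segment $a(s) = (1-s) y_0 + s x_0$, $b(s) = (1-s) y_1 + s x_1$, with the integrand bounded by \eqref{equ:hold1} and \eqref{equ:hold2}. The remaining four statements all follow from combining \eqref{equ:dapp} with the already-established H\"older estimates: uniform convexity is a direct computation from Lemma~\ref{lem:deltaconv} and $(\ast)$; the quasi-triangle inequality comes from the algebraic identity $\delta(x, z) = \delta(x, y) + \delta(y, z) + \langle F'(y) - F'(z), x - y \rangle$ combined with \eqref{equ:hold2} (after a judicious choice of $x_*$ to eliminate $M$-dependence); quasi-symmetry and the quadratic estimate follow similarly.

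The principal difficulty is the proof of $(\ast)$. A direct application of Cauchy--Schwarz cannot reduce the base $z_2 \cdots z_n$ to $x^a y^b$, because \eqref{equ:CS} requires the base to lie in $P$, and no positive semidefiniteness is available once one introduces intermediate bases involving elements of $V_0$. The resolution is the \emph{two-level decomposition} outlined above: one first rewrites a free base element $z_{i_0} = x + (z_{i_0} - x)$ to introduce a $V_0$-element in the base, and then uses $u = x - y$ to split the resulting non-PSD bilinear form into PSD pieces on which Cauchy--Schwarz can be applied. The exponent $\alpha = 2^{-n}$ is the cumulative effect of $n - 1$ Cauchy--Schwarz square roots (one per free base slot during the induction) together with the final Cauchy--Schwarz step separating the two $V_0$-variables $u_0, u_1$ in \eqref{equ:hold1}.
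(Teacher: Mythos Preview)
Your core estimate $(\ast)$ and the two-level Cauchy--Schwarz decomposition are the right ideas, but there is a circularity you have not resolved. In your inductive step, the ``size factors'' $w^2 x z'$ and $w^2 y z'$ (with $w = z_{i_0} - x$) must be bounded by $M = \max_\xi \delta(\xi, x_*)$. Any ``parallel induction'' to do this will, at its base case, produce a term comparable to $\delta(z_{i_0}, x)$ (or, after a seminorm-triangle reduction to pairs involving $x_*$, will in its own inductive step spawn further size factors whose pair again fails to contain $x_*$). Controlling $\delta(z_{i_0}, x)$ by $M$ is exactly the quasi-triangle inequality, which you propose to derive only afterwards from \eqref{equ:hold2}, hence from $(\ast)$ itself. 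The ``judicious choice of $x_*$'' cannot help here: once $x_*$ is fixed to make the quasi-triangle derivation go through, it is no longer one of $\{z_{i_0}, x\}$, so the size factor remains uncontrolled.

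The paper breaks this circle by reversing the order. It first proves the special case $z_2 = \cdots = z_n =: z$ (Lemma~\ref{lem:hold1}), where the natural bound is the pairwise quantity $B := \max\{d(x,z), d(y,z)\}$ rather than $M$. The recursion $b_{j+1} \lesssim b_j + \sqrt{B\,b_j}$ on $b_j := (x-y)^2 \cdot z^j \cdot w^{n-1-j}$ (with $w = \tfrac12(x+y)$) uses exactly your two-level decomposition, but the resulting size factor $(z-w)^2 \cdot (\ldots)$ is $\lesssim d(z,w) \lesssim B$ by the \emph{convexity} of $\delta(\cdot, z)$ on $P_\theta$ (Lemma~\ref{lem:deltaconv}): no quasi-triangle is needed. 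Quasi-triangle is then deduced from this special case, and only afterward is the general \eqref{equ:hold1} obtained via the averaging $z := \tfrac{1}{n-1}\sum_i z_i$ together with the now-available quasi-triangle inequality (to pass from $\max\{d(x_1,z), d(y_1,z)\}$ to $M$). A secondary point: quasi-symmetry and the quadratic estimate are established directly from \eqref{equ:dapp} at the outset, not as consequences of the H\"older estimates; your proposed route for these would hit the same $M$-dependence issue.
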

The strategy to get these types of H\"older estimates goes back to~\cite{BBGZ,BBEGZ,trivval}, building on an original idea of~\cite{Blo}. In the rest of this section we prove Theorem~\ref{thm:CS}, largely following~\cite[\S3.3]{trivval}.

\medskip
Given $x,y\in P_\theta$, we set
\[
  d(x,y):=\max_{0\le j\le n-1}(x-y)^2\cdot y^j\cdot x^{n-1-j}. 
\]
Using~\eqref{equ:dapp} it is then clear that
\begin{equation}\label{equ:distances}
  \delta(x,y)\approx \d(y,x)\approx d(x,y)\approx(x-y)^2\cdot(\tfrac12(x+y))^{n-1}.
\end{equation}

To prove the inequality $\d(x,(1-t)x+ty)\lesssim t^2\d(x,y)$ it suffices to prove the corresponding inequality $d(x,(1-t)x+ty)\lesssim t^2d(x,y)$. But
\[
  d(x,(1-t)x+ty)
  =t^2\max_{0\le j\le n-1}(x-y)^2\cdot((1-t)x+ty)^j\cdot x^{n-1-j}
\lesssim t^2d(x,y)
\]
using multilinearity and the binomial theorem.

\medskip
Next we prove what is essentially a special case of~\eqref{equ:hold1}.
\begin{lem}\label{lem:hold1}
  If $x,y,z\in P_\theta$, then
  \[
    (x-y)^2\cdot z^{n-1}\lesssim d(x,y)^{2\a}\max\{d(x,z),d(y,z)\}^{1-2\a}.
  \]
\end{lem}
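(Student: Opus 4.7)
The inequality is equivalent, upon taking $2^{n-1}$-th roots, to
\[
A^{2^{n-1}}\lesssim d(x,y)\cdot D^{2^{n-1}-1},
\]
where $A:=(x-y)^2\cdot z^{n-1}$ and $D:=\max\{d(x,z),d(y,z)\}$. I would prove this by induction on $n$, iterating the Cauchy--Schwarz inequality~\eqref{equ:CS}. The base case $n=1$ reduces to the equality $(x-y)^2=d(x,y)$.

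For the inductive step, introduce the interpolating quantities $A_k:=(x-y)^2\cdot z^{n-1-k}\cdot x^k$ for $k=0,\dots,n-1$, so that $A_0=A$ and $A_{n-1}=(x-y)^2\cdot x^{n-1}\le d(x,y)$ (the latter being one of the terms in the maximum defining $d(x,y)$). The main step is to establish the one-step recursion $A_k^2\lesssim A_{k+1}\cdot D$ for each $k$; composing the resulting $n-1$ inequalities yields
\[
A^{2^{n-1}}=A_0^{2^{n-1}}\lesssim A_{n-1}\cdot D^{2^{n-1}-1}\lesssim d(x,y)\cdot D^{2^{n-1}-1},
\]
as desired.

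To establish the recursion, I would apply~\eqref{equ:CS} with $u:=x-y$, $v:=z-x$, and common positive factors $z^{n-2-k}\cdot x^{k+1}\in P^{n-1}$, obtaining
\[
\bigl|(x-y)(z-x)\cdot z^{n-2-k}\cdot x^{k+1}\bigr|^2\le A_{k+1}\cdot(z-x)^2\, z^{n-2-k}\, x^{k+1}\le A_{k+1}\cdot d(x,z)\le A_{k+1}\cdot D,
\]
since the second factor on the left is $\le d(x,z)$ by the definition of $d$. The identity $z=x+(z-x)$ telescopes to $A_k-A_{k+1}=(x-y)^2\cdot z^{n-2-k}\cdot x^k\cdot(z-x)$, and rewriting this expression in terms of the CS-controlled quantity $(x-y)(z-x)\cdot z^{n-2-k}\cdot x^{k+1}$ (together with the symmetric bound obtained by swapping the roles of $x$ and $y$) should yield the desired recursion.

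The main obstacle is precisely the remainder $A_k-A_{k+1}$: although it is a multilinear quantity containing the factor $(z-x)\notin P$ and thus three $V_0$-components, it must be converted into an estimate of the form $|A_k-A_{k+1}|\lesssim\sqrt{A_{k+1}\cdot D}$. I expect this to require either a second iteration of Cauchy--Schwarz (bounding the remainder via a geometric mean of two CS-controllable quantities) or an absorption argument treating $A_k^2\lesssim A_{k+1}D+\mathrm{(remainder)}^2$ as a quadratic inequality in $A_k$. The careful bookkeeping here constitutes the technical heart of the argument, and should follow the approach of~\cite[\S3.3]{trivval}.
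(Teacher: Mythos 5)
Your overall strategy---iterating the Cauchy--Schwarz inequality \eqref{equ:CS} along a chain of $n-1$ interpolating monomials so that the exponent $2\a=2^{1-n}$ emerges from composing square roots---is indeed the one used in the paper. But your choice of interpolating family $A_k=(x-y)^2\cdot z^{n-1-k}\cdot x^k$ makes the remainder estimate, which you correctly single out as the technical heart, fail. To bound $A_k-A_{k+1}=(x-y)^2\cdot(z-x)\cdot z^{n-2-k}\cdot x^k$ by \eqref{equ:CS} you must reduce to exactly two $V_0$-factors, i.e.\ split one copy of $x-y$ as a difference of cone elements. The term carrying $x$ gives your CS-controlled quantity $(x-y)\cdot(z-x)\cdot z^{n-2-k}\cdot x^{k+1}$ and is fine; but the term carrying $y$, namely $(x-y)\cdot(z-x)\cdot y\cdot z^{n-2-k}\cdot x^k$, is bounded by Cauchy--Schwarz only in terms of $(x-y)^2\cdot y\cdot z^{n-2-k}\cdot x^k$ and $(z-x)^2\cdot y\cdot z^{n-2-k}\cdot x^k$. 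These are genuinely three-variable monomials, not among the $A_j$ nor among the monomials entering $d(x,y)$, $d(x,z)$, $d(y,z)$, and there is no cone inequality dominating $y$ by $x$ or $z$ that would let you absorb them; swapping the roles of $x$ and $y$ produces the mirror-image family with the same defect.

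The paper's device is precisely to interpolate through the midpoint $w=\tfrac12(x+y)$ instead, setting $b_j=(x-y)^2\cdot z^j\cdot w^{n-1-j}$: after splitting $x-y$, both $x$ and $y$ are dominated by $2w$ in the cone order, so the two cross terms collapse to $O(b_j)$ and $O(d(z,w))$, and $d(z,w)\lesssim\max\{d(x,z),d(y,z)\}$ by convexity of $\d(\cdot,z)$ (Lemma~\ref{lem:deltaconv}). Two further points you omit and would still need even with the correct family: the Cauchy--Schwarz step only yields the \emph{additive} recursion $b_{j+1}\le b_j+C\sqrt{b_j B}$, and upgrading it to your multiplicative form $A_k^2\lesssim A_{k+1}D$ requires both an a priori endpoint bound (in the paper, $b_{n-1}\le 4B$ via the triangle inequality for the seminorm $v\mapsto\sqrt{v^2\cdot z^{n-1}}$) and a case distinction according to whether $d(x,y)\le B$ or not. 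So the architecture of your argument is right, but the key step does not close as written; replace $z^{n-1-k}x^k$ by $z^jw^{n-1-j}$ and add the endpoint bound and case analysis.
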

\begin{proof}
%  We follow~\cite[Lemma~3.33]{trivval}.
  Set $w:=\frac12(x+y)$, $A:=d(x,y)$, $B:=\max\{d(x,z),d(y,z)\}$, and, for $0\le j\le n-1$,
  \[
    b_j:=(x-y)^2\cdot z^j\cdot w^{n-1-j}.
  \]
  Then $b_0\approx A$, and our goal is to show $b_{n-1}\lesssim A^{2\a}B^{1-2\a}$. By the triangle inequality for the seminorm $v\mapsto\sqrt{v^2\cdot z^{n-1}}$ on $V_0$, we have
  \[
    b_{n-1}\le(\sqrt{(x-z)^2\cdot z^{n-1}}+\sqrt{(y-z)^2\cdot z^{n-1}})^2\le 4B.
    % b_{n-1}\le((x-z)^2\cdot z^{n-1}+(y-z)^2\cdot z^{n-1})\le 4B.
  \]
  If $A\ge B$, then $b_{n-1}\le 4B\le 4A^{2\a}B^{1-2\a}$ and we are done, so we may assume $A\le B$. In this case, we show by induction that
  \[
    b_j\lesssim A^{2^{-j}}B^{1-2^{-j}}
  \]
  for $0\le j\le n-1$. The case $j=0$ is clear, so suppose $0\le j\le n-2$, and note that
  \begin{align*}
    b_{j+1}-b_j
    &=(x-y)^2\cdot(z-w)\cdot z^j\cdot w^{n-2-j}\\
    &=(x-y)\cdot(z-w)\cdot x\cdot z^j\cdot w^{n-2-j}
      -(x-y)\cdot(z-w)\cdot y\cdot z^j\cdot w^{n-2-j}.
  \end{align*}
  Here we can use the Cauchy-Schwartz inequality to estimate the last two terms. For example:
  \begin{multline*}
    |(x-y)\cdot(z-w)\cdot x\cdot z^j\cdot w^{n-2-j}|^2\\
    \le((x-y)^2\cdot x\cdot z^j\cdot w^{n-2-j})
    ((z-w)^2\cdot x\cdot z^j\cdot w^{n-2-j}).
  \end{multline*}
  Using that $2w-x=y\in P$, we can bound the first factor by $2b_j$, and the second factor by $2(z-w)^2\cdot z^j\cdot w^{n-1-j}\le 2d(z,w)$. Adding the two terms, we get
  $b_{j+1}-b_j\le 4\sqrt{b_j}\sqrt{d(z,w)}$.
  Now $d(z,w)\approx\d(w,z)\le\max\{\d(x,z),d(y,z)\}\approx B$ using the convexity of $\delta(\cdot,z)$, see Lemma~\ref{lem:deltaconv}. All in all, this yields
  \[
    b_{j+1}-b_j\lesssim\sqrt{b_jB}.
  \]
  for $0\le j\le n-2$. Using the induction hypothesis $b_j\lesssim A^{2^{-j}}B^{1-2^{-j}}$, we get
  \[
    b_{j+1}
    \lesssim b_j+\sqrt{Bb_j}
    \lesssim A^{2^{-j}}B^{1-2^{-j}}+A^{2^{-j-1}}B^{1-2^{-j-1}}\le 2 A^{2^{-j-1}}B^{1-2^{-j-1}},
    \]
where the last inequality follows from our assumption that $A\le B$. We are done.
\end{proof}

Using Lemma~\ref{lem:hold1}, we can now prove the quasi-triangle inequality for $\delta$, or equivalently for $d$. Fix $x,y,z\in P_\theta$, and set $w:=\frac12(x+y)$. Then
\begin{multline*}
  d(x,y)
  \approx(x-y)^2\cdot w^{n-1}
  \lesssim
  (x-z)^2\cdot w^{n-1}+(y-z)^2\cdot w^{n-1}\\
  \lesssim\max\{d(x,z),d(y,z)\})^{2\a}\max\{d(x,w),d(y,w),d(z,w)\}^{1-2\a},
\end{multline*}
by the triangle inequality for the norm $v\mapsto\sqrt{v^2\cdot w^{n-1}}$ and by Lemma~\ref{lem:hold1}. As noted above, the convexity of $\d(\cdot,z)\approx d(\cdot,z)$ gives $d(z,w)\lesssim\max\{d(x,z),d(y,z)\}$, as well as $d(x,w),d(y,w)\lesssim d(x,y)$. Thus
\[
  d(x,y)
  \lesssim\max\{d(x,z),d(y,z)\})^{2\a}\max\{d(x,y),d(x,z),d(y,z)\}^{1-2\a},
\]
which easily implies $d(x,y)\le \max\{d(x,z),d(y,z)\}$, as desired.

\medskip
Next we prove~\eqref{equ:hold1} in general. By the Cauchy--Schwartz inequality we may assume $x_0=x_1$ and $y_0=y_1$. We may also assume $n\ge2$, or else we are done by~\eqref{equ:distances}. Set $z:=\tfrac1{n-1}(z_2+\dots+z_n)$. Then 
\begin{multline*}
  (x_1-y_1)^2\cdot z_2\inter z_n
  \lesssim(x_1-y_1)^2\cdot z^{n-1}
  \lesssim d(x_1,y_1)^{2\a}\max\{d(x_1,z),d(y_1,z)\}^{1-2\a}\\
  \approx \d(x_1,y_1)^{2\a}\max\{\d(x_1,z),\d(y_1,z)\}^{1-2\a}.
\end{multline*}
By the quasi-triangle inequality we have $\d(x_1,z)\lesssim\max\{\d(x_1,x_*),\d(z,x_*)\}$, and by quasi-symmetry and Lemma~\ref{lem:deltaconv} we have $\d(z,x_*)\lesssim\max_{i\ge2}\d(z_i,x_*)$. A similar estimate for $\d(y_1,z)$ completes the proof of~\eqref{equ:hold1}.

\medskip
Next we prove~\eqref{equ:hold2}, which is equivalent to
\[
  |(x_0^n-y_0^n)\cdot(x_1-y_1)|\le\d(x_0,y_0)^{1/2}\d(x_1,y_1)^\a M^{1/2-\a}.
\]
By the Cauchy--Schwartz inequality, we have
\begin{multline*}
  |(x_0^n-y_0^n)\cdot(x_1-y_1)|^2
  =|(x_0-y_0)(x_1-y_1)\sum_{j=0}^{n-1}x_0^jy_0^{n-1-j}|^2\\
  \le\left((x_0-y_0)^2\sum_{j=0}^{n-1}x_0^jy_0^{n-1-j}\right)
  \left((x_1-y_1)^2\sum_{j=0}^{n-1}x_0^jy_0^{n-1-j}\right)
\end{multline*}
Here the first factor on the right is $\approx\d(x_0,y_0)$, whereas the second factor can be  bounded above using~\eqref{equ:hold1}.

\medskip
It only remains to prove~\eqref{equ:hold3}.
By the quasi-triangle inequality for $\d$, it suffices to consider the case when $x_0=y_0$ or $x_1=y_1$. Now
\begin{align*}
  \d(x_0,x_1)-\d(x_0,y_1)
  &=n(x_1^{n+1}-y_1^{n+1})-(n+1)x_0(x_1^n-y_1^n)\\
  &=(x_1-y_1)\sum_{j,k}(x_1^jy_1^{n-j}-x_0x_1^ky_1^{n-k-1}).
\end{align*}
If $j\le k$, then
\[
  x_1^jy_1^{n-j}-x_0x_1^ky_1^{n-k-1}
  =x_1^jy_1^{n-k-1}(y_1^{k-j+1}-x_0^{k-j+1})
  +x_1^jy_1^{n-k-1}x_0(x_0^{k-j}-x_1^{k-j}),
  \]
  and by factoring each term of the right-hand side we see from~\eqref{equ:hold1} that
  \[
    |(x_1-y_1)(x_1^jy_1^{n-k}-x_0x_1^jy_1^{n-k-1})|\lesssim\d(x_1,y_1)^\a M^{1-\a}.
  \]
  The case when $j>k$ is handled in a similar way, and adding all the terms yields
  $|\d(x_0,x_1)-\d(x_0,y_1)|\lesssim\d(x_1,y_1)^\a M^{1-\a}$.

A similar argument shows that $|\d(x_0,x_1)-\d(y_0,x_1)|\lesssim\d(x_0,y_0)^\a M^{1-\a}$, and completes the proof.

%
%
% 
%%%%%%%%%%%%%%%%%%%%%%%%%%%%%%%%%%%%%%%%%%%%%%%%%%%%%%%%%%%%%%%%%%%
%
%
% 
\section{Regularization and orthogonality on K\"ahler spaces}\label{sec:orthocomplex}
By relying on a variant of the classical Richberg regularization technique, it was proved in~\cite{BK} that any $\om$-psh function on a compact K\"ahler manifold $(X,\om)$ can be written as the limit of a decreasing sequence of smooth $\om$-psh functions. It is natural to hope that this holds in the singular case as well:

\begin{conj}\label{conj:reg} Let $(X,\om)$ be a compact K\"ahler space. Then any $\om$-psh function $\f$ on $X$ can be written as the pointwise limit of a decreasing sequence $(\f_i)$ of smooth $\om$-psh functions. 
\end{conj}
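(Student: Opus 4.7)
The plan is to extend the Blocki--Kolodziej smooth regularization to the singular setting by combining local ambient-space regularization with a Richberg-style patching procedure. As a preliminary, one reduces to the case where $X$ is normal via the normalization $\nu\colon X^\nu\to X$: this morphism is finite, $\nu^*\om$ is K\"ahler, and averaging over fibers (which preserves smoothness, $\om$-plurisubharmonicity, and monotone pointwise convergence) transfers a decreasing smooth approximation on $X^\nu$ back down to $X$. One may further reduce to bounded $\f$ by truncating $\max\{\f,-k\}$ and diagonalizing.

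The next step is to build local decreasing smooth approximations. Cover $X$ by finitely many open sets $U_\a$ admitting closed embeddings $U_\a\hto V_\a\subset\C^{N_\a}$ chosen so that $\om|_{U_\a}=\ddc\rho_\a$ for a smooth strictly psh function $\rho_\a$ on $V_\a$. By the Forn{\ae}ss--Narasimhan characterization of psh functions on a reduced complex space, $\f+\rho_\a$ is locally the restriction of a psh function $\tpsi_\a$ on a subdomain $V_\a'\Subset V_\a$. Standard convolution with a smooth kernel yields a decreasing sequence $\tpsi_{\a,i}\searrow\tpsi_\a$ of smooth psh functions on any relatively compact subdomain of $V_\a'$. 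Restriction gives smooth $\om$-psh functions $\f_{\a,i}:=\tpsi_{\a,i}|_{U_\a}-\rho_\a$ on a slightly shrunken chart, decreasing pointwise to $\f$ there.

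The final step is to patch the local pieces $\f_{\a,i}$ into a global decreasing sequence of smooth $\om$-psh functions on $X$. Using a partition of unity $(\chi_\a^2)$ subordinate to a refinement of $(U_\a)$, together with Richberg's regularized maximum $\widetilde{\max}$, one forms $\f_i$ by a diagonal choice $\a\mapsto i(\a)$ tuned to ensure that $\f_i$ is globally smooth, globally $\om$-psh, and monotone decreasing to $\f$. Compatible compensations (shifting each $\f_{\a,i}$ by small decreasing constants) force monotonicity across patching steps.

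The main obstacle is precisely this global monotonicity and $\om$-plurisubharmonicity after patching: the cutoffs and the regularized maxima introduce error terms which must be absorbed by the strict positivity of $\om$. This is most delicate along the singular locus $X_{\mathrm{sing}}$, where the embedding dimension $N_\a$ varies and the Forn{\ae}ss--Narasimhan extensions are hardest to control uniformly across overlapping charts. It is at this step that the globally K\"ahler hypothesis should enter in an essential way, guaranteeing a coherent choice of smooth strictly psh local potentials $\rho_\a$ and of extensions $\tpsi_\a$ whose convolutions differ by uniformly bounded amounts on intersections, so that the Richberg gluing produces the required decreasing global sequence.
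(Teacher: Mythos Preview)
The statement you are attempting to prove is labeled as a \emph{conjecture} in the paper (Conjecture~\ref{conj:reg}), not a theorem; the paper does not supply a proof, and only records the known cases ($X$ nonsingular, via B\l{}ocki--Ko\l{}odziej, and $X$ projective with $[\om]$ ample, via Coman--Guedj--Zeriahi). So there is no ``paper's own proof'' to compare against: the paper treats this as an open problem.

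Your proposal correctly identifies the natural strategy (local ambient extension plus Richberg patching), but does not close the gap that makes this a conjecture. Two concrete issues:

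First, the normalization reduction is flawed. The map $\nu\colon X^\nu\to X$ is finite but the fiber cardinality jumps along the non-normal locus, so ``averaging over fibers'' does not produce smooth (or even continuous) functions on $X$ in general. There is no straightforward way to push smooth $\nu^\star\om$-psh approximants on $X^\nu$ down to smooth $\om$-psh functions on $X$.

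Second, and more fundamentally, your final paragraph is not an argument but an acknowledgment of the obstacle. You say the K\"ahler hypothesis ``should enter in an essential way'' to guarantee that the Forn{\ae}ss--Narasimhan extensions on overlapping charts differ by uniformly bounded amounts near $X_{\mathrm{sing}}$, so that Richberg gluing yields a globally decreasing $\om$-psh sequence. But this is precisely the unresolved point: the ambient extensions are non-canonical, the embedding dimensions vary, and no mechanism is given to control the discrepancies. Until that step is made precise, the argument remains a heuristic outline of why one might hope the conjecture is true, not a proof.
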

Note that the conclusion only depends on the K\"ahler class $[\om]$. Besides the case $X$ is nonsingular, mentioned above, we have: 

\begin{exam}\label{exam:CGZ} Conjecture~\ref{conj:reg} holds if $X$ is projective and $[\om]$ lies in the ample cone, \ie the open convex cone generated by classes of ample line bundles on $X$. This is a consequence of~\cite[Theorem~1.1]{CGZ}. 
\end{exam}

\begin{lem}\label{lem:orthocomplex} Let $(X,\om)$ be a compact K\"ahler space for which Conjecture~\ref{conj:reg} holds. Then $[\om]$ has the orthogonality property (cf.~Definition~\ref{defi:ortho}). 
\end{lem}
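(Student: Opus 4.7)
I will verify the orthogonality property for $[\om]$ by establishing the equivalent uniform differentiability condition~(iii) of Proposition~\ref{prop:ortho}, after first checking that $\cD_\om$ admits maxima. That $\cD_\om$ admits maxima is standard on a compact K\"ahler space: given $\f, \psi \in \cD_\om$ and $f \in \cD$ with $\max\{\f, \psi\} < f$ on $X$, Demailly's regularized maximum $\widetilde{\max}_\e(\f, \psi)$ is smooth, $\om$-plurisubharmonic, and lies between $\max(\f, \psi)$ and $\max(\f, \psi) + \e$; for $\e > 0$ small enough (using compactness of $X$) it remains strictly below $f$.

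For the main estimate, fix $\f \in \cD_\om$, a test function $f \in \cD$ written as $f = f^+ - f^-$ with $f^\pm \in \cD_{C\om}$, and $t \in \R$ with $|t|\le 1$. Set $h := \f + tf$ and consider the $\om$-psh envelope
$$P_\om(h) := \sup\bigl\{\psi \in \PSH(X, \om) : \psi \le h\bigr\},$$
which is $\om$-psh and satisfies $P_\om(h) \le h$. Since $\f - |t|\sup|f| \le P_\om(h) \le \f + tf$ and $\f$ is continuous, we get $|\f - P_\om(h)| \le |t|\sup|f|$, so in particular $P_\om(h)$ is bounded. The proof hinges on two further properties: (a) $P_\om(h)$ is continuous and $\ten_\om(h) = \en_\om(P_\om(h))$, which follow by using Conjecture~\ref{conj:reg} to approximate $P_\om(h)$ by a decreasing sequence of smooth $\om$-psh functions and passing to the limit in $\en_\om$; and (b) the contact-set concentration
$$\MA_\om(P_\om(h))\bigl\{P_\om(h) < h\bigr\} = 0.$$
In the nonsingular case (b) is Berman's theorem, available via the $\cC^{1,\bar 1}$-regularity of envelopes; in the general K\"ahler setting it is obtained by combining the smooth approximation from Conjecture~\ref{conj:reg} with the weak continuity of Monge--Amp\`ere operators along decreasing sequences of continuous $\om$-psh functions (Bedford--Taylor theory adapted to K\"ahler spaces).

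Granting (a) and (b), the estimate is sandwiched by two concavity inequalities from~\eqref{equ:enconc}. On the one hand, concavity of $\en_\om$ at $\f$ gives
$$\en_\om(P_\om(h)) - \en_\om(\f) \le \int(P_\om(h) - \f)\,\MA_\om(\f) \le t\int f\,\MA_\om(\f),$$
using $P_\om(h) \le h = \f + tf$ and positivity of $\MA_\om(\f)$. On the other hand, applying concavity at $P_\om(h)$ and using the contact-set property $P_\om(h) = \f + tf$ on $\supp\MA_\om(P_\om(h))$,
$$\en_\om(\f) - \en_\om(P_\om(h)) \le \int(\f - P_\om(h))\,\MA_\om(P_\om(h)) = -t\int f\,\MA_\om(P_\om(h)).$$
Subtracting yields
$$\Bigl|\ten_\om(h) - \en_\om(\f) - t\textstyle\int f\,\MA_\om(\f)\Bigr| \le |t|\,\Bigl|\textstyle\int f\bigl(\MA_\om(\f) - \MA_\om(P_\om(h))\bigr)\Bigr|,$$
and a Chern--Levine--Nirenberg-type bound (valid with $f \in \cD_{C\om}$ and a bounded $\om$-psh potential $P_\om(h)$, obtained by a standard telescoping plus the seminegativity~\eqref{equ:semineg}) controls the right-hand side by $O(C\sup|\f - P_\om(h)|) = O(C|t|\sup|f|)$. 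This gives the $O(C t^2 \sup|f|)$ bound required by (iii) of Proposition~\ref{prop:ortho}.

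The main obstacle is part (b), namely the contact-set concentration of $\MA_\om(P_\om(h))$ in the potentially singular K\"ahler setting, where the classical $\cC^{1,\bar1}$-regularity argument is unavailable; the replacement via Conjecture~\ref{conj:reg} requires a careful limiting procedure for mixed Monge--Amp\`ere operators along smooth decreasing approximations, and to know that one can perturb the approximating smooth $\om$-psh functions to lie strictly below $h$ (which is achievable by subtracting small constants). Once (b) is in place the rest of the proof is a formal application of envelope calculus as above.
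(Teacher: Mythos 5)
Your overall strategy differs from the paper's. You work directly on the (possibly singular) space $X$ and try to verify the uniform differentiability criterion (iii) of Proposition~\ref{prop:ortho} via the envelope $P_\om(h)$ on $X$ itself. The paper instead verifies the orthogonality property in the form~\eqref{equ:ortho} directly, by pulling everything back to a resolution of singularities $\pi\colon Y\to X$: there the class $\pi^\star[\om]$ is semipositive and big on a smooth compact K\"ahler manifold, where the concentration of $\MA_\theta(\env_\theta(g))$ on the contact set is already known; Conjecture~\ref{conj:reg} is used only to identify the limit of the increasing net $\{\pi^\star\f\}_{\f\in\cD_{\om,<f}}$ with the envelope $\env_\theta(\pi^\star f)$ upstairs (via a Dini argument and the descent of $(1-\e)$-perturbed potentials), after which one concludes by continuity of Monge--Amp\`ere integrals along increasing nets in $\cE^1(Y,\theta)$.

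The difference matters because your step (b) --- the identity $\MA_\om(P_\om(h))\{P_\om(h)<h\}=0$ on a singular K\"ahler space --- is a genuine gap, not a technicality. You correctly flag it as the main obstacle, but the mechanism you propose to close it does not work: Conjecture~\ref{conj:reg} produces a \emph{decreasing} sequence of smooth $\om$-psh functions converging to $P_\om(h)$, but these approximants are not constrained to lie below the obstacle $h$, so passing to the limit in Bedford--Taylor theory yields no information whatsoever about where $\MA_\om(P_\om(h))$ is supported. All known proofs of the contact-set concentration (Berman--Boucksom orthogonality, $\cC^{1,\bar1}$-regularity of envelopes, Berman's thermodynamic/$\b$-deformation argument) require substantially more than decreasing smooth approximation, and none is available off the shelf on a singular space. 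In effect you have reduced the lemma to a statement that is just as hard as the lemma itself. (Your step (a) also silently assumes continuity of $P_\om(h)$ and the identity $\ten_\om(h)=\en_\om(P_\om(h))$ on a singular space, which again need an argument; the paper's detour through $Y$ avoids having to define or control $\MA_\om$ of non-smooth potentials on $X$ at all.) Granting (a) and (b), your sandwich of concavity inequalities and the Chern--Levine--Nirenberg bound would indeed deliver the $O(Ct^2\sup|f|)$ estimate of Proposition~\ref{prop:ortho}(iii), so the formal envelope calculus at the end is fine; the proof is incomplete because its central input is unproved.
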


\begin{proof} Pick a resolution of singularities $\pi\colon Y\to X$, and set $\theta:=\pi^\star\om\ge 0$. Since $\int\theta^n=\int\om^n>0$, the $(1,1)$-class $[\theta]$ is semipositive and big. For any $g\in\Cz(Y)$, consider the $\theta$-psh envelope
$$
\env_\theta(g):=\sup\{\p\in\PSH(Y,\theta)\mid\p\le g\}.
$$
As is well-known, $\env_\theta(\g)$ is $\theta$-psh, and $\MA_\theta(\env_\theta(g))$ is supported in $\{\env_\theta(g)=g\}$, \ie 
\begin{equation}\label{equ:orthobig}
\int_Y(g-\env_\theta(g))\MA_\theta(\env_\theta(g))=0.
\end{equation}
For any $f\in\Cz(X)$, we next claim that $\env_\theta(\pi^\star f)$ is the limit in $\cE^1(Y,\theta)$ of the increasing net $\{\pi^\star\f\}_{\f\in\cD_{\om,<f}}$. Assume this for the moment. By continuity of Monge--Amp\`ere integrals along increasing nets in $\cE^1(Y,\theta)$, and using 
$$
\int_X(f-\f)\MA_\om(\f)=\int_Y(\pi^\star f-\pi^\star\f)\MA_\theta(\pi^\star \f),
$$
for any $\f\in\cD_\om$, we infer
\begin{equation}\label{equ:limint}
\lim_{\f\in\cD_{\om,<f}}\int_X(f-\f)\MA_\om(\f)=\int_Y\left(\pi^\star f-\env_\theta(\pi^\star f)\right)\MA_\theta(\env_\theta(\pi^\star f))=0, 
\end{equation}
by~\eqref{equ:orthobig}. 

To prove the claim, note first that Conjecture~\ref{conj:reg} implies that the increasing net $\{\f\}_{\f\in\cD_{\om,<f}}$ converges pointwise to
$$
\env_\om(f):=\sup\{\p\in\PSH(X,\om)\mid\p\le f\}.
$$
Indeed, given $\d>0$ and a function $\psi\in\PSH(X,\om)$ with $\psi\le f$, Conjecture~\ref{conj:reg} and a Dini-type argument guarantees the existence of a function $\psi'\in\cD_{\om,<f}$ with $\psi\le\psi'< f+\d$. The claim is thus equivalent to the statement that $\pi^\star\env_\om(f)$ coincides a.e.\ with $\f:=\env_{\theta}(\pi^\star f)$. To prove this, pick $\tau\in\PSH(X,\om)$ with $\tau\le f$ and $\{\tau=-\infty\}=X_{\sing}$ (compare proof of Lemma~\ref{lem:submean}). Since $\p_\e:=(1-\e)\f+\e\pi^\star\tau$ is $\pi^\star\om$-psh outside $\pi^{-1}(X_{\sing})$, and $\p_\e\equiv-\infty$ on the latter, we have $\p_\e=\pi^\star\f_\e$ for a unique $\f_\e\in\PSH(X,\om)$ (see~\cite[Th\'eor\`eme~1.10]{DemSMF}). Since $\pi^\star\f_\e\le\pi^\star f$, and hence $\f_\e\le f$, we have $\f_\e\le\env_\om(f)$. Thus 
$$
(1-\e)\f+\e\pi^\star\tau=\pi^\star\f_\e\le\pi^\star\env_\om(f)\le\f,
$$
and hence $\pi^\star\env_\om(f)=\env_\theta(\pi^\star f)$ on $Y\setminus\pi^{-1}(X_{\sing})$. 
\end{proof}
%
%
% 
%%%%%%%%%%%%%%%%%%%%%%%%%%%%%%%%%%%%%%%%%%%%%%%%%%%%%%%%%%%%%%%%%%%
%
%

\end{document}